\title{Observability estimates for the Schrödinger equation in the plane with periodic bounded potentials from measurable sets}
\author{Kévin Le Balc'h, Jérémy Martin}
\numberwithin{equation}{section}
\newtheorem{theorem}{Theorem}[section]
\newtheorem{definition}[theorem]{Definition}
\newtheorem{lemma}[theorem]{Lemma}
\newtheorem{proposition}[theorem]{Proposition}
\newtheorem{exemple}[theorem]{Example}
\newtheorem{corollary}[theorem]{Corollary}
\newtheorem{remark}[theorem]{Remark}
\numberwithin{equation}{section}
\DeclareMathOperator{\Supp}{Supp}
\DeclareMathOperator{\Reelle}{Re}
\newcommand{\norme}[1]{\left\lVert#1\right\rVert}
\newcommand{\ensemblenombre}[1]{\mathbb{#1}}
\newcommand{\N}{\ensemblenombre{N}}
\newcommand{\Z}{\ensemblenombre{Z}}
\newcommand{\R}{} 
\renewcommand{\R}{\ensemblenombre{R}}
\newcommand{\C}{\ensemblenombre{C}}
\newcommand{\T}{\ensemblenombre{T}}
\newcommand{\rr}{\mathbb{R}}
\newcommand{\nn}{\mathbb{N}}
\def\ops#1#2{{\text{op}_{#1}(#2)}}
\def\un{{\mathrm{1~\hspace{-1.4ex}l}}}
\newcommand{\Ima}{\text{Im}}
\begin{document}

\maketitle

\begin{abstract}
    The goal of this article is to obtain observability estimates for Schrödinger equations in the plane $\R^2$. More precisely, considering a $2 \pi \Z^2$-periodic potential $V \in L^{\infty}(\R^2)$, we prove that the evolution equation $i  \partial_t u  = -\Delta u  + V(x) u$, is observable from any $2 \pi \Z^2$-periodic measurable set, in any small time $T>0$. We then extend Taüffer's recent result \cite{Tau22} in the two-dimensional case to less regular observable sets and general bounded periodic potentials. The methodology of the proof is based on the use of the Floquet-Bloch transform, Strichartz estimates and semiclassical defect measures for the obtaining of observability inequalities for a family of Schrödinger equations posed on the torus $\rr^2/2\pi\mathbb Z^2$.
\end{abstract}
 \small
\tableofcontents
\normalsize

\section{Introduction}

\subsection{Observability inequalities for the Schrödinger equation in $\R^d$}
In this work, we are interested in the observability of the following Schrödinger equation
\begin{equation}
	\label{eq:SchrodingerObs}
		\left\{
			\begin{array}{ll}
				i  \partial_t u  = (-\Delta + V(x)) u & \text{ in }  (0,+\infty) \times \R^d, \\
				u(0, \cdot) = u_0 & \text{ in } \R^d,
			\end{array}
		\right.
\end{equation}
where $u_0 \in L^2(\R^d)$ and $V \in L^{\infty}(\R^d)$. In the case when $V$ is $2\pi\mathbb Z^d$-periodic, that is, satisfies 
\begin{equation*}
    \label{eq:periodicityV}
   V(x+2k \pi) = V(x),\qquad \forall x \in \R^d,\ \forall k \in \Z^d.
\end{equation*}

The equation \eqref{eq:SchrodingerObs} can describe the behaviour of an electron in a crystal, see for instance \cite{Kli12} and the references therein. The notion of observability is defined as follows:

\begin{definition}
Let $T>0$ be a positive time and $b : \rr^d \longrightarrow \rr_+$ be a non-negative measurable function. The equation \eqref{eq:SchrodingerObs} is said to be observable from $b$ in time $T>0$ if and only if there exists a positive constant $C>0$ such that 
\begin{equation*}
    \forall u_0 \in L^2(\rr^d), \quad \|u_0\|^2_{L^2(\rr^d)} \leq C \int_0^T \int_{\rr^d} b(z) |u(t,z)|^2 dz dt,
\end{equation*}
where $u$ is the mild solution of \eqref{eq:SchrodingerObs} with initial data $u_0$.
\end{definition}
When $b=\un_{\omega}$ with $\omega \subset \rr^d$ a measurable subset, some geometric conditions can be required to ensure the observability of \eqref{eq:SchrodingerObs}. In the one-dimensional case $d=1$, when $V=0$, it has been shown in \cite{MPS21} and \cite{HWW22} that the free Schrödinger equation is observable at some time $T>0$ from $b=\un_{\omega}$ if and only if $\omega$ is thick, that is,
\begin{equation*}
    \exists \gamma, L>0, \forall x \in \rr, \quad |\omega \cap (x+[0,L])|\geq \gamma,
\end{equation*}
where $|A|$ denotes the Lebesgue measure of $A\subset \rr$. In higher dimensions $d \geq 2$, this thickness condition turns out to be necessary (\cite[Theorem~2.6]{MPS21}). However, as explained by the authors of \cite{MPS21}, the question of its sufficiency remains open. A sufficient condition was first given in \cite[Proposition 2.11]{MPS21}.  Recently, the particular case of periodic sets have been investigated for the observability of the free Schrödinger equation. In \cite[Theorem~2]{Tau22}, Taüffer has shown that \eqref{eq:SchrodingerObs} with $V=0$ is observable in any time $T>0$ from $b=\un_{\omega}$, where $\omega$ is any non-empty $2\pi\mathbb Z^d$-periodic open subset of $\rr^d$. Notice that non-empty $2\pi\mathbb Z^d$-periodic open subsets are trivially thick subsets. It is also worth mentioning that periodic open subsets do not satisfy necessarily the well-known geometric control condition (GCC), roughly stating that every generalized geodesic meets $\omega$ in time $t \leq T$, that turns to be the necessary and sufficient geometric condition for the wave equation, see \cite{BJ16}. On the other hand, \cite{EV18} shows that the thickness condition is necessary and sufficient for the observability of the heat equation in dimensions $d \geq 1$. According to these results, the geometric condition ensuring the observability of the Schrödinger equation in $\R^d$ is strictly less restrictive than the one for the wave equation and more restrictive (in a large sense) than the one for the heat equation. 

\subsection{Main results in the two-dimensional case}
Our main result extends the result by Taüffer to the case of less regular observable function $b$ and general $2\pi\mathbb Z^2$-periodic potential $V \in L^{\infty}(\rr^2)$, in the two-dimensional case. In all the following, the $d$-dimensional torus $\T^d$ is defined by $$\T^d := \R^d/2\pi\Z^d.$$
For the sake of notational simplicity, any function $f \in L^1(\T^d)$ will be identified to its $2\pi\mathbb Z^d$-periodic extension to $\rr^d$ belonging to $L^1_{\text{loc}}(\rr^d)$.
\medskip
\begin{theorem}\label{thm:obs_R2}
Assume $d =2$. Let $b \in L^1(\mathbb T^2)\setminus\{0\}$ be a non-negative function.

For every $T>0$ and compact subset $\mathcal K \subset L^{\infty}(\mathbb T^2)$, there exists a positive constant $C=C(b,T, \mathcal{K}) > 0$ such that for every $2\pi\mathbb Z^2$-periodic potential $V \in \mathcal{K}$ and every $u_0 \in L^2(\R^2)$, the solution $u$ of \eqref{eq:SchrodingerObs} satisfies
\begin{equation}
    \label{eq:ObservabilitySchrodingerRd}
    \norme{u_0}_{L^2(\R^2)}^2 \leq C \int_0^T \int_{\mathbb R^2} b(z)|u(t,z)|^2 dzdt.
\end{equation}
\end{theorem}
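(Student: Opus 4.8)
The plan is to reduce the observability inequality on $\R^2$ to a family of observability inequalities on the torus $\T^2$, using the Floquet–Bloch transform. First I would introduce the semiclassical scaling: given $T>0$, we work at high frequency and reduce to proving a uniform observability estimate for wavepackets. Concretely, writing $u_0 \in L^2(\R^2)$ and applying the Floquet–Bloch transform $\mathcal U$, which is unitary from $L^2(\R^2)$ onto $\int_{\T^2}^{\oplus} L^2(\T^2)\, d\theta$ (with $\theta$ ranging over the Brillouin zone $\T^2$), the equation \eqref{eq:SchrodingerObs} decouples into the family
\begin{equation*}
  i\partial_t u_\theta = (-(\nabla + i\theta)^2 + V) u_\theta \quad \text{in } (0,+\infty)\times \T^2, \qquad u_\theta(0,\cdot) = (\mathcal U u_0)(\theta),
\end{equation*}
and since $b$ is $2\pi\Z^2$-periodic and $\mathcal U$ diagonalizes the periodic multiplication operator, the right-hand side of \eqref{eq:ObservabilitySchrodingerRd} becomes $\int_{\T^2} \int_0^T \int_{\T^2} b(z)|u_\theta(t,z)|^2\,dz\,dt\,d\theta$. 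Therefore it suffices to prove an observability estimate for the torus equation that is uniform in the quasi-momentum $\theta \in \T^2$ and uniform over $V$ in the compact set $\mathcal K$.

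Next I would prove the torus observability estimate: there exists $C = C(b,T,\mathcal K)>0$ such that for all $\theta \in \T^2$, all $V \in \mathcal K$, and all $v_0 \in L^2(\T^2)$,
\begin{equation*}
  \|v_0\|_{L^2(\T^2)}^2 \leq C \int_0^T \int_{\T^2} b(z)|v(t,z)|^2\,dz\,dt,
\end{equation*}
$v$ solving the $\theta$-twisted Schrödinger equation above. The standard route is a contradiction/compactness argument with semiclassical defect measures: suppose the estimate fails, so there are sequences $\theta_n$, $V_n \in \mathcal K$, $v_{0,n}$ with $\|v_{0,n}\|_{L^2} = 1$ but $\int_0^T\int_{\T^2} b |v_n|^2 \to 0$. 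Extracting a semiclassical measure $\mu$ associated to the sequence $(v_n)$ on the relevant time–frequency scale, one shows: (i) $\mu$ is a nonzero positive measure (using that $\|v_{0,n}\|=1$, together with a suitable compactness input that prevents escape of mass — here $\T^2$ being compact helps, but there is still the issue of concentration in frequency, where Strichartz estimates on the torus enter to control the possibly non-compact part); (ii) $\mu$ is invariant under the geodesic/Hamiltonian flow on $T^*\T^2$, which for the flat torus is the straight-line flow $x \mapsto x + t\xi$; (iii) the defect condition forces $\int b\, d\mu = 0$ on the relevant part of phase space. Since $b \in L^1(\T^2)\setminus\{0\}$ is nonnegative, $\{b>0\}$ has positive Lebesgue measure, and a flow-invariant positive measure on $T^*\T^2$ whose spatial marginal charges every nonempty open set (by unique ergodicity of irrational directions, or a covering/averaging argument over rational directions as in Taüffer and Burq–Zworski) must give positive mass to $b$ — contradiction. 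The low-frequency / compact part of the sequence is handled separately by a standard unique-continuation argument, using analyticity in time of solutions with fixed finite-dimensional frequency support.

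The main obstacle I anticipate is step (iii) combined with the weakness of the hypothesis $b \in L^1(\T^2)$ rather than $b$ being (a multiple of) the indicator of an open set: the measure-theoretic propagation argument must conclude positivity of $\int b\,d\mu$ from $\{b>0\}$ merely having positive measure. This requires showing that the spatial projection of the semiclassical measure is absolutely continuous, or at least cannot avoid a positive-measure set — on the flat torus this follows because the invariant measure, disintegrated over directions, is Lebesgue-on-lines in irrational directions (unique ergodicity) and for rational directions one needs a dispersion/Strichartz argument to rule out concentration on a lower-dimensional subtorus; in dimension $2$ this rational-direction case is exactly where the two-dimensional Strichartz estimates on $\T^2$ of Bourgain–Demeter type are essential and why the result is stated for $d=2$. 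A secondary technical point is making all constants uniform over the compact set $\mathcal K \subset L^\infty(\T^2)$: since $V$ enters the torus equation as a bounded perturbation, one treats the $V_n$-term as a lower-order contribution to the defect-measure equation (it does not affect the principal symbol, hence not the flow), and compactness of $\mathcal K$ in $L^\infty$ ensures the perturbation estimates are uniform; one should also check the reduction from $\R^2$ does not lose uniformity, which it does not since the Floquet–Bloch decomposition is independent of $V$.
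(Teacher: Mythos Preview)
Your overall architecture matches the paper's: Floquet--Bloch reduction to a $\theta$-family on $\T^2$, then a contradiction argument with semiclassical defect measures, splitting into irrational and rational directions, with Strichartz estimates supplying the absolute continuity needed to handle $b\in L^1$. Two places in your plan, however, are genuine gaps rather than omitted details.

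First, the rational-direction step is not what you describe. You say Strichartz ``rules out concentration on a lower-dimensional subtorus''; in fact concentration on a rational subtorus is exactly what happens, and the paper does not rule it out but \emph{exploits} it. After showing (via the $L^4$ Strichartz estimate) that the spatial projection of $\mu_T$ has an $L^2$ density $g_T$ with $\int b\,g_T=0$, ergodicity forces all the mass onto rational directions; one then fixes a direction $\zeta_0$ with $\mu_T(\T^2\times\{\zeta_0\})>0$, rotates so $\zeta_0=(0,1)$, and performs a \emph{normal form} reduction (Proposition~\ref{prop:normal_form}) replacing $V$ by its average $\langle V\rangle_y(x)$ in the transverse variable. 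This decouples the equation in Fourier modes in $y$ into a family of \emph{one-dimensional} Schr\"odinger equations, to which the 1D observability (Corollary~\ref{cor:obs_1D}) is applied to derive a contradiction. Without this normal form and the 1D input, there is no mechanism to conclude from $\mu_T(\T^2\times\{\zeta_0\})>0$ and $\int b\,g_T=0$; Strichartz alone is not enough here.

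Second, the uniformity in $\theta$ of the Strichartz estimates is a real obstacle you pass over. You invoke ``Bourgain--Demeter type'' estimates, but the usual Zygmund/Parseval route fails for $\theta\neq 0$ because the spectrum $\{|n-\theta|^2:n\in\Z^2\}$ has no uniform gap (see Remark~\ref{rmk:strichartztheta}); the paper obtains uniform $L^4$ Strichartz estimates instead via $\theta$-uniform $L^{4/3}\to L^4$ resolvent bounds (Proposition~\ref{prop:resolventV}), proved through a floued Zygmund inequality for spectral projectors. Finally, your unique-continuation step (``analyticity in time of solutions with fixed finite-dimensional frequency support'') does not apply when $V\in L^\infty$: eigenfunctions of $-\Delta+2i\theta\cdot\nabla+|\theta|^2+V$ are not analytic, and the paper instead uses a quantitative elliptic unique continuation result from measurable sets (citing \cite{Reg01}) to show any eigenfunction vanishing on $\{b>0\}$ is identically zero.
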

\medskip
Contrary to Taüffer's result, Theorem~\ref{thm:obs_R2} is limited to the two-dimensional case. However, it allows to consider more general observable functions $b$ and to deal with rough potentials $V$. Even if our proof borrows some ingredients of the proof given by Taüffer, such as the use of the Floquet-Bloch transform, the remainder of the proof is very different. In a nutshell, ours uses the notion of semiclassical measures, whereas Taüffer's proof follows from an Ingham's type inequality and explicit computations on the spectrum of the operator $-\Delta+2i\theta\cdot\nabla + |\theta|^2$ on $L^2(\mathbb T^d)$, based on previous results of \cite{Jaf90}, \cite{KL05}. In particular, its proof seems not to be easily adaptable to the case of Schrödinger equations with potential.

By the well-known Hilbert Uniqueness Method \cite[Theorem 2.42]{Cor07}, one can deduce from Theorem~\ref{thm:obs_R2} an exact controllability result for the Schrödinger equation
\begin{equation}
	\label{eq:SchrodingerControl}
		\left\{
			\begin{array}{ll}
				i  \partial_t y  = (-\Delta + V(x)) y + h 1_{\omega} & \text{ in }  (0,+\infty) \times \R^d, \\
				y(0, \cdot) = y_0 & \text{ in } \R^d.
			\end{array}
		\right.
\end{equation}
In \eqref{eq:SchrodingerControl}, at time $t \in [0,+\infty)$, $y(t, \cdot) : \R^d \to \C$ is the state and $h(t,\cdot) : \omega \to \C$ is the control.
\medskip
\begin{corollary}
Assume $d =2 $.
For every non-empty $2\pi\mathbb Z^2$-periodic measurable subset $\omega \subset \rr^2$, $T>0$ and compact subset $\mathcal K \subset L^{\infty}(\mathbb T^2)$, there exists a positive constant $C=C(\omega,T, \mathcal K)>0$ such that for every $V \in \mathcal K$ and $y_1 \in L^2(\R^2)$, there exists a control $h \in L^2(0,T;L^2(\omega))$ satisfying
\begin{equation*}
    \label{eq:CostSchrodinger}
    \norme{h}_{L^2(0,T;L^2(\omega))} \leq C \norme{y_1}_{L^2(\R^2)},
\end{equation*}
and such that the solution $y$ of \eqref{eq:SchrodingerControl} with $y_0 = 0$ satisfies
\begin{equation*}
    \label{eq:exactcontrol}
    y(T,\cdot) = y_1.
\end{equation*}
\end{corollary}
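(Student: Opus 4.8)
The plan is to deduce the corollary from Theorem~\ref{thm:obs_R2} via the classical duality between observability and controllability, i.e.\ the Hilbert Uniqueness Method (see \cite[Theorem~2.42]{Cor07} and the functional-analytic criterion recalled there). Fix $\omega$, $T$ and $\mathcal K$ as in the statement and apply Theorem~\ref{thm:obs_R2} with the observation function $b := \un_\omega \in L^1(\T^2)\setminus\{0\}$; this provides a constant $C_0 = C_0(\omega,T,\mathcal K)>0$ such that, for every $V\in\mathcal K$ and every solution $u$ of \eqref{eq:SchrodingerObs}, one has $\norme{u(0,\cdot)}_{L^2(\R^2)}^2 \le C_0 \int_0^T\!\!\int_{\R^2}\un_\omega(z)\,|u(t,z)|^2\,dz\,dt$.

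Fix $V \in \mathcal K$. Since $V \in L^{\infty}$, the operator $A := -i(-\Delta+V)$ with domain $H^2(\R^2)$ generates a $C_0$-group $(e^{tA})_{t\in\R}$ on $L^2(\R^2)$, so \eqref{eq:SchrodingerControl} with $y_0 = 0$ is well posed for every $h \in L^2(0,T;L^2(\omega))$ and defines a bounded linear operator $L_T\colon L^2(0,T;L^2(\omega)) \to L^2(\R^2)$, $L_T h := y(T)$, given by Duhamel's formula $L_T h = -i\int_0^T e^{(T-s)A}\big(\un_\omega\, h(s)\big)\,ds$. The sought exact controllability with a cost estimate $\norme{h} \le C\,\norme{y_1}$ is, by the standard characterization, equivalent to the observability inequality $\norme{L_T^*\varphi}_{L^2(0,T;L^2(\omega))}^2 \ge c\,\norme{\varphi}_{L^2(\R^2)}^2$ for all $\varphi \in L^2(\R^2)$ and some $c>0$, in which case one may take $C = c^{-1/2}$.

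It then remains to identify $L_T^*$ and invoke the observability estimate. Using $A^* = i(-\Delta+\overline{V})$, a direct computation gives $\big(L_T^*\varphi\big)(s) = c_0\,\un_\omega\, e^{(T-s)A^*}\varphi$ for a constant $c_0$ with $|c_0| = 1$. Writing $\psi(s) := e^{(T-s)A^*}\varphi$ and $w(t) := \overline{\psi(T-t)}$, one checks that $w$ solves $i\partial_t w = (-\Delta+V)w$ with $w(0) = \overline{\varphi}$, that is, $w$ is a solution of \eqref{eq:SchrodingerObs} with the \emph{same} potential $V\in\mathcal K$ and initial datum $\overline{\varphi} \in L^2(\R^2)$. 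Moreover, after the change of variables $t = T-s$ and using $|\psi(s,z)| = |w(T-s,z)|$, one gets $\norme{L_T^*\varphi}_{L^2(0,T;L^2(\omega))}^2 = \int_0^T\!\!\int_{\R^2}\un_\omega(z)\,|w(t,z)|^2\,dz\,dt$. Applying Theorem~\ref{thm:obs_R2} to $w$ yields $\norme{\varphi}_{L^2(\R^2)}^2 = \norme{\overline{\varphi}}_{L^2(\R^2)}^2 \le C_0\,\norme{L_T^*\varphi}^2$, which is the required inequality with $c = C_0^{-1}$; hence the corollary holds with $C = C_0^{1/2} = C(\omega,T,\mathcal K)$.

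I do not expect a genuine obstacle here: the argument is entirely standard once Theorem~\ref{thm:obs_R2} is available, and the uniformity of $C$ over $V\in\mathcal K$ is inherited directly from that of $C_0$. The only points requiring a little care are the bookkeeping of the unimodular factor $c_0$ and of the complex conjugation in the computation of $L_T^*$, together with the observation that the adjoint evolution is again a Schrödinger equation of the form \eqref{eq:SchrodingerObs}; after conjugating back to $w$ the potential that appears is exactly $V$, so no enlargement of the compact set $\mathcal K$ is needed.
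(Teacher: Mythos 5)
Your proposal is correct and follows exactly the route the paper takes: the paper deduces the corollary from Theorem~\ref{thm:obs_R2} by invoking the Hilbert Uniqueness Method (citing \cite[Theorem 2.42]{Cor07}) without writing out the details, and your argument is precisely the standard duality computation behind that citation, with the observability constant $C_0$ for $b=\un_\omega$ giving the control cost $C=C_0^{1/2}$ uniformly over $V\in\mathcal K$.
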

\medskip

A key ingredient in the proof of Theorem~\ref{thm:obs_R2} is borrowed from \cite{Tau22} and consists in applying the Floquet-Bloch transform, which is introduced in Section~\ref{section:floquet_bloch}. As detailed in Section~\ref{section:floquet_bloch}, performing the Floquet-Bloch transform reduces the study of \eqref{eq:SchrodingerControl} to a family of Schrödinger equations posed on the torus $\mathbb T^d$:
\begin{equation}
	\label{eq:Schrodinger_torusObs}\tag{$E_{\theta}$}
		\left\{
			\begin{array}{ll}
				i  \partial_t u  = (-\Delta+2i\theta \cdot \nabla + |\theta|^2+ V(x)) u & \text{ in }  (0,+\infty) \times \T^d, \\
				u(0, \cdot) = u_0 & \text{ in } \T^d,
			\end{array}
		\right.
\end{equation}
with $\theta \in [0,1]^d$. The observability of \eqref{eq:Schrodinger_torusObs} with $\theta=0$ has been widely studied over the last two decades. In \cite{AM14}, the observability of \eqref{eq:Schrodinger_torusObs} is shown to hold from any open subset $\omega \subset \mathbb T^d$ in any time $T>0$ when $\theta=0$ and $V$ belongs to a class of potentials slightly larger than the class of continuous potentials. In the two-dimensional setting, the authors of \cite{BBZ13} established that the same result holds true for $V \in L^2(\mathbb T^2)$ but still from open subset. More recently, it has been shown that the regularity assumption on $b$ can also be relaxed. Indeed, the main result of \cite{BZ19} ensures the observability of \eqref{eq:Schrodinger_torusObs} (with $\theta=0$ and $V=0$) as soon as $b \in L^2(\T^2)$. Regarding the equations \eqref{eq:Schrodinger_torusObs} with non-trivial $\theta$, let us mention that an observability result for the one-dimensional case is given by \cite[Proposition~3.1]{BBZ13} for $V \in L^p(\T^1) $ with $p>1$ and $b=\un_{\omega}$ where $\omega\subset \T^1$ is a non-empty open subset.

Our second main result ensures the observability of the Schrödinger equations \eqref{eq:Schrodinger_torusObs}. It is obtained as a by product of the proof of Theorem~\ref{thm:obs_R2}.
\medskip
\begin{theorem}\label{obs_tore_thm}
Let $b \in L^1(\mathbb T^2)\setminus\{0\}$ be a non-negative function, $T>0$ and $\mathcal K\subset L^{\infty}(\T^2)$ be a compact subset. There exists a positive constant $C=C(b,T,\mathcal K) > 0$ such that for every potential $V \in  \mathcal K$, $\theta \in [0,1]^2$ and $v_0 \in L^2(\T^2)$, the solution $v$ of \eqref{eq:Schrodinger_torusObs} satisfies
\begin{equation}
    \label{eq:ObservabilitySchrodingerTore}
    \norme{v_0}_{L^2(\T^2)}^2 \leq C \int_0^T \int_{\mathbb T^2} b(z) |v(t,z)|^2 dz dt.
\end{equation}
\end{theorem}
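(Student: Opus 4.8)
The plan is to prove Theorem~\ref{obs_tore_thm} by a standard semiclassical-measure argument combined with a uniqueness-compactness (Bardos--Lebeau--Rauch) reduction, being careful that all constants are uniform in $\theta \in [0,1]^2$ and $V \in \mathcal K$. First I would reduce the observability inequality \eqref{eq:ObservabilitySchrodingerTore} to a weak observability estimate with a compact remainder:
\begin{equation}
    \label{eq:weakobs}
    \norme{v_0}_{L^2(\T^2)}^2 \leq C \int_0^T \int_{\mathbb T^2} b(z) |v(t,z)|^2 \, dz \, dt + C \norme{v_0}_{H^{-1}(\T^2)}^2 .
\end{equation}
Granting \eqref{eq:weakobs} uniformly in $(\theta,V)$, one removes the remainder term by the usual contradiction argument: if \eqref{eq:ObservabilitySchrodingerTore} failed along a sequence, after normalizing one extracts weakly convergent sequences of initial data, potentials (using compactness of $\mathcal K$ in $L^\infty$) and parameters $\theta_n \to \theta$; the limit would be a nonzero solution of some $(E_\theta)$ vanishing on $(0,T)\times\{b>0\}$, contradicting unique continuation for the Schrödinger equation (here one uses that $\{b>0\}$ has positive measure, hence contains, up to a set of measure zero, enough of the torus to run Hmm, actually unique continuation from a positive-measure set — this is where I should invoke a result of the type in \cite{BBZ13} or the spectral unique continuation on the torus).

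\emph{Proving the weak estimate \eqref{eq:weakobs}.} This is the heart of the matter and where semiclassical defect measures enter. Suppose \eqref{eq:weakobs} is false: there are $V_n \in \mathcal K$, $\theta_n \in [0,1]^2$ and normalized solutions $v_n$ (with $\norme{v_{0,n}}_{L^2}=1$) such that $\int_0^T\int b |v_n|^2 \to 0$ and $\norme{v_{0,n}}_{H^{-1}}\to 0$; the latter forces all the mass to escape to high frequencies, so after rescaling frequencies at a scale $h_n \to 0$ one associates to the sequence a semiclassical defect measure $\mu$ on $(0,T)\times T^*\T^2$ (in the variables dual to $x$), a nonnegative measure carried by the characteristic set $\{|\xi|=1\}$ (the operator $h_n^2(-\Delta + 2i\theta_n\cdot\nabla+|\theta_n|^2)$ has principal symbol $|\xi + \theta_n h_n^{-1}\cdot\,|$... — I need to be careful here: the drift term $2i\theta\cdot\nabla$ is of order one, so at the semiclassical scale $h_n$ it contributes $h_n \theta_n\cdot\xi/h_n$; since $\theta_n$ is bounded this is an order-$h_n^{-1}$ shift, which means one should instead work with the \emph{two-microlocal} or shifted measure, or simply note that $-\Delta+2i\theta\cdot\nabla+|\theta|^2 = e^{-i\theta\cdot x}(-\Delta)e^{i\theta\cdot x}$ is a gauge transform of the free Laplacian). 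Using the gauge identity $(-\Delta+2i\theta\cdot\nabla+|\theta|^2)=e^{-i\theta\cdot x}(-\Delta)e^{i\theta\cdot x}$, the substitution $w_n = e^{i\theta_n\cdot x}v_n$ turns $(E_{\theta_n})$ into $i\partial_t w_n = -\Delta w_n + V_n w_n$ on the torus, and since $|w_n|=|v_n|$ the observation term is unchanged. So it suffices to prove \eqref{eq:weakobs} for the \emph{standard} Schrödinger equation on $\T^2$ with potential in $\mathcal K$, which is essentially the content of \cite{BBZ13,BZ19}; the measure $\mu$ is then invariant under the geodesic flow of the flat torus, its total mass is positive (because the $H^{-1}$ norm of the data goes to zero, no mass is lost), and $\int b\,d\mu = 0$ over $(0,T)$.

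\emph{Concluding from the measure.} On the flat torus $\T^2$ the geodesic flow is completely integrable and, by the theorem of \cite{AM14} / \cite{BBZ13} on propagation and the structure of invariant measures (every invariant measure charging a direction $\xi_0$ is, on each invariant torus, a measure invariant by translation, hence either has full support in $x$ in the irrational-direction case, or decomposes over a family of closed geodesics in the rational case), the condition $\int_0^T\int b\,d\mu = 0$ with $b\in L^1(\T^2)\setminus\{0\}$, $b\ge 0$ forces $\mu = 0$: for rational directions one uses the one-dimensional observability from $L^1$ data (the torus-integral of $b$ over a family of parallel geodesics is positive for a.e.\ offset since $\int_{\T^2} b >0$), for irrational directions one uses unique ergodicity of the translation flow to spread $b$ over all of $\T^2$. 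But $\mu=0$ contradicts the positivity of its total mass. Hence \eqref{eq:weakobs} holds, with a constant uniform in $(\theta,V)\in[0,1]^2\times\mathcal K$ by the same compactness packaging.

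\emph{Main obstacle.} The delicate point is the treatment of the first-order drift term $2i\theta\cdot\nabla$ at the semiclassical scale: it is not lower order relative to $h_n^2\Delta$, so a naive semiclassical measure is not the right object. The gauge transformation $v \mapsto e^{i\theta\cdot x}v$ resolves this cleanly at the level of the \emph{equation} on the torus (it is $2\pi\mathbb Z^2$-periodic as a multiplier, so it preserves $L^2(\T^2)$), reducing everything to the already-understood $\theta=0$ case; but one must then check that all estimates from \cite{BBZ13,BZ19} are uniform over the compact set $\mathcal K$ of potentials — this uniformity is again handled by the contradiction/compactness scheme, extracting a convergent subsequence $V_n \to V$ in $L^\infty(\T^2)$ and passing to the limit in both the equation and the (weakly convergent) defect measure. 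A secondary technical point is the unique continuation step used to kill the compact remainder: one needs unique continuation for $i\partial_t w = -\Delta w + Vw$ on $\T^2$ from the positive-measure set $\{b>0\}$, which follows from the eigenfunction/quasimode unique continuation results underlying \cite{BBZ13} together with the fact that a nonzero solution cannot have a spacetime zero set of positive measure on $(0,T)\times\{b>0\}$.
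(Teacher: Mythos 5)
Your proposal has a genuine gap at its central step: the gauge reduction to $\theta=0$ does not work on the torus. The identity $-\Delta+2i\theta\cdot\nabla+|\theta|^2=e^{\mp i\theta\cdot x}(-\Delta)e^{\pm i\theta\cdot x}$ is correct as an identity of differential operators on $\R^2$, but the multiplier $e^{i\theta\cdot x}$ is \emph{not} $2\pi\Z^2$-periodic for $\theta\in[0,1]^2\setminus\Z^2$ (one has $e^{i\theta\cdot(x+2\pi k)}=e^{2\pi i\theta\cdot k}e^{i\theta\cdot x}$), so $w=e^{i\theta\cdot x}v$ is only quasi-periodic and the substitution takes you out of $L^2(\T^2)$. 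In other words, $\mathcal H_{\theta,V}$ is unitarily equivalent to $-\Delta+V$ with \emph{Bloch} boundary conditions, not periodic ones — this is precisely why the Floquet--Bloch decomposition produces a genuinely $\theta$-dependent family, and why the spectrum is $\{|n-\theta|^2: n\in\Z^2\}$ rather than $\{|n|^2\}$. A quick sanity check: if your reduction were valid, the operator would have integer spectrum and the Ingham/Parseval arguments would go through, whereas Remark~\ref{rmk:strichartztheta} of the paper shows the set $\{|n-\theta|^2\}$ has no uniform gap. As a consequence, you cannot simply quote the $\theta=0$ results of \cite{BBZ13,BZ19}: the uniform Strichartz estimate $\|e^{-it\mathcal H_{\theta,V}}v_0\|_{L^4(\T^2;L^2(0,T))}\lesssim\|v_0\|_{L^2}$ — which is indispensable both for showing that the projected defect measure $\mu_T$ has an $L^2$ density (so that $\int b\,d\mu_T$ makes sense for $b\in L^1$ and vanishes in the limit) and for controlling the error terms in the normal-form reduction to one dimension — must be re-proved for $\theta\neq0$. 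The paper does this via new uniform $L^{4/3}\to L^4$ resolvent estimates for $\mathcal H_{\theta,V}$, themselves resting on a shifted-lattice version of Zygmund's inequality and an arithmetic counting lemma (Propositions~\ref{prop:FlouZygmund}, \ref{prop:resolvent}, \ref{prop:resolventV}). Your proposal contains no substitute for this step.

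A secondary remark: your worry that the drift $2i\theta\cdot\nabla$ is "not lower order" at the semiclassical scale is actually unfounded — $h^2(2i\theta\cdot\nabla)=2h\,\theta\cdot(h\nabla)$ is $O(h)$ on $h$-oscillating data, so the semiclassical measure is still supported on $\{|\xi|=1\}$ and invariant under the flat geodesic flow with no two-microlocal machinery needed; this is how the paper proceeds (Proposition~\ref{semiclassical_measure_prop}, Example~\ref{ex:measure_support}). The remaining architecture of your argument (dyadic frequency localization, weak observability with a compact remainder, ergodicity for irrational directions, reduction to a one-dimensional observability estimate for rational directions, removal of the remainder by compactness--uniqueness using unique continuation from a positive-measure set à la \cite{Reg01} and the Trotter--Kato stability in $(\theta,V)$) does match the paper's strategy, but it is only the skeleton: without the uniform resolvent/Strichartz input, and with the 1D reduction left as a gesture rather than the quantitative normal form of Proposition~\ref{prop:normal_form} (whose error terms are controlled exactly by those Strichartz bounds and by approximating $V$ in $L^4$), the proof does not close.
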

\medskip
Even if our proof closely follows the methodology introduced in \cite{BZ12}, \cite{BBZ13} and \cite{BZ19}, some new difficulties appear. Indeed, a key ingredient is the establishment of uniform Strichartz estimates for the equation \eqref{eq:Schrodinger_torusObs}. Even for fixed $\theta \in [0,1]^2$, as mentioned in Remark \ref{rmk:strichartztheta} below, Strichartz estimates cannot be obtained from the usual Zygmund inequality \cite{Zyg74}. This is why we need to first get uniform resolvent estimates for the operator $-\Delta+2i\theta \cdot \nabla + |\theta|^2+ V(x)$ in the spirit of \cite{BBZ13}. On the other hand, the use of semi-classical defect measures for proving the observability estimates \eqref{eq:ObservabilitySchrodingerTore} has to be performed by keeping track of the dependence of the parameter $\theta$ in all the procedure.
\subsection{Organization of the paper}
Our article is organised as follows: Section~\ref{section:floquet_obs} introduces the Floquet-Bloch transform and aims at showing that uniform observability estimates of \eqref{eq:Schrodinger_torusObs} with respect to $\theta$ leads to an observability estimate for \eqref{eq:SchrodingerObs}. Section~\ref{sec:propsemigroupHthetaV} is devoted to establish useful properties of the group generated by $\mathcal H_{\theta, V}=-\Delta +2i \theta\cdot \nabla+|\theta|^2+V$, such as resolvent estimates and Strichartz estimates. Section \ref{sec:refobsmultid} consists in establishing observability inequalities for $L^2$ initial data from observability inequalities for highly oscillating initial data. In Section~\ref{section:proof_obs_torus}, the proof of Theorem~\ref{obs_tore_thm} is presented. It uses the notion of semiclassical defect measure and follows the strategy developed by the authors of \cite{BZ12}, \cite{BBZ13} and \cite{BZ19}. Few facts about semiclassical analysis and semiclassical defect measures are recalled in the Appendix, in Section~\ref{section:appendix}.

\section{Proof of the observability inequality on $\R^2$}\label{section:floquet_obs}

This section aims at proving that Theorem~\ref{thm:obs_R2} is a consequence of uniform observability estimates for the family of Schrödinger equations \eqref{eq:Schrodinger_torusObs} posed on $\T^d$. The proof relies on the Floquet-Bloch transform which is presented in Section~\ref{section:floquet_bloch}. 

\subsection{The Floquet-Bloch transform}\label{section:floquet_bloch}

In this part, we give a definition and few facts about the Floquet-Bloch transform. This tool is instrumental in the proof of Theorem~\ref{thm:obs_R2}. We follow the presentation of \cite[Section 4]{Kuc93}.

Let us first introduce the definition of the Floquet transform.
\begin{definition}
Let $\mathcal F : L^2(\R^d) \to L^2([0,2\pi]^d \times [0,1]^d)$ the Floquet transform
\begin{equation*}
    \mathcal F u(y , \theta) = \sum_{k \in \Z^d} e^{2i\pi\theta \cdot k} u(y+2 \pi k)\qquad \forall u \in L^2(\R^d),\ \forall (y,\theta) \in [0,2\pi]^d \times [0,1]^d.
\end{equation*}
\end{definition}
The first proposition ensures that the Floquet-Bloch transform is an isometry from $L^2(\rr^d)$ to $L^2([0,2\pi]^d \times [0,1]^d)$.
\begin{proposition}\label{prop:floquet_bloch_isometry}
The map $\mathcal F$ is an isometric isomorphism from $L^2(\rr^d)$ to $L^2([0,2\pi]^d \times [0,1]^d)$.
\end{proposition}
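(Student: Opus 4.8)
The plan is to establish the identity on the dense subspace $C_c(\R^d)\subset L^2(\R^d)$ first, then to extend it by density, and finally to prove surjectivity by exhibiting an explicit inverse. Throughout, the only facts used are the orthonormality and completeness of the characters $\theta\mapsto e^{2i\pi\theta\cdot k}$, $k\in\Z^d$, in $L^2([0,1]^d)$, together with the tiling of $\R^d$ by the cubes $2\pi k+[0,2\pi]^d$.

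First I would take $u\in C_c(\R^d)$ with support in a compact set $K$. For each $y\in[0,2\pi]^d$ one has $u(y+2\pi k)\neq 0$ only for $k$ in a fixed finite subset of $\Z^d$ (depending on $K$ but not on $y$), so $\mathcal Fu(y,\theta)$ is a finite trigonometric sum, the map $(y,\theta)\mapsto\mathcal Fu(y,\theta)$ is continuous, and in particular $\mathcal Fu\in L^2([0,2\pi]^d\times[0,1]^d)$. Expanding $|\mathcal Fu(y,\theta)|^2$, integrating first in $\theta$ and using $\int_{[0,1]^d}e^{2i\pi\theta\cdot(k-l)}\,d\theta=\delta_{k,l}$, I get
\begin{align*}
\norme{\mathcal Fu}_{L^2([0,2\pi]^d\times[0,1]^d)}^2
&=\int_{[0,2\pi]^d}\sum_{k\in\Z^d}|u(y+2\pi k)|^2\,dy\\
&=\sum_{k\in\Z^d}\int_{2\pi k+[0,2\pi]^d}|u(z)|^2\,dz=\norme{u}_{L^2(\R^d)}^2,
\end{align*}
the last equality because the cubes $2\pi k+[0,2\pi]^d$, $k\in\Z^d$, cover $\R^d$ with pairwise intersections of Lebesgue measure zero. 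Since $C_c(\R^d)$ is dense in $L^2(\R^d)$, the operator $\mathcal F$ extends uniquely to a linear isometry $L^2(\R^d)\to L^2([0,2\pi]^d\times[0,1]^d)$.

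It remains to prove that $\mathcal F$ is onto. Given $g\in L^2([0,2\pi]^d\times[0,1]^d)$, Fubini's theorem ensures that $\theta\mapsto g(y,\theta)$ belongs to $L^2([0,1]^d)$ for a.e. $y\in[0,2\pi]^d$, with Fourier coefficients $c_k(y):=\int_{[0,1]^d}g(y,\theta)e^{-2i\pi\theta\cdot k}\,d\theta$ depending measurably on $y$ and satisfying $\int_{[0,2\pi]^d}\sum_{k\in\Z^d}|c_k(y)|^2\,dy=\norme{g}_{L^2}^2$ by Parseval's identity. Defining $u\in L^2(\R^d)$ by $u(y+2\pi k):=c_k(y)$ for $y\in[0,2\pi)^d$ and $k\in\Z^d$, one has $\norme{u}_{L^2(\R^d)}=\norme{g}_{L^2}$; and approximating $u$ in $L^2(\R^d)$ by its restriction $u_N$ to $\bigcup_{|k|\le N}(2\pi k+[0,2\pi)^d)$ — for which $\mathcal Fu_N$ is literally the finite sum $\sum_{|k|\le N}c_k(y)e^{2i\pi\theta\cdot k}$ — and letting $N\to\infty$, one identifies $\mathcal Fu(y,\theta)=\sum_{k\in\Z^d}c_k(y)e^{2i\pi\theta\cdot k}=g(y,\theta)$. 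Hence $\mathcal F$ is surjective, and therefore an isometric isomorphism.

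I do not expect a genuine obstacle here: this is the classical argument for the Zak/Floquet transform, and the only points needing care are routine measure-theoretic bookkeeping — the interchange of sum and integral in the norm computation, which is trivial on $C_c(\R^d)$ since the sum is finite, and the measurability of $y\mapsto c_k(y)$ together with the a.e. identification $\mathcal Fu=g$, both handled by Fubini's theorem. Alternatively, one could simply invoke \cite[Section~4]{Kuc93}.
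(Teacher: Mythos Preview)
Your argument is correct and complete: it is the standard proof of the Zak/Floquet isometry via the orthonormal basis $\{e^{2i\pi\theta\cdot k}\}_{k\in\Z^d}$ of $L^2([0,1]^d)$ together with the tiling of $\R^d$. The paper itself does not give a proof of this proposition at all; it simply states the result and refers the reader to \cite[Section~4]{Kuc93}, which is exactly the alternative you mention at the end.
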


In this paper, we will use a slightly modified Floquet-Bloch transform, denoted by $\tilde{\mathcal F}$, defined by 
\begin{equation*}\label{eq:floquet_bloch_tilde}
    \tilde{\mathcal F}u(y,\theta)= e^{i\theta\cdot y} \mathcal Fu(y,\theta)\qquad  \forall u \in L^2(\rr^d),\ \forall (y, \theta) \in [0,2\pi]^d\times[0,1]^d.
\end{equation*}
It is clear that $\tilde{\mathcal F}$ still defines an isometric isomorphism from $L^2(\rr^d)$ to $L^2([0,2\pi]^d\times [0,1]^d)$. The main advantage of taking this definition comes from the following result:
\begin{proposition}\label{prop:Floquet_laplacian}
Let $k \in \nn$. For all $u \in H^k(\rr^d)$ and for almost all $\theta \in [0,1]^d$, we have $\tilde{\mathcal F}(u)(\cdot, \theta) \in H^k(\mathbb T^d)$. 
Moreover, $$\tilde{\mathcal F}(-\Delta u)(\cdot, \theta)= (-\Delta+2i\theta\cdot\nabla +|\theta|^2)\tilde{\mathcal F}u(\cdot, \theta)\qquad \forall u \in H^2(\mathbb R^d),\ \forall \theta \in [0,1]^d.$$
\end{proposition}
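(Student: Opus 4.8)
The plan is to establish all three assertions first for $u$ in the Schwartz class $\mathcal S(\R^d)$, where the Floquet series and all differentiations under the sum are legitimate, and then to extend to a general $u\in H^k(\R^d)$ by density, using that $\tilde{\mathcal F}$ is an isometry (Proposition~\ref{prop:floquet_bloch_isometry}).

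\textbf{The intertwining identity.} For $u\in\mathcal S(\R^d)$, the series defining $\mathcal Fu(\cdot,\theta)$ and all of its term-by-term derivatives converge locally uniformly, so $\mathcal Fu(\cdot,\theta)\in C^\infty(\R^d)$ and $\mathcal F(\partial_j u)(\cdot,\theta)=\partial_j\bigl(\mathcal Fu(\cdot,\theta)\bigr)$. This is exactly where the modified transform pays off: setting $w:=\tilde{\mathcal F}u(\cdot,\theta)=e^{i\theta\cdot y}\mathcal Fu(\cdot,\theta)$ and conjugating by the gauge factor, I get
\begin{equation*}
    \tilde{\mathcal F}(\partial_j u)(\cdot,\theta)=e^{i\theta\cdot y}\,\partial_j\!\bigl(e^{-i\theta\cdot y}w\bigr)=(\partial_j-i\theta_j)\,\tilde{\mathcal F}u(\cdot,\theta),\qquad 1\le j\le d,
\end{equation*}
whence, iterating and summing over $j$ and expanding $-(\nabla-i\theta)\cdot(\nabla-i\theta)=-\Delta+2i\theta\cdot\nabla+|\theta|^2$,
\begin{equation*}
    \tilde{\mathcal F}(-\Delta u)(\cdot,\theta)=\bigl(-\Delta+2i\theta\cdot\nabla+|\theta|^2\bigr)\tilde{\mathcal F}u(\cdot,\theta).
\end{equation*}
In the same breath I would record that $\tilde{\mathcal F}u(\cdot,\theta)$ is genuinely $2\pi\Z^d$-periodic in $y$: under $y\mapsto y+2\pi\ell$ the function $\mathcal Fu(y,\theta)$ is multiplied by $e^{-2i\pi\theta\cdot\ell}$, which is exactly cancelled by the factor $e^{2i\pi\theta\cdot\ell}$ produced by $e^{i\theta\cdot y}$; hence $\tilde{\mathcal F}u(\cdot,\theta)\in C^\infty(\mathbb T^d)$ for every $\theta\in[0,1]^d$.

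\textbf{The Sobolev regularity.} Still for $u\in\mathcal S(\R^d)$, I would unfold the sum defining $\tilde{\mathcal F}u(\cdot,\theta)$ (the translation in $e^{i\theta\cdot y}$ recombines with the phase $e^{2i\pi\theta\cdot k}$, while $e^{2i\pi n\cdot k}=1$) to find that its $n$-th Fourier coefficient on $\mathbb T^d$ equals $c_d\,\widehat u(n-\theta)$, with $\widehat u$ the Fourier transform on $\R^d$ and $c_d$ a dimensional constant. Since $\theta\in[0,1]^d$ gives $1+|n-\theta|^2\simeq 1+|n|^2$ uniformly in $(n,\theta)\in\Z^d\times[0,1]^d$, Plancherel on $\mathbb T^d$ and on $\R^d$ yields
\begin{equation*}
    \int_{[0,1]^d}\norme{\tilde{\mathcal F}u(\cdot,\theta)}_{H^k(\mathbb T^d)}^2\,d\theta\;\simeq\;\int_{[0,1]^d}\sum_{n\in\Z^d}\bigl(1+|n-\theta|^2\bigr)^k|\widehat u(n-\theta)|^2\,d\theta\;\simeq\;\int_{\R^d}\bigl(1+|\xi|^2\bigr)^k|\widehat u(\xi)|^2\,d\xi\;\simeq\;\norme{u}_{H^k(\R^d)}^2 .
\end{equation*}
So $u\mapsto\tilde{\mathcal F}u$ is, up to constants, an isometry from $H^k(\R^d)$ into $L^2\bigl([0,1]^d;H^k(\mathbb T^d)\bigr)$; since $\mathcal S(\R^d)$ is dense in $H^k(\R^d)$ and $\tilde{\mathcal F}$ is already continuous on $L^2(\R^d)$, this extends to all $u\in H^k(\R^d)$, and finiteness of the left-hand side forces $\tilde{\mathcal F}u(\cdot,\theta)\in H^k(\mathbb T^d)$ for a.e.\ $\theta\in[0,1]^d$.

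\textbf{Passage to general $u$ and the main difficulty.} For $u\in H^2(\R^d)$ I would take $u_m\in\mathcal S(\R^d)$ with $u_m\to u$ in $H^2(\R^d)$; then $-\Delta u_m\to-\Delta u$ in $L^2(\R^d)$ and $u_m\to u$ in $H^2(\R^d)$, so applying $\tilde{\mathcal F}$ and using the previous step both sides of the intertwining identity written for $u_m$ converge in $L^2\bigl([0,2\pi]^d\times[0,1]^d\bigr)$ to the corresponding sides for $u$; passing to the limit gives the identity for $u$ at a.e.\ $\theta\in[0,1]^d$. There is no real obstacle in any of this: the content is the elementary conjugation $e^{i\theta\cdot y}\,\partial_j\,\bigl(e^{-i\theta\cdot y}\,\cdot\,\bigr)=\partial_j-i\theta_j$ together with the reindexing $\widehat u\mapsto\bigl(\widehat u(n-\theta)\bigr)_{n\in\Z^d}$; the only points needing (routine) care are the justification of term-by-term differentiation of the Floquet series — immediate on $\mathcal S(\R^d)$ — and arranging the density argument so that all the pointwise-in-$\theta$ statements hold on one common full-measure subset of $[0,1]^d$.
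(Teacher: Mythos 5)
Your proof is correct. The paper does not actually supply a proof of this proposition (it is stated as standard, following the presentation of the Floquet--Bloch transform in \cite{Kuc93}), and your argument --- the gauge conjugation $e^{i\theta\cdot y}\,\partial_j\,(e^{-i\theta\cdot y}\,\cdot\,)=\partial_j-i\theta_j$ for the intertwining identity and the periodicity, the identification of the $n$-th Fourier coefficient of $\tilde{\mathcal F}u(\cdot,\theta)$ with $c_d\,\widehat u(n-\theta)$ together with the tiling $\R^d=\bigcup_{n\in\Z^d}(n-[0,1]^d)$ for the $H^k$ regularity, and a density argument for the passage from $\mathcal S(\R^d)$ to $H^k(\R^d)$ --- is exactly the standard one this result rests on, with the routine points (term-by-term differentiation on $\mathcal S$, working on a common full-measure set of $\theta$) correctly flagged.
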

In particular, it appears from Proposition~\ref{prop:Floquet_laplacian} that $\tilde{\mathcal F}u$ enjoys some periodicity property in the $y$-variable, contrary to $\mathcal Fu$.
An other elementary result is the following proposition which ensures that the Floquet-Bloch transform commutes with any periodic function:
\begin{proposition}\label{prop:commutationperiodic}
Let $V \in L^{\infty}(\rr^d)$ be a $2\pi\mathbb Z^d$-periodic function. Then,  $$ \tilde{\mathcal F}(Vu)=V \tilde{\mathcal F}u\qquad \forall u \in L^2(\rr^d).$$
\end{proposition}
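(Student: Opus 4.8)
The plan is to reduce to a dense subspace of $L^2(\rr^d)$ on which the series defining $\mathcal F$ is literally convergent, verify the identity there by a direct computation using the periodicity of $V$, and then extend to general $u\in L^2(\rr^d)$ by density and continuity.

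First I would fix $u\in C_c^\infty(\rr^d)$; this is a dense subspace of $L^2(\rr^d)$, and for such $u$ the sum $\sum_{k\in\Z^d}e^{2i\pi\theta\cdot k}u(y+2\pi k)$ is locally finite in $y$, hence pointwise well defined. Since $Vu$ is bounded and compactly supported, the analogous sum for $Vu$ is also locally finite, and one computes directly
\begin{equation*}
    \mathcal F(Vu)(y,\theta)=\sum_{k\in\Z^d}e^{2i\pi\theta\cdot k}V(y+2\pi k)u(y+2\pi k)=V(y)\sum_{k\in\Z^d}e^{2i\pi\theta\cdot k}u(y+2\pi k)=V(y)\,\mathcal Fu(y,\theta),
\end{equation*}
where the second equality uses the $2\pi\Z^d$-periodicity $V(y+2\pi k)=V(y)$ for all $k\in\Z^d$. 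Multiplying by $e^{i\theta\cdot y}$ and recalling $\tilde{\mathcal F}u=e^{i\theta\cdot y}\mathcal Fu$, this yields $\tilde{\mathcal F}(Vu)=V\,\tilde{\mathcal F}u$ for all $u$ in this dense subspace.

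To conclude, I would argue by continuity. By Proposition~\ref{prop:floquet_bloch_isometry} (and the fact, noted just before it, that $\tilde{\mathcal F}$ is again an isometric isomorphism), the map $\tilde{\mathcal F}\colon L^2(\rr^d)\to L^2([0,2\pi]^d\times[0,1]^d)$ is bounded. Multiplication by $V$ is bounded on $L^2(\rr^d)$ with norm at most $\norme{V}_{L^\infty(\rr^d)}$, and multiplication by the function $(y,\theta)\mapsto V(y)$ is bounded on $L^2([0,2\pi]^d\times[0,1]^d)$ with the same bound. Hence $u\mapsto\tilde{\mathcal F}(Vu)$ and $u\mapsto V\tilde{\mathcal F}u$ are two bounded operators from $L^2(\rr^d)$ to $L^2([0,2\pi]^d\times[0,1]^d)$ that agree on a dense subspace, so they coincide on all of $L^2(\rr^d)$.

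The computation itself poses no real difficulty; the only point needing a little care is the density argument, namely handling the series defining $\mathcal F$ on a subspace where it is genuinely convergent and transferring the identity by continuity, rather than manipulating the series directly for an arbitrary $L^2$ function.
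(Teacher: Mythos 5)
Your proof is correct. The paper itself offers no proof of this proposition (it is stated as an elementary fact), so there is nothing to compare against; your argument — direct computation on $C_c^\infty(\rr^d)$, where the defining series is locally finite, followed by extension by density using that both $u\mapsto\tilde{\mathcal F}(Vu)$ and $u\mapsto V\tilde{\mathcal F}u$ are bounded on $L^2$ — is exactly the standard way to fill in the details. The only point worth a passing remark is that $V$ is an $L^\infty$ class, so the identity $V(y+2\pi k)=V(y)$ holds only for a.e.\ $y$, simultaneously for all $k\in\Z^d$ since this is a countable family of null sets; this is all that is needed for an equality in $L^2([0,2\pi]^d\times[0,1]^d)$, and your pointwise manipulation is legitimate.
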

Combining Proposition~\ref{prop:Floquet_laplacian} and \ref{prop:commutationperiodic}
leads to, for all $V \in L^{\infty}(\T^d)$, $u \in H^2(\rr^d)$ and for almost all $\theta \in [0,1]^d$,
$$\tilde{\mathcal F}(-\Delta u+Vu)(\cdot, \theta)= (-\Delta+2i\theta\cdot\nabla +|\theta|^2+V)\tilde{\mathcal F}u(\cdot, \theta).$$
Consequently, for $V \in L^{\infty}(\T^d)$, we obtain that for all $u \in L^2(\rr^d)$ and for almost all $\theta \in [0,1]^d$,
\begin{equation}\label{eq:group_floquet_bloch}
\tilde{\mathcal F}(e^{it(-\Delta+V)}u)(\cdot, \theta)= e^{it(-\Delta+2i\theta\cdot\nabla +|\theta|^2+V)}\tilde{\mathcal F}u(\cdot, \theta)\qquad \forall t \in \R,
\end{equation}
where $\left(e^{it(-\Delta+2i\theta\cdot\nabla +|\theta|^2+V)}\right)_{t \in \rr}$ is the one-parameter group generated by the self-adjoint operator $$-\Delta+2i\theta\cdot\nabla +|\theta|^2+V : H^2(\T^d) \longrightarrow L^2(\T^d).$$

\subsection{From the uniform observability inequality on $\T^2$ to the observability inequality on $\R^2$}


This part is devoted to establish that Theorem~\ref{thm:obs_R2} can be deduced from Theorem~\ref{obs_tore_thm}.

In the rest of the paper, we will use the notation
\begin{equation*}
    \label{eq:definitionopHtheta}
    \mathcal{H}_{\theta,V} v := (- \Delta + 2 i \theta \cdot \nabla + |\theta|^2 + V) v\qquad \forall v \in H^2(\T^d),
\end{equation*}
for $V \in L^{\infty}(\T^d)$ and $\theta \in [0,1]^d$. The following proposition shows that uniform observability inequalities for Schrödinger equations \eqref{eq:Schrodinger_torusObs} directly imply  observability inequalities for Schrödinger equations posed on the Euclidean space.

\begin{proposition}\label{prop:from_torus_to_Rd}
Let $V \in L^{\infty}(\T^d)$, $b \in L^1(\T^d)$, $T>0$ and $C>0$. If for all $\theta\in [0,1]^d$ and $v_0 \in L^2(\T^d)$, 
$$\|v_0\|^2_{L^2(\T^d)} \leq C \int_0^T \int_{\T^d} b(z) \left|e^{-it \mathcal H_{\theta, V}}v_0(z)\right|^2 dz\, dt,$$
then for all $u_0 \in L^2(\rr^d)$,
$$\|u_0\|^2_{L^2(\rr^d)} \leq C \int_0^T \int_{\rr^d} b(z) \left|e^{-it(-\Delta+V)}u_0(z)\right|^2 dz \, dt.$$
\end{proposition}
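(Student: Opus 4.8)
The plan is to push the torus observability inequality forward to $\R^d$ through the modified Floquet--Bloch transform $\tilde{\mathcal F}$, using three ingredients already at hand: that $\tilde{\mathcal F}$ is an isometric isomorphism onto $L^2([0,2\pi]^d\times[0,1]^d)$ (Proposition~\ref{prop:floquet_bloch_isometry} and the subsequent remark), the intertwining relation \eqref{eq:group_floquet_bloch}, and the $2\pi\Z^d$-periodicity of $b$.

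First I would fix $u_0\in L^2(\R^d)$, set $u(t)=e^{-it(-\Delta+V)}u_0$, and apply \eqref{eq:group_floquet_bloch} (valid for every $t\in\R$, hence in particular with $-t$) to get, for a.e.\ $\theta\in[0,1]^d$,
\[
\tilde{\mathcal F}u(t)(\cdot,\theta)=e^{-it\mathcal H_{\theta,V}}\,\tilde{\mathcal F}u_0(\cdot,\theta),
\]
where for a.e.\ $\theta$ the slice $\tilde{\mathcal F}u_0(\cdot,\theta)$ lies in $L^2(\T^d)$ because $\tilde{\mathcal F}u_0\in L^2([0,2\pi]^d\times[0,1]^d)$. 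Next I would apply the hypothesis with $v_0=\tilde{\mathcal F}u_0(\cdot,\theta)$ for each such $\theta$ and integrate the resulting inequality over $\theta\in[0,1]^d$. By the isometry property the left-hand side becomes $\|u_0\|_{L^2(\R^d)}^2$, and by Tonelli's theorem (all integrands being nonnegative) the right-hand side equals
\[
C\int_0^T\int_{\T^d} b(z)\left(\int_{[0,1]^d}\bigl|\tilde{\mathcal F}u(t)(z,\theta)\bigr|^2\,d\theta\right)dz\,dt .
\]

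It then remains to identify the inner $\theta$-average. Since $|\tilde{\mathcal F}u(y,\theta)|=|\mathcal F u(y,\theta)|=\bigl|\sum_{k\in\Z^d}e^{2i\pi\theta\cdot k}u(y+2\pi k)\bigr|$, expanding the square and using the orthonormality of the characters $\theta\mapsto e^{2i\pi\theta\cdot k}$ on $[0,1]^d$ gives $\int_{[0,1]^d}|\tilde{\mathcal F}u(y,\theta)|^2\,d\theta=\sum_{k\in\Z^d}|u(y+2\pi k)|^2$ for a.e.\ $y$. Substituting this, replacing $\int_{\T^d}$ by $\int_{[0,2\pi]^d}$, and using the periodicity of $b$ together with the change of variables $z=y+2\pi k$ in each summand turns the bracketed integral into $\int_{\R^d} b(z)|u(t,z)|^2\,dz$, which is exactly the desired inequality.

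The only point requiring care — more bookkeeping than genuine difficulty — is the legitimacy of interchanging the sum over $k$, the integration in $\theta$, and the integration in $(t,z)$, together with the case where the right-hand side is infinite (then the conclusion is vacuous): carrying out the whole computation in $[0,+\infty]$ with nonnegative integrands and invoking Tonelli's theorem handles both, and no a priori integrability of $b\,|u|^2$ on $(0,T)\times\R^d$ is needed.
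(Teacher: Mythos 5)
Your proof is correct and follows essentially the same route as the paper: apply the torus inequality to the Floquet--Bloch slices $\tilde{\mathcal F}u_0(\cdot,\theta)$, use the intertwining relation \eqref{eq:group_floquet_bloch}, integrate in $\theta$, and identify the two sides via the isometry. The only (harmless) difference is in the identification of the right-hand side: you unfold an explicit Parseval computation in $\theta$ together with the periodicity of $b$ and Tonelli, whereas the paper invokes the isometry combined with Proposition~\ref{prop:commutationperiodic}; your nonnegative-integrand bookkeeping is if anything slightly more careful for $b$ merely in $L^1(\T^d)$.
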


In the Section~\ref{section:proof_obs_torus}, we give the proof of Theorem \ref{obs_tore_thm} which provides uniform observability estimates with respect to $\theta \in [0,1]^d$ for the Schrödinger equations \eqref{eq:Schrodinger_torusObs}, in the two-dimensional case. Thanks to Proposition~\ref{prop:from_torus_to_Rd}, Theorem~\ref{thm:obs_R2} appears as a consequence of Theorem~\ref{obs_tore_thm}.

\begin{proof}[Proof of Proposition~\ref{prop:from_torus_to_Rd}]
Let $u_0 \in L^2(\R^d)$ and $u$ be the solution of \eqref{eq:SchrodingerObs} associated with $u_0$. Proposition \ref{prop:Floquet_laplacian} and \eqref{eq:group_floquet_bloch} are instrumental in this proof.

First, thanks to the isometry property, the left hand side of \eqref{eq:SchrodingerObs} becomes
\begin{equation}
    \label{eq:lefthandsideObsRd}
    \norme{u_0}_{L^2(\R^d)}^2 = \norme{\tilde{\mathcal{F}}u_0}_{L^2([0,1]^d \times \T^d)}^2 =  \int_{[0,1]^d} \int_{\T^d} | \tilde{\mathcal{F}}u_0 (y, \theta)|^2 dy\, d\theta
\end{equation}

Secondly, for the right hand side of \eqref{eq:SchrodingerObs}, we have, thanks to Proposition~\ref{prop:floquet_bloch_isometry} and \ref{prop:commutationperiodic},
\begin{align}
\int_{0}^T \int_{\mathbb R^d} b(z)|u(t,z)|^2 dz\, dt & =\int_{0}^T \langle \tilde{\mathcal{F}} (e^{-i t (-\Delta+ V(x))} u_0), \tilde{\mathcal{F}} (b \cdot e^{-i t (-\Delta+ V(x))} u_0 )\rangle_{L^2([0,1]^d \times \T^d)} dt\notag\\
     & =  \int_{0}^T \int_{[0,1]^d}\int_{\T^d} b(z) |\tilde{\mathcal{F}} ( e^{-  i t (-\Delta + V)} u_0)(z, \theta)|^2 d\theta \, dz\, dt. \notag
\end{align}     
It therefore follows from \eqref{eq:group_floquet_bloch} and the previous lines that
\begin{equation}
   \int_{0}^T \int_{\mathbb R^d} b(z)|u(t,z)|^2 dz\, dt  = \int_{[0,1]^d} \left(\int_{0}^T \int_{\T^d} b(z) |e^{-  i t \mathcal H_{\theta, V}} \tilde{\mathcal{F}}u_0(z, \theta)|^2 dz \, dt\right) d\theta \label{eq:righthandsideObsRd}
\end{equation}
On the other hand, by assumptions, we have for almost all $\theta \in [0,1]^d$,
\begin{equation}\label{eq:ObsUniformApplication}
\left\|\tilde{\mathcal F}u_0(\cdot, \theta)\right\|^2_{L^2(\T^d)} \leq C \int_0^T \int_{\T^d} b(z) \left|e^{-it\mathcal H_{\theta, V}}\tilde{\mathcal F}u_0(z, \theta)\right|^2 dz \, dt.
\end{equation}
By gathering \eqref{eq:lefthandsideObsRd}, \eqref{eq:righthandsideObsRd} and \eqref{eq:ObsUniformApplication}, we finally obtain the expected observability inequality \eqref{eq:ObservabilitySchrodingerRd}.
\end{proof}

\section{Properties of the group generated by $\mathcal{H}_{\theta,V}$}
\label{sec:propsemigroupHthetaV}

The goal of this section is to first derive resolvent estimates for $\mathcal{H}_{\theta,V}$, then deduce a priori estimates and stability results for solutions to the associated semi-group $(e^{-it \mathcal{H}_{\theta,V}})_{t \geq 0}$. In both parts, we separate the available results in the multi-dimensional case to the specific two-dimensional results.

\subsection{Resolvent estimates for $\mathcal{H}_{\theta,V}$}

In this part, we first establish standard $L^2$ resolvent estimates for $\mathcal{H}_{\theta,V}$ in $\T^d$. In the second subsection, we prove $L^4$ resolvent estimates for $\mathcal{H}_{\theta,V}$ in $\T^2$. This result is specific to the two-dimensional case and is instrumental in the proof of the Strichartz estimates provided by Proposition~\ref{prop:StrichartzEstimate}. 

\subsubsection{$L^2$ resolvent estimates in $\T^d$}

The first result concerns spectral properties of the operator $\mathcal{H}_{\theta,V}$. As it is standard, the proof is omitted.
\begin{proposition}
For every $\theta \in [0,1]^d$ and $V \in L^{\infty}(\T^d)$, $\mathcal{H}_{\theta,V}$ is a self-adjoint, with compact resolvent, operator on $L^2(\T^d)$. Let $(\psi_{k,\theta,V})_{k \in \N}$ be the orthonormal basis of eigenfunctions and $(\lambda_{k,\theta,V})$ be the associated eigenvalues. For every $M>0$ and $s \in \R$, there exist two positive constants $C,C{'}>0$ such that for every $V \in L^{\infty}(\T^d)$ with $\norme{V}_{L^{\infty}(\T^d)} \leq M$ and $\theta \in [0,1]^d$, we have
\begin{equation}
\label{eq:uhshthetaV}
C \norme{u}_{H^s(\T^d)}^2 \leq \sum_{k=0}^{+\infty} (1+|\lambda_{k,\theta,V}|^2)^{s/2} |u_k|^2 \leq C{'} \norme{u}_{H^s(\T^d)}^2,\ \quad \forall u = \sum_{k=0}^{+\infty} u_k \psi_{k,\theta,V} \in H^s(\T^d).
\end{equation}
\end{proposition}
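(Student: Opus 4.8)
The plan is to establish the norm equivalence \eqref{eq:uhshthetaV} by a perturbation argument, treating $V$ as a bounded perturbation of the free operator $\mathcal{H}_{\theta,0}=-\Delta+2i\theta\cdot\nabla+|\theta|^2$, whose spectral decomposition is explicit in the Fourier basis. First I would recall that for $\theta\in[0,1]^d$, the operator $\mathcal{H}_{\theta,0}$ diagonalizes in the basis $(e^{in\cdot y})_{n\in\Z^d}$ with eigenvalue $|n+\theta|^2\geq 0$; in particular $\mathcal{H}_{\theta,0}$ is nonnegative and $\|(\mathrm{Id}+\mathcal{H}_{\theta,0})^{s/2}u\|_{L^2}$ is comparable, uniformly in $\theta\in[0,1]^d$, to $\|u\|_{H^s(\T^d)}$, since $1+|n+\theta|^2$ is comparable to $1+|n|^2$ with constants independent of $\theta$ (using $|\theta|\leq\sqrt d$). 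This handles the case $V=0$.

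Next, for general $V\in L^\infty(\T^d)$ with $\|V\|_{L^\infty}\leq M$, the key point is to control $|\lambda_{k,\theta,V}|$ in terms of the "unperturbed" quantity. Since $\mathcal{H}_{\theta,V}=\mathcal{H}_{\theta,0}+V$ with $V$ self-adjoint and bounded by $M$, the min-max principle gives $|\lambda_{k,\theta,V}-\mu_k|\leq M$ where $\mu_k$ is the $k$-th eigenvalue of $\mathcal{H}_{\theta,0}$, hence $1+|\lambda_{k,\theta,V}|$ is comparable to $1+\mu_k$ with constants depending only on $M$. The delicate part is that the eigenfunctions $\psi_{k,\theta,V}$ are not the Fourier modes, so one cannot directly read off Sobolev norms from the coefficients $u_k$. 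For integer $s=2m$ this can be done by noting that $\|u\|_{H^{2m}}^2$ is comparable to $\|(\mathrm{Id}+\mathcal{H}_{\theta,0})^m u\|_{L^2}^2$ and then using that $(\mathrm{Id}+\mathcal{H}_{\theta,0})^m$ and $(\mathrm{Id}+\mathcal{H}_{\theta,V})^m$ differ by a lower-order operator; more cleanly, one shows by elliptic regularity (bootstrapping $-\Delta u = \lambda u - 2i\theta\cdot\nabla u - (|\theta|^2+V)u$ for eigenfunctions, then extending by density and functional calculus) that $\|u\|_{H^s}^2$ is equivalent, uniformly over $\|V\|_{L^\infty}\leq M$ and $\theta\in[0,1]^d$, to $\|(\mathrm{Id}+\mathcal{H}_{\theta,V}^2)^{s/4}u\|_{L^2}^2=\sum_k(1+|\lambda_{k,\theta,V}|^2)^{s/2}|u_k|^2$. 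For non-integer or negative $s$ one concludes by interpolation and duality from the integer, nonnegative cases, the uniformity of constants being preserved since the interpolation/duality constants depend only on the endpoint constants.

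The main obstacle I expect is making the passage from the explicit free case to the perturbed case \emph{uniform in both $\theta$ and $V$} for general real $s$: the eigenfunctions $\psi_{k,\theta,V}$ carry all the $(\theta,V)$-dependence, so one must argue at the level of operators rather than coefficients. The cleanest route is probably to prove that $(\mathrm{Id}+\mathcal{H}_{\theta,V}^2)^{s/4}(\mathrm{Id}-\Delta)^{-s/2}$ and its inverse are bounded on $L^2(\T^d)$ with bounds depending only on $M$, $s$ and the dimension; this in turn reduces, via complex interpolation of the analytic family $z\mapsto(\mathrm{Id}+\mathcal{H}_{\theta,V}^2)^{z}(\mathrm{Id}-\Delta)^{-2z}$, to the case of integer powers where it is a direct computation using boundedness of $\theta\cdot\nabla(\mathrm{Id}-\Delta)^{-1/2}$ and multiplication by $V$. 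Since, as the authors note, the proof is standard, I would in practice only sketch the free case and the min-max comparison of eigenvalues, and then invoke the above operator-norm comparison as routine, emphasizing only that all constants can be taken uniform over the ball $\{\|V\|_{L^\infty}\leq M\}$ and over $\theta\in[0,1]^d$.
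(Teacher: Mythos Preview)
The paper does not prove this proposition: immediately after stating it the authors write ``As it is standard, the proof is omitted.'' Your proposal is a correct and reasonable way to supply the omitted details, and matches in spirit what the authors presumably have in mind (explicit Fourier diagonalization for $V=0$, then a relative-boundedness/perturbation argument for general $V$, then interpolation and duality for arbitrary $s$, all with constants uniform in $\theta$ and in $\|V\|_{L^\infty}\le M$). One harmless slip: on $e^{in\cdot y}$ the eigenvalue of $\mathcal{H}_{\theta,0}$ is $|n-\theta|^2$, not $|n+\theta|^2$ (cf.\ the paper's Zygmund inequality \eqref{eq:zygmund}); this does not affect your argument since you only use comparability of $1+|n\pm\theta|^2$ with $1+|n|^2$. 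Also note that the min-max comparison of eigenvalues you mention is not actually needed for the norm equivalence; the operator-level comparison $(\mathrm{Id}+\mathcal{H}_{\theta,V}^2)^{s/4}(\mathrm{Id}-\Delta)^{-s/2}$ bounded with bounded inverse already gives \eqref{eq:uhshthetaV} directly.
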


The next result is about stability properties with respect to parameters of the resolvent of $\mathcal{H}_{\theta,V}$.
\begin{proposition}
\label{prop:convresolvent}
Assume that $\theta_n \to \theta$ in $[0,1]^d$ and $V_n \rightharpoonup^\star V$ in $L^{\infty}(\T^d)$ as $n \to +\infty$, then for every $\lambda \in \C$ such that $\Re(\lambda) > 0$, the following convergence holds
\begin{equation*}
    \label{eq:convergenceresolventwithouti}
   \norme{( \lambda I + i \mathcal{H}_{\theta_n,V_n})^{-1} f - (\lambda I + i\mathcal{H}_{\theta,V})^{-1} f}_{L^2(\T^2)} \underset{n \to +\infty}{\to} 0,\qquad \forall f \in L^2(\T^d).
\end{equation*}
\end{proposition}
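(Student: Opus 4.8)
The plan is to prove convergence of the resolvents by combining a uniform bound, a uniform Gårding-type (or coercivity) estimate, and compactness. I would first observe that $i\mathcal{H}_{\theta,V}$ generates a unitary group (modulo the bounded, possibly $\theta$- and $V$-independent-in-norm operator $iV$), so for $\Re(\lambda)>0$ the resolvent $(\lambda I + i\mathcal{H}_{\theta,V})^{-1}$ is well-defined and bounded on $L^2(\T^d)$ with a norm controlled by $1/\Re(\lambda)$ after absorbing $V$; more precisely, writing $\mathcal{H}_{\theta,V} = \mathcal{H}_{\theta,0} + V$ with $\mathcal{H}_{\theta,0}$ self-adjoint and $\|V\|_{L^\infty} \le M$, the operator $\lambda I + i\mathcal{H}_{\theta,V}$ is invertible for $\Re(\lambda) > 0$ with $\|(\lambda I + i\mathcal{H}_{\theta,V})^{-1}\|_{\mathcal L(L^2)} \le C(M,\Re(\lambda))$ uniformly in $\theta$ and in $V$ with $\|V\|_{L^\infty}\le M$. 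This uniform bound is the workhorse for the whole argument.

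Next I would set $u_n := (\lambda I + i\mathcal{H}_{\theta_n,V_n})^{-1} f$ and $u := (\lambda I + i\mathcal{H}_{\theta,V})^{-1} f$, so that $(\lambda I + i\mathcal{H}_{\theta_n,V_n}) u_n = f = (\lambda I + i\mathcal{H}_{\theta,V}) u$. Subtracting and rearranging gives
\begin{equation*}
(\lambda I + i\mathcal{H}_{\theta_n,V_n})(u_n - u) = -i(\mathcal{H}_{\theta_n,V_n} - \mathcal{H}_{\theta,V}) u = -i\big( 2i(\theta_n-\theta)\cdot\nabla + (|\theta_n|^2 - |\theta|^2) + (V_n - V) \big) u,
\end{equation*}
whence, using the uniform resolvent bound,
\begin{equation*}
\|u_n - u\|_{L^2} \le C(M,\Re(\lambda)) \big\| \big(2i(\theta_n-\theta)\cdot\nabla + (|\theta_n|^2 - |\theta|^2) + (V_n - V)\big) u \big\|_{H^{-?}}\ \text{(in a suitable sense)}.
\end{equation*}
The first two terms on the right are genuinely small: $\nabla u \in L^2$ since $u \in H^2(\T^d)$ (elliptic regularity, using that $f \in L^2$ and $\mathcal{H}_{\theta,V}$ has domain $H^2$), and $|\theta_n - \theta| \to 0$, $||\theta_n|^2 - |\theta|^2| \to 0$. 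The subtle term is $(V_n - V)u$: weak-$\star$ convergence $V_n \rightharpoonup^\star V$ does not give $\|(V_n-V)u\|_{L^2}\to 0$ in general.

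To handle the potential term, I would exploit compactness: $u \in H^2(\T^d)$ is fixed, so the sequence $(V_n u)$ is bounded in $L^2$; moreover, for each fixed smooth test function $\varphi$, $\langle V_n u, \varphi\rangle = \langle V_n, \bar u \varphi\rangle \to \langle V, \bar u\varphi\rangle = \langle Vu,\varphi\rangle$ since $\bar u\varphi \in L^1(\T^d)$, so $V_n u \rightharpoonup Vu$ in $L^2$. To upgrade weak to strong convergence where it matters, I would instead pair the identity with $u_n - u$ itself: writing the weak formulation $\langle (\lambda I + i\mathcal{H}_{\theta_n,V_n})(u_n-u), u_n - u\rangle = -i\langle (\mathcal{H}_{\theta_n,V_n}-\mathcal{H}_{\theta,V})u, u_n - u\rangle$, taking real parts, and using $\Re\langle i\mathcal{H}_{\theta_n,V_n}(u_n-u), u_n-u\rangle = -\Im\langle V_n(u_n-u),u_n-u\rangle \ge -M\|u_n-u\|_{L^2}^2$, one gets (after moving the $M$ term, assuming $\Re\lambda > M$, or iterating a Neumann-series argument to reach all $\Re\lambda>0$)
\begin{equation*}
(\Re\lambda - M)\|u_n-u\|_{L^2}^2 \le \big| \langle (\mathcal{H}_{\theta_n,V_n}-\mathcal{H}_{\theta,V})u, u_n - u\rangle \big|.
\end{equation*}
Now the right side equals $|\langle (2i(\theta_n-\theta)\cdot\nabla + (|\theta_n|^2-|\theta|^2))u, u_n-u\rangle + \langle (V_n-V)u, u_n-u\rangle|$; the first inner product is bounded by $C|\theta_n-\theta|\,\|u\|_{H^1}\|u_n-u\|_{L^2}\to 0$ relative to $\|u_n-u\|_{L^2}$, and for the second, since $(V_n-V)u \rightharpoonup 0$ in $L^2$ and $(u_n-u)$ is bounded in $L^2$, I would need strong convergence of one factor — here I would instead use that $(u_n - u)$ is in fact bounded in $H^2$ (by the uniform bound applied in $H^2$, which follows from \eqref{eq:uhshthetaV} and elliptic regularity), hence relatively compact in $L^2$, so along any subsequence $u_n - u \to w$ strongly in $L^2$, and then $\langle (V_n-V)u, u_n-u\rangle \to \langle 0, w\rangle = 0$; combined with the estimate this forces $\|w\|_{L^2}=0$, and a standard subsequence argument closes the proof (to push from $\Re\lambda>M$ to all $\Re\lambda>0$, use the resolvent identity to express $(\lambda I + i\mathcal{H})^{-1}$ in terms of $(\mu I + i\mathcal{H})^{-1}$ for $\mu$ with large real part). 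The main obstacle is precisely this potential term: one must not attempt to make $\|(V_n-V)u\|_{L^2}$ small directly, but rather test against the (compact-in-$L^2$) difference $u_n-u$ so that only a weak pairing against $V_n - V$ is needed.
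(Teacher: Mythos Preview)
Your argument is correct but organized differently from the paper's. The paper proceeds more directly: it multiplies the equation $(\lambda I + i\mathcal{H}_{\theta_n,V_n})u_n = f$ by $\overline{u_n}$, takes real and imaginary parts to obtain a uniform $H^1$ bound on $(u_n)$, then invokes Rellich compactness to extract a subsequence $u_n \to v$ strongly in $L^2$ and weakly in $H^1$, and finally passes to the limit in the weak formulation against test functions $\varphi\in H^1(\T^d)$ to identify $v=(\lambda I + i\mathcal{H}_{\theta,V})^{-1}f$; uniqueness of the accumulation point yields full convergence. You instead work with the difference $u_n-u$ from the start, pair the difference equation against $u_n-u$, and use Rellich compactness on $u_n-u$ to handle the pairing $\langle (V_n-V)u,\,u_n-u\rangle$. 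Both routes rest on the same two pillars (energy estimate and compact embedding), but the paper's version avoids the subtraction and the subsequence-of-subsequence bookkeeping.

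Two minor remarks on your write-up. First, since the potentials here are real-valued, $\mathcal{H}_{\theta_n,V_n}$ is self-adjoint, so $\Re\langle i\mathcal{H}_{\theta_n,V_n}(u_n-u),\,u_n-u\rangle = 0$ exactly; there is no loss of $M\|u_n-u\|_{L^2}^2$, and your coercivity estimate holds directly for every $\Re\lambda>0$ without any resolvent-identity extension. Second, for the compactness step you only need $(u_n)$ bounded in $H^1$ (as in the paper), not $H^2$; either bound is easy to obtain, but $H^1$ already suffices for Rellich.
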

\begin{proof}
The proof combines an energy estimate coupled with the Rellich theorem. We start from 
\begin{equation}
    \label{eq:resolventtrotterKaton}
    (-i \lambda I -\Delta + 2 i \theta_n \cdot \nabla + |\theta_n|^2 + V_n)u_n=-if\ \text{in}\ \T^d.
\end{equation}

After multiplying \eqref{eq:resolventtrotterKaton} by $\overline{u}_n$, integrating on $\T^d$ and performing an integration by parts, we obtain
\begin{equation}\label{eq:ipp_resolvent}
(-i \lambda+|\theta_n|^2) \|u_n\|^2_{L^2}+\|\nabla u_n\|^2_{L^2} +2 \langle i\theta_n \cdot \nabla u_n, u_n\rangle_{L^2} +\langle V_n u_n, u_n \rangle_{L^2}\\=-i \langle f, u_n \rangle_{L^2}.
\end{equation}
By taking the real and imaginary parts, it follows from \eqref{eq:ipp_resolvent} and Young's inequalities that there exists a positive constant $C>0$, depending on $\lambda$ and $\sup_{n \in \N} \norme{V_n}_{L^{\infty}(\T^d)}$, such that
$$ \norme{\nabla u_n}_{L^2(\T^d)}^2 \leq C \norme{f}_{L^2(\T^2)}^2 + C \norme{u_n}_{L^2(\T^2)}^2\ \text{and}\ \norme{ u_n}_{L^2(\T^d)}^2 \leq C \norme{f}_{L^2(\T^2)}^2.$$
This readily implies that
\begin{align*}
 \norme{u_n}_{H^1(\T^d)}^2 &\leq C \norme{f}_{L^2(\T^d)}^2.
\end{align*}
By the Rellich theorem, we have that there exists $v \in H^1(\T^d)$ such that, up to a subsequence,
\begin{equation*}
    u_n \rightharpoonup v\ \text{in}\ H^1(\T^d),\ u_n \to v\ \text{in}\ L^2(\T^d)\ \text{as}\ n \to +\infty.
\end{equation*} 
After multiplying \eqref{eq:resolventtrotterKaton} by $i \bar{\varphi} \in H^1(\T^d)$ and integrating by parts, we pass to the limit as $n \to +\infty$
\begin{equation*}
    \int_{\T^d} \left( \lambda v \bar{\varphi} + i \left(\nabla v \cdot \nabla \bar{\varphi} + 2 i \theta \cdot \nabla v \bar{\varphi} + |\theta|^2 v \bar{\varphi} + V v \bar{\varphi} \right)\right)dx= \int_{\T^d} f \bar{\varphi}\qquad \forall \varphi \in H^1(\T^d).
\end{equation*}
By uniqueness, we have that $v = (\lambda I + i\mathcal{H}_{\theta,V})^{-1} f$, then $(u_n)_{n \in \N}$ admits a unique accumulation point $(\lambda I + i\mathcal{H}_{\theta,V})^{-1} f \in H^1(\T^d)$, which concludes the proof.
\end{proof}

\subsubsection{$L^4$ resolvent estimates in $\T^2$}

The main result of this part is a $L^4$ resolvent estimate for $\mathcal{H}_{\theta,V}$ in $\T^2$.
\begin{proposition}
\label{prop:resolventV}
For every $M>0$, there exists $C>0$ such that for every $\theta \in [0,1]^2$, $V \in L^{\infty}(\T^2)$, $\norme{V}_{L^{\infty}(\T^2)} \leq M$, $f \in L^{4/3}(\T^2)$ and $\tau \in \C$ with $|\Im(\tau)| \geq 1$, 
\begin{equation}
    \label{eq:resolventthetaV}
    \norme{(-\Delta + 2 i \theta \cdot \nabla + |\theta|^2 + V - \tau )^{-1} f}_{L^4(\T^2)} \leq C \norme{f}_{L^{4/3}(\T^2)}.
\end{equation}
\end{proposition}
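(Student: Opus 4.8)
The plan is to reduce the estimate with potential $V$ to the free estimate (i.e.\ the case $V=0$) by a perturbative/Neumann-series argument, and to obtain the free estimate by diagonalizing $-\Delta+2i\theta\cdot\nabla+|\theta|^2$ in the Fourier basis together with a lattice-point counting bound in the spirit of Zygmund's $L^4$ restriction inequality on $\T^2$. First I would record that in the Fourier basis $(e^{ik\cdot x})_{k\in\Z^2}$ the operator $-\Delta+2i\theta\cdot\nabla+|\theta|^2$ is diagonal with eigenvalues $|k+\theta|^2$, $k\in\Z^2$. Hence the free resolvent $R_0(\tau):=(-\Delta+2i\theta\cdot\nabla+|\theta|^2-\tau)^{-1}$ acts by the Fourier multiplier $(|k+\theta|^2-\tau)^{-1}$. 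Writing $\tau=a+ib$ with $|b|\ge 1$, one has $||k+\theta|^2-\tau|\ge \max(|b|,\,||k+\theta|^2-a|)$, and in particular $||k+\theta|^2-\tau|\gtrsim 1+\big|\,|k+\theta|^2-a\,\big|$ uniformly. The desired bound $\|R_0(\tau)f\|_{L^4}\le C\|f\|_{L^{4/3}}$ is, by duality and $TT^*$, equivalent to the dispersive-type bound $\|R_0(\tau)\|_{L^{4/3}\to L^4}\lesssim 1$; I would deduce it from the classical periodic Strichartz/Zygmund inequality $\|\sum_{k}c_k e^{ik\cdot x}\|_{L^4(\T^2)}\lesssim \|c\|_{\ell^2}$ applied to dyadic pieces where $||k+\theta|^2-a|\sim 2^j$, using that the number of lattice points $k$ with $|k+\theta|^2$ in an interval of length $2^j$ around $a$ grows at most like $2^j$ times a logarithmic/divisor factor that is absorbed by the decay $2^{-j}$ coming from the resolvent — this is exactly the argument of \cite{BBZ13} for $\theta=0$, and the shift by $\theta$ does not affect the counting since $\Z^2+\theta$ is still a translate of a unit lattice; the key point is that all constants are uniform in $\theta\in[0,1]^2$.

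With the free estimate in hand, the passage to general $V$ with $\|V\|_{L^\infty(\T^2)}\le M$ is \emph{not} a direct Neumann series (multiplication by $V$ does not map $L^{4/3}$ to $L^{4/3}$ with small norm), so instead I would use the resolvent identity
\[
(-\Delta+2i\theta\cdot\nabla+|\theta|^2+V-\tau)^{-1}=R_0(\tau)\big(I+V R_0(\tau)\big)^{-1},
\]
and show that $I+VR_0(\tau)$ is invertible on $L^4(\T^2)$ with uniformly bounded inverse \emph{provided} $|b|=|\Im\tau|$ is large, then remove the largeness of $|b|$ afterwards. For the large-$|b|$ regime: $R_0(\tau):L^4\to L^4$ has norm $\lesssim |b|^{-1}$ (multiplier bounded by $|b|^{-1}$, Plancherel on $L^2$, and interpolation/Sobolev — more carefully, $R_0(\tau):L^4\to W^{?}\hookrightarrow L^4$ with gain), so $\|VR_0(\tau)\|_{L^4\to L^4}\le M\|R_0(\tau)\|_{L^4\to L^4}\le \tfrac12$ once $|b|\ge b_0(M)$, giving a convergent Neumann series and hence \eqref{eq:resolventthetaV} for $|\Im\tau|\ge b_0(M)$ via the identity above (first apply $(I+VR_0)^{-1}$, bounded $L^{4/3}\to L^{4/3}$? — see below). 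For the remaining strip $1\le|\Im\tau|\le b_0(M)$, I would argue by contradiction and compactness: if the estimate failed there would be sequences $\theta_n\to\theta$, $V_n\rightharpoonup^\star V$, $\tau_n\to\tau$ with $|\Im\tau_n|\in[1,b_0(M)]$, and $f_n$ with $\|f_n\|_{L^{4/3}}=1$ but $\|u_n\|_{L^4}\to\infty$; normalizing $g_n:=u_n/\|u_n\|_{L^4}$ one gets $(-\Delta+2i\theta_n\cdot\nabla+|\theta_n|^2+V_n-\tau_n)g_n\to 0$ in $L^{4/3}$ with $\|g_n\|_{L^4}=1$, and $\Im\tau_n\ne0$ forces $g_n\to0$ in $L^2$ (energy estimate as in the proof of Proposition~\ref{prop:convresolvent}), hence $-\Delta g_n$ is bounded in $L^{4/3}+L^2$, so by elliptic regularity and Sobolev embedding $g_n$ is relatively compact in $L^4$ and its limit $g$ satisfies $(-\Delta+2i\theta\cdot\nabla+|\theta|^2+V-\tau)g=0$ with $\Im\tau\ne0$, forcing $g=0$, contradicting $\|g\|_{L^4}=1$.

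The one technical subtlety I would be most careful about is the mapping properties needed to chain $R_0(\tau)=(-\Delta+2i\theta\cdot\nabla+|\theta|^2-\tau)^{-1}$ through the factor $(I+VR_0(\tau))^{-1}$ at the right integrability: the clean way is to run the perturbation on the $L^4\to L^4$ side, i.e.\ write $u=R_0(\tau)(f-Vu)$, so $(I+VR_0(\tau))(f? )$... — concretely, set $w:=f-Vu\in L^{4/3}$ (since $u\in L^4$, $Vu\in L^4\subset L^{4/3}$ on the bounded torus), note $u=R_0(\tau)w$, and $w=f-VR_0(\tau)w$, i.e.\ $(I+VR_0(\tau))w=f$; this is an equation in $L^{4/3}(\T^2)$, and $VR_0(\tau):L^{4/3}\to L^{4/3}$ has small norm for $|\Im\tau|$ large because $R_0(\tau):L^{4/3}\to L^{4/3}$ has norm $\lesssim|\Im\tau|^{-1}$ (same multiplier bound plus duality/interpolation). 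Then $\|w\|_{L^{4/3}}\le 2\|f\|_{L^{4/3}}$ and $\|u\|_{L^4}=\|R_0(\tau)w\|_{L^4}\le C\|w\|_{L^{4/3}}\le 2C\|f\|_{L^{4/3}}$ by the free estimate, uniformly in $\theta$. Thus the genuinely new input is the \emph{uniform-in-$\theta$} free $L^{4/3}\to L^4$ resolvent bound, and that is where I expect to spend the most effort — verifying that the Zygmund-type $\ell^2\to L^4(\T^2)$ inequality and the attendant divisor-bound lattice counting are insensitive to the $\theta$-translation of the lattice and yield a constant depending only on an absolute numerical factor; everything downstream (the Neumann series and the compactness argument closing the low-$|\Im\tau|$ strip) is standard once Proposition~\ref{prop:convresolvent}-type energy estimates are invoked.
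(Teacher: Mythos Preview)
Your treatment of the free case $V=0$ is aligned with the paper: diagonalize in Fourier, decompose dyadically in $\big|\,|k+\theta|^2-\Re\tau\,\big|$, and feed each piece into a refined Zygmund/spectral-cluster bound that is uniform in the translate $\theta$. That part is fine.

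Where your proposal diverges is the reduction from $V$ to $V=0$. You propose a two-step scheme: a Neumann series for $|\Im\tau|$ large, then a compactness/contradiction argument to close the strip $1\le|\Im\tau|\le b_0(M)$. The paper does something considerably shorter and avoids both steps entirely. It takes the imaginary part of $\langle (\mathcal H_{\theta,V}-\tau)u,u\rangle$; since $V$ is real this gives
\[
|\Im\tau|\,\|u\|_{L^2}^2 \le \|u\|_{L^4}\|f\|_{L^{4/3}},
\]
hence $\|u\|_{L^2}\le \big(\|u\|_{L^4}\|f\|_{L^{4/3}}\big)^{1/2}$ for all $|\Im\tau|\ge 1$. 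Writing $(-\Delta+2i\theta\cdot\nabla+|\theta|^2-\tau)u=f-Vu$ and applying the free estimate yields
\[
\|u\|_{L^4}\le C\|f\|_{L^{4/3}}+C\|Vu\|_{L^{4/3}}\le C\|f\|_{L^{4/3}}+CM\|u\|_{L^2}\le C\|f\|_{L^{4/3}}+CM\|u\|_{L^4}^{1/2}\|f\|_{L^{4/3}}^{1/2},
\]
and one closes by Young's inequality. This works for every $|\Im\tau|\ge 1$ at once, with a constant depending only on $M$.

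Your Neumann-series step also has a genuine gap as written: you need $\|V R_0(\tau)\|_{L^{4/3}\to L^{4/3}}$ (or the $L^4\to L^4$ variant) to be small for large $|\Im\tau|$, and you justify this by ``same multiplier bound plus duality/interpolation''. But the multiplier bound $|m(k)|\le|\Im\tau|^{-1}$ only gives smallness of $R_0(\tau)$ on $L^2$; it does \emph{not} by itself give $L^p\to L^p$ smallness for $p\ne 2$, and Riesz--Thorin between $L^2\to L^2$ (norm $|\Im\tau|^{-1}$) and $L^{4/3}\to L^4$ (norm $C$) never lands on $L^{4/3}\to L^{4/3}$ or $L^4\to L^4$. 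You would need an honest $L^p$ multiplier theorem (Mikhlin--H\"ormander type, with uniform control of the constants in $\tau,\theta$), which is additional work you have not supplied. The compactness argument for the remaining strip is plausible but delicate (the conjugation $e^{-i\theta\cdot x}$ that diagonalizes $-\Delta+2i\theta\cdot\nabla+|\theta|^2$ takes you out of periodic functions, so the elliptic-regularity/compactness has to be done carefully), and in any case it is unnecessary once you use the imaginary-part estimate above.
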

Proposition \ref{prop:resolventV} is an adaptation of \cite[Proposition 2.6]{BBZ13}. Two main differences appear. First, due to the presence of the parameter $\theta$ in the operator $\mathcal{H}_{\theta,V}$, we need to keep track of the independence of the constants, with respect to $\theta$ to get a uniform constant $C$ in \eqref{eq:resolventthetaV}. The second difference is the assumption on the potential. While Bourgain, Burq, Zworski are considering potential living in a compact set of $L^2(\T^2)$, here we are focusing on potentials living in a ball of $L^{\infty}(\T^2)$.

The proof of Proposition \ref{prop:resolventV} is crucially based on the following result which is Proposition \ref{prop:resolventV} in the particular case when $V=0$.
\begin{proposition}
\label{prop:resolvent}
There exists $C>0$ such that for every $\theta \in [0,1]^2$, $f \in L^{4/3}(\T^2)$ and $\tau \in \C$ with $|\Im(\tau)| \geq 1$, 
\begin{equation}
    \label{eq:resolventtheta}
    \norme{(-\Delta + 2 i \theta \cdot \nabla + |\theta|^2- \tau )^{-1} f}_{L^4(\T^2)} \leq C \norme{f}_{L^{4/3}(\T^2)}.
\end{equation}
\end{proposition}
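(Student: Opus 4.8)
\medskip
\noindent\textbf{Proof proposal.}
I would follow the scheme of \cite[Proposition~2.6]{BBZ13}, the point being that every constant must stay independent of the drift $\theta$ and of $\Re\tau$. First observe that $-\Delta+2i\theta\cdot\nabla+|\theta|^2$ acts on $e^{ik\cdot x}$ ($k\in\Z^2$) as multiplication by $|k-\theta|^2$, so the operator $R_\tau$ in \eqref{eq:resolventtheta} is the Fourier multiplier $k\mapsto(|k-\theta|^2-\tau)^{-1}$; modulating by $e^{-i\theta\cdot x}$, which is an isometry of every $L^p(\T^2)$, it becomes the multiplier $\xi\mapsto(|\xi|^2-\tau)^{-1}$ acting on functions with spectrum in the shifted lattice $\Z^2-\theta$. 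Write $\tau=\sigma+is$. Since the operator is self-adjoint, $R_{\bar\tau}=R_\tau^{*}$ and $\|R_{\bar\tau}\|_{L^{4/3}\to L^4}=\|R_\tau\|_{L^{4/3}\to L^4}$, so it is enough to treat $s\geq1$.

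Then I would split the symbol dyadically in $\big||\xi|^2-\sigma\big|$: with a smooth partition of unity, $(|\xi|^2-\tau)^{-1}=\sum_{j\geq0}m_j(\xi)$, where $m_j$ is supported in $\big\{\,\big||\xi|^2-\sigma\big|\in[2^{j-1},2^{j+1}]\,\big\}$ for $j\geq1$, resp.\ in $\big\{\big||\xi|^2-\sigma\big|\le2\big\}$ for $j=0$, with $|m_j|\lesssim2^{-j}$ there and the expected derivative bounds. The sum of the $m_j$ with $2^j\gtrsim\max(1,\sigma)$ is the ``elliptic'' part, for which $\big||\xi|^2-\tau\big|\gtrsim1+|\xi|^2$; the corresponding operator is convolution on $\T^2$ against a kernel of $L^2(\T^2)$-norm $\lesssim1$ uniformly (using $\sum_{k\in\Z^2}(1+|k-\theta|^2)^{-2}\lesssim1$ together with the decay of $|m_j|$ on the remaining range), hence it maps $L^{4/3}\to L^4$ uniformly by Young's inequality. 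For each remaining index $j$ the multiplier $m_j$ is supported on a spherical shell $S_j=\{\xi:\big||\xi|^2-\sigma\big|\le2^{j+1}\}$ of radial width $\sim2^j/\sqrt\sigma$, and a $TT^{*}$/bilinear argument (factoring $T_{m_j}$ through $L^2$ and using $|m_j|\lesssim2^{-j}\mathbf 1_{S_j}$) gives
\begin{equation*}
  \|T_{m_j}\|_{L^{4/3}(\T^2)\to L^4(\T^2)}\;\lesssim\;2^{-j}\Big(\sup_{\eta}\#\big((S_j\cap(\Z^2-\theta))\cap(\eta-S_j)\big)\Big)^{1/2}.
\end{equation*}
Thus everything reduces to the lattice count $\sup_\eta\#\big((S_j\cap(\Z^2-\theta))\cap(\eta-S_j)\big)\lesssim2^j$ — the number of lattice points in the intersection of two thin annuli of radius $\lesssim\sqrt\sigma$ and radial width $\sim2^j/\sqrt\sigma$ — after which summing $2^{-j/2}$ over $j$ closes the proof, and with $|\Im\tau|\geq1$ the resulting constant is uniform.

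I expect this lattice-point bound to be the crux. Its difficulty lies in the degenerate configurations — the two annuli nearly coinciding (which forces considering $\eta$ close to a point of $\Z^2-2\theta$), or nearly tangent to each other — where one needs the analogues of the number-theoretic estimates of \cite{BBZ13}. Uniformity in $\theta$, on the other hand, should come essentially for free, since $\#\big((\Z^2-\theta)\cap\Omega\big)=\#\big(\Z^2\cap(\Omega+\theta)\big)$ depends only on the geometry of $\Omega$ (area, perimeters and curvatures of the bounding arcs), not on the arithmetic of the shifted lattice. For $\theta=0$ the width-one shell is a single circle and the count is the classical loss-free bound $\#\{k\in\Z^2:|k|^2=n,\ |\eta-k|^2=n\}\le2$ underlying Zygmund's $L^4$ estimate; for general $\theta$ one cannot reduce to exact circles and must run the whole argument with annuli of positive width. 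An alternative entry point is the unitary-group representation $R_\tau f=i\int_0^{+\infty}e^{it\sigma}e^{-ts}e^{-it\mathcal H_{\theta,0}}f\,dt$, together with the fact that $t\mapsto\|e^{-it\mathcal H_{\theta,0}}f\|_{L^4(\T^2)}$ is $2\pi$-periodic (because $e^{-2i\pi\mathcal H_{\theta,0}}$ is a unimodular scalar times a torus translation, both $L^4$-isometries) and that the $\theta$-dependent phases cancel when one computes the relevant $L^4$ norms; this localizes matters to a fixed time interval, but upgrading it to the full $L^{4/3}\to L^4$ bound still requires an input of the same strength as the lattice count above.
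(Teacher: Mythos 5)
Your overall scheme is close in spirit to the paper's: the paper also reduces matters (for $\Re\tau$ large; the regime $\Re\tau\leq C$ is dispatched separately by an energy estimate plus the embedding $H^1(\T^2)\hookrightarrow L^4(\T^2)$) to a dyadic decomposition in $\bigl||n-\theta|^2-\Re\tau\bigr|$, and on each dyadic shell to an $L^2\to L^4$ bound for functions with Fourier support in a thin annulus. The organisational difference is that the paper factors the resolvent through its square root $P_{\theta,\tau}$, applies $TT^*$ once globally, and proves the shell estimate as a refined Zygmund inequality (Proposition \ref{prop:FlouZygmund}), itself obtained by cutting the annulus into $\sim (h\kappa)^{-1}$ angular sectors and invoking an arithmetic lemma (Lemma \ref{lemma:arithmetic}, from \cite{BBZ13}) which says that only $O(1)$ quadruples of sectors can interact in the expansion of $\|u\|_{L^4}^4$. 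That lemma, together with the lattice count inside a single sector, is the entire content of the proof; your proposal identifies it as ``the crux'' but does not supply it, so the proof is not complete.

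Moreover, the precise reduction you write down would fail as stated. The bound $\sup_{\eta}\#\bigl((S_j\cap(\Z^2-\theta))\cap(\eta-S_j)\bigr)\lesssim 2^j$ is false at the degenerate value of $\eta$ where the two annuli coincide: there the count equals the total number of shifted-lattice points in $S_j$, which for $j=0$ is governed by $r_2$-type quantities and is unbounded in $\sigma$ (your own statement of the $\theta=0$ case, $\#\{k:|k|^2=n,\ |\eta-k|^2=n\}\leq 2$, already tacitly excludes $\eta=0$, where the count is $r_2(n)$). In Zygmund's argument this near-diagonal contribution is not controlled by a pointwise count at all but by Cauchy--Schwarz on the diagonal sum, and the nearly tangent configurations are exactly what the sector decomposition and the arithmetic lemma of \cite{BBZ13} are designed to handle. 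So the reduction to a uniform-in-$\eta$ count must itself be modified (split off the near-degenerate $\eta$ first), and the remaining combinatorial estimate still has to be proven; neither step is present in the proposal. The reduction to $\Im\tau\geq 1$ by duality and the treatment of the elliptic part via Young's inequality are fine.
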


\begin{proof}[Proof of Proposition \ref{prop:resolventV} from Proposition \ref{prop:resolvent}]
We start from $$(-\Delta + 2 i \theta \cdot \nabla + |\theta|^2 + V- \tau )u=f\ \text{in}\ \T^2.$$
Since $V$ is real-valued, after multiplying by $\bar{u}$ and integrating on $\T^2$, we obtain by taking the imaginary part and thanks to Hölder's inequality
\begin{align}
    |\Im(\tau)| \norme{u}_{L^2(\T^2)}^2 &\leq  \norme{u}_{L^4(\T^2)} \norme{f}_{L^{4/3}(\T^2)}.   \label{eq:estimateimpartV}
\end{align}
Since $|\Im(\tau)| \geq 1$, we get from \eqref{eq:estimateimpartV} that
\begin{equation*}
\norme{u}_{L^2(\T^2)}^2 \leq \norme{u}_{L^4(\T^2)} \norme{f}_{L^{4/3}(\T^2)}.
\end{equation*}
On the other hand, we also have
$$(-\Delta + 2 i \theta \cdot \nabla + |\theta|^2 - \tau )u=f - V u\ \text{in}\ \T^2.$$
So applying the resolvent estimate \eqref{eq:resolventtheta}, we get
$$ \norme{u}_{L^4(\T^2)} \leq C \norme{f}_{L^{4/3}(\T^2)} + C \norme{V u}_{L^{4/3}(\T^2)}.$$
By plugging the $L^2$-estimate on $u$ in the previous formula, using that $V \in L^{\infty}(\T^2)$ and performing Young's estimate, we get the expected result \eqref{eq:resolventthetaV}.
\end{proof}

All the end of this part is then devoted to the proof of Proposition \ref{prop:resolvent}.

The next result is a refinement of the Zygmund's inequality for the operator $\mathcal{H}_{\theta, 0}$ i.e. there exists $C>0$ such that for every $\theta \in [0,1]^2$ and $\lambda>0$, we have
\begin{equation}
    \label{eq:zygmund}
    \norme{\sum_{|n-\theta|^2 = \lambda} c_n e^{i n \cdot x}}_{L^4(\T^2)} \leq C \left(\sum_{|n-\theta|^2 = \lambda} |c_n|^2\right)^{1/2}.
\end{equation}
Note that inequality \eqref{eq:zygmund} comes from a straightforward adaptation of \cite{Zyg74}. 
\begin{proposition}
\label{prop:FlouZygmund}
There exists $C>0$ such that for all $\theta \in [0,1]^2$, $\kappa \geq 0$, $0 < h \leq 1$ and $u = \sum_{n \in \Z} \widehat{u}(n) e^{i n \cdot x}  \in L^2(\T^2)$ satisfying
\begin{equation*}
    \widehat{u}(n) = 0\ \text{for}\ n \notin \mathcal{B}_{\theta}(\kappa, h) := \{n \in \Z^2\ ;\ \left| h^2|n-\theta|^2 - 1 \right| \leq \kappa^2 h^2\},
\end{equation*}
we have
\begin{align*}
\norme{u}_{L^4(\T^2)} \leq
\left\{
    \begin{array}{ll}
        C (1+ \kappa)^{1/4} (1+\kappa^2h)^{1/4} \norme{u}_{L^2(\T^2)} & \mbox{if } \kappa \leq h^{-1} \\
        C (1+\kappa)^{1/2} \norme{u}_{L^2(\T^2)} & \mbox{if }\kappa \geq h^{-1}
    \end{array}
\right.
\end{align*}
\end{proposition}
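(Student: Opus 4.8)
The plan is to reduce the estimate to the classical Zygmund inequality \eqref{eq:zygmund} by slicing the frequency annulus $\mathcal{B}_{\theta}(\kappa,h)$ into its arithmetic layers $\{|n-\theta|^2 = \lambda\}$ and then controlling how many such layers intervene and how their $L^4$-norms interact. First I would split $u = \sum_{\lambda} u_{\lambda}$, where $u_{\lambda} := \sum_{|n-\theta|^2 = \lambda} \widehat{u}(n) e^{in\cdot x}$ and $\lambda$ ranges over the values of $|n-\theta|^2$ with $n \in \mathcal{B}_{\theta}(\kappa,h)$. The defining inequality $|h^2|n-\theta|^2 - 1| \leq \kappa^2 h^2$ forces $\lambda \in [h^{-2} - \kappa^2,\ h^{-2} + \kappa^2]$, an interval of length $2\kappa^2$. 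Since $\theta \in [0,1]^2$ is fixed and $n$ ranges over $\Z^2$, the quantity $|n-\theta|^2 = |n|^2 - 2\theta\cdot n + |\theta|^2$ takes values in $|\theta|^2 - 2\theta\cdot\Z^2 + \Z_{\geq 0}$; I would note that consecutive attainable values of $|n|^2 - 2\theta\cdot n$ are separated by at least a fixed positive gap only when $\theta$ is rational, so instead the right bookkeeping is: the number of distinct values of $\lambda$ in an interval of length $2\kappa^2$ is $O(1+\kappa^2)$, uniformly in $\theta$, because $|n|^2$ is integer-valued and $2\theta\cdot n$ ranges over a set whose intersection with any bounded interval, after fixing $|n|$, is finite with controlled cardinality. (More precisely one counts lattice points $n$ with $|n-\theta|^2$ in a window; the annulus $\{||n-\theta|^2 - h^{-2}| \leq \kappa^2\}$ contains $O((1+\kappa^2)(1+\kappa^2 h^2)h^{-2})$ points but splits into $O(1+\kappa^2)$ nonempty slices.)

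Next, by the triangle inequality in $L^4$, $\norme{u}_{L^4} \leq \sum_{\lambda} \norme{u_{\lambda}}_{L^4}$, and applying \eqref{eq:zygmund} to each slice gives $\norme{u_{\lambda}}_{L^4} \leq C \norme{u_{\lambda}}_{L^2}$. By Cauchy--Schwarz over the (at most $O(1+\kappa^2)$) values of $\lambda$, together with orthogonality $\sum_{\lambda} \norme{u_{\lambda}}_{L^2}^2 = \norme{u}_{L^2}^2$, this already yields $\norme{u}_{L^4} \leq C(1+\kappa)\norme{u}_{L^2}$, which is the bound claimed in the regime $\kappa \geq h^{-1}$ (there $(1+\kappa)^{1/2}$ is weaker than what this crude argument gives, so in fact one only needs the even cruder count that the number of slices is $O(1+\kappa^2 h^2 \cdot h^{-2}) $... let me instead keep the honest statement: the crude slicing gives $(1+\kappa)$, and for $\kappa \ge h^{-1}$ we want $(1+\kappa)^{1/2}$, so a genuinely finer argument is needed even there). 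To get the sharp exponents I would not estimate slice-by-slice but rather use an $L^4$-almost-orthogonality / $TT^*$ argument: $\norme{u}_{L^4}^2 = \norme{u^2}_{L^2}$, expand $u^2 = \sum_{n,m} \widehat{u}(n)\widehat{u}(m) e^{i(n+m)\cdot x}$, and for a fixed output frequency $p = n+m$ count the pairs $(n,m) \in \mathcal{B}_{\theta}(\kappa,h)^2$ with $n+m = p$. This is the standard route to Strichartz estimates on tori: one reduces to bounding, uniformly in $p$, the number of representations $p = n+m$ with both $n,m$ on the thickened annulus, which is a lattice-point count on the intersection of two thin annuli — a classical problem whose answer produces exactly the two regimes $\kappa \lessgtr h^{-1}$.

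Carrying this out: writing $N_p := \#\{(n,m) \in \mathcal{B}_{\theta}(\kappa,h)^2 : n+m=p\}$, Cauchy--Schwarz gives $\norme{u^2}_{L^2}^2 = \sum_p |\sum_{n+m=p}\widehat u(n)\widehat u(m)|^2 \leq \sum_p N_p \sum_{n+m=p}|\widehat u(n)|^2|\widehat u(m)|^2 \leq (\sup_p N_p)\, \norme{u}_{L^2}^4$, whence $\norme{u}_{L^4}^2 \le (\sup_p N_p)^{1/2}\norme{u}_{L^2}^2$. The heart of the matter is therefore the geometric estimate $\sup_p N_p \le C(1+\kappa)(1+\kappa^2 h)$ when $\kappa \le h^{-1}$ and $\sup_p N_p \le C(1+\kappa)^2$ when $\kappa \ge h^{-1}$. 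For this I fix $p$, set $n' = n - \theta$, $m' = m - \theta = p - 2\theta - n'$, and observe that $n'$ must lie simultaneously on the annulus $A_1 = \{||x|^2 - h^{-2}| \le \kappa^2\}$ (width $\sim \kappa^2 h^2 / h^{-1} = \kappa^2 h$ around radius $h^{-1}$ when $\kappa h \le 1$, hence essentially a $\kappa^2 h$-neighbourhood of a circle of radius $h^{-1}$) and on the translated annulus $A_2 = p - 2\theta - A_1$. The intersection of two annuli of radius $\sim h^{-1}$ and thickness $\sim \kappa^2 h$ whose centres are at bounded-but-arbitrary distance is contained in a union of two boxes of dimensions roughly $\kappa^2 h \times \sqrt{h^{-1}\cdot \kappa^2 h} = \kappa^2 h \times \kappa\sqrt{h \cdot h^{-1}}$... — the precise lengths are what give $(1+\kappa)(1+\kappa^2 h)$ — and a box of that size contains $O((1+\kappa)(1+\kappa^2 h))$ lattice points; when $\kappa h \ge 1$ the annulus $A_1$ degenerates to a full disc of radius $O(\kappa)$, containing $O((1+\kappa)^2)$ lattice points, giving the second regime.

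\medskip

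\noindent\textbf{Main obstacle.} The genuinely delicate point is the uniform-in-$\theta$ lattice-point count for the intersection of the two thin annuli $A_1 \cap (p - 2\theta - A_1)$: because $\theta$ is real and the second annulus is an arbitrary translate of the first, one cannot invoke divisor-bound arguments valid for integer centres, and one must argue purely by the geometry of intersecting annular regions (splitting into near-tangential and transversal cases) while checking that all implied constants are independent of $\theta$, $p$, $h$ and $\kappa$. The bookkeeping at the boundary between the two regimes $\kappa \sim h^{-1}$, and making sure the thickness/radius ratios are tracked correctly so that the box dimensions produce precisely $(1+\kappa)^{1/2}(1+\kappa^2 h)^{1/2}$ after the square root, is where the argument has to be done carefully rather than sketched.
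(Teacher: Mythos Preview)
Your representation-counting route via $\sup_p N_p$ is genuinely different from the paper's proof, and it has a real gap in the thin-annulus regime $\kappa \le h^{-1}$. The claimed bound $\sup_p N_p \le C(1+\kappa)(1+\kappa^2 h)$ is false: take $\theta=0$ and $p=0$. Since $\mathcal B_0(\kappa,h)$ is symmetric under $n\mapsto -n$, every $n\in\mathcal B_0(\kappa,h)$ gives a valid pair $(n,-n)$, so $N_0=\#\mathcal B_0(\kappa,h)$, and by the Gauss circle count this is $\sim 2\pi\kappa^2$. For $1\ll\kappa\ll h^{-1/2}$ one has $(1+\kappa)(1+\kappa^2 h)\sim\kappa$, which is much smaller than $\kappa^2$. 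Your geometric picture of ``two annuli whose centres are at bounded-but-arbitrary distance'' implicitly excludes the near-concentric case $p\approx 2\theta$, and that is precisely the case that breaks the sup bound. The difficulty is not the $\theta$-uniformity you flag as the main obstacle; it is that the crude Cauchy--Schwarz leading to $\|u\|_{L^4}^4\le(\sup_p N_p)\|u\|_{L^2}^4$ throws away the cancellation needed to reach $(1+\kappa)^{1/4}$ rather than $(1+\kappa)^{1/2}$.

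The paper (following \cite{BBZ13}) avoids this by decomposing not over output frequencies $p$ but over \emph{angular sectors}: the annulus is partitioned into $\sim(h\kappa)^{-1}$ arcs $\mathcal A_{\theta,\alpha}(\kappa,h)$ of angular width $h\kappa$, and the key input is a geometric lemma (Lemma~\ref{lemma:arithmetic}) to the effect that if $(\mathcal A_{\theta,\alpha}+\mathcal A_{\theta,\beta})\cap(\mathcal A_{\theta,\alpha'}+\mathcal A_{\theta,\beta'})\neq\emptyset$ then the index pairs are close up to permutation. This yields $\ell^2$-almost-orthogonality $\|u\|_{L^4}^4\le C(\sum_\alpha\|U_\alpha\|_{L^4}^2)^2$ rather than a sup bound, after which each $\|U_\alpha\|_{L^4}$ is controlled by the lattice-point count in a single rectangular sector of sides $\sim(1+\kappa)\times(1+\kappa^2 h)$. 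Your $\kappa\ge h^{-1}$ argument (disc of radius $O(\kappa)$, hence $N_p\le C(1+\kappa)^2$) does give the correct bound in that regime; the paper instead uses the Sobolev embedding $H^{1/2}(\T^2)\hookrightarrow L^4(\T^2)$ there, which is equivalent.
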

\begin{remark}
It is worth mentioning that $\kappa = 0$ is simply the Zygmund's inequality \eqref{eq:zygmund}, while the other regimes have to be treated by different arguments, that are Sobolev embeddings for $\kappa \geq h^{-1}$ and an arithmetic proof of Sogge's estimate for spectral projectors for $\kappa \leq h^{-1}$.
\end{remark}

The proof that we give below is an adaptation of \cite[Proposition 2.4]{BBZ13}.
\begin{proof}
In the following proof, the constants $C>0$ that appear can vary from line to line but do not depend on $\theta$.

First, we have $\mathcal{B}_{\theta}(\kappa_1, h) \subset \mathcal{B}_{\theta}(\kappa_2, h)$ when $\kappa_1 \leq \kappa_2$, so one can safely assume that $\kappa \geq C$ where $C\geq 1$ is a positive numerical constant.

For a constant $0<\delta \leq 1$ that will be fixed later, we distinguish two regimes: $\kappa h \geq \delta$, and $\kappa h \leq \delta$. 

\textit{First regime: $\kappa h \geq \delta$.} The estimate comes from the Sobolev embedding $H^{1/2}(\T^2) \hookrightarrow L^4(\T^2)$ because $\widehat{u}(n) = 0$ unless $|n|^2 \leq |\theta|^2 + h^{-2} + \kappa^2 \leq 2+ (\delta^{-2} + 1)\kappa^2$, so taking $\kappa \geq \sqrt 2$, this implies
\begin{equation*}
    \norme{u}_{H^{1/2}(\T^2)} \leq C_{\delta} \kappa^{1/2} \norme{u}_{L^2(\T^2)}.
\end{equation*}

\textit{Second regime: $h \kappa \leq \delta$.} First observe that $\mathcal{B}_{\theta}(\kappa, h) \subset \mathcal{A}_{\theta}(\kappa, h)$ where
\begin{equation*}
    \mathcal{A}_{\theta}(\kappa, h) := \{n \in \Z^2\ ;\ \left| h |n-\theta| -1 \right| \leq \kappa^2 h^2\}.
\end{equation*}
Indeed, using $h \kappa \leq \delta \leq 1$, we have
\begin{align*}
    n \in \mathcal{B}_{\theta}(\kappa, h) \Rightarrow -\kappa^2 h^2 + 1 \leq h^2 |n-\theta|^2 \leq \kappa^2 h^2 + 1 \Rightarrow \sqrt{1-\kappa^2 h^2 } \leq h |n-\theta| \leq \sqrt{1+\kappa^2 h^2}\\
    \Rightarrow 1-\kappa^2 h^2 \leq h |n-\theta| \leq 1+\kappa^2 h^2 \Rightarrow n \in \mathcal{A}_{\theta}(\kappa, h).
\end{align*}

Note that we have
\begin{align*}
    \C &= \{z \in \C\ ;\ \Re (z-\theta) \geq 0,\ \Im (z-\theta) \geq 0\} \cup \{z \in \C\ ;\ \Re (z-\theta) \leq 0,\ \Im (z-\theta) \geq 0\}\\ &\qquad \cup \{z \in \C\ ;\ \Re (z-\theta) \leq 0,\ \Im (z-\theta) \leq 0\}  \cup \{z \in \C\ ;\ \Re (z-\theta) \geq 0,\ \Im (z-\theta) \leq 0\}\\
    & = \mathcal{C}^{\theta}_{++} \cup \mathcal{C}^{\theta}_{-+} \cup \mathcal{C}^{\theta}_{--} \cup \mathcal{C}^{\theta}_{+-}.
\end{align*}
We will only consider the situation where
\begin{equation*}
    u = \sum_{n \in \Z^2} u_n e^{i n \cdot x} = \sum_{n \in \Z^2 \cap \mathcal{C}^{\theta}_{++}} u_n e^{i n \cdot x}.
\end{equation*}
The general case easily follows.

We first introduce
\begin{equation*}
     \mathcal{A}_{\theta}(\kappa, h) = \bigcup_{\alpha=0}^{N_{\kappa,h}} \left(\Z^2 \cap \mathcal{A}_{\theta,\alpha}(\kappa, h)\right),\ \text{with}\ N_{\kappa,h} := \left\lfloor \frac{\pi}{2 hk}\right\rfloor,
\end{equation*}
where
\begin{equation*}
     \mathcal{A}_{\theta,\alpha}(\kappa, h) := \Big\{z \in \mathcal{C}_{++}^{\theta}\ ;\ |h|z-\theta| - 1 | \leq \kappa^2 h^2,\ \text{arg}(z-\theta) \in [\alpha h \kappa, (\alpha+1) h \kappa)\Big\}.
\end{equation*}
Then the proof relies on the following geometric lemma, that is a Corollary of \cite[Lemma 2.5]{BBZ13}.
\begin{lemma}
\label{lemma:arithmetic}
Fix $\delta > 0$ small enough. Then there exists $Q \in \N$ such that for all $\theta\in [0,1]^2$, $0 < h < 1$, $1 \leq \kappa \leq \delta/h$ and $\alpha, \beta, \alpha',\beta' \in \{0,1, \dots, N_{\kappa,h}\}$, if
\begin{equation}
\label{eq:conditionalphabetaetc}
    ( \mathcal{A}_{\theta,\alpha}(\kappa, h) + \mathcal{A}_{\theta,\beta}(\kappa, h)) \cap (( \mathcal{A}_{\theta,\alpha'}(\kappa, h) + \mathcal{A}_{\theta,\beta'}(\kappa, h)) \neq \emptyset,
\end{equation}
then
\begin{equation}
\label{eq:conditionalphabetaetcBis}
|\alpha - \alpha'| + |\beta - \beta'| \leq Q\ \text{or}\ |\alpha- \beta'| + |\beta - \alpha'| \leq Q.
\end{equation}
\end{lemma}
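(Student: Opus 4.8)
The statement to be proven is Lemma~\ref{lemma:arithmetic}, the geometric/arithmetic lemma controlling when the sumsets $\mathcal{A}_{\theta,\alpha}+\mathcal{A}_{\theta,\beta}$ and $\mathcal{A}_{\theta,\alpha'}+\mathcal{A}_{\theta,\beta'}$ can overlap. My plan is to deduce it directly from the corresponding statement in \cite[Lemma 2.5]{BBZ13} by a rescaling and translation argument that absorbs the shift by $\theta$. The key observation is that $\mathcal{A}_{\theta,\alpha}(\kappa,h)$ is, up to the translation $z \mapsto z-\theta$, exactly an angular sector of the annulus $\{ |\,h|w|-1\,|\le \kappa^2 h^2\}$ of angular width $h\kappa$, indexed by $\alpha$ — i.e. it is $\theta + \mathcal{A}_{0,\alpha}(\kappa,h)$ in the notation where $\theta=0$. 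Hence
\[
\mathcal{A}_{\theta,\alpha}(\kappa,h) + \mathcal{A}_{\theta,\beta}(\kappa,h) = 2\theta + \bigl(\mathcal{A}_{0,\alpha}(\kappa,h) + \mathcal{A}_{0,\beta}(\kappa,h)\bigr),
\]
and similarly for the primed indices. Therefore condition \eqref{eq:conditionalphabetaetc} is equivalent to
\[
\bigl(\mathcal{A}_{0,\alpha}(\kappa,h) + \mathcal{A}_{0,\beta}(\kappa,h)\bigr) \cap \bigl(\mathcal{A}_{0,\alpha'}(\kappa,h) + \mathcal{A}_{0,\beta'}(\kappa,h)\bigr) \neq \emptyset,
\]
because translating both sides of a set intersection by the same vector $2\theta$ does not change whether the intersection is empty. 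The point of passing to the modified Floquet–Bloch transform $\tilde{\mathcal F}$ and of working with shifted sectors is precisely that the $\theta$-dependence enters only through this harmless global translation.

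\textbf{Steps.} First I would record the elementary set-theoretic identity above, being careful that $\mathcal{A}_{\theta,\alpha}$ is defined by the two conditions $|\,h|z-\theta|-1\,|\le\kappa^2 h^2$ and $\arg(z-\theta)\in[\alpha h\kappa,(\alpha+1)h\kappa)$, both of which are invariant (as conditions on $z-\theta$) under replacing $\theta$ by $0$ and $z$ by $z-\theta$; this gives $\mathcal{A}_{\theta,\alpha}(\kappa,h)=\theta+\mathcal{A}_{0,\alpha}(\kappa,h)$ literally. Second, I would observe that the number of sectors $N_{\kappa,h}=\lfloor \pi/(2h\kappa)\rfloor$ and the admissible range $1\le\kappa\le\delta/h$ are themselves independent of $\theta$, so the index sets $\{0,\dots,N_{\kappa,h}\}$ match on both sides. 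Third, I would invoke \cite[Lemma 2.5]{BBZ13} — which is exactly the present statement in the case $\theta=0$ — to obtain the universal constant $Q\in\N$ (depending only on the fixed small $\delta$) such that the non-empty-intersection hypothesis forces \eqref{eq:conditionalphabetaetcBis}. Transporting this back through the translation by $2\theta$ completes the argument, with the \emph{same} $Q$ for every $\theta\in[0,1]^2$.

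\textbf{Main obstacle.} The only genuine subtlety, and the thing I would check most carefully, is that the reduction to $\theta=0$ is faithful at the level of the \emph{lattice} $\Z^2$: in Proposition~\ref{prop:FlouZygmund} the Fourier support lives in $\Z^2$, but Lemma~\ref{lemma:arithmetic} as stated concerns the continuous sectors $\mathcal{A}_{\theta,\alpha}(\kappa,h)\subset\mathcal{C}^\theta_{++}\subset\C$, not $\Z^2\cap\mathcal{A}_{\theta,\alpha}$. Since the lemma is formulated purely in terms of the continuous sets, the translation $z\mapsto z-\theta$ is a perfectly legitimate bijection of $\C$ and no lattice issue arises in the lemma itself — the passage to $\Z^2$ happens only later, when the lemma is applied inside the proof of Proposition~\ref{prop:resolvent}, where one uses $\Z^2\cap\mathcal{A}_{\theta,\alpha}\subset\mathcal{A}_{\theta,\alpha}$. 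So the real work reduces to (i) verifying that \cite[Lemma 2.5]{BBZ13} is stated in a form that indeed gives the $\theta=0$ case verbatim (possibly after a trivial relabelling of their sectors, e.g. matching their angular partition of the quarter-annulus to ours), and (ii) confirming that ``$\delta$ small enough'' can be chosen once and for all, uniformly in $\theta$, which is immediate since after translation the problem no longer sees $\theta$ at all. If \cite[Lemma 2.5]{BBZ13} is stated only for a sphere/annulus centered at the origin, this identification is exactly what is needed; if it is stated more generally, the deduction is even more immediate.
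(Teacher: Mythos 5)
Your argument is exactly the paper's: the authors also observe that $\mathcal{A}_{\theta,\alpha}(\kappa,h)=F_\theta(\mathcal{A}_{0,\alpha}(\kappa,h))$ with $F_\theta(z)=z+\theta$, so the sumsets are translated by $2\theta$, condition \eqref{eq:conditionalphabetaetc} for general $\theta$ is equivalent to the same condition for $\theta=0$, and the conclusion follows from \cite[Lemma 2.5]{BBZ13} with a $Q$ independent of $\theta$. The proposal is correct and takes essentially the same approach.
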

\begin{proof}[Proof of Lemma \ref{lemma:arithmetic}.]
From \cite[Lemma 2.5]{BBZ13}, we have that there exists $Q \in \N$ such that for any $0 < h < 1$ and any $1 \leq \kappa \leq \delta/h$, for every $\alpha, \beta, \alpha',\beta' \in \{0,1, \dots, N_{\kappa,h}\}$, if \eqref{eq:conditionalphabetaetc} holds with $\theta=0$ then \eqref{eq:conditionalphabetaetcBis} holds. Therefore, fixing $\Theta \in \T^2$ by remarking that $\mathcal{A}_{\Theta,\alpha}(\kappa, h)=F_{\Theta}(\mathcal{A}_{0,\alpha}(\kappa, h))$ with $F_{\Theta}(z) = z+\Theta$ so if \eqref{eq:conditionalphabetaetc} holds for $\theta=\Theta$ then \eqref{eq:conditionalphabetaetc} holds for $\theta=0$ so \eqref{eq:conditionalphabetaetcBis} holds. This concludes the proof.
\end{proof}
By decomposing as follows
\begin{equation}
\label{eq:decompositionu}
    u = \sum_{\alpha=0}^{N_{\kappa,h}} U_{\alpha}, \quad \text{with} \quad U_{\alpha} := \sum_{n \in \Z^2 \cap\mathcal{A}_{\theta,\alpha}(\kappa, h) } u_n e^{i n\cdot x},
\end{equation}
we obtain
\begin{equation}
\label{eq:developpementuL4}
    \norme{u}_{L^4(\T^2)}^4 = \norme{u^2}_{L^2(\T^2)}^2 = \int_{\T^2} u^2 \overline{u^2} dx =  \sum_{\alpha, \beta,\alpha',\beta' = 0}^{N_{\alpha,h}} \int_{\T^2} U_{\alpha} U_{\beta} \overline{U_{\alpha'}}\ \overline{U_{\beta'}} dx.
\end{equation}
Moreover, for $\alpha, \beta, \alpha',\beta' \in \{0,1, \dots, N_{\kappa,h}\}$, we have 
$$\int_{\T^2} U_{\alpha} U_{\beta} \overline{U_{\alpha'}}\ \overline{U_{\beta'}} dx
=\sum_{n \in \Z^2 \cap \mathcal A_{\theta, \alpha}} \sum_{m \in \Z^2 \cap\mathcal A_{\theta, \beta}}\sum_{p \in \Z^2 \cap\mathcal A_{\theta, \alpha'}}\sum_{q \in \Z^2 \cap \mathcal A_{\theta, \beta'}}u_nu_m\overline{u_p}\overline{u_q} \int_{\T^2} e^{ix\cdot(n+m-p-q)}dx.$$
In particular, if \eqref{eq:conditionalphabetaetc} does not hold, then $\int_{\T^2} U_{\alpha} U_{\beta} \overline{U_{\alpha'}}\ \overline{U_{\beta'}} dx=0$. Hence, from Lemma \ref{lemma:arithmetic}, we can restrict the sum in \eqref{eq:developpementuL4} to the subset of indices $(\alpha,\beta,\alpha',\beta')$ satisfying \eqref{eq:conditionalphabetaetcBis}, i.e.
\begin{equation}
    \label{eq:developpementuL4Mieux}
    \norme{u}_{L^4(\T^2)}^4 = \sum_{\alpha, \beta,\alpha',\beta'\ \text{satisfying}\ \eqref{eq:conditionalphabetaetcBis}} \int_{\T^2} U_{\alpha} U_{\beta} \overline{U_{\alpha'}}\ \overline{U_{\beta'}} dx
\end{equation}

For $(\alpha,\beta,\alpha',\beta')$ satisfying \eqref{eq:conditionalphabetaetcBis}, Hölder's inequality gives that
\begin{align*}
    \left| \int_{\T^2} U_{\alpha} U_{\beta} \overline{U_{\alpha'}}\  \overline{U_{\beta'}} dx\right| &\leq \norme{U_{\alpha}}_{L^4(\T^2)}\norme{U_{\beta}}_{L^4(\T^2)} \norme{U_{\alpha'}}_{L^4(\T^2)} \norme{U_{\beta}}_{L^4(\T^2)},
    \end{align*}
so
\begin{equation*}
    \left| \int_{\T^2} U_{\alpha} U_{\beta} \overline{U_{\alpha'}}\  \overline{U_{\beta'}} dx\right| \leq ( \norme{U_{\alpha}}_{L^4(\T^2)}^2 + \norme{U_{\alpha'}}_{L^4(\T^2)}^2) ( \norme{U_{\beta}}_{L^4(\T^2)}^2 + \norme{U_{\beta'}}_{L^4(\T^2)}^2)\ \text{if}\ |\alpha - \alpha'| + |\beta - \beta'| \leq Q,
\end{equation*}
or
\begin{equation*}
    \left| \int_{\T^2} U_{\alpha} U_{\beta} \overline{U_{\alpha'}}\  \overline{U_{\beta'}} dx\right| \leq ( \norme{U_{\alpha}}_{L^4(\T^2)}^2 + \norme{U_{\alpha'}}_{L^4(\T^2)}^2) ( \norme{U_{\beta}}_{L^4(\T^2)}^2 + \norme{U_{\beta'}}_{L^4(\T^2)}^2)\ \text{if}\ |\alpha- \beta'| + |\beta - \alpha'| \leq Q.
\end{equation*}
Therefore, we have from \eqref{eq:developpementuL4Mieux},
\begin{equation}
\label{eq:esetimationuL4Pasmal}
    \norme{u}_{L^4(\T^2)} \leq C Q^2 \left( \sum_{\alpha=0}^{N_{\alpha,h}} \norme{U_{\alpha}}_{L^4(\T^2)}^2 \right)^2.
\end{equation}

Let us estimate $\norme{U_{\alpha}}_{L^4(\T^2)}^2$. By using Hölder's inequality then Cauchy-Schwarz inequality we get
\begin{align}
 \notag   \norme{U_{\alpha}}_{L^4(\T^2)} &\leq C \norme{U_{\alpha}}_{L^{\infty}(\T^2)}^{1/2} \norme{U_{\alpha}}_{L^2(\T^2)}^{1/2}\\
\notag  & \leq \left(\sum_{n \in \Z^2 \cap \mathcal{A}_{\theta,\alpha}(\kappa,h)} |u_n| \right)^{1/2}
  \left(\sum_{n \in \Z^2 \cap \mathcal{A}_{\theta,\alpha}(\kappa,h)} |u_n|^2 \right)^{1/4}\\
\norme{U_{\alpha}}_{L^4(\T^2)}  & \leq C | \Z^2 \cap \mathcal{A}_{\theta,\alpha}(\kappa,h)|^{1/4} \norme{U_{\alpha}}_{L^2(\T^2)}.
\label{eq:estimationUalpha}
\end{align}

We now need to bound the number of integral points in $\mathcal{A}_{\theta,\alpha}(\kappa,h)$. It is not difficult to see that $\mathcal{A}_{\theta,\alpha}(\kappa,h)$ is included in a rectangle of height $1+\kappa$ and width $1 + 3 \kappa^2 h$. Moreover, the number of integral points in any rectangle of height $H$ and width $W$ is bounded by $C \max(H,1) \max(W,1)$. Hence recalling $\kappa h \leq \delta$, we have
\begin{equation}
\label{eq:calculnombrepointsentiers}
    | \Z^2 \cap \mathcal{A}_{\theta,\alpha}(\kappa,h)| \leq C (1+ \kappa)(1+3 \kappa^2 h) \leq C (1+\kappa)^2.
\end{equation}
By gathering \eqref{eq:esetimationuL4Pasmal}, \eqref{eq:estimationUalpha}, \eqref{eq:calculnombrepointsentiers} and \eqref{eq:decompositionu}, we obtain
\begin{equation*}
\label{eq:upperrightcadrantestimation}
    \norme{u}_{L^4(\T^2)}^2 \leq C (1+ \kappa) (1+\kappa^2 h) \norme{u}_{L^2(\T^2)}^4.
\end{equation*}
This concludes the proof of Proposition \ref{prop:FlouZygmund}.
\end{proof}

From Proposition \ref{prop:FlouZygmund}, we can now prove the resolvent estimate of Proposition \ref{prop:resolvent}.

\begin{proof}
We split the proof into two cases.

\textit{First case: $\Re(\tau) \leq C$.} The proof combines an energy estimate coupled to a Sobolev embedding. We start from $$(-\Delta + 2 i \theta \cdot \nabla + |\theta|^2- \tau )u=f\ \text{in}\ \T^2.$$
After multiplying by $\overline{u}$, integrating on $\T^2$ and performing an integration by parts, we obtain
$$\frac 12 \|\nabla u\|^2_{L^2(\T^d)} +2i\langle \theta\cdot\nabla u, u\rangle_{L^2(\T^2)}+ (|\theta|^2-\tau)\|u\|^2_{L^2(\T^2)}=\langle f, u\rangle_{L^2(\T^2)}.$$
By applying Hödler's inequality and Young's inequality to the real and imaginary parts, we deduce that 
\begin{align}
   \label{eq:estimaterealpart} \norme{\nabla u}_{L^2(\T^2)}^2 + |\theta|^2 \norme{u}_{L^2(\T^2)}^2 - \Re(\tau) \norme{u}_{L^2(\T^2)}^2 &\leq C \norme{u}_{L^4(\T^2)} \norme{f}_{L^{4/3}(\T^2)} + C  \norme{u}_{L^2(\T^2)}^2,\\
    |\Im(\tau)| \norme{u}_{L^2(\T^2)}^2 &\leq  \norme{u}_{L^4(\T^2)} \norme{f}_{L^{4/3}(\T^2)}.   \label{eq:estimateimpart}
\end{align}
Since $|\Im(\tau)| \geq 1$, we get from \eqref{eq:estimateimpart} that
\begin{equation}
\label{eq:estimateimpartBis}
\norme{u}_{L^2(\T^2)}^2 \leq \norme{u}_{L^4(\T^2)} \norme{f}_{L^{4/3}(\T^2)}.
\end{equation}
Plugging \eqref{eq:estimateimpartBis} in \eqref{eq:estimaterealpart}, we obtain
\begin{equation*}
    \norme{u}_{H^1(\T^2)}^2 \leq C \norme{u}_{L^4(\T^2)} \norme{f}_{L^{4/3}(\T^2)}.
\end{equation*}
The Sobolev embedding $H^1(\T^2) \hookrightarrow L^4(\T^2)$ enables us to conclude.

\textit{Second case: $\Re\tau > C$.} First, let us define
$$ \forall f=\sum_{n\in \mathbb Z^2} f_n e^{i n \cdot x} \in L^2(\T^2), \quad P_{\theta,\tau}f := \sum_{n\in \mathbb Z^2} \frac{f_n}{(|n-\theta|^2 - \tau)^{1/2}} e^{i n \cdot x}.$$

Let us prove that 
\begin{equation}
\label{eq:boundedP}
   \forall f \in L^2(\T^2),\quad \norme{P_{\theta,\tau}f}_{L^4(\T^2)} \leq C \norme{f}_{L^2(\T^2)}.
\end{equation}
By calling $u=P_{\theta,\tau}f$, we decompose $u$ as follows
$$u = \underbrace{\sum_{||n-\theta|^2 - \Re(\tau)| < 1} \frac{f_n}{(|n-\theta|^2-\tau)^{-1/2}} e^{i n \cdot x}}_{u_0} +  \sum_{j=1}^{+\infty}\underbrace{\sum_{2^{j-1} \leq ||n-\theta|^2 - \Re(\tau)| < 2^j} \frac{f_n}{(|n-\theta|^2 - \tau)^{1/2}} e^{i n\cdot x}}_{u_j}.$$
From Proposition \ref{prop:FlouZygmund} with $\kappa=1$ and $h=1$, we get
\begin{equation*}
    \norme{u_0}_{L^4(\T^2)} \leq C \norme{u_0}_{L^2(\T^2)} \leq C \norme{f}_{L^2(\T^2)}.
\end{equation*}
For $1 \leq j < +\infty$, from Proposition \ref{prop:FlouZygmund} with $\kappa=2^{j/2}$, $h=(\Re(\tau))^{-1/2}$, we get
\begin{equation*}
    \norme{u_j}_{L^4(\T^2)} \leq C (1+\kappa)^{1/2} \leq C 2^{j/4} \norme{u_j}_{L^2(\T^2)}.
\end{equation*}
Therefore, by combining the three previous equations, we obtain
\begin{align*}
  \norme{u}_{L^4(\T^2)} &\leq \norme{u_0}_{L^4(\T^2)} +  \sum_{j=1}^{+\infty} \norme{u_j}_{L^4(\T^2)}\\
  &\leq C \norme{f}_{L^2(\T^2)} + C \sum_{j=1}^{+\infty} 2^{j/4} \norme{u_j}_{L^2(\T^2)} \\
   & \leq C \norme{f}_{L^2(\T^2)} + \left(\sum_{j=1}^{+\infty} 2^{-j/2}\right)^{1/2} \left(\sum_{j=1}^{+\infty} 2^j \sum_{2^{j-1} \leq ||n-\theta|^2-\Reelle(\tau)| < 2^j} \frac{|f_n|^2}{|n-\theta|^2-\tau}\right)^{1/2}\\
   & \leq C \norme{f}_{L^2(\T^2)},
\end{align*}
which concludes the proof of \eqref{eq:boundedP}.

So, we deduce from \eqref{eq:boundedP} and \cite[Lemme p.XVII-5]{GV95} that 
\begin{equation}
\label{eq:boundedPPstar}
   \forall f \in L^{4/3}(\T^2),\ \norme{P_{\theta,\tau}P_{\theta,\tau}^*f}_{L^4(\T^2)} \leq C \norme{f}_{L^{4/3}(\T^2)}.
\end{equation}
Moreover, it is not difficult to see that $P_{\theta,\tau}P_{\theta,\tau}^*$ coincides with $(-\Delta + 2 i \theta \cdot \nabla + |\theta|^2 - \tau )^{-1}$ on $C_c^{\infty}(\T^2)$ that is dense in $L^{4/3}(\T^2)$, hence \eqref{eq:boundedPPstar} leads to \eqref{eq:resolventtheta}.
\end{proof}

\subsection{A priori estimates for the group generated by $\mathcal{H}_{\theta,V}$}
This section is devoted to present a priori estimates for the group generated by $\mathcal{H}_{\theta, V}$. In the first subsection, we begin by introducing useful stability results holding in any dimension. In the second and third section, we state Strichartz estimates in the one and two dimensional cases.

\subsubsection{Stability results in $L^2$ in $\T^d$}

We first focus on the $d$-dimensional setting, i.e. we consider for $T>0$

\begin{equation}
	\label{eq:SchrodingerObsTorusdf}
		\left\{
			\begin{array}{ll}
				i  \partial_t v  = (-\Delta + 2 i \theta \cdot \nabla + |\theta|^2 + V ) v + f & \text{ in }  (0,T) \times \T^d, \\
				v(0, \cdot) = v_0 & \text{ in } \T^d.
			\end{array}
		\right.
\end{equation}

The next result relies on the conservation of the $L^2$-norm and the well-posedness of \eqref{eq:SchrodingerObsTorusdf}.

\begin{proposition}
\label{prop:wellposednessL2}
Let $T>0$. For every $v_0 \in L^2(\T^d)$, $\theta \in [0,1]^d$ and $V \in L^{\infty}(\T^d)$, the solution $v$ to \eqref{eq:SchrodingerObsTorusdf} with $f=0$ satisfies
\begin{equation}
    \label{eq:conservationL2norm}
    \norme{v(t)}_{L^2(\T^d)} = \norme{v_0}_{L^2(\T^d)}\qquad \forall t \in [0,T].
\end{equation}
Moreover, for every $v_0 \in L^2(\T^d)$, $f \in L^1(0,T;L^2(\T^d))$, $\theta \in [0,1]^d$ and $V \in L^{\infty}(\T^d)$, the solution $v$ of \eqref{eq:SchrodingerObsTorusdf} satisfies
\begin{equation}
\label{eq:L2normsource}
    \norme{v}_{L^{\infty}(0,T;L^2(\T^d))} \leq \norme{v_0}_{L^2(\T^d)} + \norme{f}_{L^1(0,T;L^2(\T^d))}.
\end{equation}
\end{proposition}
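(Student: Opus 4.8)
The plan is to prove Proposition~\ref{prop:wellposednessL2} by standard semigroup theory, exploiting the self-adjointness of $\mathcal H_{\theta,V}$ established earlier. For the conservation of the $L^2$-norm, I would argue as follows. Since $\mathcal H_{\theta,V} : H^2(\T^d) \to L^2(\T^d)$ is self-adjoint, Stone's theorem guarantees that $(e^{-it\mathcal H_{\theta,V}})_{t\in\R}$ is a strongly continuous group of unitary operators on $L^2(\T^d)$. Hence for $v_0 \in L^2(\T^d)$ the mild solution $v(t) = e^{-it\mathcal H_{\theta,V}}v_0$ satisfies $\|v(t)\|_{L^2(\T^d)} = \|v_0\|_{L^2(\T^d)}$ for all $t$, which is precisely \eqref{eq:conservationL2norm}. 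Alternatively, to make the argument self-contained, I would first treat $v_0 \in H^2(\T^d)$, multiply the equation $i\partial_t v = \mathcal H_{\theta,V} v$ by $\overline{v}$, integrate over $\T^d$, and take the imaginary part: since $\langle \mathcal H_{\theta,V} v, v\rangle_{L^2}$ is real (self-adjointness), one gets $\frac{d}{dt}\|v(t)\|_{L^2}^2 = 0$; the general case $v_0 \in L^2$ follows by density and the continuity of $t \mapsto e^{-it\mathcal H_{\theta,V}}$ on $L^2$.

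For the inhomogeneous estimate \eqref{eq:L2normsource}, I would use the Duhamel (variation of constants) formula. The mild solution of \eqref{eq:SchrodingerObsTorusdf} is
\begin{equation*}
    v(t) = e^{-it\mathcal H_{\theta,V}} v_0 - i\int_0^t e^{-i(t-s)\mathcal H_{\theta,V}} f(s)\, ds.
\end{equation*}
Taking the $L^2(\T^d)$-norm, using the triangle inequality, and using that each $e^{-i(t-s)\mathcal H_{\theta,V}}$ is an isometry on $L^2(\T^d)$ (by the unitarity just established), I obtain
\begin{equation*}
    \|v(t)\|_{L^2(\T^d)} \leq \|v_0\|_{L^2(\T^d)} + \int_0^t \|f(s)\|_{L^2(\T^d)}\, ds \leq \|v_0\|_{L^2(\T^d)} + \|f\|_{L^1(0,T;L^2(\T^d))}.
\end{equation*}
Taking the supremum over $t \in [0,T]$ yields \eqref{eq:L2normsource}. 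One should also note in passing that this computation already justifies that the Duhamel formula defines a genuine element of $C([0,T];L^2(\T^d))$ when $f \in L^1(0,T;L^2(\T^d))$, which is the sense in which the mild solution is understood.

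Honestly, there is no real obstacle here: the only point requiring mild care is the well-posedness/meaningfulness of the mild solution for merely $L^1$-in-time source terms, and that the Duhamel integral is well-defined as a Bochner integral with values in $L^2(\T^d)$ — this is standard once one knows the group is uniformly bounded (indeed isometric) on $L^2(\T^d)$, so that $s \mapsto e^{-i(t-s)\mathcal H_{\theta,V}} f(s)$ is strongly measurable with $L^1$ norm bound. I would keep the write-up short, citing Stone's theorem for the group structure and simply displaying the Duhamel formula and the triangle-inequality estimate. No uniformity in $\theta$ or $V$ needs to be tracked at this stage since the bounds \eqref{eq:conservationL2norm} and \eqref{eq:L2normsource} hold with constant $1$ independently of all parameters.
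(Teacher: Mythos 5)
Your proof is correct and follows exactly the standard route the paper relies on without writing out: unitarity of the group $e^{-it\mathcal H_{\theta,V}}$ from self-adjointness (Stone's theorem) for \eqref{eq:conservationL2norm}, and the Duhamel formula plus the triangle inequality and the isometry property for \eqref{eq:L2normsource}. The paper states this proposition without proof precisely because the argument is this standard one, so there is nothing to add.
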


The following result is a stability result according to the parameters $\theta \in [0,1]^d$ and $V \in L^{\infty}(\T^d)$.
\begin{proposition}
\label{prop:stabilityparameters}
Assume that $\theta_n \to \theta$ in $[0,1]^d$ and $V_n \rightharpoonup^\star V$ in $L^{\infty}(\T^d)$ as $n \to +\infty$, then we have
\begin{equation}
\label{eq:convergencesemigroup}
    e^{-it(\mathcal{H}_{\theta_n,V_n})} \varphi \underset{n \to +\infty}{\longrightarrow} e^{-it(\mathcal{H}_{\theta,V})} \varphi \quad \text{in} \quad L^2(\T^d),\qquad \forall t \in [0,T],\ \forall \varphi \in L^2(\T^d).
\end{equation}
\end{proposition}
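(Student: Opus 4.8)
The plan is to prove Proposition~\ref{prop:stabilityparameters} by combining the uniform stability estimate \eqref{eq:L2normsource} (which controls solutions in $L^\infty_t L^2_x$ uniformly in the parameters) with the resolvent convergence established in Proposition~\ref{prop:convresolvent}, via a Trotter--Kato type argument. First I would fix $\varphi \in L^2(\T^d)$ and $t \in [0,T]$, and observe that it suffices to prove the convergence along an arbitrary subsequence, after passing to a further subsequence; equivalently, I would show that every subsequence of $(e^{-it\mathcal H_{\theta_n,V_n}}\varphi)_n$ admits a sub-subsequence converging to $e^{-it\mathcal H_{\theta,V}}\varphi$ in $L^2(\T^d)$. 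Since $\|e^{-it\mathcal H_{\theta_n,V_n}}\varphi\|_{L^2(\T^d)} = \|\varphi\|_{L^2(\T^d)}$ by \eqref{eq:conservationL2norm}, the sequence is bounded in $L^2(\T^d)$, so along a subsequence it converges weakly to some $\psi \in L^2(\T^d)$; the task is to identify $\psi = e^{-it\mathcal H_{\theta,V}}\varphi$ and to upgrade the convergence to the strong topology.

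For the identification, the cleanest route is to work at the level of the Laplace transform / resolvent rather than directly with the propagator. Writing $v_n(t) = e^{-it\mathcal H_{\theta_n,V_n}}\varphi$, for $\Re\lambda > 0$ the resolvent $(\lambda I + i\mathcal H_{\theta_n,V_n})^{-1}\varphi = \int_0^{+\infty} e^{-\lambda t} v_n(t)\, dt$ (a Bochner integral, justified by the uniform bound $\|v_n(t)\|_{L^2} = \|\varphi\|_{L^2}$). By Proposition~\ref{prop:convresolvent} the left-hand side converges strongly in $L^2(\T^d)$ to $(\lambda I + i\mathcal H_{\theta,V})^{-1}\varphi = \int_0^{+\infty} e^{-\lambda t} e^{-it\mathcal H_{\theta,V}}\varphi\, dt$. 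Testing against an arbitrary $g \in L^2(\T^d)$, the functions $t \mapsto \langle v_n(t), g\rangle_{L^2}$ are uniformly bounded by $\|\varphi\|_{L^2}\|g\|_{L^2}$, equicontinuous on $[0,T]$ (using \eqref{eq:L2normsource} applied to differences of solutions, together with boundedness of $\mathcal H_{\theta_n,V_n}\varphi$ for smooth $\varphi$ and a density argument — or more simply the bound $\|v_n(t)-v_n(s)\|_{L^2}\le |t-s|\,\|\mathcal H_{\theta_n,V_n}\varphi\|_{L^2}$ for $\varphi \in H^2$ and then density), hence precompact in $C([0,T])$ by Arzel\`a--Ascoli; any uniform limit $t\mapsto \langle w(t),g\rangle$ must, by injectivity of the Laplace transform on continuous functions and the resolvent identity above, coincide with $t\mapsto\langle e^{-it\mathcal H_{\theta,V}}\varphi, g\rangle$. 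Since $g$ is arbitrary, $v_n(t) \rightharpoonup e^{-it\mathcal H_{\theta,V}}\varphi$ for every $t\in[0,T]$, so $\psi = e^{-it\mathcal H_{\theta,V}}\varphi$.

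Finally, the weak convergence $v_n(t)\rightharpoonup e^{-it\mathcal H_{\theta,V}}\varphi$ combined with the norm convergence $\|v_n(t)\|_{L^2(\T^d)} = \|\varphi\|_{L^2(\T^d)} = \|e^{-it\mathcal H_{\theta,V}}\varphi\|_{L^2(\T^d)}$ (again by \eqref{eq:conservationL2norm}, since $\mathcal H_{\theta,V}$ is self-adjoint) forces strong convergence in the Hilbert space $L^2(\T^d)$, which is exactly \eqref{eq:convergencesemigroup}. I expect the main obstacle to be the careful handling of the density/equicontinuity step needed to pass from resolvent convergence to pointwise-in-$t$ convergence of the propagators: one must either invoke an abstract Trotter--Kato theorem (checking its hypotheses, namely that $\{\mathcal H_{\theta_n,V_n}\}$ generate a uniformly bounded family of $C_0$-groups and that their resolvents converge strongly at one point $\lambda$ with $\Re\lambda>0$ and $\Re\bar\lambda>0$, which Proposition~\ref{prop:convresolvent} supplies), or reprove the relevant piece by hand as sketched above; in either case the subtlety is that resolvent convergence is only known for $\Re\lambda>0$, so one genuinely uses the self-adjointness (the spectrum is real) to conclude, rather than a general semigroup statement.
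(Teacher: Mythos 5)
Your proposal is correct and follows essentially the same route as the paper: the paper's proof is a one-line application of the Trotter--Kato approximation theorem (\cite[Theorem 4.2]{Paz83}) combined with the resolvent convergence of Proposition~\ref{prop:convresolvent}, which is exactly the first option you describe (the hypotheses you list --- unitary, hence uniformly bounded, $C_0$-groups and strong resolvent convergence at some $\lambda$ with $\Re\lambda>0$, with dense range $H^2(\T^d)$ of the limit resolvent --- are indeed what need checking). Your hand-crafted Laplace-transform/Arzel\`a--Ascoli version is a sound, if more laborious, reproof of the same theorem, so there is no substantive difference in approach.
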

\begin{proof}
This is a direct application of Trotter-Kato approximation theorem, see \cite[Theorem 4.2]{Paz83}, using the convergence of the resolvent already stated in Proposition \ref{prop:convresolvent}.
\end{proof}

\subsubsection{Strichartz estimates in $\T^1$} 

Now we focus on the $1$-dimensional setting, i.e. we consider for $T>0$

\begin{equation}
	\label{eq:SchrodingerObsTorus1f}
		\left\{
			\begin{array}{ll}
				i  \partial_t v  = (-\partial_{x}^2 + 2 i \theta \partial_{x} + |\theta|^2 + V ) v + f & \text{ in }  (0,T) \times \T^1, \\
				v(0, \cdot) = v_0 & \text{ in } \T^1.
			\end{array}
		\right.
\end{equation}

Instrumental in the proof of the one-dimensional observability estimates for Schrödinger type equations are the following Strichartz type estimates, taken from \cite[Proposition~2.1]{BBZ13}:

\begin{proposition}[{\cite[Proposition~2.1]{BBZ13}}]\label{strichartz_1D_prop}
Let $T>0$ and $M>0$. There exists $C=C(T,M)>0$ such that for all $\theta \in [0,1]$, $V\in L^{\infty}(\mathbb T)$ with $\|V\|_{L^{\infty}} \leq M$ and $u_0 \in L^2(\mathbb T)$,
\begin{equation}\label{estimation_strichartz_1D}
    \left\|e^{-it\mathcal H_{\theta, V}}u\right\|_{L^{\infty}(\mathbb T; L^2(0,T))} \leq C \|u\|_{L^2(\mathbb T)}.
\end{equation}
\end{proposition}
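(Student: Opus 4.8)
The plan is to reduce the estimate \eqref{estimation_strichartz_1D} to an estimate for the free-type operator $\mathcal{H}_{\theta,0} = -\partial_x^2 + 2i\theta\partial_x + |\theta|^2$ by treating the potential $V$ perturbatively via the Duhamel formula, and to obtain the estimate for $\mathcal{H}_{\theta,0}$ by a direct Fourier-series computation that is uniform in $\theta \in [0,1]$. First I would establish the case $V=0$. Writing $u_0 = \sum_{n\in\Z} \widehat{u_0}(n) e^{inx}$, one has $e^{-it\mathcal{H}_{\theta,0}} u_0 = \sum_{n\in\Z} \widehat{u_0}(n)\, e^{-it|n-\theta|^2} e^{inx}$, so for fixed $x$ the function $t \mapsto e^{-it\mathcal{H}_{\theta,0}}u_0(x)$ is a superposition of the exponentials $e^{-it|n-\theta|^2}$ with coefficients $\widehat{u_0}(n) e^{inx}$ of the same modulus $|\widehat{u_0}(n)|$. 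The key arithmetic point in dimension one is a spectral gap: the phases $\mu_n := |n-\theta|^2 = (n-\theta)^2$ satisfy $|\mu_n - \mu_m| = |n-m|\,|n+m-2\theta| \geq |n-m| \geq 1$ whenever $n \neq m$ (using $n+m-2\theta \in \R\setminus(0,?)$ — more precisely $|n+m-2\theta|\geq |n-m|$ is false in general, but $|\mu_n-\mu_m|\ge 1$ still holds since the $\mu_n$ are, up to the fixed shift, a $1$-separated set: if $n>m\geq$ the two nearest values differ by at least $\min_k |(k-\theta)^2-(k'-\theta)^2|$). This uniform separation lets me invoke an Ingham-type inequality on a fixed interval $(0,T)$ — or simply the almost-orthogonality of $1$-separated frequencies in $L^2(0,T)$ after possibly shrinking/enlarging $T$ — to get $\|e^{-it\mathcal{H}_{\theta,0}}u_0(x)\|_{L^2(0,T)}^2 \leq C(T)\sum_n |\widehat{u_0}(n)|^2 = C(T)\|u_0\|_{L^2(\T)}^2$ with $C(T)$ independent of $x$ and of $\theta$; taking the supremum over $x\in\T$ gives \eqref{estimation_strichartz_1D} for $V=0$.

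Next I would handle general $V \in L^\infty(\T)$ with $\|V\|_{L^\infty}\le M$ by Duhamel: if $v(t) = e^{-it\mathcal{H}_{\theta,V}}u_0$ then
\begin{equation*}
v(t) = e^{-it\mathcal{H}_{\theta,0}}u_0 - i\int_0^t e^{-i(t-s)\mathcal{H}_{\theta,0}}\bigl(V v(s)\bigr)\,ds.
\end{equation*}
Taking the $L^\infty(\T;L^2(0,T))$ norm, the first term is controlled by the $V=0$ case, and for the second term I would use Minkowski's integral inequality together with the $V=0$ Strichartz bound applied to each $e^{-i(t-s)\mathcal{H}_{\theta,0}}(Vv(s))$, plus the $L^2$-conservation $\|v(s)\|_{L^2(\T)} = \|u_0\|_{L^2(\T)}$ from Proposition~\ref{prop:wellposednessL2}, to obtain a bound of the form $C(T)\,M\,T\,\|u_0\|_{L^2(\T)}$. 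Choosing $T$ small so that $C(T)MT \le 1/2$ absorbs this term into the left-hand side on a short interval $[0,T_0]$ with $T_0 = T_0(M)$; iterating over $\lceil T/T_0\rceil$ consecutive intervals of length $T_0$ and summing the contributions (using again $L^2$-conservation to restart at each step) yields the estimate on $[0,T]$ with constant $C(T,M)$ as claimed.

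The main obstacle is the uniformity in $\theta$ of the spectral-gap / Ingham argument: one must check that the separation constant for the phases $\{(n-\theta)^2 : n\in\Z\}$ and hence the Ingham constant $C(T)$ can be taken independent of $\theta \in [0,1]$. This is where I would be most careful — verifying $\inf_{n\neq m}|(n-\theta)^2 - (m-\theta)^2| \geq 1$ uniformly in $\theta\in[0,1]$ (this follows since $(n-\theta)^2-(m-\theta)^2 = (n-m)(n+m-2\theta)$ and for the two indices realizing the minimal gap of a given ``level'' the factor $|n+m-2\theta|$ is bounded below; more robustly, for $|n|,|m|$ large the gaps grow, and only finitely many pairs need separate inspection, each giving a positive bound continuous in $\theta$ on the compact set $[0,1]$). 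Since the statement is quoted directly from \cite[Proposition~2.1]{BBZ13}, one may alternatively just cite their proof, noting that the $\theta$-dependence is innocuous because the shifted lattice $\Z-\theta$ has the same $1$-separation property as $\Z$ itself, uniformly in $\theta$.
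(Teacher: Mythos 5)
The paper itself gives no proof of this proposition: it is quoted from \cite[Proposition~2.1]{BBZ13}, with only the remark that uniformity in $V\in B_{L^{\infty}}(0,M)$ is contained in the proof given there. Judged on its own merits, your proposal has one genuine error at its core: the claimed spectral gap is false. You assert $\inf_{n\neq m}\left|(n-\theta)^2-(m-\theta)^2\right|\geq 1$ uniformly in $\theta\in[0,1]$, and your fallback ("only finitely many pairs need separate inspection, each giving a positive bound continuous in $\theta$ on the compact set $[0,1]$") fails too: for $n=1$, $m=0$ the gap equals $|1-2\theta|$, which vanishes at $\theta=1/2$ and is arbitrarily small but nonzero nearby, so there is no uniform positive lower bound and the Ingham constant in your $V=0$ step blows up as written. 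Your closing remark that "the shifted lattice $\Z-\theta$ has the same $1$-separation property as $\Z$" confuses separation of the points $n-\theta$ (true) with separation of their squares (false). The paper explicitly flags this failure of a uniform gap for $\{|n-\theta|^2\}$ in Remark~\ref{rmk:strichartztheta}, albeit in the two-dimensional setting.

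The argument is nonetheless repairable in dimension one because the clustering is bounded: since $(n-\theta)^2-(m-\theta)^2=(n-m)(n+m-2\theta)$, two distinct frequencies can lie within $1/2$ of each other only if $n+m$ equals the integer $s_0$ nearest to $2\theta$, and each $n$ belongs to at most one such pair. Splitting $\Z$ into $\{n:\,2n>s_0\}$ and its complement yields two families whose frequencies are $1/2$-separated uniformly in $\theta$; applying the upper (Bessel-type) Ingham bound to each and summing gives the $V=0$ estimate with a constant depending only on $T$. Your Duhamel step is then sound — and in fact no smallness/bootstrap is needed, since $\|v(s)\|_{L^2}=\|u_0\|_{L^2}$ by conservation gives $\int_0^T\|Vv(s)\|_{L^2}\,ds\leq MT\|u_0\|_{L^2}$ directly; this is exactly how the paper deduces the inhomogeneous Proposition~\ref{prop:StrichartzEstimate1d}. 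An alternative that avoids gap conditions altogether is a uniform resolvent estimate plus a $TT^{*}$ argument, which is the route the paper takes in the two-dimensional case (Propositions \ref{prop:resolvent}--\ref{prop:StrichartzEstimate}).
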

Notice that Proposition~\ref{strichartz_1D_prop} is a slightly modified version of \cite[Proposition~2.1]{BBZ13}. First, the potential is assumed to be in $L^{\infty}(\mathbb T)$, which is a sufficient assumption for our purpose. On the other hand, we claim that the constant in \eqref{estimation_strichartz_1D} can be taken uniformly with respect to $V$ in $B_{L^{\infty}}(0,M)$. This fact is clearly contained in the proof given by the authors of \cite[Proposition~2.1]{BBZ13}.

The next result concerns Strichartz estimates for solutions to \eqref{eq:SchrodingerObsTorus1f}.
\begin{proposition}
\label{prop:StrichartzEstimate1d}
Let $T>0$ and $M>0$. There exists $C=C(T,M)>0$ such that for all $\theta \in [0,1]$, $V\in L^{\infty}(\mathbb T)$ with $\|V\|_{L^{\infty}} \leq M$, $v_0 \in L^2(\T^1)$ and $f \in L^1(0,T;L^2(\T^1))$, the solution $v$ to \eqref{eq:SchrodingerObsTorus1f} satisfies
\begin{equation}
\label{eq:strichartz1Dsource}
    \norme{v}_{L^{\infty}(0,T;L^2(\T^1)) \cap L^{\infty}(\mathbb T^1; L^2(0,T))} \leq C \left( \norme{v_0}_{L^2(\T^1)} + \norme{f}_{L^1(0,T;L^2(\T^1))}\right).
\end{equation}
\end{proposition}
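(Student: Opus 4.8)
The plan is to reduce everything to the homogeneous Strichartz estimate of Proposition~\ref{strichartz_1D_prop} via Duhamel's formula, the only additional ingredient being Minkowski's integral inequality to absorb the source term. Writing the solution of \eqref{eq:SchrodingerObsTorus1f} as
\[
v(t) = e^{-it\mathcal{H}_{\theta,V}}v_0 - i\int_0^t e^{-i(t-s)\mathcal{H}_{\theta,V}}f(s)\,ds =: v_{\mathrm{hom}}(t)+v_{\mathrm{inh}}(t),
\]
the $L^{\infty}(0,T;L^2(\T^1))$ component of \eqref{eq:strichartz1Dsource} is nothing but \eqref{eq:L2normsource} in Proposition~\ref{prop:wellposednessL2}, so I would only have to control the $L^{\infty}(\T^1;L^2(0,T))$ norm. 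For $v_{\mathrm{hom}}$ this is exactly \eqref{estimation_strichartz_1D}, with a constant depending only on $T$ and $M$ and, crucially, uniform in $\theta\in[0,1]$ and $V\in B_{L^{\infty}}(0,M)$.

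For $v_{\mathrm{inh}}$, I would fix $x\in\T^1$, write $v_{\mathrm{inh}}(t,x)=-i\int_0^T \mathbf{1}_{\{s<t\}}\big(e^{-i(t-s)\mathcal{H}_{\theta,V}}f(s)\big)(x)\,ds$, and apply Minkowski's integral inequality in the time variable:
\[
\|v_{\mathrm{inh}}(\cdot,x)\|_{L^2(0,T)} \le \int_0^T \Big\| t\mapsto \mathbf{1}_{\{s<t\}}\,\big(e^{-i(t-s)\mathcal{H}_{\theta,V}}f(s)\big)(x)\Big\|_{L^2(0,T)}\,ds.
\]
Taking the supremum over $x$, performing the change of variable $t\mapsto t-s$, and using $L^2(0,T-s)\hookrightarrow L^2(0,T)$, the right-hand side is bounded by $\int_0^T \big\|e^{-it\mathcal{H}_{\theta,V}}f(s)\big\|_{L^{\infty}(\T^1;L^2(0,T))}\,ds$. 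Applying \eqref{estimation_strichartz_1D} to the datum $f(s)\in L^2(\T^1)$ for a.e. $s$ then gives $C(T,M)\int_0^T\|f(s)\|_{L^2(\T^1)}\,ds = C(T,M)\|f\|_{L^1(0,T;L^2(\T^1))}$. Summing this with the bound for $v_{\mathrm{hom}}$ and recombining with the $L^{\infty}_tL^2_x$ estimate yields \eqref{eq:strichartz1Dsource}.

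I do not expect any serious obstacle here: because the source term is measured in $L^1$ in time, which is the favourable exponent, the elementary Minkowski inequality suffices and no Christ--Kiselev type argument is needed; the uniformity of $C$ with respect to $(\theta,V)$ is simply inherited from Proposition~\ref{strichartz_1D_prop}. The only point deserving a line of care is that after the time translation the Strichartz norm of $v_{\mathrm{inh}}$ is taken over the subinterval $(0,T-s)$, which is harmlessly dominated by the norm over $(0,T)$.
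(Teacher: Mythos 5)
Your proposal is correct and follows essentially the same route as the paper: Duhamel's formula, Minkowski's integral inequality to pull the $ds$-integral outside the $L^{\infty}(\T^1;L^2(0,T))$ norm, and the homogeneous estimate \eqref{estimation_strichartz_1D} applied for each fixed $s$ (the paper writes $e^{-i(t-s)\mathcal{H}_{\theta,V}}=e^{-it\mathcal{H}_{\theta,V}}e^{is\mathcal{H}_{\theta,V}}$ and invokes the conservation law \eqref{eq:conservationL2norm}, which is the same device as your time translation). No gap.
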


\begin{proof}
On the first hand, thanks to \eqref{eq:L2normsource}, we have for all $v_0 \in L^2(\T^1)$,
$$\|v\|_{L^{\infty}(0,T; L^2(\T^1))} \leq \|v_0\|_{L^2(\T^1)}+ \| f\|_{L^1(0,T; L^2(\T^1)}.$$

On the other hand, by using Duhamel's formula, we have that
$$v(t) = e^{-it \mathcal{H}_{\theta,V}} v_0 + \int_{0}^T 1_{s < t} e^{-i(t-s)\mathcal{H}_{\theta,V}} f(s) ds.$$
Then, by \eqref{eq:conservationL2norm} and \eqref{estimation_strichartz_1D}, we have
\begin{align*}
    \norme{v}_{L^{\infty}(\mathbb T; L_t^2(0,T))} &\leq C \left( \norme{v_0}_{L^2(\T^2)} + \int_0^T \norme{ e^{-it\mathcal{H}_{\theta,V}} e^{is\mathcal{H}_{\theta,V}}  f(s) }_{L^{\infty}(\mathbb T; L_t^2(0,T))} ds\right)\\ 
    & \leq C  \left( \norme{v_0}_{L^2(\T^2)} + \int_0^T \norme{e^{is\mathcal{H}_{\theta,V}}  f(s) }_{L^{2}(\T^1)} ds\right)\\
& = C \left( \norme{v_0}_{L^2(\T^1)} + \norme{f}_{L^1(0,T;L^2(\T^1))}\right),
\end{align*}
which leads to the second estimate of \eqref{eq:strichartz1Dsource}.
\end{proof}

\subsubsection{Strichartz estimates in $\T^2$} 

Now we focus on the $2$-dimensional setting, i.e. we consider for $T>0$

\begin{equation}
	\label{eq:SchrodingerObsTorus2f}
		\left\{
			\begin{array}{ll}
				i  \partial_t v  = (-\Delta + 2 i \theta \cdot \nabla + |\theta|^2 + V ) v + f & \text{ in }  (0,T) \times \T^2, \\
				v(0, \cdot) = v_0 & \text{ in } \T^2.
			\end{array}
		\right.
\end{equation}

The next result concerns Strichartz estimates for solutions to \eqref{eq:SchrodingerObsTorus2f}.
\begin{proposition}
\label{prop:StrichartzEstimate}
Let $T>0$ and $M>0$. There exists $C=C(T,M)>0$ such that for every $\theta \in [0,1]^2$, $v_0 \in L^2(\T^2)$ and $f \in L^{4/3}(\T^2;L^2(0,T)) \cap L^1(0,T;L^2(\T^2))$, the solution $v$ to \eqref{eq:SchrodingerObsTorus2f} satisfies
\begin{equation*}
    \norme{v}_{L^{\infty}(0,T;L^2(\T^2)) \cap L^{4}(\T^2;L^2(0,T))} \leq C \left( \norme{v_0}_{L^2(\T^2)} + \norme{f}_{L^{4/3}(\T^2;L^2(0,T)) \cap L^1(0,T;L^2(\T^2))}\right).
\end{equation*}
\end{proposition}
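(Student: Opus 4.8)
The plan is to derive Proposition~\ref{prop:StrichartzEstimate} from the resolvent estimate of Proposition~\ref{prop:resolventV} in exactly the same way that Strichartz estimates are classically deduced from uniform resolvent bounds (the Kato-type $TT^*$ argument, as in \cite{BBZ13}). First I would treat the homogeneous part, i.e. establish
\begin{equation*}
    \norme{e^{-it\mathcal{H}_{\theta,V}} v_0}_{L^{4}(\T^2;L^2(0,T))} \leq C \norme{v_0}_{L^2(\T^2)}
\end{equation*}
uniformly in $\theta\in[0,1]^2$ and in $V$ with $\norme{V}_{L^\infty}\leq M$. The standard route is to write $U(t) = e^{-it\mathcal{H}_{\theta,V}}$, note that $\norme{U(\cdot)v_0}_{L^4_x L^2_t(0,T)}^2 = \norme{\int_0^T U(t)^* \chi(t) U(t)\, dt}$-type dualized expression, and reduce the square of the $L^4_xL^2_t$ norm, after a $TT^*$ manipulation and a cutoff in time, to bounding the operator
\begin{equation*}
    g \longmapsto \int_0^T \mathbbm{1}_{s<t}\, U(t-s) g(s)\, ds
\end{equation*}
from $L^{4/3}_xL^2_t$ to $L^4_xL^2_t$. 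Via a Fourier transform in the time variable on $\rr$ (after extending by a compactly supported cutoff, so that the finite time interval is harmless and only $\tau$ with, say, $|\Im\tau|\geq 1$ after a suitable shift matter), this operator norm is controlled by $\sup_{\tau}\norme{(\mathcal{H}_{\theta,V}-\tau)^{-1}}_{L^{4/3}\to L^4}$, which is exactly the content of \eqref{eq:resolventthetaV}; the uniformity in $\theta$ and $V$ is inherited from the uniformity already proved in Proposition~\ref{prop:resolventV}. One has to be a little careful to use the correct frequency-localized version or a Christ--Kiselev type lemma to pass from the full-line bound to the retarded (triangular) kernel $\mathbbm{1}_{s<t}$, but since we only need a fixed finite time $T$ and the endpoint is not in play ($4>2$), this is routine.

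Having the homogeneous estimate and the $L^\infty_t L^2_x$ bound from \eqref{eq:L2normsource}, I would then handle the inhomogeneous equation by Duhamel's formula,
\begin{equation*}
    v(t) = e^{-it\mathcal{H}_{\theta,V}} v_0 + \int_0^t e^{-i(t-s)\mathcal{H}_{\theta,V}} f(s)\, ds,
\end{equation*}
exactly as in the proof of Proposition~\ref{prop:StrichartzEstimate1d}. The $L^\infty_tL^2_x$ part of the estimate is precisely \eqref{eq:L2normsource}. For the $L^4_xL^2_t$ part of the Duhamel term, the contribution of $f$ split as $\int_0^T$ reduces, by the unitarity $\norme{e^{is\mathcal{H}_{\theta,V}}f(s)}_{L^2}=\norme{f(s)}_{L^2}$, to the retarded operator above acting on $f$; combining the $L^{4/3}_xL^2_t\to L^4_xL^2_t$ bound for that operator with the trivial estimate coming from Minkowski and the homogeneous bound gives
\begin{equation*}
    \norme{v}_{L^4(\T^2;L^2(0,T))} \leq C\left( \norme{v_0}_{L^2(\T^2)} + \norme{f}_{L^{4/3}(\T^2;L^2(0,T))} + \norme{f}_{L^1(0,T;L^2(\T^2))}\right),
\end{equation*}
and adding the $L^\infty_tL^2_x$ estimate yields the claimed inequality.

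The main obstacle is the $TT^*$/retarded-kernel bookkeeping: making precise that the resolvent estimate \eqref{eq:resolventthetaV}, which is stated for the stationary operator with spectral parameter $\tau$ satisfying $|\Im\tau|\geq 1$, indeed controls the space-time operator after the Fourier transform in $t$, and that the passage from the non-retarded kernel to the retarded one $\mathbbm{1}_{s<t}$ costs nothing essential (Christ--Kiselev, or a direct argument exploiting that we are away from the endpoint). A secondary but important point, and the reason this section keeps insisting on it, is that every constant produced along the way must be tracked to be uniform in $\theta\in[0,1]^2$ and in $V\in B_{L^\infty}(0,M)$; this is automatic because the only nontrivial input, Proposition~\ref{prop:resolventV}, already has this uniformity built in, and the remaining manipulations (Duhamel, Minkowski, unitarity of the group, the time cutoff) are all insensitive to $\theta$ and $V$. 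Remark~\ref{rmk:strichartztheta} (referenced earlier) explains why one genuinely cannot shortcut this via the classical Zygmund $L^4$ eigenfunction bound alone, which is exactly why the resolvent detour is needed.
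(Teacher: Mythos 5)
Your proposal follows essentially the same route as the paper: Duhamel splitting, a $TT^*$ reduction of the homogeneous $L^4_xL^2_t$ bound to the retarded operator $f\mapsto\int_0^t e^{-i(t-s)\mathcal H_{\theta,V}}f(s)\,ds$, a Fourier transform in time combined with the uniform resolvent estimate \eqref{eq:resolventthetaV}, and Minkowski/unitarity for the $L^\infty_tL^2_x$ and $L^1_tL^2_x$ parts. The only cosmetic difference is that the paper does not need Christ--Kiselev: it handles the retarded kernel directly by damping ($Y(t)=e^{-t}1_{(0,+\infty)}(t)y(t)$ solves a shifted equation whose time-Fourier transform has $|\Im(\tau-i)|=1$), which is the ``suitable shift'' you allude to.
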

Proposition \ref{prop:StrichartzEstimate} has already been proved in \cite[Proposition 2.2]{BBZ13} for $\theta = 0$. Here, the main novelty is to get a uniform a priori estimate with respect to the parameter $\theta \in [0,1]^2$. 
\begin{remark}
\label{rmk:strichartztheta}
It is worth mentioning that the homogeneous case, i.e. $f=0$ cannot be proved as \cite[Remark (3) p.333]{BZ19}. Indeed, the starting point for obtaining such a case is the Zygmund's inequality \eqref{eq:zygmund}. By setting $v_0 = \sum_{\lambda > 0} v_\lambda$ with $v_{\lambda}=\sum_{|n-\theta|^2 = \lambda} c_n e^{i n\cdot x}$, we have that for $v$, solution to \eqref{eq:SchrodingerObsTorus2f} with $f=0$,
\begin{equation}
\label{eq:solutionL4homogeneous}
    \norme{v}_{L^4(\T^2;L^2(0,2\pi))}^4 = \int_{\T^2}\left(\int_{0}^{2 \pi}\left|\sum_{\lambda} e^{it \lambda} v_{\lambda}(z)\right|^2 dt\right)^2 dz.
\end{equation}

For $\theta=0$, the right hand side of \eqref{eq:solutionL4homogeneous} can then be bounded as follows, using Parseval equality, Hölder's inequality and Zygmund's inequality \eqref{eq:zygmund}
\begin{multline*}
  \int_{\T^2}\left(\int_{0}^{2 \pi}\left|\sum_{\lambda} e^{it \lambda} v_{\lambda}(z)\right|^2 dt\right)^2 dz =   (2 \pi)^2 \int_{\T^2}\left(\sum_{\lambda} |v_\lambda(z)|^2\right)^2 dz = (2 \pi)^2 \int_{\T^2}\sum_{\lambda, \mu} |v_\lambda(z)|^2 |v_{\mu}(z)|^2 dz\\
  \leq (2 \pi)^2 \sum_{\lambda, \mu} \norme{v_{\lambda}}_{L^4(\T^2)}^2 \norme{v_{\mu}}_{L^4(\T^2)}^2 \leq C \sum_{\lambda, \mu} \norme{v_{\lambda}}_{L^2(\T^2)}^2\norme{v_{\mu}}_{L^2(\T^2)}^2 \leq C \left(\sum_{\lambda} \norme{v_{\lambda}}_{L^2(\T^2)}^2\right)^2
  \leq C \norme{v_0}_{L^2(\T^2)}^2.
\end{multline*}

On the other hand, for $\theta \neq 0$, one cannot use Parseval equality to first estimate the right hand side of \eqref{eq:solutionL4homogeneous} because $\lambda>0$ is not an integer anymore. A natural strategy would be to employ an Ingham's inequality, see \cite{Ing36}. But for obtaining such an estimate, one needs to prove a gap condition on the set $\{|n-\theta|^2\ ;\ n \in \Z^2\}$, uniformly in $\theta$. This turns to be false because $| (1,0) - (\theta_1,\theta_2)|^2 - | (0,1) - (\theta_1,\theta_2)|^2 =  2(\theta_2-\theta_1) \to 0$ as $\theta_2-\theta_1 \to 0$.

Actually, even for fixed $\theta$, this gap fails to hold. Indeed, let us consider $\theta=(\theta_1, \theta_2)$, where $\theta_1$ and $\theta_2$ are $\mathbb Q$-linearly independent. It is well-known that, in that case, the set $\theta_1 \mathbb Z+\theta_2 \mathbb Z$ is dense in $\mathbb R$. For any $\varepsilon>0$, we can therefore find two integers $k, l \in \mathbb Z$ such that $0<|k \theta_1+l \theta_2| < \frac{\varepsilon}{4}$. By defining $n=(k+2l, l-2k)$ and $m=(2l-k, -l-2k)$, we obtain 
$$0<\left||n-\theta|^2-|m-\theta|^2\right|= 4|k\theta_1+l\theta_2|< \varepsilon.$$
This prevents $\{|n-\theta|^2\ ;\ n \in \Z^2\}$ to satisfy a gap.
\end{remark}

Instrumental in the proof of Proposition \ref{prop:StrichartzEstimate} are the resolvent estimates \eqref{eq:resolventthetaV} given by Proposition \ref{prop:resolventV}.
\begin{proof}[Proof of Proposition \ref{prop:StrichartzEstimate}]
Let $\theta \in [0,1]^2$, $v_0 \in L^2(\T^2)$ and $f \in L^{4/3}(\T^2;L^2(0,T)) \cap L^1(0,T;L^2(\T^2))$. Let $v$ be the solution of \eqref{eq:SchrodingerObsTorus2f}. We split the solution into two parts, i.e. 
$$v(t) = e^{- i t \mathcal H_{\theta,V}} v_0 + \int_{0}^t e^{- i (t-s) \mathcal H_{\theta,V}} f(s) ds =: v_1(t) + v_2(t).$$

\textit{First step: Bound on $v_1(t)$.} The goal of this step is to obtain
\begin{equation}
\label{eq:boundsemigroupdata}
    \norme{v_1(t)}_{L^{\infty}(0,T;L^2(\T^2)) \cap L^{4}(\T^2;L^{2}(0,T))} \leq C \norme{v_0}_{L^2(\T^2)}.
\end{equation}

Firstly, from \eqref{eq:conservationL2norm}, we have $\norme{v_1(t)}_{L^{\infty}(0,T;L^2(\T^2))} = \norme{v_0}_{L^2(\T^2)}$.

The second bound of \eqref{eq:boundsemigroupdata} comes from a “$TT^*$ argument”. More precisely, we set
$$ T : v_0 \in L^2(\T^2) \mapsto v_1.$$
In order to prove that $T$ is a bounded operator from $L^{2}(\T^2)$ to $L^{4}(\T^2;L^2(0,T))$, it suffices to prove that $TT^*$ is a bounded operator from $L^{4/3}(\T^2;L^2(0,T))$ to $L^4(\T^2;L^2(0,T))$, see for instance \cite[Lemme p.XVII-5]{GV95}.

First, a straightforward computation gives that
\begin{equation*}
    T^{*} f = \int_{0}^T e^{is\mathcal{H}_{\theta,V}} f(s) ds \in L^2(\T^2),\qquad \forall f \in L^{4/3}(\T^2;L^2(0,T)),
\end{equation*}
so
\begin{align*}
    TT^*f = \int_0^T e^{-i(t-s) \mathcal{H}_{\theta,V}} f(s) ds 
    = \int_0^t e^{-i(t-s) \mathcal{H}_{\theta,V}} f(s) ds + \int_t^T e^{-i(t-s) \mathcal{H}_{\theta,V}} f(s) ds
     =: T_1 f + T_2 f.
\end{align*}
It remains to prove that $T_1$ and $T_2$ are bounded operators from $L^{4/3}(\T^2;L^2(0,T))$ to $L^4(\T^2;L^2(0,T))$. We will only do it for $T_1$ because the same arguments apply for treating $T_2$.

Setting $y=T_1 f$, we observe that $y(t,\cdot)$ solves the Schrödinger equation \eqref{eq:SchrodingerObsTorus2f} with $v_0=0$, so setting $Y(t):= e^{-t} 1_{(0,+\infty)}(t) y(t)$ and $F(t) := e^{-t} f(t) 1_{(0,T)}(t)$, the following equation is satisfied in the distributional sense
\begin{equation}
\label{eq:equationY}
    i  \partial_t Y + i Y  = (-\Delta + 2 i \theta \cdot \nabla + |\theta|^2 +V ) Y + F \qquad \text{ in }  (0,+\infty) \times \T^2.
\end{equation}
Therefore, taking the Fourier transform in the time variable of \eqref{eq:equationY}, we obtain the equation satisfied by $\widehat{Y}$,
\begin{equation}
\label{eq:equationwidehatY}
  (-\Delta + 2 i \theta \cdot \nabla + |\theta|^2 + V + \tau -i ) \widehat{Y}(\tau)  = -\widehat{F}(\tau)\qquad \forall \tau \in \R.
\end{equation}
Therefore one can apply \eqref{eq:resolventthetaV} to \eqref{eq:equationwidehatY}, using that $\Ima(\tau-i) = 1$ to get
\begin{equation}
\label{eq:FourierEstimateYtau}
    \norme{\widehat{Y}(\tau)}_{L^4(\T^2)} \leq C \norme{\widehat{F}(\tau)}_{L^{4/3}(\T^2)}\qquad \forall \tau \in \R.
\end{equation}
Hence, using Parseval equality, and reversing time integration and space integration because $4 \geq 2 \geq 2/3$, we get from \eqref{eq:FourierEstimateYtau} that
\begin{multline*}
    \norme{T_1 f}_{L^4(\T^2;L^2(0,T))} \leq C \norme{Y}_{L^4(\T^2;L^2(\R_t))} = C \norme{\widehat{Y}}_{L^4(\T^2;L^2(\R_{\tau}))} \leq C\norme{\widehat{Y}}_{L^2(\R_{\tau};L^{4}(\T^2))} \\
  \leq C\norme{\widehat{F}}_{L^2(\R_{\tau};L^{4/3}(\T^2))}  \leq  C\norme{\widehat{F}}_{L^{4/3}(\T^2;L^2(\R_{\tau}))} 
    \leq C \norme{F}_{L^{4/3}(\T^2;L^2(\R_t))} \leq C \norme{f}_{L^{4/3}(\T^2;L^2(0,T))},
\end{multline*}
which proves that $T_1$ is bounded from $L^{4/3}(\T^2;L^2(0,T))$ to $L^4(\T^2;L^2(0,T))$. This concludes the proof of the second bound of \eqref{eq:boundsemigroupdata} hence the first step.

\textit{Second step: Bound on $v_2(t)$.} The goal of this step is to obtain
\begin{equation*}
    \norme{v_2}_{L^{\infty}(0,T;L^2(\T^2)) \cap L^{4}(\T^2;L^{2}(0,T))} \leq C \norme{f}_{L^1(0,T;L^2(\T^2) \cap L^{4/2}(\T^2;L^{2}(0,T))}.
\end{equation*}
The first bound is obtained as follows, for every $t \in [0,T]$,
\begin{equation*}
    \norme{v_2(t)}_{L^2(\T^2)) } \leq \int_{0}^T \norme{e^{-i(T-s)(\mathcal{H}_{\theta,V})} f(s)}_{L^2(\T^2)} \leq \int_{0}^T \norme{ f(s)}_{L^2(\T^2)} \leq \norme{f}_{L^1(0,T;L^2(\T^2))}.
\end{equation*}
The second bound exactly comes from the boundedness of $T_1$ from $L^{4/3}(\T^2;L^2(0,T))$ to $L^4(\T^2;L^2(0,T))$, see the previous step.

This concludes the proof of the second step then the proof of Proposition \ref{prop:StrichartzEstimate}.
\end{proof}

\section{Observability estimates in $\T^d$: from highly oscillating initial data to $L^2$ initial data}
\label{sec:refobsmultid}

The goal of this section is to prove that it is sufficient to establish observability inequalities for highly oscillating initial data to deduce the observability of $L^2$ initial data. Note that this step holds for all multi-dimensional torus $\T^d$. The first part consists in obtaining a weaker observability estimate, i.e. the expected observability estimate up to a compact term by using a dyadic decomposition, see Proposition \ref{frequency_cutoff_obs_prop} below. Then, the second part is devoted to remove this compact term by using a compactness-uniqueness argument, see Proposition \ref{prop:from_weakobs_to_strongobs} below. Here, because we are considering measurable observation sets, some arguments have to be changed as the unique continuation property satisfied by the elliptic operator $\mathcal{H}_{\theta,V}$. It is worth mentioning that while Proposition \ref{frequency_cutoff_obs_prop} focuses on a subset of $L^{\infty}$ potentials, Proposition \ref{prop:from_weakobs_to_strongobs} considers a compact subset of $L^{\infty}$ potentials.

\subsection{Weaker observability estimate}

Let $d \geq 1$. For $\theta \in [0,1]^d$, $V\in L^{\infty}(\mathbb T^d)$ and $\rho >0$, we define for all $h>0$,
\begin{equation*}\label{frequency_cutoff}
    \Pi_{h, \rho,\theta, V}u_0= \chi\left(\frac{h^2\mathcal{H}_{\theta,V}-1}{\rho}\right)u_0,
\end{equation*}
where $\chi \in C_c^{\infty}((-1,1), \rr)$ is equal to $1$ in a neighborhood of $0$.
\begin{proposition}\label{frequency_cutoff_obs_prop}
Let $b \in L^{\infty}(\mathbb T^d)$ be a non-negative function and $S\subset L^{\infty}(\T^d)$ be a bounded subset. If for any $T>0$ there exist some positive constants $C=C(T,S)>0$, $\rho_0=\rho_0(T, S)>0$ and $h_0=h_0(T, S)>0$ such that for all $\theta \in [0,1]^d$ and $V \in S$, we have
\begin{multline}\label{frequency_cutoff_obs}
   \forall 0<h\leq h_0, \forall 0< \rho \leq \rho_0, \forall u_0 \in L^2(\mathbb T^d), \\ \left\| \Pi_{h, \rho,\theta, V}u_0 \right\|_{L^2(\mathbb T^d)}^2 \leq C \int_0^T \int_{\mathbb T^d} b(z) \left| e^{-it\mathcal{H}_{\theta,V}} (\Pi_{h, \rho,\theta, V}u_0)(z)\right|^2 dz dt,
\end{multline}
then for any $T>0$ there exists a positive constant $C{'}=C{'}(T, S)>0$ such that for all $\theta \in [0,1]^d$ and $V \in S$,
\begin{equation*}
    \forall u_0 \in L^2(\mathbb T^d), \quad \left\| u_0 \right\|_{L^2(\mathbb T^d)}^2 \leq C{'} \left(\int_0^T \int_{\mathbb T^d} b(z) \left| e^{-it\mathcal{H}_{\theta,V}} u_0(z)\right|^2 dz dt+ \|u_0\|^2_{H^{-2}(\T^d)}\right).
\end{equation*}
\end{proposition}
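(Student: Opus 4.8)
The strategy is a dyadic (Littlewood–Paley) decomposition in frequency combined with an almost-orthogonality argument, so that the full observability for $L^2$ data follows by summing the frequency-localized observability inequalities \eqref{frequency_cutoff_obs}, with the low-frequency block absorbed into the compact remainder $\|u_0\|_{H^{-2}}^2$. First I would fix $T>0$, $\theta$, $V\in S$, and choose a fixed $\rho\leq\rho_0$ together with a partition of unity adapted to the spectral parameter: pick $\varphi\in C_c^\infty$ supported where $\chi\equiv 1$ so that $\sum_{j\geq 0}\varphi(2^{2j}(\lambda/?)\cdots)$... more precisely, using that $\mathcal H_{\theta,V}$ has discrete spectrum bounded below (Proposition above), write $I = \psi_0(\mathcal H_{\theta,V}) + \sum_{j\geq 1}\varphi(2^{-2j}\mathcal H_{\theta,V})$ where $\psi_0$ is a smooth cutoff near the bottom of the spectrum and each $\varphi(2^{-2j}\cdot)$ is spectrally localized in $|\lambda|\sim 2^{2j}$. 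By rescaling, $\varphi(2^{-2j}\mathcal H_{\theta,V}) = \chi\!\left(\frac{h_j^2\mathcal H_{\theta,V}-1}{\rho}\right)\varphi(2^{-2j}\mathcal H_{\theta,V})$ with $h_j = 2^{-j}$ provided $\varphi$ is supported in a small enough neighborhood of $1$ (shrinking the support of $\varphi$ relative to $\rho$); hence $u_0^{(j)} := \varphi(2^{-2j}\mathcal H_{\theta,V})u_0$ is of the form $\Pi_{h_j,\rho,\theta,V}(u_0^{(j)})$ for $j$ large enough that $h_j\leq h_0$, so \eqref{frequency_cutoff_obs} applies to each $u_0^{(j)}$.

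Next I would apply \eqref{frequency_cutoff_obs} to each high-frequency piece $u_0^{(j)}$ and sum over $j\geq J_0$ (where $2^{-J_0}\leq h_0$), while treating the finitely-remaining low block $u_0^{\mathrm{low}} := \psi_0(\mathcal H_{\theta,V})u_0 + \sum_{1\leq j < J_0} u_0^{(j)}$ separately. Since $u_0^{\mathrm{low}}$ is spectrally localized in a fixed bounded set $\{|\lambda|\lesssim 2^{2J_0}\}$, equivalence of norms \eqref{eq:uhshthetaV} gives $\|u_0^{\mathrm{low}}\|_{L^2}^2 \leq C\|u_0^{\mathrm{low}}\|_{H^{-2}}^2\leq C\|u_0\|_{H^{-2}}^2$ (using that the spectral projector onto a bounded spectral window is bounded $H^{-2}\to L^2$ with constant depending only on $J_0$, $S$, via \eqref{eq:uhshthetaV}), which produces the compact term. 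For the high-frequency part, orthogonality of the spectral blocks gives $\|u_0\|_{L^2}^2 \lesssim \|u_0^{\mathrm{low}}\|_{L^2}^2 + \sum_{j\geq J_0}\|u_0^{(j)}\|_{L^2}^2 \lesssim \|u_0\|_{H^{-2}}^2 + \sum_{j\geq J_0}\int_0^T\int_{\T^d} b\,|e^{-it\mathcal H_{\theta,V}}u_0^{(j)}|^2\,dz\,dt$, and it remains to bound the sum of the right-hand sides by $C\int_0^T\int_{\T^d}b\,|e^{-it\mathcal H_{\theta,V}}u_0|^2\,dz\,dt$.

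The main obstacle is this last step: reassembling $\sum_j \int b\,|e^{-it\mathcal H_{\theta,V}}u_0^{(j)}|^2$ into $\int b\,|e^{-it\mathcal H_{\theta,V}}u_0|^2$. This is not exactly orthogonal because the weight $b$ destroys spatial Fourier orthogonality; one must exploit that the dyadic blocks $u_0^{(j)}$ have \emph{finitely overlapping} frequency supports and that $e^{-it\mathcal H_{\theta,V}}$ commutes with the spectral projectors, so $e^{-it\mathcal H_{\theta,V}}u_0^{(j)} = \varphi(2^{-2j}\mathcal H_{\theta,V})\,e^{-it\mathcal H_{\theta,V}}u_0$. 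The key point is then that $v\mapsto \sum_j \int_0^T\int b\,|\varphi(2^{-2j}\mathcal H_{\theta,V})v|^2$ defines a bounded quadratic form on $L^2(0,T;L^2(\T^d))$ with constant uniform in $\theta\in[0,1]^d$ and $V\in S$: by writing $b\in L^\infty$ and using that $\sum_j|\varphi(2^{-2j}\lambda)|^2\lesssim 1$ (bounded overlap) together with the functional calculus, one gets $\sum_j\|\varphi(2^{-2j}\mathcal H_{\theta,V})v(t)\|_{L^2}^2 \lesssim \|v(t)\|_{L^2}^2$, hence $\sum_j\int_0^T\int b\,|\varphi(2^{-2j}\mathcal H_{\theta,V})v|^2 \leq \|b\|_{L^\infty}\sum_j\int_0^T\|\varphi(2^{-2j}\mathcal H_{\theta,V})v\|_{L^2}^2 \lesssim \|b\|_{L^\infty}\|v\|_{L^2(0,T;L^2)}^2$. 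Applying this with $v = e^{-it\mathcal H_{\theta,V}}u_0$ and then using conservation of the $L^2$ norm \eqref{eq:conservationL2norm} is wasteful (it loses the weight $b$), so instead one applies it after first telescoping: writing $\varphi(2^{-2j}\mathcal H_{\theta,V}) = \varphi(2^{-2j}\mathcal H_{\theta,V})\tilde\varphi(2^{-2j}\mathcal H_{\theta,V})$ with $\tilde\varphi$ a fattened cutoff and absorbing one factor into $b$-weighted $L^2$, one reduces to $\sum_j\int_0^T\int b\,|\tilde\varphi(2^{-2j}\mathcal H_{\theta,V})(e^{-it\mathcal H_{\theta,V}}u_0)|^2 \lesssim \int_0^T\int b\,|e^{-it\mathcal H_{\theta,V}}u_0|^2$ using the bounded-overlap bound with the weight kept. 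Carrying out this weighted almost-orthogonality carefully, uniformly in the parameters — using \eqref{eq:uhshthetaV} to compare $H^s$ norms to the spectral side uniformly in $(\theta,V)$ — is the crux of the argument; the rest is bookkeeping.
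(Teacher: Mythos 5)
Your overall architecture (dyadic spectral decomposition, absorption of the low block into $\|u_0\|_{H^{-2}}^2$ via the norm equivalence \eqref{eq:uhshthetaV}, application of \eqref{frequency_cutoff_obs} to each high block with $h_j=2^{-j}$) matches the paper's proof. But there is a genuine gap exactly at the step you yourself identify as the crux. The inequality you need,
\begin{equation*}
\sum_j \int_0^T\!\!\int_{\T^d} b\,\bigl|\tilde\varphi(2^{-2j}\mathcal H_{\theta,V})v\bigr|^2\,dz\,dt \;\lesssim\; \int_0^T\!\!\int_{\T^d} b\,|v|^2\,dz\,dt,
\end{equation*}
does \emph{not} follow from the bounded overlap $\sum_j|\tilde\varphi(2^{-2j}\lambda)|^2\lesssim 1$. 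Bounded overlap plus functional calculus gives almost-orthogonality only in the \emph{unweighted} $L^2(\T^d)$ norm; to keep the weight you must commute $\sqrt{b}$ past the spectral projectors $\tilde\varphi(2^{-2j}\mathcal H_{\theta,V})$, and these projectors are nonlocal while $b$ is merely a nonnegative $L^\infty$ function (think $b=\un_\omega$ with $\omega$ small), so no commutator expansion is available and the weighted bound is false for general $v$. "Absorbing one factor into $b$-weighted $L^2$" is precisely the step that cannot be carried out as stated; it is essentially equivalent to the conclusion you are trying to reach.

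The paper's resolution is a different idea: since $u(t)=e^{-it\mathcal H_{\theta,V}}u_0$ solves the equation, $D_t u=-\mathcal H_{\theta,V}u$, so on the solution one may replace $\varphi_k(\mathcal H_{\theta,V})$ by $\varphi_k(D_t)$ acting in the \emph{time} variable. The operator $\varphi_k(D_t)$ commutes exactly with multiplication by $b(z)$ (a function of space only), and the almost-orthogonality $\sum_k\|\varphi(hD_t)g\|_{L^2(\R_t)}^2\le\|g\|_{L^2(\R_t)}^2$ holds pointwise in $z$ by Plancherel in $t$, so the weight passes through for free. The price is that one must first localize in time with cutoffs $\psi,\widetilde\psi\in C_c^\infty((0,T))$ (hence the paper applies \eqref{frequency_cutoff_obs} on $(T/3,2T/3)$ and works with $u(T/3)$), and the commutators $[\psi(t),\varphi^2(hD_t)]$ are controlled by semiclassical calculus on $\R$, producing $O(h^3)$ remainders that are summable over $k$ and absorbed into $\|u_0\|_{H^{-2}}^2$. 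Without this swap from spatial spectral localization to temporal frequency localization (or some substitute for it), your reassembly step does not close.
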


\begin{proof}
Let us assume that \eqref{frequency_cutoff_obs} holds for any time $T>0$.\\

Let us show that the following weaker observability estimates hold: for any $T>0$ there exists a constant $C{'}>0$ such that for all $\theta \in [0,1]^d$ and $V \in S$,
\begin{equation}\label{weak_obs_estimate}
    \forall u_0 \in L^2(\mathbb T^d), \quad \left\| u_0 \right\|_{L^2(\mathbb T^d)}^2 \leq C \int_0^T \int_{\mathbb T^d} b(z) \left| e^{-it\mathcal{H}_{\theta,V}} u_0(z)\right|^2 dz dt+C\|u_0\|^2_{H^{-2}(\mathbb T^d)}.
\end{equation}
The weaker observability estimate \eqref{weak_obs_estimate} will be deduced from the observability estimate for highly oscillating initial data \eqref{frequency_cutoff_obs}. The strategy is crucially inspired by \cite[Section 3.1]{BZ19}. Let us describe in an heuristic way the strategy. 

We first decompose dyadically the initial data, the low frequencies are then putting into the $H^{-2}$-norm of the initial data, whereas \eqref{frequency_cutoff_obs} is applied to high frequencies. Moreover, in order to reconstruct the solution, in the right hand side of \eqref{weak_obs_estimate}, the natural idea is to commute $\sqrt{b}$ and $\mathcal{H}_{\theta,V}$ that does not work. This is why here we will crucially use that $D_t e^{-it\mathcal{H}_{\theta,V}} = -\mathcal{H}_{\theta,V} e^{-it\mathcal{H}_{\theta,V}}$ with $D_t = \partial_t /i$. Therefore, one can use $D_t \sqrt{b} = \sqrt{b} D_t$. Finally, because we are observing the solution in the interval time $(0,T)$, one should commute $1_{(0,T)}$ and $D_t$, that does not work. This is why, before applying the observability estimate, one may introduce a cut-off function $\psi$, whose support is contained in $(0,T)$, then apply semi-classical calculus in the time variable in order to commute $\psi$ and $D_t$. The remainder terms would be put in the $H^{-2}$-norm of the initial data.

Let $T>0$. By assumptions, there exist $\rho_0>0$, $h_0>0$ such that \eqref{frequency_cutoff_obs} holds. Fix $R>1$ such that 
$(R^{-1},R) \subset \{r \in \R\ ;\ \chi((r-1)/\rho_0) = 1\}$.
Then from \cite[Proposition 2.10]{BCD11}, one can find a dyadic partition of the unity as follows, there exist $\varphi_0 \in C_c^{\infty}((-1,1);[0,1])$ and $\varphi \in C_c^{\infty}((R^{-1},R);[0,1])$ such that, denoting $\varphi_k(r)=\varphi^2(R^{-k}r)$ for $k \geq 1$,
\begin{equation}
\label{eq:decompositiondyadique}
    \forall r \in \R^+,\ \varphi_0^2(r) + \sum_{k=1}^{+\infty} \varphi_k(r) = 1.
\end{equation}
Notice that, since $S$ is a bounded subset of $L^{\infty}(\T^d)$, there exists $M>0$ such that 
$$\forall V \in S, \quad \|V\|_{L^{\infty}(\T^d)} \leq M.$$
We therefore deduce from \eqref{eq:uhshthetaV} that for every $s \in \R$, there exist $c_1',c_2'>0$ such that for every $\theta \in [0,1]^d$ and $V \in S$,
\begin{equation}
\label{eq:comparaisonvarphiknorme}
  c_1' \sum_{k=0}^{+\infty} R^{2ks} \norme{\varphi_{k}(\mathcal{H}_{\theta,V})u_0}_{L^2(\T^d)}^2 \leq  \norme{ u_0}_{H^s(\T^d)}^2
 \leq c_2' \sum_{k=0}^{+\infty} R^{2ks} \norme{\varphi_{k}(\mathcal{H}_{\theta,V})u_0}_{L^2(\T^d)}^2.
\end{equation}

Let $\psi \in C_c^{\infty}((0,T);[0,1])$ that satisfies $\psi(t) =1$ on $(T/3,2T/3)$. Let us choose $K\geq 1$ large enough such that $R^{-K} \leq h_0^2$. Then, we have that for every $k \geq K+1$, setting $h=R^{-k/2}$, $\varphi_{k}(\mathcal{H}_{\theta,V})$ coincides with $\Pi_{h,\rho_0,\theta,V} \varphi_{k}(\mathcal{H}_{\theta,V})$. One can then use \eqref{frequency_cutoff_obs} on the time interval $(T/3,2T/3)$ to obtain
\begin{equation}
\label{eq:EstimationFrequencyCutOfft3}
    \norme{\varphi_{k}(\mathcal{H}_{\theta,V})u(T/3)}_{L^2(\T^2)}^2 \leq C \int_{\R} \psi(t)^2 \norme{\sqrt{b} \varphi_{k}(\mathcal{H}_{\theta,V}) u(t)}_{L^2(\T^2)}^2 dt\qquad \forall k \geq K+1.
\end{equation}
Now using that $D_t u = -\mathcal{H}_{\theta,V} u$ and $D_t \sqrt{b} = \sqrt{b} D_t$, we deduce from \eqref{eq:EstimationFrequencyCutOfft3} that
\begin{align}
\label{eq:EstimationFrequencyCutOfft3Bis}
    \norme{\varphi_{k}(\mathcal{H}_{\theta,V})u(T/3)}_{L^2(\T^2)}^2 \leq C \norme{\sqrt{b} \psi(t) \varphi_k(D_t) u}_{L^2(\R_t \times  \T^2)}^2 \qquad \forall k \geq K+1.
\end{align}

Let $\widetilde{\psi} \in C_c^{\infty}((0,T);[0,1])$ such that $\widetilde{\psi}=1$ on $\text{supp}(\psi)$. Setting $h=R^{-k/2}$, the semi-classical parameter, from the semi-classical calculus on $\R$, see Theorem \ref{th:semiclassicalcomposition}, \eqref{eq:expansioncommutatorRd} and \eqref{eq:suppdisjointssemiclassical}, the asymptotic expansion holds
\begin{align}
    \psi(t) \varphi_k(D_t) &= \psi(t) \varphi^2(hD_t)\notag\\
    &=\psi(t) \varphi^2(hD_t) \widetilde{\psi}(t)+\psi(t) \varphi^2(hD_t) (1- \tilde{\psi}(t)) \notag \\
   &= \psi(t) \varphi^2(hD_t) \widetilde{\psi}(t) + E(t,hD_t)(1+|t|^2)^{-1}(1+|hD_t|^2)^{-1},\label{eq:psitildepsisemiclassical}
\end{align}
where $$E(t,hD_t) = op_h(c),\ c \in \mathcal{S}(\R^2)\ \text{and}\ \sup_{(t,\tau) \in \R\times \R} |(1+t^2)^{\alpha} (1+\tau^2)^{\beta} \partial_{t}^\gamma \partial_{\tau}^{\delta} c(t,\tau)| \leq C_{\alpha,\beta,\gamma,\delta} h^3,\ \alpha,\beta,\gamma,\delta \in \N.$$
Then, by using \eqref{eq:EstimationFrequencyCutOfft3Bis}, \eqref{eq:psitildepsisemiclassical}, Theorem \ref{th:continuitypseudoRd}, \eqref{eq:EstimationCalderonVaillancourt} and again $D_t u = -\mathcal{H}_{\theta,V} u$ in $(0,T)$ we have 
\begin{align*}
  \notag & \norme{\varphi_{k}(\mathcal{H}_{\theta,V})u(T/3)}_{L^2(\T^d)}^2 \\
 \notag   & \leq C \norme{\sqrt{b} \varphi_k(D_t) \widetilde{\psi}(t) u}_{L^2(\R_t \times\T^d)}^2 + C  h^6 \norme{(1+t^2)^{-1} (1+|hD_t|^2)^{-1} u(t)}_{L^2(\R_t \times \T^d)}^2 \\
 & \leq C \norme{ \varphi_k(D_t) \widetilde{\psi}(t) \sqrt{b} u}_{L^2(\R_t \times\T^d)}^2 + C  h^6 \norme{(1+t^2)^{-1} (1+|h\mathcal{H}_{\theta,V}|^2)^{-1} u(t)}_{L^2(\R_t \times \T^d)}^2 \qquad \forall k \geq K+1.
\end{align*}
Therefore, by summing for $k \geq K+1$ the preceding estimate, remembering that $h=R^{-k/2}$, we get from  \eqref{eq:decompositiondyadique}, \eqref{eq:conservationL2norm} and \eqref{eq:uhshthetaV}
\begin{align}
\sum_{k=K+1}^{+\infty} \norme{\varphi_{k}(\mathcal{H}_{\theta,V})u(T/3)}_{L^2(\T^d)}^2  &\leq \int_{\R} \widetilde{\psi}(t)^2 \norme{\sqrt{b}  u(t)}_{L^2(\T^d)}^2 dt + C \norme{(1+|\mathcal{H}_{\theta,V}|^2)^{-1} u(t)}_{L^{\infty}(\R_t;L^2(\T^d))}^2\notag \\
 & \leq C \int_{0}^T \norme{\sqrt{b}  u(t)}_{L^2(\T^d)}^2 dt + C \norme{(1+|\mathcal{H}_{\theta,V}|^2)^{-1} u_0}_{L^2(\T^d)}^2 \notag \\
 & \leq C \int_{0}^T \norme{\sqrt{b}  u(t)}_{L^2(\T^d)}^2 dt + C \norme{u_0}_{H^{-2}(\T^d)}^2
\label{eq:estimatehighfrequenciescutoff}
\end{align}

To sum up, we then have from \eqref{eq:estimatehighfrequenciescutoff} and \eqref{eq:comparaisonvarphiknorme}
\begin{multline*}
     \norme{u_0}_{L^2(\T^d)}^2 = \sum_{k=0}^{+\infty} \norme{\varphi_k(\mathcal{H}_{\theta,V})u_0}_{L^2(\T^d)}^2
      \leq C \norme{u_0}_{H^{-2}(\T^d)}^2 + \sum_{k=K+1}^{+\infty} \norme{e^{-i(T/3)\mathcal{H}_{\theta,V}}\varphi_{k}(\mathcal{H}_{\theta,V})u_0}_{L^2(\T^d)}^2\\
     \leq C \norme{u_0}_{H^{-2}(\T^d)}^2 + \sum_{k=K+1}^{+\infty} \norme{\varphi_{k}(\mathcal{H}_{\theta,V})u(T/3)}_{L^2(\T^d)}^2
     \leq C \int_{0}^T \norme{\sqrt{b}  u(t)}_{L^2(\T^d)}^2 dt + C \norme{u_0}_{H^{-2}(\T^d)}^2,
\end{multline*}
which concludes the proof of \eqref{weak_obs_estimate}.
\end{proof}

\subsection{Remove the compact term in the weaker observability estimate}

\begin{proposition}\label{prop:from_weakobs_to_strongobs}
Let $b \in L^{\infty}(\mathbb T^d)$ be a non-negative function and $S\subset L^{\infty}(\T^d)$ be a compact subset for the $\text{weak}^\star$ topology. If for any $T>0$ there exist some positive constants $C=C(T,S)>0$ such that for all $\theta \in [0,1]^d$ and $V \in S$, we have
\begin{equation}\label{weak_obs_estimate2}
    \forall u_0 \in L^2(\mathbb T^d), \quad \left\| u_0 \right\|_{L^2(\mathbb T^d)}^2 \leq C{'} \left(\int_0^T \int_{\mathbb T^d} b(z) \left| e^{-it\mathcal{H}_{\theta,V}} u_0(z)\right|^2 dz dt+ \|u_0\|^2_{H^{-2}(\T^d)}\right).
\end{equation}
then for any $T>0$ there exists a positive constant $C{'}=C{'}(T, S)>0$ such that for all $\theta \in [0,1]^d$ and $V \in S$,
\begin{equation*}
    \forall u_0 \in L^2(\mathbb T^d), \quad \left\| u_0 \right\|_{L^2(\mathbb T^d)}^2 \leq C{'} \int_0^T \int_{\mathbb T^d} b(z) \left| e^{-it\mathcal{H}_{\theta,V}} u_0(z)\right|^2 dz dt.
\end{equation*}
\end{proposition}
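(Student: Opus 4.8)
The plan is to run a standard compactness--uniqueness argument to absorb the weak $H^{-2}$ term in \eqref{weak_obs_estimate2}. Fix $T>0$ and suppose, for contradiction, that the conclusion fails. Since the right-hand side quantity $\int_0^T\int_{\T^d} b|e^{-it\mathcal H_{\theta,V}}u_0|^2\,dz\,dt$ and the left-hand side $\|u_0\|_{L^2}^2$ are both invariant under rescaling of $u_0$, the failure of a uniform constant over $\theta\in[0,1]^d$ and $V\in S$ produces sequences $\theta_n\in[0,1]^d$, $V_n\in S$ and $u_{0,n}\in L^2(\T^d)$ with $\|u_{0,n}\|_{L^2(\T^d)}=1$ and
\begin{equation*}
    \int_0^T\int_{\T^d} b(z)\,\big|e^{-it\mathcal H_{\theta_n,V_n}}u_{0,n}(z)\big|^2\,dz\,dt \xrightarrow[n\to\infty]{} 0.
\end{equation*}
Up to extracting a subsequence, compactness of $[0,1]^d$ gives $\theta_n\to\theta$, compactness of $S$ for the weak$^\star$ topology gives $V_n\rightharpoonup^\star V\in S$, and weak compactness of the unit ball of $L^2(\T^d)$ gives $u_{0,n}\rightharpoonup u_0$ in $L^2(\T^d)$.

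Next I would pass to the limit in the weak observability estimate to show $u_0\neq 0$. The key point is that the embedding $L^2(\T^d)\hookrightarrow H^{-2}(\T^d)$ is compact, so $u_{0,n}\to u_0$ strongly in $H^{-2}(\T^d)$; combined with Proposition~\ref{prop:stabilityparameters}, which gives $e^{-it\mathcal H_{\theta_n,V_n}}u_{0,n}\to e^{-it\mathcal H_{\theta,V}}u_0$ in $L^2(\T^d)$ for each $t$ (one splits $e^{-it\mathcal H_{\theta_n,V_n}}u_{0,n}-e^{-it\mathcal H_{\theta,V}}u_0 = e^{-it\mathcal H_{\theta_n,V_n}}(u_{0,n}-u_0) + (e^{-it\mathcal H_{\theta_n,V_n}}-e^{-it\mathcal H_{\theta,V}})u_0$, bounds the first term by $\|u_{0,n}-u_0\|_{L^2}$ which does not go to zero — so instead one argues that the map is weakly continuous and uses the observation integral), and then by dominated convergence (the integrand is bounded using conservation of the $L^2$ norm \eqref{eq:conservationL2norm} and $b\in L^\infty$) one obtains $\int_0^T\int_{\T^d} b|e^{-it\mathcal H_{\theta,V}}u_0|^2=0$. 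Meanwhile applying \eqref{weak_obs_estimate2} to $u_{0,n}$ and using $\|u_{0,n}\|_{L^2}=1$ together with $\|u_{0,n}\|_{H^{-2}}\to\|u_0\|_{H^{-2}}$ forces $\|u_0\|_{H^{-2}(\T^d)}^2\geq 1/C$, so $u_0\neq 0$.

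It remains to derive a contradiction from the existence of a nonzero $u_0\in L^2(\T^d)$ solving $i\partial_t u = \mathcal H_{\theta,V}u$ with $b(z)|u(t,z)|^2=0$ for a.e.\ $(t,z)\in(0,T)\times\T^d$, i.e.\ $u(t,z)=0$ for a.e.\ $(t,z)$ with $z$ in the set $\{b>0\}$, which has positive measure since $b\not\equiv 0$. Expanding $u_0=\sum_k c_k\psi_{k,\theta,V}$ in the eigenbasis of $\mathcal H_{\theta,V}$, one has $u(t,z)=\sum_k c_k e^{-it\lambda_{k,\theta,V}}\psi_{k,\theta,V}(z)$; grouping the sum by distinct eigenvalues and using that the functions $t\mapsto e^{-it\lambda}$ associated to distinct eigenvalues are linearly independent on any interval, the vanishing of $u$ on $(0,T)\times\{b>0\}$ forces each spectral projection $\Pi_\mu u_0$ of $u_0$ onto the eigenspace of eigenvalue $\mu$ to vanish a.e.\ on $\{b>0\}$. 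Since each $\Pi_\mu u_0$ is a finite linear combination of eigenfunctions of the elliptic operator $\mathcal H_{\theta,V}$, it is in particular an eigenfunction (or $0$), hence it solves an elliptic equation $(-\Delta+2i\theta\cdot\nabla+|\theta|^2+V)w=\mu w$ on $\T^d$ and vanishes on a set of positive measure. By the weak unique continuation property for second-order elliptic operators with $L^\infty$ coefficients — which holds since $\theta,V$ are bounded (one may invoke Aronszajn--Cordes type unique continuation, or for $L^\infty$ potentials the result of Jerison--Kenig / regularity arguments) — this implies $\Pi_\mu u_0=0$ for every $\mu$, hence $u_0=0$, contradicting $u_0\neq 0$.

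The main obstacle is the final step: establishing that a nontrivial finite combination of eigenfunctions of $\mathcal H_{\theta,V}$ cannot vanish on a positive-measure set. The separation of spectral components via the time variable is routine, but the unique continuation from a measurable set of positive measure (rather than an open set) requires care: for $V\in L^\infty$ one cannot quote classical Aronszajn unique continuation directly for vanishing on measure-theoretic sets, and one must instead use that eigenfunctions of $\mathcal H_{\theta,V}$ lie in $H^2(\T^d)\subset W^{2,p}$ for all $p<\infty$ by elliptic regularity, so they are continuous, and then either reduce to vanishing on an open set by a Lebesgue-density/continuity argument — which fails because the zero set of a real-analytic-like function need not contain an open set even if it has positive measure — or, more robustly, invoke the strong unique continuation property, which guarantees that a nonzero $H^2$ solution of such an elliptic equation has zeros of at most finite order, hence its zero set has measure zero. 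This last route is the clean one: since $\{b>0\}$ has positive measure, $\Pi_\mu u_0$ vanishing there contradicts the strong unique continuation property unless $\Pi_\mu u_0\equiv 0$.
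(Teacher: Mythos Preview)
Your overall strategy---compactness--uniqueness to absorb the $H^{-2}$ term---matches the paper's, and the first half (extracting subsequences, using the compact embedding $L^2\hookrightarrow H^{-2}$ together with \eqref{weak_obs_estimate2} to force the weak limit $u_0\neq 0$) is correct. Two points need attention.

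First, the passage to the limit in the observation integral cannot be done by dominated convergence: you only have \emph{weak} convergence $e^{-it\mathcal H_{\theta_n,V_n}}u_{0,n}\rightharpoonup e^{-it\mathcal H_{\theta,V}}u_0$ in $L^2(\T^d)$ (obtained, as the paper makes explicit, by writing $\langle e^{-it\mathcal H_{\theta_n,V_n}}u_{0,n}-e^{-it\mathcal H_{\theta,V}}u_0,\varphi\rangle=\langle u_{0,n},(e^{it\mathcal H_{\theta_n,V_n}}-e^{it\mathcal H_{\theta,V}})\varphi\rangle+\langle u_{0,n}-u_0,e^{it\mathcal H_{\theta,V}}\varphi\rangle$ and applying Proposition~\ref{prop:stabilityparameters} to the first term). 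Weak convergence gives lower semicontinuity of $v\mapsto\int b|v|^2$, and combined with Fatou in $t$ one gets $\int_0^T\int_{\T^d}b|e^{-it\mathcal H_{\theta,V}}u_0|^2\leq\liminf_n(\cdots)=0$. This is what the paper does, and it repairs your argument with no difficulty.

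Second, and more substantively, your reduction to eigenfunctions has a real gap. Linear independence of $\{e^{-it\mu}\}$ on an interval is immediate for \emph{finite} families, but you apply it to the infinite spectral decomposition of a general $u_0\in L^2$: concluding that each $\Pi_\mu u_0$ vanishes on $\{b>0\}$ from $\sum_\mu e^{-it\mu}\,\Pi_\mu u_0|_{\{b>0\}}=0$ on $(0,T)$ is not automatic. (One can rescue this by pairing against test functions supported in $\{b>0\}$ to obtain an $\ell^1$ exponential series $\sum_\mu A_\mu e^{-it\mu}$ vanishing on $(0,T)$, observing that since the eigenvalues are bounded below this extends holomorphically to a half-plane with vanishing boundary trace on an interval, hence is identically zero---but you do not supply such an argument.) The paper avoids this entirely by a different route: it shows that the space $\mathcal N_{T,\theta,V}$ of invisible data is $\mathcal H_{\theta,V}$-invariant, by applying the weak estimate \eqref{weak_obs_estimate2} at times $T-\varepsilon$ to the difference quotients $(e^{-i\varepsilon\mathcal H_{\theta,V}}u_0-u_0)/\varepsilon$, which bootstraps $u_0\in D(\mathcal H_{\theta,V})$ and $\mathcal H_{\theta,V}u_0\in\mathcal N_{T,\theta,V}$. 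Since $\mathcal H_{\theta,V}$ has compact resolvent, the nontrivial closed invariant subspace $\mathcal N_{T,\theta,V}$ must contain an eigenfunction; that eigenfunction vanishes on $\{b>0\}$, and one concludes via the unique continuation result of \cite{Reg01} (nodal sets of eigenfunctions of $-\Delta+V$, $V\in L^\infty$, have measure zero). This invariance argument is cleaner and sidesteps the infinite-sum issue.
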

\begin{proof}
Let us assume that \eqref{weak_obs_estimate2} holds for any time $T>0$.\\

\textit{First step: an unique continuation property.}\\
Let us consider the following space
\begin{equation*}
    \mathcal N_{T, \theta, V}=\left\{u \in L^2([0,T]\times \mathbb T^d); \ b(x)e^{-it\mathcal{H}_{\theta,V}}u(x)=0\ \text{on}\ [0,T]\times \mathbb T^d\right\},
\end{equation*}
with $\theta \in [0,1]^d$ and $V \in S$. Let us show that $\mathcal N_{T, \theta, V}=\left\{0\right\}$. To that end, we proceed by contradiction and assume $N_{T, \theta, V}\neq \left\{0\right\}$. We begin by noticing that $N_{T, \theta, V}$ is invariant by the action of $\mathcal{H}_{\theta,V}$. Indeed, if $u \in N_{T, \theta, V}$, then for all $0<\varepsilon< T$, $u_\varepsilon= \frac{e^{-i\varepsilon\mathcal{H}_{\theta,V}}u-u}{\varepsilon}$ belongs to $N_{T-\varepsilon, \theta, V}$. Thus, by applying \eqref{weak_obs_estimate} at time $T-\varepsilon$, we obtain that for all $0< \varepsilon < T$,
\begin{equation*}
    \|u_{\varepsilon}\|_{L^2(\mathbb T^d)} \leq C{'} \|u_{\varepsilon} \|_{H^{-2}(\mathbb T^d)},
\end{equation*}
for some positive constant $C'>0$.
Since $u_{\varepsilon} \underset{\varepsilon\to 0^+}{\longrightarrow} u$ in $\mathcal D'(\mathbb T^d)$ and $H^{-2}(\mathbb T^d)$, it follows that $u \in D\left(\mathcal{H}_{\theta,V}\right)$ (see for instance \cite[Section 1.1]{Paz83}) and 
$$\|\mathcal{H}_{\theta,V}u\|_{L^2(\mathbb T^d)} \leq C" \|u\|_{L^2(\mathbb T^d)},$$
for some positive constant $C''>0$.
Moreover, since $u_{\varepsilon}$ belongs to $N_{T-\varepsilon, \theta, V}$ for all $0< \varepsilon < T$, we deduce that $\mathcal{H}_{\theta,V}u \in N_{T-\delta, \theta, V}$ for all $0< \delta <T$. Then, $\mathcal{H}_{\theta,V}u \in N_{T, \theta, V}$ and $N_{T, \theta, V}$ is invariant by the action of $\mathcal{H}_{\theta,V}$. As a consequence, there exists a nontrivial function $\phi \in L^2(\mathbb T^d)$ and $\lambda \in \rr$ such that
$$(-\Delta+2i\theta\cdot\nabla+|\theta|^2+V)\phi = \lambda \phi \quad \text{and} \quad \phi \in N_{T, \theta, V}.$$
In particular, $\phi$ is an eigenfunction of $(-\Delta+2i\theta\cdot\nabla+|\theta|^2+V)$ which vanishes on a set of positive measure. By the unique continuation result from \cite[Theorem 1.2]{Reg01}, we deduce that $\phi\equiv 0$. This provides a contradiction and consequently, $N_{T, \theta, V}=\left\{0\right\}.$\\

\textit{Second step: we remove the $H^{-2}$-norm in the weak observability estimate.}

By now, we establish that there exists a positive constant $C{'}=C{'}(T,M)>0$ such that for all $\theta \in [0,1]^d$ and $V \in S$,
\begin{equation*}
    \forall u_0 \in L^2(\mathbb T^d), \quad \left\| u_0 \right\|_{L^2(\mathbb T^d)}^2 \leq C{'} \int_0^T \int_{\mathbb T^d} a(z) \left| e^{-it\mathcal{H}_{\theta,V}} u_0(z)\right|^2 dz dt.
\end{equation*}
Once again, we proceed by contradiction and it provides sequences $(u_n)_{n \in \nn} \subset L^2(\mathbb T^d)$ with $\|u_n\|_{L^2(\mathbb T^d)}=1$ for all $n \in \nn$, $(\theta_n)_{n \in \nn} \subset [0,1]^d$ and $(V_n)_{n \in \nn} \subset S$, such that for all $n \in \nn^*$,
\begin{equation}\label{observability_contradiction}
    \int_0^T \int_{\mathbb T^d} b(z) \left| e^{-it\mathcal{H}_{\theta_n,V_n}} u_n(z)\right|^2 dz dt \leq \frac1n.
\end{equation}
Since $(u_n)_{n \in \nn}$ is bounded in $L^2(\mathbb T^d)$, there exists $f\in L^2(\mathbb T^d)$ such that, up to a subsequence, $(u_n)_{n \in \nn}$ weakly converges to $f$ in $L^2(\mathbb T^d)$ and strongly converges to $f$ in $H^{-2}(\mathbb T^d).$ Thanks to the weak observability estimate \eqref{weak_obs_estimate}, it follows that 
\begin{equation}
    \label{eq:fnonnul}
    1\leq C \|f\|^2_{H^{-2}(\mathbb T^d)}.
\end{equation}
On the other hand, since $(\theta_n)_{n \in \nn}$ is bounded and $S$ is $\text{weakly}^\star$ compact in $L^{\infty}(\mathbb T^d)$, there exist $\theta \in [0, 2\pi]^d$ and $V \in S$ such that, up to a subsequence:
$$\theta_n \underset{n \to +\infty}{\to} \theta \quad \text{and} \quad V_n \underset{n \to +\infty}{\rightharpoonup^\star} V \ \text{in} \ L^{\infty}(\mathbb T^d). $$
We then get that for all $t\in [0,T]$,
\begin{equation}
\label{eq:convweaktrotterkato}
    e^{-it\mathcal{H}_{\theta_n,V_n}} u_n \underset{n \to +\infty}{\rightharpoonup} e^{-it\mathcal{H}_{\theta,V}} f \ \text{weakly in} \ L^2.
\end{equation}
Indeed, for obtaining \eqref{eq:convweaktrotterkato}, we proceed as follows, for $\varphi \in L^2(\T^d)$,
\begin{multline*}
    \langle e^{-it\mathcal{H}_{\theta_n,V_n}} u_n - e^{-it\mathcal{H}_{\theta,V}} f, \varphi \rangle =  \langle (e^{-it\mathcal{H}_{\theta_n,V_n}} - e^{-it\mathcal{H}_{\theta,V}}) u_n , \varphi \rangle + \langle e^{-it\mathcal{H}_{\theta,V}} u_n - e^{-it\mathcal{H}_{\theta,V}} f,  \varphi \rangle\\
    =\langle u_n, (e^{it\mathcal{H}_{\theta_n,V_n}} - e^{it\mathcal{H}_{\theta,V}}) \varphi \rangle + \langle u_n -f , e^{it\mathcal{H}_{\theta,V}} \varphi \rangle,
\end{multline*}
the first term goes to $0$ as $n \to +\infty$ according to the stability result \eqref{eq:convergencesemigroup} and the second term goes to $0$ as $n \to +\infty$ by weak convergence. 

From \eqref{eq:convweaktrotterkato} and \eqref{observability_contradiction}, we then deduce that for all $t \in [0,T]$, 
\begin{equation*}
    \int_0^T \int_{\mathbb T^d} b(z) \left| e^{-it\mathcal{H}_{\theta,V}} f(z)\right|^2 dz dt \leq \liminf_{n \to +\infty}  \int_0^T \int_{\mathbb T^d} b(z) \left| e^{-it\mathcal{H}_{\theta_n,V_n}} u_n(z)\right|^2 dz dt = 0,
\end{equation*}
that implies that $f \in \mathcal N_{T, \theta, V}=\left\{0\right\}$. This contradicts \eqref{eq:fnonnul} and ends the proof of Proposition~\ref{frequency_cutoff_obs_prop}.\end{proof}

\section{Proof of the uniform observability inequality on $\T^2$}\label{section:proof_obs_torus}
This section is devoted to the proof of Theorem~\ref{obs_tore_thm}. It is adapted from the one given by Burq and Zworski in \cite{BZ19}, in the case $V=0$ and $\theta=0$. Let us recall that two main differences appear in comparison with \cite{BZ19}: the presence of the parameter $\theta \in [0,1]^2$ and the bounded potential $V \in L^{\infty}(\T^2)$ in the operator $\mathcal{H}_{\theta,V}$. These difficulties have already been handled in Sections \ref{sec:propsemigroupHthetaV},\ref{sec:refobsmultid}. Here, we continue keeping track of $\theta$ to ensure that the observability constants do not depend on $\theta$. 

In a first part, we prove one-dimensional observability estimates (for highly oscillating data) thanks to semi-classical defect measures. The second part consists in proving two-dimensional observability estimates (also for highly oscillating data) using semi-classical defect measures and a reduction of the dimension argument based on ergodicity arguments.

Before continuing, let us state a useful easy lemma, that will be used in the next two parts. It enables us to pass from $L^{\infty}$-observable sets to $L^{1}$-observable sets in the multi-dimensional setting.
\begin{lemma}
\label{lemma:linftyl1}
Let $T>0$ and $S \subset L^{\infty}(\T^d)$. Assume that for every $B \in L^{\infty}(\T^d) \setminus \{0\}$, there exists a positive constant $C>0$ such that for all $\theta \in [0, 2\pi]^d$ and $V\in S$,
\begin{equation}
\label{eq:ddimensionalobservabilityestimateB}
   \forall u_0 \in L^2(\mathbb T^d), \quad \|u_0\|^2_{L^2(\mathbb T^d)} \leq C \int_0^T \int_{\mathbb T^1} B(x)\left| e^{-it\mathcal{H}_{\theta,V}}u_0(x)\right|^2 \ dxdt.
\end{equation}
Then, for every $b \in L^{1}(\T^d) \setminus \{0\}$, there exists a positive constant $C>0$ such that for all $\theta \in [0, 2\pi]^d$ and $V\in S$,
\begin{equation}
\label{eq:ddimensionalobservabilityestimateb}
   \forall u_0 \in L^2(\mathbb T^d), \quad \|u_0\|^2_{L^2(\mathbb T^d)} \leq C \int_0^T \int_{\mathbb T^d} b(x)\left| e^{-it\mathcal{H}_{\theta,V}}u_0(x)\right|^2 \ dxdt.
\end{equation}
\end{lemma}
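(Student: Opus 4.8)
The plan is to exploit the elementary fact that a nonnegative $L^1$ function is bounded below on a set of positive measure by a multiple of an indicator, and that indicators of positive-measure sets can in turn be bounded below by genuine $L^\infty$ functions — so that the hypothesis \eqref{eq:ddimensionalobservabilityestimateB} applies. Concretely, fix $b \in L^1(\T^d)\setminus\{0\}$ nonnegative. First I would note that there exists $\varepsilon>0$ such that the superlevel set $\Omega_\varepsilon := \{x\in\T^d : b(x) \geq \varepsilon\}$ has positive Lebesgue measure; indeed $\{b>0\} = \bigcup_{n\geq 1}\{b\geq 1/n\}$ has positive measure since $b\not\equiv 0$, so one of the sets $\{b\geq 1/n\}$ does. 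Then pointwise one has
\begin{equation*}
    b(x) \;\geq\; \varepsilon\,\un_{\Omega_\varepsilon}(x)\qquad \text{for a.e. } x\in\T^d .
\end{equation*}

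Next, set $B := \varepsilon\,\un_{\Omega_\varepsilon} \in L^\infty(\T^d)$. Since $|\Omega_\varepsilon|>0$, we have $B \in L^\infty(\T^d)\setminus\{0\}$, so the assumed estimate \eqref{eq:ddimensionalobservabilityestimateB} holds for this $B$ with a constant $C = C(T,S,B)>0$, uniformly over $\theta\in[0,2\pi]^d$ and $V\in S$. Combining the two displays, for every $u_0\in L^2(\T^d)$, $\theta\in[0,2\pi]^d$ and $V\in S$,
\begin{align*}
    \|u_0\|_{L^2(\T^d)}^2
    &\leq C \int_0^T\!\!\int_{\T^d} B(x)\,\bigl|e^{-it\mathcal H_{\theta,V}}u_0(x)\bigr|^2\,dx\,dt \\
    &\leq C \int_0^T\!\!\int_{\T^d} b(x)\,\bigl|e^{-it\mathcal H_{\theta,V}}u_0(x)\bigr|^2\,dx\,dt ,
\end{align*}
which is exactly \eqref{eq:ddimensionalobservabilityestimateb}. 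This proves the lemma.

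The only point requiring a word of care — and it is not really an obstacle — is to make sure the constant stays uniform over $\theta$ and $V$: this is immediate because $B$ is chosen first and only depends on $b$ (through the choice of $\varepsilon$ and $\Omega_\varepsilon$), so the uniformity built into the hypothesis \eqref{eq:ddimensionalobservabilityestimateB} for that fixed $B$ transfers verbatim. Note also that the statement as written has a harmless typo, $\T^1$ in place of $\T^d$ inside the integral of \eqref{eq:ddimensionalobservabilityestimateB}; the argument above is the intended one and uses $\T^d$ throughout.
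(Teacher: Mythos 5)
Your proof is correct and follows essentially the same route as the paper: both arguments produce a nonzero $L^\infty$ minorant of $b$ (you take $\varepsilon\,\un_{\{b\ge\varepsilon\}}$, the paper takes $b\,\un_{\{b<R\}}$) and then invoke the hypothesis for that minorant, the uniformity in $\theta$ and $V$ transferring immediately. Your observation about the $\T^1$/$\T^d$ typo is also right.
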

\begin{proof}
Indeed, if $b \in L^1(\T^d)\setminus\{0\}$ then, there exists $R>0$ such that $\left|\{x \in \T^d; \ b(x)<R\}\right|>0$. Thus, $B= \un_{(0,R)}(b) b \in L^{\infty}(\T^d)\setminus\{0\}$ and for all $u_0 \in L^2(\T^d)$, we have that
$$\int_0^T \int_{\mathbb T^d} B(x)\left| e^{-it\mathcal{H}_{\theta,V}}u_0(x)\right|^2 \ dxdt \leq \int_0^T \int_{\mathbb T^d} b(x)\left| e^{-it\mathcal{H}_{\theta,V}}u_0(x)\right|^2 \ dxdt.$$
Conjugating the previous estimate with \eqref{eq:ddimensionalobservabilityestimateB} leads to \eqref{eq:ddimensionalobservabilityestimateb}.
\end{proof}

\subsection{One dimensional observability estimates}\label{Obs_1D_section}
This section is devoted to prove the following one dimensional weak observability estimates for smooth potentials.
\begin{proposition}\label{obs_schro_1D}
Let $b \in L^{\infty}(\mathbb T^1)\setminus\{0\}$ be a non-negative function, $M>0$ and $T>0$. There exists a positive constant $C>0$ such that for all $\theta \in [0, 1]$ and $V\in \mathcal C^{\infty}(\mathbb T^1)$ with $\| V\|_{L^{\infty}} \leq M$,
\begin{equation*}
\label{eq:onedimensionalobservabilityestimate}
   \forall u_0 \in L^2(\mathbb T^1), \quad \|u_0\|^2_{L^2(\mathbb T^1)} \leq C \int_0^T \int_{\mathbb T^1} b(x)\left| e^{-it\mathcal{H}_{\theta,V}}u_0(x)\right|^2 \ dxdt + C \|u_0\|^2_{H^{-2}(\T^1)}.
\end{equation*}
\end{proposition}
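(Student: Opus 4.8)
The plan is to prove this one-dimensional weak observability estimate via semiclassical defect measures, following the strategy of \cite{BZ12} and \cite{BBZ13}. First I would reduce to the case of an observation function supported on an interval: by Lemma~\ref{lemma:linftyl1} it suffices to prove the estimate for $b = \un_\omega$ with $\omega \subset \mathbb{T}^1$ a non-empty open interval (after possibly shrinking, since $b \in L^\infty \setminus \{0\}$ dominates a constant multiple of such an indicator on a set of positive measure, and one can pass to an open subinterval inside it up to a further localization argument). Next, by the frequency-cutoff reduction embodied in Proposition~\ref{frequency_cutoff_obs_prop}, it is enough to establish the observability inequality for highly oscillating data, i.e. to show that for some $\rho_0, h_0 > 0$ and $C>0$, uniformly in $\theta \in [0,1]$ and $V \in C^\infty(\mathbb{T}^1)$ with $\|V\|_{L^\infty} \le M$, one has
\begin{equation*}
\|\Pi_{h,\rho,\theta,V} u_0\|_{L^2(\mathbb{T}^1)}^2 \le C \int_0^T \int_{\mathbb{T}^1} b(z)\, |e^{-it\mathcal{H}_{\theta,V}}(\Pi_{h,\rho,\theta,V} u_0)(z)|^2\, dz\, dt
\end{equation*}
for all $0 < h \le h_0$, $0 < \rho \le \rho_0$, $u_0 \in L^2(\mathbb{T}^1)$.

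To prove this spectrally localized estimate I would argue by contradiction: suppose it fails, producing sequences $h_n \to 0$, $\rho_n \to 0$ (or fixed small $\rho$), $\theta_n \to \theta_\infty$, $V_n$ (with a weak-$\star$ limit $V_\infty$, which here is in $C^\infty$ by compactness in the relevant topology — or one first fixes $V$ and lets only $h,\theta$ vary, which is cleaner given the smoothness hypothesis), and initial data $u_n$ with $\|\Pi_{h_n,\rho_n,\theta_n,V_n} u_n\|_{L^2} = 1$ but with the right-hand side tending to $0$. Writing $w_n(t) = e^{-it\mathcal{H}_{\theta_n,V_n}} \Pi_{h_n,\rho_n,\theta_n,V_n} u_n$, this is a sequence of solutions, normalized in $L^2(\mathbb{T}^1)$, spectrally concentrated at frequency $\sim h_n^{-1}$ (since $h_n^2 \mathcal{H}_{\theta_n,V_n} \approx 1$ on its support), with $\int_0^T \|\sqrt{b}\, w_n(t)\|_{L^2}^2\, dt \to 0$. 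One then extracts a semiclassical defect measure $\mu$ on $T^*\mathbb{T}^1 \setminus 0$ (in fact on the energy shell $\{|\xi|^2 = 1\}$ after rescaling, using the semiclassical symbol of $h^2\mathcal{H}_{\theta,V} = (hD + \theta \cdot h/\ldots)^2 + h^2 V \to |\xi|^2$) for the family $w_n$, or rather for the rescaled profiles. The standard facts recalled in the Appendix give that $\mu$ is a probability measure (after normalization, using conservation of the $L^2$ norm, Proposition~\ref{prop:wellposednessL2}), that it is invariant under the geodesic/Hamiltonian flow generated by the principal symbol $|\xi|^2$ — which on $\mathbb{T}^1$ is the translation flow $x \mapsto x + 2t\xi$ — and that it gives zero mass to $\omega \times \mathbb{R}$ because of the vanishing of the observation term. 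Here the lower-order terms $2i\theta \cdot \nabla$ and $V$ contribute only to the subprincipal symbol and do not affect the principal transport, but I would need to track that the $\theta$-dependence is harmless: the symbol of $hD_x + \theta$ still has principal part $\xi$, and uniformity in $\theta \in [0,1]$ is automatic since $\theta$ enters only at lower order.

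The contradiction then comes from the fact that on $\mathbb{T}^1$ the translation flow on the energy shell $\{\xi = \pm 1\}$ is minimal (every trajectory is dense, or rather equidistributes), so an invariant probability measure that vanishes on the open set $\omega \times \{\xi = +1\}$ (and similarly for $\xi = -1$) must vanish identically, contradicting $\mu(T^*\mathbb{T}^1) = 1$. More precisely, invariance under $x \mapsto x + 2t$ for all $t$ forces the $x$-marginal of $\mu$ restricted to each of the two sheets $\xi = \pm 1$ to be translation-invariant, hence a multiple of Lebesgue measure, which cannot give zero mass to the open interval $\omega$ unless it is the zero measure. This is exactly where the one-dimensional structure makes the argument elementary — no deeper dynamical input (no unique ergodicity of irrational flows, no Diophantine analysis) is needed at this stage.

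I expect the main obstacle to be twofold. First, the careful bookkeeping of \emph{uniformity in} $\theta$ and in $V \in B_{L^\infty}(0,M)$ throughout the defect-measure extraction: one must ensure the measure construction, the invariance property, and the absolute continuity / propagation estimates hold with constants independent of these parameters, which requires the a priori estimates of Section~\ref{sec:propsemigroupHthetaV} (conservation of $L^2$ norm, and the stability results Propositions~\ref{prop:convresolvent}, \ref{prop:stabilityparameters}) and a limiting argument that correctly identifies the limit operator $\mathcal{H}_{\theta_\infty, V_\infty}$. Second, a technical point: the semiclassical concentration provided by $\Pi_{h,\rho,\theta,V}$ must be strong enough to guarantee that no mass of $\mu$ escapes to infinity in $\xi$ and none concentrates at $\xi = 0$ — this is where the choice of $\rho_0$ small enters, confining the symbol to a neighborhood of the energy shell $\{|\xi|^2 = 1\}$ bounded away from the origin. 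Once these are in place, the passage to the limit and the minimality argument are routine; the smoothness hypothesis on $V$ is used, if at all, only to make the subprincipal symbol well-defined classically and to invoke propagation of singularities in its sharpest form, though for the one-dimensional transport statement $V \in L^\infty$ would in fact suffice, which is consistent with the later removal of the smoothness assumption.
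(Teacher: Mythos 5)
Your overall architecture (reduction to highly oscillating data via Proposition~\ref{frequency_cutoff_obs_prop}, contradiction argument, extraction of a semiclassical defect measure, invariance under the translation flow on $\{\xi=\pm 1\}$, and the conclusion that the $x$-marginal must be a multiple of Lebesgue measure) matches the paper's proof. However, there is a genuine gap at the very first step: the claimed reduction to $b=\un_{\omega}$ with $\omega$ a non-empty \emph{open interval} is not available. Lemma~\ref{lemma:linftyl1} goes in the opposite direction (from observability for every $L^{\infty}$ weight to observability for every $L^1$ weight); it does not let you replace a general non-negative $b\in L^{\infty}(\T^1)\setminus\{0\}$ by the indicator of an interval. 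Moreover, a set of positive measure need not contain any interval (think of a fat Cantor set), so the ``further localization argument'' you invoke does not exist. Since the whole point of the proposition is to handle merely measurable observation functions, this reduction discards the actual difficulty.

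The idea you are missing, and which the paper supplies, is a \emph{regularity property of the defect measure}: the convergence in Proposition~\ref{semiclassical_measure_prop} is a priori only against continuous symbols, so from the vanishing of $\int_0^T\int b\,|e^{-it\mathcal H_{\theta_h,V_h}}v_h|^2$ one cannot directly conclude $\int_{\T} b\,d\mu_T=0$ for a merely measurable bounded $b$ (for an open $\omega$ one could argue by inner approximation with continuous minorants, which is implicitly why your reduction would make the rest easy). The paper resolves this by invoking the uniform one-dimensional Strichartz estimate \eqref{estimation_strichartz_1D}, $\|e^{-it\mathcal H_{\theta,V}}u\|_{L^{\infty}(\T;L^2(0,T))}\le C\|u\|_{L^2}$, to show that $|\int a\,d\mu_T|\le C\|a\|_{L^1}$ for smooth $a$, hence $\mu_T=f(x)\,dx$ with $f\in L^{\infty}(\T)$. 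This absolute continuity with bounded density is what licenses approximating $b$ by smooth $b_n$ in $L^1$ and passing to the limit to get $\int b\,d\mu_T=0$; combined with $\partial_x\mu_T=0$ (so $\mu_T=c\,dx$ with $c>0$ since $\mu_T(\T^1)=T$), this yields $c\|b\|_{L^1}=0$, the desired contradiction. Your dynamical input (minimality of the translation flow) is correct but insufficient without this regularity step; the rest of your uniformity concerns ($\theta$ entering only at subprincipal level, the role of $\rho_0$ in confining the measure to the unit cosphere) are handled exactly as you anticipate.
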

Recall the notation of the operator $\mathcal{H}_{\theta,V} = -\partial_{x}^2 + 2 i \theta \cdot \partial_{x} + |\theta|^2  +V(x)$ that would be used in the sequel of this part.
\begin{proof}
Thanks to Proposition~\ref{frequency_cutoff_obs_prop}, it is sufficient to establish that there exist some positive constants $C=C(T,M)>0$, $\rho_0=\rho_0(T)>0$ and $h_0=h_0(T)>0$ such that for all $\theta \in [0,1]$ and $V \in \mathcal C^{\infty}(\mathbb T^1)$ with $\| V\|_{L^{\infty}} \leq M$, we have
\begin{multline*}
   \forall 0<h\leq h_0, \forall 0< \rho \leq \rho_0, \forall u_0 \in L^2(\mathbb T^1), \\ \left\| \Pi_{h, \rho,\theta, V}u_0 \right\|_{L^2(\mathbb T^1)}^2 \leq C \int_0^T \int_{\mathbb T^1} b(x) \left| e^{-it\mathcal{H}_{\theta,V}} (\Pi_{h, \rho,\theta, V}u_0)(x)\right|^2 dx dt.
\end{multline*}
To that end, let us proceed by contradiction. Assume that there exist some sequences $(\rho_h)_{h >0} \subset \rr_+^*$, $(V_h)_{h>0} \subset \mathcal C^{\infty}(\mathbb T^1)$, $(\theta_h)_{h >0} \subset [0,1]$ and $(u_h)_{h >0} \subset L^2(\mathbb T^1)$ satisfying:
\begin{equation*}
    \rho_h \underset{h\to 0^+}{\longrightarrow} 0, \quad \| V_h\|_{L^{\infty}} \leq M, \quad v_h=\Pi_{h, \rho_h,\theta_h, V_h}u_h\quad \text{with}\quad \|v_h\|_{L^2}=1
\end{equation*}
and
\begin{equation}
\label{eq:convsupportmuzero1D}
    \int_0^T \int_{\mathbb T^1} b(x) \left| e^{-it\mathcal{H}_{\theta_h,V_h}} v_h(x)\right|^2 dx dt \underset{h \to 0^+}{\longrightarrow}0.
\end{equation}
By definition, the family $(u_h)_{h>0}$ satisfies the $h$-oscillating property \eqref{h-oscillating}. By applying Proposition~\ref{semiclassical_measure_prop}, there exists a finite measure $\mu \in L^{\infty}(\rr, \mathcal M_+(T^*\mathbb T))$ such that, up to a subsequence: for all $\varphi \in L^1(\mathbb R)$ and $a \in \mathcal C^{\infty}_c(\mathbb T\times \rr)$,
\begin{equation}\label{defect_measure_1D}
    \lim_{h \to 0^+} \int_{\rr} \varphi(t) \langle \ops ha e^{-i t\mathcal H_{\theta_h, V_h}} v_h, e^{-it \mathcal H_{\theta_h, V_h}} v_h\rangle_{L^2(\mathbb T^d)} dt= \int_{\rr \times T^*\mathbb T^1} \varphi(t) a(x,\xi) \mu(t, dx, d\xi)dt,
\end{equation}
and the measure $\mu$ satisfies
\begin{equation}\label{flow_invariance}
    \forall s \in \rr, \quad \partial_s\int_{\rr}\int_{T^*\mathbb T^1}\varphi(t) a(x+s\xi, \xi) \mu(t, dx, d\xi)dt=0,
\end{equation}
for all $\varphi \in L^1(\mathbb R)$ and $a \in \mathcal C^{\infty}_c(\mathbb T\times \rr)$.
Moreover, the following properties hold
\begin{equation*}\label{support}
   \forall t_0,t_1 \in \rr,\ \mu([t_0,t_1]\times \mathbb T^1 \times \rr)=|t_1-t_0|\quad \text{and} \quad \Supp \mu \subset \rr \times \mathbb T^1 \times \{-1, 1\}. 
\end{equation*}
Let us define the measure $$\mu_T(dx)=\int_{0}^T\int_{\rr} \mu(t, dx, d\xi)dt.$$

As a consequence of Proposition~\ref{strichartz_1D_prop}, we show that 
$$\int_{\mathbb T} b(x) \mu_T(dx)=0.$$ To that end, let us first check that there exists $f\in L^{\infty}(\mathbb T)$ such that $\mu_T=f(x)dx$. Indeed, we obtain from \eqref{defect_measure_1D} and \eqref{estimation_strichartz_1D} that for all $a\in \mathcal C_c^{\infty}(\mathbb T)$ 
$$\int_{\mathbb T} a(x) d\mu_T(dx) = \lim_{h \to 0^+} \int_{\rr}\int_{\mathbb T} a(x) \left| e^{-i t\mathcal H_{\theta_h, V_h}} u_h\right|^2(x) dxdt \leq C \| a\|_{L^1(\mathbb T)},$$ and then, $\mu_T \in \left(L^1(\mathbb T)\right)' = L^{\infty}(\mathbb T)$. Now, let $(b_n)_{n \in \mathbb N} \in \mathcal C_c^{\infty}(\mathbb T)^{\mathbb N}$ be a sequence converging to $b$ in $L^1(\mathbb T).$ Notice that since $\mu_T=f(x)dx \in L^{\infty}$, we readily have $\int_{\mathbb T} b(x) \mu_T(dx) = \lim_{n \to +\infty} \int_{\mathbb T} b_n(x) \mu_T(dx) $. On the other hand, 
\begin{align*}
    \left|\int_{\mathbb T} b_n(x) \mu_T(dx)\right| & = \lim_{h \to 0^+} \left|\int_{0}^T\int_{\mathbb T} b_n(x) \left| e^{-i t\mathcal H_{\theta_h, V_h}} v_h\right|^2(x) dxdt\right| \\ & \leq \lim_{h \to 0^+} \int_{0}^T\int_{\mathbb T} b(x) \left| e^{-i t\mathcal H_{\theta_h, V_h}} v_h\right|^2(x) dxdt + C \|b-b_n\|_{L^1(\mathbb T)}.
\end{align*}
We deduce from the last inequality and \eqref{eq:convsupportmuzero1D} that $$\int_{\mathbb T} b(x) \mu_T(dx)=\lim_{n \to +\infty}  \left|\int_{\mathbb T} b_n(x) \mu_T(dx)\right|=0.$$
Finally, let us notice that the invariance property \eqref{flow_invariance} leads to 
$$\partial_x \mu_T=0.$$
Let us check this fact. Thanks to the fact that $$\Supp \mu \subset \rr \times \mathbb T^1 \times \{-1, 1\},$$ 
it is sufficient to prove that for all $a\in \mathcal C^{\infty}_c(\mathbb T^1 \times \rr)$ with $$(\Supp a) \cap (\mathbb T^1 \times \{-1, 1\})\subset \mathbb T^1 \times \{-1\} \quad \text{or} \quad (\Supp a) \cap (\mathbb T^1 \times \{-1, 1\})\subset \mathbb T^1 \times \{ 1\},$$
and for all $\phi \in L^1(\rr)$,
$$\int_{\rr} \int_{\mathbb T^1\times \rr} \phi(t) \partial_x a(x, \xi) \mu(t, dx, d\xi) =0.$$
For example, let us deal with the case when $(\Supp a) \cap (\mathbb T^1 \times \{-1, 1\})\subset \mathbb T^1 \times \{1\}$. 
Thanks to the invariance property \eqref{flow_invariance}, we have for all $\phi \in L^1(\rr)$ and for all $s \in \rr$,
\begin{align*} 
\int_{\rr} \int_{\mathbb T^1\times \rr} \phi(t) \partial_x a(x, \xi) d\mu(t, dx, d\xi) & =\int_{\rr} \int_{\mathbb T^1\times \{1\}} \phi(t) \partial_x a(x+s\xi, \xi) \mu(t, dx, d\xi)\\
& = \int_{\rr} \int_{\mathbb T^1\times \{1\}} \phi(t) \partial_x a(x+s,1) \mu(t, dx, d\xi).
\end{align*}
By using anew the invariance property \eqref{flow_invariance} and the fact that $\partial_x a(x+s, 1)=\partial_s a(x+s,1)$, it follows that
\begin{align*}\int_{\rr} \int_{\mathbb T^1\times \rr} \phi(t) \partial_x a(x, \xi) d\mu(t, dx, d\xi) & = \int_{\rr} \int_{\mathbb T^1\times \{1\}} \phi(t) \partial_s a(x+s, \xi) \mu(t, dx, d\xi)\\
& =\partial_s\int_{\rr} \int_{\mathbb T^1\times \{1\}} \phi(t) a(x+s\xi, \xi) \mu(t, dx, d\xi)\\
\int_{\rr} \int_{\mathbb T^1\times \rr} \phi(t) \partial_x a(x, \xi) d\mu(t, dx, d\xi)&=0.
\end{align*}
This proves that 
$\partial_x \mu_T=0,$
which means that $\mu_T=c dx$, with $c>0$ since $\mu_T(\mathbb T^1)= T$.

This implies, in particular, that $$c \|b\|_{L^1(\mathbb T)} = \int_{\mathbb T} b(x) \mu_T(x)= 0.$$
This is a contradiction and this ends the proof of Proposition~\ref{obs_schro_1D}.
\end{proof}
Thanks to Proposition~\ref{obs_schro_1D}, we can establish the following result which provides one-dimensional observability estimates for Schrödinger equations with $L^{\infty}$-potential and source term:

\begin{corollary}\label{cor:obs_1D}
Let $b \in L^1(\mathbb T^1)\setminus \{0\}$ be a non-negative function, $M>0$, and $T>0$. There exists a positive constant $C$ such that for all $\theta \in [0,1],$ $V \in L^{\infty}(\mathbb T^1)$ with $\|V\|_{L^{\infty}(\mathbb T^1)} \leq M$, $u_0 \in L^2(\mathbb T^1)$ and $f \in L^1((0,T), L^2(\mathbb T^1))$, the mild solution $u$ to 
\begin{equation*}
		\left\{
			\begin{array}{ll}
				i  \partial_t u  = (-\partial_{x}^2 + 2 i \theta \cdot \partial_{x} + |\theta|^2  +V(x)) u +f & \text{ in }  (0,T) \times \T^1, \\
				u(0, \cdot) = u_0 & \text{ in } \T^1.
			\end{array}
		\right.
\end{equation*}
satisfies the observability estimate
\begin{equation*}
    \|u\|^2_{L^{\infty}((0,T);L^2(\T^1))} \leq C\left(\int_0^T \int_{\T^1} b(x) |u(t,x)|^2 dxdt + \|f\|^2_{L^1((0,T), L^2(\mathbb T^1))}\right).
\end{equation*}
\end{corollary}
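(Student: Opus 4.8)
The plan is to deduce the corollary from three ingredients established earlier in the paper: the weak observability estimate of Proposition~\ref{obs_schro_1D} (valid only for \emph{smooth} potentials, and only up to an $H^{-2}$ remainder), the compactness machinery of Proposition~\ref{prop:from_weakobs_to_strongobs} together with Lemma~\ref{lemma:linftyl1} (which turn a weak observability estimate on a weak-$\star$ compact set of potentials into a genuine one with an $L^1$ weight), and the one-dimensional Strichartz estimate with source term of Proposition~\ref{prop:StrichartzEstimate1d}. The only genuinely new work is to relax the smoothness of the potential and to absorb the source term $f$.

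\textbf{Step 1: from smooth to $L^\infty$ potentials.} Fix $M>0$ and $b\in L^\infty(\mathbb{T}^1)\setminus\{0\}$. Given $V\in L^\infty(\mathbb{T}^1)$ with $\|V\|_{L^\infty}\le M$, mollify it into $V_k\in\mathcal C^\infty(\mathbb{T}^1)$ with $\|V_k\|_{L^\infty}\le\|V\|_{L^\infty}\le M$ and $V_k\to V$ in $L^1(\mathbb{T}^1)$, so that in particular $V_k\rightharpoonup^\star V$ in $L^\infty(\mathbb{T}^1)$. Proposition~\ref{obs_schro_1D} applies to each $V_k$ with a constant $C=C(T,M,b)$ independent of $k$ and $\theta$. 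Letting $k\to+\infty$ and using the semigroup stability of Proposition~\ref{prop:stabilityparameters}, the conservation $\|e^{-it\mathcal{H}_{\theta,V_k}}u_0\|_{L^2}=\|u_0\|_{L^2}$, and dominated convergence (here $b\in L^\infty$), the inequality
\[
\|u_0\|^2_{L^2(\mathbb{T}^1)}\le C\int_0^T\int_{\mathbb{T}^1}b(x)\,\bigl|e^{-it\mathcal{H}_{\theta,V}}u_0(x)\bigr|^2\,dx\,dt+C\|u_0\|^2_{H^{-2}(\mathbb{T}^1)}
\]
passes to the limit and holds for every $\theta\in[0,1]$ and every $V$ with $\|V\|_{L^\infty}\le M$.

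\textbf{Step 2: removing the compact term and reaching $L^1$ weights.} The ball $S=\{V\in L^\infty(\mathbb{T}^1):\|V\|_{L^\infty}\le M\}$ is weak-$\star$ compact (Banach--Alaoglu), and the unique continuation step in Proposition~\ref{prop:from_weakobs_to_strongobs} only uses that the potential is real-valued and bounded; hence that proposition applies and removes the $H^{-2}$ term: for every $b\in L^\infty(\mathbb{T}^1)\setminus\{0\}$ there is $C=C(T,M,b)>0$ with $\|u_0\|^2_{L^2}\le C\int_0^T\int_{\mathbb{T}^1}b\,|e^{-it\mathcal{H}_{\theta,V}}u_0|^2$ uniformly in $\theta\in[0,1]$ and $V\in S$. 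Since this holds for \emph{every} $b\in L^\infty(\mathbb{T}^1)\setminus\{0\}$, Lemma~\ref{lemma:linftyl1} upgrades it to every $b\in L^1(\mathbb{T}^1)\setminus\{0\}$. This is exactly the source-free case of the corollary.

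\textbf{Step 3: the source term, and the main obstacle.} Split the mild solution as $u=v+w$, where $v(t)=e^{-it\mathcal{H}_{\theta,V}}u_0$ is the free evolution and $w$ is the Duhamel term, i.e. the solution of the same equation with zero initial data. Two a priori bounds on $w$ are available: $\|w\|_{L^\infty(0,T;L^2(\mathbb{T}^1))}\le\|f\|_{L^1(0,T;L^2(\mathbb{T}^1))}$ from \eqref{eq:L2normsource}, and $\|w\|_{L^\infty(\mathbb{T}^1;L^2(0,T))}\le C\|f\|_{L^1(0,T;L^2(\mathbb{T}^1))}$ from the $L^\infty(\mathbb{T}^1;L^2(0,T))$ component of Proposition~\ref{prop:StrichartzEstimate1d} applied with zero data. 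Applying the Step~2 observability estimate to $v$, using the pointwise bound $|v|^2\le 2|u|^2+2|w|^2$, and estimating $\int_0^T\int_{\mathbb{T}^1}b\,|w|^2\le\|b\|_{L^1(\mathbb{T}^1)}\|w\|^2_{L^\infty(\mathbb{T}^1;L^2(0,T))}$ by Fubini, one obtains
\[
\|u_0\|^2_{L^2(\mathbb{T}^1)}\le C\int_0^T\int_{\mathbb{T}^1}b(x)\,|u(t,x)|^2\,dx\,dt+C'\|f\|^2_{L^1(0,T;L^2(\mathbb{T}^1))};
\]
combining this with $\|u\|^2_{L^\infty(0,T;L^2(\mathbb{T}^1))}\le 2\|u_0\|^2_{L^2(\mathbb{T}^1)}+2\|w\|^2_{L^\infty(0,T;L^2(\mathbb{T}^1))}$ and the bound on $w$ gives the claimed estimate. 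There is no single hard step here, the result being assembled from earlier ones; the only delicate point is that, the weight $b$ being merely in $L^1$, the Duhamel contribution cannot be controlled through an $L^2_x$ bound and must be paired with $b$ via the anisotropic Strichartz norm $L^\infty_xL^2_t$ — which is precisely why Proposition~\ref{prop:StrichartzEstimate1d} rather than mere $L^2$ well-posedness is needed — while throughout one must keep every constant uniform in $\theta$.
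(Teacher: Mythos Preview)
Your proof is correct and follows essentially the same approach as the paper: the same three-stage structure (approximate $V$ by smooth potentials and pass to the limit via Proposition~\ref{prop:stabilityparameters}, then remove the $H^{-2}$ term using the weak-$\star$ compactness of $B_{L^\infty}(0,M)$ and Proposition~\ref{prop:from_weakobs_to_strongobs}, then upgrade to $L^1$ weights via Lemma~\ref{lemma:linftyl1}), followed by the same treatment of the source term through the Duhamel splitting $u=v+w$ and control of $\int b\,|w|^2$ via the anisotropic Strichartz bound $\|w\|_{L^\infty_xL^2_t}$ from Proposition~\ref{prop:StrichartzEstimate1d}. Your Step~3 is in fact organized a bit more cleanly than the paper's, which first bounds $\|u(t)\|_{L^2}^2$ and then substitutes the Duhamel formula back into the observability term, but the content is identical.
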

\begin{proof}
Let us first deal with the case $f=0$ and $b \in L^{\infty}(\T^1)$. According to Proposition~\ref{prop:from_weakobs_to_strongobs}, since $B_{L^{\infty}}(0,M)$ is $\text{weakly}^\star$ compact in $L^{\infty}(\T^1)$, it is sufficient to show that there exists $C>0$ such that for all $\theta \in [0,1]$ and $V \in L^{\infty}(\T^1)$ with $\| V\|_{L^{\infty}(\T^1)} \leq M$,
$$\forall u_0 \in L^2(\T^1), \quad \|u_0\|^2_{L^2(\T^1)} \leq C\int_0^T\int_{\T^1} b(x) |u(t,x)|^2 dx\, dt + C\|u_0\|^2_{H^{-2}(\T^1)}.$$
From Proposition~\ref{obs_schro_1D}, there exists a positive constant $C>0$ such that for all $\theta \in [0, 1]$, $V\in \mathcal C^{\infty}(\mathbb T^1)$ with $\| V\|_{L^{\infty}} \leq M$ and $u_0 \in L^2(\T^1)$,
\begin{equation}
\label{eq:onedimensionalobservabilityestimate2}
  \|u_0\|^2_{L^2(\mathbb T^1)} \leq C \int_0^T \int_{\mathbb T^1} b(x)\left| e^{-it(-\partial_{x}^2 + 2 i \theta \cdot \partial_{x} + |\theta|^2  +V(x))}u_0(x)\right|^2 \ dxdt + C \|u_0\|^2_{H^{-2}(\T^1)}.
\end{equation}
Let $V \in L^{\infty}(\T^1)$. One can find a sequence $(V_n)_{n \in \nn} \subset \mathcal C^{\infty}(\T^1)$ satisfying 
$$\forall n \in \nn, \quad \| V_n\|_{L^{\infty}(\T^1)} \leq M \quad \text{and} \quad V_n \underset{n \to +\infty}{\longrightarrow} V \quad \text{for the weak}^\star \text{ topology of } L^{\infty}.$$
Moreover, we obtain from \eqref{eq:onedimensionalobservabilityestimate2} that for all $n \in \nn$, $\theta \in [0,1]$ and $u_0 \in L^2(\T^1)$,
\begin{equation*}
\label{eq:onedimensionalobservabilityestimate3}
  \|u_0\|^2_{L^2(\mathbb T^1)} \leq C \int_0^T \int_{\mathbb T^1} b(x)\left| e^{-it(-\partial_{x}^2 + 2 i \theta \cdot \partial_{x} + |\theta|^2  +V_n(x))}u_0(x)\right|^2 \ dxdt + C \|u_0\|^2_{H^{-2}(\T^1)}.
\end{equation*}
We therefore deduce from the above estimates, together with the stability result given by Proposition~\ref{eq:convergencesemigroup} and the dominated convergence Theorem, that for all $\theta \in [0,1]$ and $u_0 \in L^2(\T^1)$,
$$\|u_0\|^2_{L^2(\T^1)} \leq C \int_0^T \int_{\T^1} b(x) \left| e^{-it(-\partial_{x}^2 + 2 i \theta \cdot \partial_{x} + |\theta|^2  +V(x))}u_0(x)\right|^2 \ dxdt+ C\|u_0\|_{H^{-2}(\T^1)}.$$
Since $B_{L^{\infty}}(0,M)$ is $\text{weakly}^\star$ compact in $L^{\infty}(\T^1)$, we are now able to conclude from Proposition~\ref{prop:from_weakobs_to_strongobs} that for all $\theta \in [0,1]$, $V \in L^{\infty}(\T^1)$ with $\|V\|_{L^{\infty}}\leq M$ and $u_0 \in L^2(\T^1)$,
\begin{equation}\label{eq:obs_1D_without_source_term}\|u_0\|^2_{L^2(\T^1)} \leq C \int_0^T \int_{\T^1} b(x) \left| e^{-it(-\partial_{x}^2 + 2 i \theta \cdot \partial_{x} + |\theta|^2  +V(x))}u_0(x)\right|^2 \ dxdt.
\end{equation}
Notice that, thanks to Lemma~\ref{lemma:linftyl1}, the observability estimates \eqref{eq:obs_1D_without_source_term} holds true for $b \in L^1(\T^1)$.

Let us now consider the general case. We split $u$ into two terms, the one coming from the initial data and the second coming from the source term, we have
\begin{equation}
\label{eq:duhamelformulaproofObsInhom}
  u(t)= e^{-i t \mathcal{H}_{\theta,V}} u_0 + \int_0^t e^{-i(t-s)\mathcal{H}_{\theta,V}} f(s) ds.  
\end{equation}
Taking the square of the $L^2(\T^1)$-norm on both sides of the previous equality, we have that
\begin{equation*}
    \norme{u(t)}_{L^2(\T^1)}^2 \leq 2 \norme{e^{-i t \mathcal{H}_{\theta,V}} u_0}_{L^2(\T^1)}^2 + 2 \norme{\int_0^t e^{-i(t-s)\mathcal{H}_{\theta,V}} f(s) ds}_{L^2(\T^1)}^2.
\end{equation*}
Then, from the Minkowski inequality, the conservation of the $L^2$-norm \eqref{eq:conservationL2norm}, the observability inequality for the homogeneous equation \eqref{eq:obs_1D_without_source_term}, we deduce that 
\begin{align*}
    \norme{u(t)}_{L^2(\T^1)}^2 & \leq 2 \norme{u_0}_{L^2(\T^1)}^2 + 2 \left(\int_{0}^t \norme{ f(s)}_{L^2(\T^1)} ds\right)^2
   \notag\\ & \leq C \int_{0}^T \int_{\T^1} b(x) |e^{-i t' \mathcal{H}_{\theta,V}} u_0|^2 dt' dx + C  \|f\|^2_{L^1((0,T),L^2(\T^1))}.
\end{align*}
Now we plug the Duhamel formula \eqref{eq:duhamelformulaproofObsInhom} in the right hand side of the previous estimate, using Fubini theorem, we obtain for all $t \in (0,T)$
\begin{align*}
   & \norme{u(t)}_{L^2(\T^1)}^2\\
    &\leq C \int_{0}^T \int_{\T^1} b(x) |u(t')|^2 dt' dx 
   + C \int_{0}^T \int_{\T^1} b(x) \left|\int_0^{t'} e^{-i(t'-s)\mathcal{H}_{\theta,V}} f(s) ds\right|^2 dt' dx + C \|f\|^2_{L^1((0,T),L^2(\T^1))}\\
   & \leq C \int_{0}^T \int_{\T^1} b(x) |u(t')|^2 dt' dx + C \norme{b}_{L^1(\T^1)} \norme{\int_0^{t'} e^{-i(t'-s)\mathcal{H}_{\theta,V}} f(s) ds}_{L^{\infty}(\T^1;L^2(0,T))}^2 + C \|f\|^2_{L^1((0,T),L^2(\T^1))}.
\end{align*}
Moreover,
the Strichartz estimates \eqref{eq:strichartz1Dsource} given by Proposition~\ref{prop:StrichartzEstimate1d} shows that 
\begin{equation*}
\norme{\int_0^{t'} e^{-i(t'-s)\mathcal{H}_{\theta,V}} f(s) ds}_{L^{\infty}(\T^1;L^2(0,T))}^2 \leq C \|f\|^2_{L^1((0,T), L^2(\mathbb T^1))}.
\end{equation*}
Then, it follows that
\begin{equation*}
\norme{u}^2_{L^{\infty}(0,T; L^2(\T^1))}\leq C  \int_{0}^T \int_{\T^1} b(x) |u(t)|^2 dt dx + C \|f\|^2_{L^1((0,T), L^2(\mathbb T^1))},
\end{equation*}
which concludes the proof.
\end{proof}

\subsection{Two-dimensional observability estimates}
This section is devoted to the proof of Theorem~\ref{obs_tore_thm}. It is divided into two parts. The first one establishes uniform observability estimates for smooth potentials, belonging to a relative compact subset of $L^4(\T^2)$. In the second part, we deduce the observability estimates for $L^{\infty}$-potentials from the smooth case and the stability result of Proposition \ref{prop:stabilityparameters}. Without loss of generality, according to Lemma \ref{lemma:linftyl1}, we can assume that $b \in L^{\infty}(\T^2)$.

\subsubsection{Observability estimates for smooth potentials}
In this section, we prove the following proposition which provides weak observability estimates for smooth potentials:
\begin{proposition}\label{obs_tore_smooth_potential}
Let $T>0$, $M>0$, $b \in L^{\infty}(\mathbb T^2) \setminus \{0\}$ be a non-negative function and $\mathcal K \subset \mathcal C^{\infty}(\T^2)\cap B_{L^{\infty}}(0,M)$ be a relatively compact subset of $L^4(\T^2)$. There exists a positive constant $C=C(T,b, M, \mathcal K) > 0$ such that for every potential $V \in \mathcal K$, $\theta \in [0,1]^2$ and $v_0 \in L^2(\T^2)$,
\begin{equation*}
    \label{eq:ObservabilitySchrodingerToreWeak}
    \norme{v_0}_{L^2(\T^2)}^2 \leq C \int_0^T \int_{\mathbb T^2} b(z) |e^{-it\mathcal H_{\theta, V}}v(t,z)|^2 dz dt +C \|u_0\|^2_{H^{-2}(\T^2)}.
\end{equation*}
\end{proposition}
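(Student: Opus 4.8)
The plan is to adapt, while keeping track of the parameter $\theta$, the strategy of Burq--Zworski \cite{BZ19} (see also \cite{BZ12,BBZ13}): reduce to an observability estimate for highly oscillating data, argue by contradiction, produce a semiclassical defect measure, and discard it through a dimension-reduction argument combining the ergodicity of the geodesic flow on $\T^2$ with the one-dimensional estimates of Section~\ref{Obs_1D_section}. \textbf{Step 1 (reduction to highly oscillating data).} By Proposition~\ref{frequency_cutoff_obs_prop} applied with the bounded set $S=\mathcal K$, it suffices to prove the frequency-localized estimate: for every $T>0$ there exist $C,\rho_0,h_0>0$ such that, for all $\theta\in[0,1]^2$, $V\in\mathcal K$, $0<h\le h_0$, $0<\rho\le\rho_0$ and $u_0\in L^2(\T^2)$,
\[
\norme{\Pi_{h,\rho,\theta,V}u_0}_{L^2(\T^2)}^2\le C\int_0^T\!\!\int_{\T^2}b(z)\,\bigl|e^{-it\mathcal H_{\theta,V}}(\Pi_{h,\rho,\theta,V}u_0)(z)\bigr|^2\,dz\,dt.
\]
I would argue by contradiction exactly as in Proposition~\ref{obs_schro_1D}: the negation yields sequences $h\to0^+$, $\rho_h\to0^+$, $\theta_h\in[0,1]^2$, $V_h\in\mathcal K$ and $v_h=\Pi_{h,\rho_h,\theta_h,V_h}u_h$ with $\norme{v_h}_{L^2(\T^2)}=1$ and $\int_0^T\!\int_{\T^2}b(z)|e^{-it\mathcal H_{\theta_h,V_h}}v_h(z)|^2\,dz\,dt\to0$. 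Using that $\mathcal K$ is bounded in $L^\infty(\T^2)$ and relatively compact in $L^4(\T^2)$, one may extract a subsequence along which $\theta_h\to\theta\in[0,1]^2$ and $V_h\to V$ in $L^4(\T^2)$ (with $V_h\rightharpoonup^\star V$ in $L^\infty$).

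\textbf{Step 2 (semiclassical measure and its invariance).} By the $h$-oscillation property \eqref{h-oscillating} built into $\Pi_{h,\rho,\theta,V}$ and Proposition~\ref{semiclassical_measure_prop}, up to a further extraction the family $t\mapsto e^{-it\mathcal H_{\theta_h,V_h}}v_h$ admits a semiclassical defect measure $\mu\in L^\infty(\R;\mathcal M_+(T^*\T^2))$, normalized by $\mu([t_0,t_1]\times T^*\T^2)=|t_1-t_0|$. The key point is that in the semiclassical scaling the operator $h^2\mathcal H_{\theta,V}$ has principal symbol $|\xi|^2$, the contributions of $2i\theta\cdot\nabla$, $|\theta|^2$ and $V$ being of lower (subprincipal) order and $V\in C^\infty$; hence $\mu$ is carried by $\{|\xi|=1\}$ and is invariant under the geodesic flow, that is $\partial_s\int_\R\phi(t)\int_{T^*\T^2}a(x+s\xi,\xi)\,\mu(t,dx,d\xi)\,dt=0$ for all $\phi\in L^1(\R)$ and $a\in C_c^\infty(\T^2\times\R^2)$. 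It is precisely because these structural properties of $\mu$ do not depend on $\theta$ that the resulting observability constant can be taken uniform in $\theta$.

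\textbf{Step 3 (killing $\mu$).} As in the proof of Proposition~\ref{obs_schro_1D}, the two-dimensional Strichartz estimate of Proposition~\ref{prop:StrichartzEstimate} shows that $\mu_T(dx):=\int_0^T\!\int_{\R^2}\mu(t,dx,d\xi)\,dt$ has density in $L^\infty(\T^2)$; approximating $b\in L^\infty(\T^2)$ in $L^1$ by $C_c^\infty$ functions and using the vanishing of the observation term yields $\int_{\T^2}b\,d\mu_T=0$, so $\mu$ charges no mass on $\{b>0\}$, a set of positive Lebesgue measure. To contradict $\mu([0,T]\times T^*\T^2)=T$, I would disintegrate $\mu$ over the direction $\xi\in\mathbb S^1$. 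For directions of irrational slope the translation flow on $\T^2$ is uniquely ergodic, so the corresponding invariant component of $\mu$ is proportional to Lebesgue measure; since Lebesgue measure charges $\{b>0\}$, this component vanishes. For directions of rational slope the geodesics are closed and may avoid $\{b>0\}$: one performs a second microlocalization in a thin cone around the rational direction, changes coordinates so that $\T^2$ fibers over a transverse circle $\T^1_y$ with the closed geodesics as fibers, and checks that the associated two-microlocal measure is transported along $\T^1_y$ by a one-dimensional Schrödinger operator of the form $-\partial_y^2+2i\theta'\partial_y+|\theta'|^2+\widetilde V$, with $\widetilde V$ an average of $V$ along the closed geodesics. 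Corollary~\ref{cor:obs_1D}, being uniform in $\theta'$ and valid for $L^\infty$ potentials and with a source term, then forces this component to vanish as well. Summing over all directions gives $\mu\equiv0$, the desired contradiction, which proves the frequency-localized estimate and hence, by Step~1, Proposition~\ref{obs_tore_smooth_potential}.

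\textbf{Main obstacle.} The delicate part is the treatment of the rational directions in Step~3: building the second-microlocal measure, identifying precisely the one-dimensional Schrödinger equation it satisfies, and applying Corollary~\ref{cor:obs_1D} while keeping every constant uniform in $\theta$ and in $V\in\mathcal K$. The dependence on $\theta$ is harmless at the level of principal symbols but must be propagated through the reduced one-dimensional operator — which is exactly why Corollary~\ref{cor:obs_1D} was established with the term $2i\theta'\partial_y$ and with an inhomogeneity. The relative compactness of $\mathcal K$ in $L^4(\T^2)$ is what allows one both to extract a limiting potential in the contradiction argument and to control $V$ in the Strichartz-type estimates underlying the construction of $\mu$.
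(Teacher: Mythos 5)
Your outline follows essentially the same route as the paper: reduction to frequency-localized data via Proposition~\ref{frequency_cutoff_obs_prop}, a contradiction argument producing an invariant semiclassical measure on $\{|\xi|=1\}$, elimination of irrational directions by unique ergodicity, and reduction of the remaining rational direction to the one-dimensional estimate of Corollary~\ref{cor:obs_1D}. Two remarks. First, the step you defer as the ``main obstacle'' is exactly where the paper does its real work: Proposition~\ref{prop:normal_form} constructs explicitly the conjugation $(I+h_nQ_{n,\delta})\mathcal H_{\theta_n,V_{p_\delta}}\chi_\varepsilon(h_nD)=\bigl(\mathcal H_{\theta_n,\langle V_{p_\delta}\rangle_y}(I+h_nQ_{n,\delta})+h_nW_{n,\delta}D_x\bigr)\chi_\varepsilon(h_nD)+h_nR_{n,\delta}$, with $Q_{n,\delta}=\tfrac i2\tilde V_{p_\delta}\sigma(h_nD_y)/(h_nD_y)$, and then runs a quantitative bookkeeping: the term $h_nW_{n,\delta}D_x\chi_\varepsilon(h_nD)v_n$ is $O(\varepsilon)$ because the cone forces $|h_n\xi_1|\le\varepsilon$, and $(V_{p_\delta}-V_n)v_n$ is $O(\delta)$ via the $L^4_zL^2_t$ Strichartz bound; the contradiction is then $0<\gamma^2\le C\varepsilon+C_\delta\varepsilon^2+C'\delta^2$ for a single charged rational direction (the set of charged rational directions being countable, one direction suffices — you do not need to ``sum over all directions''). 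Your sketch names the right reduced operator and the right role of the source term in Corollary~\ref{cor:obs_1D}, but as written the normal form is asserted rather than proved, so the proof is incomplete at its most delicate point. Second, a small inaccuracy: in two dimensions Proposition~\ref{prop:StrichartzEstimate} gives $L^4(\T^2;L^2(0,T))$ control, so the spatial marginal $\mu_T$ has density only in $L^2(\T^2)$, not $L^\infty$ (the $L^\infty$ density is the one-dimensional case); this is harmless since approximating $b$ in $L^2$ still yields $\int_{\T^2}b\,g_T\,dz=0$.
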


Let $M>0$, $T>0$ and $\mathcal K \subset \mathcal C^{\infty}(\T^2)\cap B_{L^{\infty}}(0,M)$ be a relatively compact subset of $L^4(\T^2)$. Thanks to Proposition~\ref{frequency_cutoff_obs_prop}, it is sufficient to establish that there exist some positive constants $C=C(T,b, M, \mathcal K)>0$, $\rho_0=\rho_0(T)>0$ and $h_0=h_0(T)>0$ such that for all $\theta \in [0,1]^2$ and $V \in \mathcal K$, we have
\begin{multline*}
   \forall 0<h\leq h_0, \forall 0< \rho \leq \rho_0, \forall u_0 \in L^2(\mathbb T^2), \\ \left\| \Pi_{h, \rho,\theta, V}u_0 \right\|_{L^2(\mathbb T^2)}^2 \leq C \int_0^T \int_{\mathbb T^2} b(z) \left| e^{-it\mathcal H_{\theta, V}} (\Pi_{h, \rho,\theta, V}u_0)(z)\right|^2 dz dt,
\end{multline*}
where $\mathcal H_{\theta, V}=-\Delta+2i\theta\cdot\nabla + |\theta|^2 +V$.

To that end, let us proceed by contradiction. Assume that there exist some sequences $(h_n)_{n \in \nn} \subset (0,1]$, $(\rho_n)_{n \in \nn} \subset \rr_+^*$, $(V_n)_{n \in \nn} \subset \mathcal K$, $(\theta_n)_{n \in \nn} \subset [0,1]^2$ and $(u_n)_{n \in \nn} \subset L^2(\mathbb T^2)$ satisfying:
\begin{equation*}
    \rho_n \underset{n \to +\infty}{\longrightarrow} 0, \quad v_n=\Pi_{h_n, \rho_n,\theta_n, V_n}u_n\quad \text{with}\quad \|v_n\|_{L^2}=1
\end{equation*}
and
\begin{equation}\label{eq:contradiction_obs}
    \int_0^T \int_{\mathbb T^2} b(z) \left| e^{-it\mathcal H_{\theta_n, V_n}} v_n(z)\right|^2 dz dt \underset{n \to +\infty}{\longrightarrow}0.
\end{equation}
Since $\mathcal K$ is relatively compact in $L^4(\T^2)$, up to a subsequence, $(V_n)_{n \in \nn}$ is a Cauchy sequence in $L^4(\T^2)$. Let us consider a small parameter $\delta>0$, to be chosen later. There exists $p_{\delta} \in \nn$ such that
\begin{equation}\label{eq:Vn_cauchy_sequence}
\|V_n -V_{p_\delta}\|_{L^4(\T^2)} \leq \delta\qquad \forall n \geq p_{\delta}.
\end{equation}
By definition, the family $(v_n)_{n \in \nn}$ satisfies the $h_n$-oscillating property \eqref{h-oscillating}. We can therefore apply Proposition~\ref{semiclassical_measure_prop} which provides a finite measure $\mu \in L^{\infty}(\rr, \mathcal M_+(T^*\mathbb T^2))$ such that, up to a subsequence: for all $\varphi \in L^1(\mathbb R)$ and $a \in \mathcal C^{\infty}_c(\mathbb T^2\times \rr^2)$,
\begin{equation}\label{defect_measure_2D}
    \lim_{n \to +\infty} \int_{\rr} \varphi(t) \langle \ops {h_n}a e^{-i t\mathcal H_{\theta_n, V_n}} v_n, e^{-it \mathcal H_{\theta_n, V_n}} v_n\rangle_{L^2(\mathbb T^2)} dt= \int_{\rr \times T^*\mathbb T^2} \varphi(t) a(z,\xi) \mu(t, dz, d\zeta)dt.
\end{equation}
Moreover, the measure $\mu$ satisfies
\begin{equation}\label{flow_invariance_2D}
    \forall s \in \rr, \quad \partial_s\int_{\rr}\int_{T^*\mathbb T^2}\varphi(t) a(z+s\zeta, \zeta) \mu(t, dz, d\zeta)dt=0,
\end{equation}
for all $\varphi \in L^1(\mathbb R)$ and $a \in \mathcal C^{\infty}_c(\mathbb T^2\times \rr^2)$.
Furthermore, the following properties hold
\begin{equation*}\label{support_2D}
   \forall t_0,t_1 \in \rr,\ \mu([t_0,t_1]\times \mathbb T^2 \times \rr^2)=|t_1-t_0|\quad \text{and} \quad \Supp \mu \subset \rr \times \mathbb T^2 \times \mathbb S^1. 
\end{equation*}
Let us define the measure $\mu_T \in \mathcal M_+(T^*\mathbb T^2)$ by
$$\mu_T(dz, d\zeta)=\int_0^T \mu(t, dz, d\zeta) dt.$$
We divide the remainder of the proof into six steps.\\

\textit{First step: regularity property for $\mu_T$.}\\
We have from \eqref{defect_measure_2D}, together with Proposition~\ref{semiclassical_measure_prop} assertion \textit{(ii)}, that for all $a \in \mathcal C^{\infty}_c(\mathbb T^2)$,
$$\int_{\mathbb T^2\times \rr^2} a(z) \mu_T(dz, d\zeta) = \lim_{n \to +\infty} \int_{0}^T\int_{\mathbb T^2} a(z) \left| e^{-it\mathcal H_{\theta_n, V_n}} v_n(z)\right|^2 dzdt.$$
Consequently, we obtain for all $a \in \mathcal C^{\infty}_c(\mathbb T^2)$,
\begin{align*}\left|\int_{\mathbb T^2\times \rr^2} a(z) \mu_T(dz, d\zeta)\right| & = \lim_{n \to +\infty} \left|\int_{0}^T\int_{\mathbb T^2} a(z) \left| e^{-it\mathcal H_{\theta_n, V_n}} v_n(z)\right|^2 dz\right| \\
& \leq \|a\|_{L^2(\mathbb T^2)} \left\|e^{-it\mathcal H_{\theta_n, V_n}} v_n\right\|^2_{L^4(\mathbb T^2, L^2(0,T))} \\
& \leq C_T \|a \|_{L^2(\mathbb T^2)},
\end{align*}
where the last inequality follows from the Strichartz type estimates given by Proposition~\ref{prop:StrichartzEstimate}.
It follows from the Riesz representation Theorem that there exists a non-negative function $g_T \in L^2(\mathbb T^2)$ such that for all $a \in \mathcal C^{\infty}_c(\mathbb T^2)$,
$$\int_{\mathbb T^2\times \rr^2} a(z) \mu_T(dz, d\zeta) =\int_{\mathbb T^2} a(z) g_T(z) dz.$$

Now, let $(b_n)_{n \in \mathbb N} \in \mathcal C_c^{\infty}(\mathbb T^2)^{\mathbb N}$ be a sequence converging to $b$ in $L^2(\mathbb T^2).$ Notice that since $g_T\in L^{2}$, we readily have $\int_{\mathbb T^2} b(z) g_T(z) dz = \lim_{n \to +\infty} \int_{\mathbb T} b_n(z) g_T(z) dz $. On the other hand, 
\begin{align*}
    \left|\int_{\mathbb T^2} b_n(z) g_T(z) dz\right| & = \lim_{h \to 0^+} \left|\int_{0}^T\int_{\mathbb T} b_n(z) \left| e^{-i t\mathcal H_{\theta_h, V_h}} v_h\right|^2(z) dzdt\right| \\ & \leq \lim_{h \to 0^+} \int_{0}^T\int_{\mathbb T} b(z) \left| e^{-i t\mathcal H_{\theta_h, V_h}} v_h\right|^2(z) dzdt + C \|b-b_n\|_{L^1(\mathbb T^2)}.
\end{align*}
We deduce from the last inequality and \eqref{eq:contradiction_obs} that 
\begin{equation}\label{dfct_meas_null}\int_{\mathbb T^2\times \rr^2} b(z) \mu_T(dz, d\zeta)=\int_{\mathbb T^2} b(z) g_T(z) dz=0.
\end{equation}

\textit{Second step: Ergodicity property.}
We define the set $\Sigma_{\mathbb R \setminus \mathbb Q}$ of irrational directions on the torus $\mathbb T^2$
\begin{equation*}
    \Sigma_{\mathbb R \setminus \mathbb Q}:=\left\{(z, \zeta)\in T^*\mathbb T^2; \ |\zeta|=1, \, \mathbb Z^2 \cap \{\zeta\}^{\perp}=\{0\}\right\},
\end{equation*}
and $\Sigma_{\mathbb Q}=\Sigma_{\mathbb R \setminus \mathbb Q}^c$ the set of rational directions.
The set $\Sigma_{\mathbb R \setminus \mathbb Q}$ is clearly invariant by the flow:

\begin{equation}\label{irr_dir_flow}
\forall (z, \zeta) \in \Sigma_{\mathbb R \setminus \mathbb Q}, \forall s \in \rr, \quad (z+s\zeta,\zeta) \in \Sigma_{\mathbb R \setminus \mathbb Q}.
\end{equation}
Let us show that $\mu_T(\Sigma_{\mathbb R \setminus \mathbb Q})=0.$ Let $(b_p)_{p \in \nn} \subset \mathcal C_c^{\infty}(\mathbb T^2;\R^+)$ such that $b_p \underset{p \to +\infty}{\longrightarrow} b$ in $L^2(\mathbb T^2)$. Clearly, we have $$\langle b_p\rangle\underset{p \to +\infty}{\longrightarrow} \langle b \rangle,$$
where $\langle b \rangle = \int_{\mathbb T^2} b(x) dx.$
Since $\langle b \rangle =\|b\|_{L^1(\mathbb T^2)}>0$, for $p$ sufficiently large, we have $\langle b_p\rangle \geq\frac{\|b\|_{L^1(\mathbb T^2)}}{2}>0$. Furthermore, by unique ergodicity of the flow $z \longmapsto z+s \zeta$, the following convergence holds: for all $p \in \nn$,
$$\forall (z, \zeta) \in \Sigma_{\mathbb R \setminus \mathbb Q}, \quad \langle b_p\rangle_S(z, \zeta):=\frac 1S \int_{0}^S b_p(z+s \zeta)ds \underset{S \to +\infty}{\longrightarrow} \langle b_p\rangle.\footnote{To prove this convergence, one can check it for trigonometric polynomials and then conclude by a density argument.}$$
Consequently, we obtain from the Fatou's lemma that
\begin{align}\nonumber
  0  \leq \mu_T(\Sigma_{\mathbb R \setminus \mathbb Q}) \langle b_p \rangle  = \int_{\Sigma_{\mathbb R \setminus \mathbb Q}} \langle b_p\rangle \mu_T(dz, d\zeta)
    & = \int_{\Sigma_{\mathbb R \setminus \mathbb Q}} \liminf_{S \to +\infty} \langle b_p\rangle_S(z, \zeta) \mu_T(dz, d\zeta)\\
    & \leq \liminf_{S\to +\infty}\int_{\Sigma_{\mathbb R \setminus \mathbb Q}} \langle b_p\rangle_S(z, \zeta) \mu_T(dz, d\zeta).\label{dfct_meas_irr_dir1}
\end{align}
By \eqref{flow_invariance_2D} and \eqref{irr_dir_flow}, $\mu_T(dz, d\zeta)$ and $\Sigma_{\mathbb R \setminus \mathbb Q}$ are invariant by the flow, it follows that for all $S>0$,
\begin{equation}\label{dfct_meas_irr_dir2} \int_{\Sigma_{\mathbb R \setminus \mathbb Q}} \langle b_p\rangle_S(z, \zeta) \mu_T(dz, d\zeta)= \frac 1S\int_0^S \int_{\Sigma_{\mathbb R \setminus \mathbb Q}} b_p(z+s\zeta, \zeta) \mu_T(dz, d\zeta)= \int_{\Sigma_{\mathbb R \setminus \mathbb Q}} b_p(z) \mu_T(dz, d\zeta).
\end{equation}
On the other hand, \eqref{dfct_meas_null} shows that
\begin{equation}\label{dfct_meas_irr_dir3}
0 \leq \left|\int_{\Sigma_{\mathbb R \setminus \mathbb Q}} b_p(z) \mu_T(dz, d\zeta)\right|= \left|\int_{\Sigma_{\mathbb R \setminus \mathbb Q}} b_p(z)-b(z) \mu_T(dz, d\zeta)\right|\leq \int_{\mathbb T^2} |b_p(z)-b(z)| g_T(z)dz \underset{p\to+\infty}{\longrightarrow} 0.
\end{equation}
Finally, by gathering \eqref{dfct_meas_irr_dir1}, \eqref{dfct_meas_irr_dir2} and \eqref{dfct_meas_irr_dir3}, we obtain that 
$$0\leq  \mu_T(\Sigma_{\mathbb R \setminus \mathbb Q}) \leq \frac {2}{\|b\|_{L^1}} \left|\int_{\Sigma_{\mathbb R \setminus \mathbb Q}} b_p(z) \mu_T(dz, d\zeta)\right| \underset{p \to +\infty}{\longrightarrow} 0,$$
providing $\mu_T(\Sigma_{\mathbb R \setminus \mathbb Q})=0.$
Consequently, $\mu_T(\Sigma_{\mathbb Q})=T>0$ and since
$\left\{\zeta \in \mathbb S^1; \ \exists z\in \mathbb T^2, (z, \zeta) \in \Sigma_{\mathbb Q} \right\}$
is a countable set, there exists $\zeta_0 \in \rr^2$ such that
$$\mu_T(\mathbb T^2 \times \{\zeta_0\})>0 \quad \text{and} \quad \zeta_0=\frac{(n,m)}{\sqrt{n^2+m^2}} \quad \text{for some}\quad (n,m) \in \mathbb Z^2.$$

\textit{Third step: a change of variables.}
This step is devoted to show that, up to a change of variables, we can assume that $\zeta_0=(0,1)$. 
Let $F : \rr^2 \longrightarrow \rr^2$ be the isometry defined by $$\forall (x_1, x_2) \in \rr^2, \quad F(x_1, x_2)= x_1 \xi_0^{\perp} +x_2 \zeta_0,$$
where $\zeta_0^{\perp}=\frac{(-m,n)}{\sqrt{n^2+m^2}}$.
One can readily verify that for any function $u$ periodic with respect to $(2\pi \mathbb Z)^2$, the function $F^*u$ is periodic with respect to $(A \mathbb Z)^2$, with $A=2\pi\sqrt{n^2+m^2}$. Moreover, if $u(t,\cdot)$ is solution to the Schrödinger equation:
\begin{equation*}
		\left\{
			\begin{array}{ll}
				i  \partial_t u  = (-\Delta + 2 i \theta \cdot \nabla  + |\theta|^2+ V(y)) u & \text{ in }  (0,T) \times \T^2, \\
				u(0, \cdot) = u_0 & \text{ in } \T^2,
			\end{array}
		\right.
\end{equation*}
then, $v(t,\cdot)=F^*u(t, \cdot)$ is solution to the Schrödinger equation posed on $\mathbb R^2/(A \mathbb Z)^2$:
\begin{equation}\label{eq:SchrodingerEq_DilatedTorus}
		\left\{
			\begin{array}{ll}
				i  \partial_t v  = (-\Delta + 2 i F^{-1}(\theta) \cdot \nabla + |\theta|^2+ F^*V(y)) v & \text{ in }  (0,T) \times \mathbb R^2/(A \mathbb Z)^2, \\
				v(0, \cdot) = F^*u_0 & \text{ in } \mathbb R^2/(A \mathbb Z)^2.
			\end{array}
		\right.
\end{equation}
As a consequence, if we define $w_h= F^* v_h$ then, 
$$\forall t \in \rr, \quad F^*\left(e^{-it(-\Delta+2i\theta_n \cdot\nabla+ |\theta_n|^2+ V_n)} v_n\right)= e^{-it(-\Delta+2iF^{-1}(\theta_n) \cdot\nabla+ |\theta_n|^2+ F^*V_n)} w_n.$$
In the following, the new torus $\mathbb R^2/(A \mathbb Z)^2$ is still denoted $\mathbb T^2$. 
Up to a subsequence, associated to this family of solutions of the Schrödinger equation \eqref{eq:SchrodingerEq_DilatedTorus} with initial data $(w_n)_{n \in \nn}$ is a semiclassical defect measure $\nu \in L^{\infty}(\mathbb R, \mathcal M_+(T^*\mathbb T^2))$ satisfying:
\begin{itemize}
    \item  for all $t_0,t_1 \in \rr,\ \nu([t_0,t_1]\times \mathbb T^2 \times \rr^2)=A^2|t_1-t_0|$ and $\Supp \nu \subset \rr \times \mathbb T^2 \times \mathbb S^1$,
    \item $\nu_T(\mathbb T^2\times \{(0,1)\})>0$, with $\nu_T(dz, d\zeta)=\int_0^T \nu(t, dz, d\zeta)dt$.
\end{itemize}
In other words, we are in the same situation as in the second step, with $\zeta_0=(0,1)$. For the sake of conciseness, we keep in the remainder of the proof the notations adopted in the second step and assume that $\zeta_0=(0,1)$.\\

\textit{Fourth step: localization around the rational direction $\zeta_0$.}
In this step, we localize the semiclassical defect measure $\mu$ around the rational direction $\zeta_0=(0,1)$.
For $m \in \nn^*$, we define the following set of rational directions
\begin{equation*}\label{eq:rational_directions}
    \Sigma_{\mathbb Q}^m:=\left\{(z,\zeta)\in \mathbb T^2 \times \mathbb S^1; \ \zeta=\frac{(p,q)}{\sqrt{p^2+q^2}}, \ p^2+q^2 \leq m, \ \text{pgcd}(p,q)=1\right\}.
\end{equation*}
Let $\varepsilon>0$ be a small positive real number, to be chosen later. By denoting $\Sigma_{\mathbb R \setminus \mathbb Q}^m =\left(\Sigma_{\mathbb Q}^m\right)^c$, we have $\Sigma_{\mathbb Q} = \bigcup_{m \in \mathbb N^*} \Sigma_{\mathbb Q}^m$ so $\Sigma_{\mathbb R \setminus \mathbb Q}=\Sigma_{\mathbb Q}^c = \bigcap_{m \in \mathbb N^*} (\Sigma_{\mathbb Q}^m)^c = \bigcap_{m \in \nn^*} \Sigma_{\mathbb R \setminus \mathbb Q}^m$, with $\Sigma_{\mathbb R \setminus \mathbb Q}^{m+1} \subset \Sigma_{\mathbb R \setminus \mathbb Q}^m$ for all $m \in \N^*$. Then, the second step shows that $$\lim_{m \to +\infty} \mu_T\left(\Sigma_{\mathbb R \setminus \mathbb Q}^m\right)= \mu_T\left(\Sigma_{\mathbb R \setminus \mathbb Q}\right)=0.$$
In particular, this provides a positive integer $m_{\varepsilon} \in \nn^*$ such that $\mu_T\left(\Sigma_{\mathbb R \setminus \mathbb Q}^{m_{\varepsilon}}\right)<\varepsilon.$
Since $\Sigma_{\mathbb Q}^{m_{\varepsilon}}$ is a discrete set, we can choose $\chi_{\varepsilon} \in \mathcal{C}^{\infty}_c(\rr^2)$ such that 
\begin{equation*}\label{troncature}
    \chi_{\varepsilon}((0,1))=1, \quad \Supp \chi_{\varepsilon} \subset B((0,1); \varepsilon) \quad \text{and} \quad \Sigma^{m_\varepsilon}_{\mathbb Q} \cap \left(\mathbb T^2 \times \Supp \chi_{\varepsilon}\right)= \mathbb T^2\times\{(0,1)\},
\end{equation*}
where $B((0,1); \varepsilon)$ stands for the Euclidean ball of $\rr^2$ centered at $(0,1)$ with radius $\varepsilon$.\\

\textit{Fifth step: a normal form argument.}
This step consists in applying a normal form argument in order to reduce the study to a Schrödinger equation with a one-variable potential. The following proposition is an adaptation of  \cite[Proposition~4.4]{BBZ13} (see also \cite[Corollary~2.6]{BZ12}).
\begin{proposition}\label{prop:normal_form}
There exist three bounded families $(Q_{n,\delta})_{n \in \nn}, (R_{n,\delta})_{n \in \nn}, (W_{n,\delta})_{n \in \nn}$ in $\mathcal L (L^2(\mathbb T^2))$ such that for all $n \in \nn$,
\begin{equation*}
    (I+h_nQ_{n, \delta})\mathcal H_{\theta_n, V_{p_\delta}} \chi_{\varepsilon}(h_n D)
    =\Big(\mathcal H_{\theta_n, \langle V_{p_{\delta}}\rangle_y}(I+h_nQ_{n, \delta})+h_nW_{n, \delta} D_x\Big)\chi_{\varepsilon}(h_nD)+h_nR_{n, \delta},
\end{equation*}
where $\langle V_{p_{\delta}}\rangle_y(x)= \int_{\mathbb T^1} V_{p_\delta}(x,y) dy$ and $D=D_z=(D_x,D_y)=\frac 1i \nabla$. More precisely, there exist a positive constant $C>0$ independent on $\delta$ and a positive constant $C_{\delta}>0$ such that for all $n\in \nn$,
$$\|Q_{n, \delta}\|_{\mathcal L(L^2)} \leq C, \quad \|W_{n, \delta}\|_{\mathcal L(L^2)} \leq C_{\delta} \quad \text{and} \quad \|R_{n, \delta}\|_{\mathcal L(L^2)} \leq C_{\delta}.$$
\end{proposition}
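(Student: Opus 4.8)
The plan is to follow the averaging/normal form scheme of \cite{BZ12} and \cite[Proposition~4.4]{BBZ13}, while carefully tracking the dependence on $\theta_n$ and isolating what is allowed to depend on $\delta$. First I would decompose the smooth potential as $V_{p_\delta}=\langle V_{p_\delta}\rangle_y+\widetilde V_\delta$, where $\widetilde V_\delta(x,y):=V_{p_\delta}(x,y)-\langle V_{p_\delta}\rangle_y(x)$ has vanishing $y$-average, and introduce its $y$-primitive $\Phi_\delta(x,y):=\int_0^y\widetilde V_\delta(x,s)\,ds$. Since $\widetilde V_\delta\in\mathcal C^\infty(\mathbb T^2)$ has zero mean in $y$, one gets $\Phi_\delta\in\mathcal C^\infty(\mathbb T^2)$ with $\partial_y\Phi_\delta=\widetilde V_\delta$ and --- crucially for the $\delta$-uniformity of $Q_{n,\delta}$ --- $\|\Phi_\delta\|_{L^\infty}\le C\|\widetilde V_\delta\|_{L^\infty}\le C\|V_{p_\delta}\|_{L^\infty}\le CM$. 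I would then fix, once and for all, a cutoff $\widetilde\chi\in\mathcal C_c^\infty(\mathbb R^2)$ with $\widetilde\chi(\eta)=1/\eta_2$ on $B((0,1);1/2)$ (which contains $\Supp\chi_\varepsilon$ as soon as $\varepsilon<1/2$), and set $Q_{n,\delta}:=\ops{h_n}{\tfrac{i}{2}\Phi_\delta(z)\,\widetilde\chi(\eta)}$; heuristically, $Q_{n,\delta}$ solves the cohomological equation that kills $\widetilde V_\delta$ along the translation flow in the rational direction $\zeta_0=(0,1)$.

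Next comes the algebraic identity. Writing $M_g$ for multiplication by $g$ and using $\mathcal H_{\theta_n,V_{p_\delta}}=\mathcal H_{\theta_n,\langle V_{p_\delta}\rangle_y}+M_{\widetilde V_\delta}=\mathcal H_{\theta_n,0}+M_{\langle V_{p_\delta}\rangle_y}+M_{\widetilde V_\delta}$, one obtains
\begin{multline*}
(I+h_nQ_{n,\delta})\mathcal H_{\theta_n,V_{p_\delta}}\chi_\varepsilon(h_nD)-\mathcal H_{\theta_n,\langle V_{p_\delta}\rangle_y}(I+h_nQ_{n,\delta})\chi_\varepsilon(h_nD)\\
=M_{\widetilde V_\delta}\chi_\varepsilon(h_nD)+h_nQ_{n,\delta}M_{\widetilde V_\delta}\chi_\varepsilon(h_nD)+h_n[Q_{n,\delta},\mathcal H_{\theta_n,\langle V_{p_\delta}\rangle_y}]\chi_\varepsilon(h_nD).
\end{multline*}
The second term on the right is $h_n$ times a bounded operator of norm $\le CM^2$, and $h_n[Q_{n,\delta},M_{\langle V_{p_\delta}\rangle_y}]$ is, by the semiclassical composition calculus of the Appendix (Theorem~\ref{th:semiclassicalcomposition} and \eqref{eq:expansioncommutatorRd}), again $h_n$ times a bounded operator whose norm is controlled by $\|\partial_x\langle V_{p_\delta}\rangle_y\|_{L^\infty}$, hence $\delta$-dependent. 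The core term is $h_n[Q_{n,\delta},\mathcal H_{\theta_n,0}]\chi_\varepsilon(h_nD)$: since $h_n^2\mathcal H_{\theta_n,0}=\ops{h_n}{|\eta-h_n\theta_n|^2}$, the commutator expansion gives $h_n[Q_{n,\delta},\mathcal H_{\theta_n,0}]=\ops{h_n}{\tfrac{1}{i}\{\tfrac{i}{2}\Phi_\delta\widetilde\chi,\,|\eta-h_n\theta_n|^2\}}+h_n\ops{h_n}{r_{n,\delta}}$ with universal remainder $r_{n,\delta}$ controlled by finitely many $\mathcal C^k$-seminorms of $\Phi_\delta$ and $\widetilde\chi$, and a direct computation of the Poisson bracket yields $\tfrac{1}{i}\{\tfrac{i}{2}\Phi_\delta\widetilde\chi,\,|\eta-h_n\theta_n|^2\}=-\widetilde\chi(\eta)\,(\eta-h_n\theta_n)\cdot(\partial_x\Phi_\delta,\widetilde V_\delta)$. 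On $\Supp\chi_\varepsilon$ one has $\eta_2\widetilde\chi(\eta)=1$ \emph{exactly}, so the $\widetilde V_\delta$-component restricted to $\Supp\chi_\varepsilon$ is $-\widetilde V_\delta+h_n(\cdots)$, which produces $-M_{\widetilde V_\delta}\chi_\varepsilon(h_nD)$ up to an $O(h_n)$ error, while the $\partial_x\Phi_\delta$-component, using $\eta_1=h_n\xi_1$, is of the form $h_nW_{n,\delta}D_x\chi_\varepsilon(h_nD)$ with $W_{n,\delta}:=-M_{\partial_x\Phi_\delta}\widetilde\chi(h_nD)$, up to $O(h_n)$ commutator errors. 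Substituting back, the two copies of $M_{\widetilde V_\delta}\chi_\varepsilon(h_nD)$ cancel and one is left precisely with the claimed identity, $R_{n,\delta}$ being the sum of all the $O(h_n)$-bounded remainders collected above.

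For the norm bounds I would invoke the Calderón--Vaillancourt estimate of the Appendix (Theorem~\ref{th:continuitypseudoRd} and \eqref{eq:EstimationCalderonVaillancourt}): one has $\|Q_{n,\delta}\|_{\mathcal L(L^2)}\le C\sup_{|\alpha|\le k}\|\partial^\alpha(\Phi_\delta\widetilde\chi)\|_{L^\infty}\le CM$, with $C$ independent of $n$, $\theta_n$, $h_n\in(0,1]$ and $\delta$, since only $\|\Phi_\delta\|_{L^\infty}$ (governed by $M$) and the fixed symbol $\widetilde\chi$ enter. By contrast, $W_{n,\delta}$ and $R_{n,\delta}$ involve $\partial_x\Phi_\delta$, $\partial^2\Phi_\delta$, $\partial_x\langle V_{p_\delta}\rangle_y$, that is, derivatives of the \emph{fixed} smooth potential $V_{p_\delta}$; their $\mathcal L(L^2)$-norms are therefore bounded by a constant $C_\delta$ depending on $V_{p_\delta}$ but uniform in $n$, $\theta_n$ and $h_n\in(0,1]$. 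The $\theta_n$-dependence is harmless throughout, as it enters only through $|\eta-h_n\theta_n|^2=|\eta|^2+O(h_n)$, the $O(h_n)$ being uniform over $\theta_n\in[0,1]^2$.

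The step I expect to be the main obstacle is precisely this bookkeeping of the semiclassical expansion that makes $Q_{n,\delta}$ \emph{genuinely $\delta$-independent}: it forces us to place into $Q_{n,\delta}$ only the zeroth-order symbol $\tfrac{i}{2}\Phi_\delta\widetilde\chi$ --- whose size is governed by $M$ alone --- and to push every factor involving a derivative of $V_{p_\delta}$ into $W_{n,\delta}$ or $R_{n,\delta}$. Tied to this is the need to make the leading cancellation of $M_{\widetilde V_\delta}\chi_\varepsilon(h_nD)$ \emph{exact} rather than merely $O(\varepsilon)$ (which is exactly why the amplitude $1/\eta_2$ is built into $Q_{n,\delta}$), and to check that none of the finitely many remainder terms --- those coming from the composition formulas, from the $\theta_n$-part of $|\eta-h_n\theta_n|^2$, and from commuting $h_nD_x$ past multiplications and past $\chi_\varepsilon(h_nD)$ --- spoils either the $O(h_n)$ smallness or the uniformity with respect to $\theta_n\in[0,1]^2$ and $h_n\in(0,1]$.
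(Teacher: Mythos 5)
Your proposal follows essentially the same normal-form scheme as the paper: same decomposition $V_{p_\delta}=\langle V_{p_\delta}\rangle_y+\widetilde V_\delta$, same $y$-primitive, same cutoff equal to $1/\eta_2$ near $\Supp\chi_\varepsilon$ so that the commutator with $-\Delta$ cancels $\widetilde V_\delta\,\chi_\varepsilon(h_nD)$ exactly, same $W_{n,\delta}D_x$ term carrying $\partial_x\Phi_\delta$, and the same bookkeeping that puts every derivative of $V_{p_\delta}$ into $W_{n,\delta}$ or $R_{n,\delta}$. The only substantive difference is that you define $Q_{n,\delta}$ as the Weyl quantization of the product symbol $\tfrac{i}{2}\Phi_\delta(z)\widetilde\chi(\eta)$, whereas the paper takes the operator product $\tfrac{i}{2}M_{\Phi_\delta}\circ\frac{\sigma(h_nD_y)}{h_nD_y}$ of a multiplication operator and a Fourier multiplier in $y$.

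That difference is where your argument has a flaw: the chain $\|Q_{n,\delta}\|_{\mathcal L(L^2)}\le C\sup_{|\alpha|\le k}\|\partial^\alpha(\Phi_\delta\widetilde\chi)\|_{L^\infty}\le CM$ is incorrect in its second step, since the Calder\'on--Vaillancourt seminorms involve $\partial_x\Phi_\delta$, $\partial^2\Phi_\delta$, etc., which are controlled by the $\mathcal C^k$-norms of the fixed smooth potential $V_{p_\delta}$ and not by $M$ alone; even the refined bound \eqref{eq:EstimationCalderonVaillancourt} leaves a $\delta$-dependent $O(h_n^{1/2})$ correction, so the stated $\delta$-uniform bound on $Q_{n,\delta}$ does not follow from your quantization. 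The paper's product form sidesteps this entirely: the operator norm of a multiplication operator composed with a bounded Fourier multiplier is at most $\tfrac12\|\Phi_\delta\|_{L^\infty}\|\sigma(\cdot)/\cdot\|_{L^\infty}\le CM$ with no derivatives and no pseudodifferential calculus. The fix is one line (adopt the product form, or note that the $O_\delta(h_n^{1/2})$ defect is harmless in the sixth step where $Q_{n,\delta}$ only ever appears multiplied by $h_n\to 0$), but as written the claimed uniform bound on $Q_{n,\delta}$ is not justified. The rest of your computation --- the Poisson bracket, the exact cancellation on $\Supp\chi_\varepsilon$, the $\theta_n$-uniformity via $|\eta-h_n\theta_n|^2=|\eta|^2+O(h_n)$ --- is sound and matches the paper's.
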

Let us mention that, contrary to \cite[Proposition~4.4]{BBZ13}, our symbols depend on the semiclassical parameters $h_n$. For that reason, we need to carefully estimate each commutators appearing in the proof of Proposition~\ref{prop:normal_form}.
\begin{proof}[Proof of Proposition~\ref{prop:normal_form}]
Let $\sigma \in \mathcal{C}^{\infty}_c(\rr \setminus\{0\})$ be equal to $1$ near to the projection of $\Supp \chi_{\varepsilon}$ onto $\{0\}\times \rr$ (such a function can be chosen independently with respect to $\varepsilon$.).
Let us define 
$$Q_{n, \delta}=\frac i2\tilde{V}_{p_\delta} \frac{\sigma(h_nD_y)}{h_nD_y},$$
with $\tilde{V}_{p_\delta}(x,y)=\int_0^y V_{p_\delta}(x,y{'})-\langle V_{p_\delta}\rangle_y(x)dy'$. The family $(Q_{n,\delta})_{n \in \nn}$ is uniformly bounded (with respect to $\delta$) in $\mathcal L(L^2(\mathbb T^2))$ since $\|\tilde{V}_{p_\delta}\|_{L^{\infty}} \leq 2M$. With this choice, we obtain 
\begin{align*}
    &\ (I+h_nQ_{n,\delta})\mathcal H_{\theta_n, V_{p_\delta}}\chi_{\varepsilon}(h_nD)
     - \mathcal H_{\theta_n,\langle V_{p_\delta}\rangle_y}(I+h_nQ_{n, \delta})\chi_{\varepsilon}(h_nD)
    \\
    &=\big(h_n[Q_{n,\delta}, -\Delta]+ 2ih_n [Q_{n,\delta}, \theta_n\cdot\nabla]+  V_{p_\delta}-\langle V_{p_\delta}\rangle + h_nQ_{n,\delta} V_{p_\delta}-h_n \langle V_{p_\delta}\rangle Q_{n,\delta}\big)\chi_{\varepsilon}(h_nD)\\
    &=\left(-h_n i(D_x \tilde V_{p_\delta}) D_x \frac{\sigma(h_nD_y)}{h_nD_y}\right)\chi_{\varepsilon}(h_nD)+h_n(R_{n,\delta,1}+R_{n,\delta,2}),
\end{align*}
with $R_{n,\delta,1}=(Q_{n,\delta} V_{p_\delta}-\langle V_{p_\delta}\rangle Q_{n,\delta})\chi_{\varepsilon}(h_n D)$ and
    $$R_{n,\delta, 2}=\left(-\frac i2 D_x^2(\tilde{V}_{p_\delta})  \frac{\sigma(h_nD_y)}{h_nD_y}+\frac i2 [ \tilde{V}_{p_\delta}, D_y^2+2\theta_n\cdot\nabla] \frac{\sigma(h_nD_y)}{h_nD_y}+\frac{V_{p_\delta}-\langle V_{p_\delta}\rangle_y}{h_n}\right)\chi_{\varepsilon}(h_nD).$$
We can readily check that the family $(R_{n,\delta,1})_{n \in \nn}$ is bounded in $\mathcal L(L^2(\mathbb T^2))$. Furthermore, by direct computations, we have
\begin{align*} R_{n,\delta,2}
& =\left(\frac i2 (\Delta\tilde{V}_{p_\delta})\frac{\sigma(h_nD_y)}{h_nD_y}-2\theta_n\cdot(\nabla \tilde V_{p_\delta})\frac{\sigma(h_nD_y)}{h_nD_y}-\frac{i}{h_n} (D_y\tilde{V}_{p_\delta})\sigma(h_nD_y)+\frac{V_{p_\delta}-\langle V_{p_\delta}\rangle}{h_n}\right)\chi_{\varepsilon}(h_nD)\\
&=\left(\frac i2 (\Delta\tilde{V}_{p_\delta})\frac{\sigma(h_nD_y)}{h_nD_y}-2\theta_n\cdot(\nabla \tilde V_{p_\delta})\frac{\sigma(h_nD_y)}{h_nD_y}+\frac{V_{p_\delta}-\langle V_{p_\delta}\rangle}{h_n}(1-\sigma(h_nD_y))\right)\chi_{\varepsilon}(h_nD)\\
&=\left(\frac i2 (\Delta\tilde{V}_{p_\delta})\frac{\sigma(h_nD_y)}{h_nD_y}-2\theta_n\cdot(\nabla \tilde V_{p_\delta})\frac{\sigma(h_nD_y)}{h_nD_y}\right) \chi_{\varepsilon}(h_nD),
\end{align*}
since $i(D_y \tilde V_{p_\delta})=V_{p_\delta}-\langle V_{p_\delta} \rangle_y$ and $(1-\sigma) \chi_\varepsilon= 0.$
We obtain that the family $(R_{n,\delta,2})_{n \in \nn}$ is bounded in $\mathcal L(L^2(\mathbb T^2))$. Finally, we obtain that 
\begin{equation*}
    (I+h_nQ_{n,\delta})\mathcal H_{\theta_n, V_{p_\delta}}\chi_{\varepsilon}(h_nD)
     - \mathcal H_{\theta_n,\langle V_{p_\delta}\rangle_y}(I+h_nQ_{n, \delta})\chi_{\varepsilon}(h_nD)= h_n W_{n,\delta} D_x \chi_{\varepsilon}(h_nD) + h_n(R_{n,\delta,1}+R_{n,\delta,2}),
\end{equation*}
with $$W_{n,\delta}= -\frac i2 D_x(\tilde V_{p_\delta}) \frac{\sigma(h_nD_y)}{h_nD_y}.$$ Once again, the family $(W_{n,\delta})_{n \in \nn}$ is bounded in $\mathcal L (L^2(\mathbb T^2))$. This ends the proof of Proposition~\ref{prop:normal_form}.
\end{proof}

\textit{Sixth step: reduction to the one-dimensional case.}
This steps consists in reducing the study to the one-dimensional case, in order to conclude thanks to Corollary~\ref{cor:obs_1D}. We begin by defining for all $t \in \rr$ and $n \in \nn$, $w_{n,\delta}(t)=(I+h_nQ_{n,\delta})\chi_{\varepsilon}(h_nD) v_n(t)$. First of all, we show that there exists a constant $\gamma>0$, independent on $\varepsilon$ and $\delta$, such that $$0< \gamma \leq \liminf_{n \to +\infty} \|w_{n,\delta}\|_{L^2((0,T)\times \mathbb T^2)}.$$
Indeed, if $C>0$ is such that $\sup_{n \in \nn} \|Q_{n,\delta}\|_{\mathcal L(L^2)}\leq C$ then, we have 
\begin{align*}
    \liminf_{n \to +\infty} \|w_{n,\delta}\|_{L^2((0,T)\times \mathbb T^2)} & \geq \liminf_{n \to +\infty} \|\chi_{\varepsilon}(h_nD)v_n\|_{L^2((0,T)\times\mathbb T^2)} - \limsup_{n \to +\infty} h_n C \|v_n\|_{L^2((0,T)\times \mathbb T^2)}\\
    & = \liminf_{n \to +\infty} \|\chi_{\varepsilon}(h_nD)v_n\|_{L^2((0,T)\times\mathbb T^2)}.
\end{align*}
Moreover, thanks to \eqref{defect_measure_2D}, we have 
\begin{align*}
    \liminf_{n \to +\infty} \|\chi_{\varepsilon}(h_nD)v_n\|^2_{L^2((0,T)\times\mathbb T^2)} & =\liminf_{n \to +\infty} \int_{0}^T \langle \chi_{\varepsilon}^2(h_nD) v_n(t), v_n(t)\rangle_{L^2(\mathbb T^2)} dt\\
    & = \int_{\mathbb T^2 \times \rr^2} \chi_{\varepsilon}^2(\xi)
    \mu_T(dx, d\xi) \geq \mu_T\left(\mathbb T^2 \times \{(0,1)\}\right).
\end{align*}
This implies that $ \liminf_{n \to +\infty} \|w_{n,\delta}\|_{L^2((0,T)\times \mathbb T^2)} \geq \gamma>0$, with $\gamma = \mu_T\left(\mathbb T^2 \times \{(0,1)\}\right)^{\frac 12}>0$.

Furthermore, notice that, thanks to Proposition~\ref{prop:normal_form}, $w_{n,\delta}$ solves the following Schrödinger equation
\begin{equation*}
    i\partial_t w_{n,\delta}(t)= (-\Delta+2i\theta_n\cdot \nabla + |\theta_n|^2 + \langle V_{p_\delta}\rangle_y(x))w_{n,\delta}(t)+f_{n,\varepsilon, \delta}(t)+ g_{n,\varepsilon, \delta}(t)
\end{equation*}
with 
\begin{align*}
    f_{n, \varepsilon, \delta}(t)&=h_nR_{n, \delta} v_n(t)+(I+h_nQ_{n,\delta})[\chi_{\varepsilon}(h_nD), V_{p_\delta}]v_n(t),\\
    g_{n, \varepsilon, \delta}(t)&= h_nW_{n,\delta} D_x\chi_{\varepsilon}(h_nD) v_n(t)+(I+h_nQ_{n,\delta})\chi_{\varepsilon}(h_nD)(V_{p_\delta}-V_n)v_n(t).
\end{align*}

Let us show that $(f_{n, \varepsilon, \delta})_{n \in \nn}$ converges to $0$ in $L^2((0,T)\times\mathbb T^2)$. On the first hand, since $(v_n(t))_{n \in \nn}$ is bounded in $L^{\infty}((0,T), L^2(\mathbb T^2))$ and $(R_{n,\delta})_{n \in \nn}$ is bounded in $\mathcal L(L^2(\mathbb T^2))$, the first term on the right hand side goes to $0$ in $L^2((0,T)\times \mathbb T^2)$. On the other hand, to deal with the second term on the right hand side, it is sufficient to show that $([\chi_{\varepsilon}(h_nD), V_{p_\delta}])_{n \in \nn}$ converges to $0$ in $\mathcal L(L^2(\mathbb T^2))$.  This is a consequence of \eqref{eq:expansioncommutatorTd} together with the Calderon-Vaillancourt Theorem \eqref{eq:EstimationCalderonVaillancourtT^d}.

Regarding the sequence $(g_{n, \varepsilon, \delta})_{n \in \nn}$, we have that there exist a constant $C_{\delta}>0$, independent on $\varepsilon$, and a constant $C{'}>0$, independent on $\varepsilon$ and $\delta$, such that 
\begin{equation}
\label{eq:boundsgndelta}
\limsup_{n \to +\infty} \|g_{n, \varepsilon, \delta}\|_{L^2((0,T)\times \mathbb T^2)}^2 \leq C_{\delta} \varepsilon^2 +C{'}\delta^2.
\end{equation}
Indeed, since $\sup_{n \in \nn} \|Q_{n,\delta}\|_{\mathcal L(L^2)}\leq C$ and $(W_{n,\delta})_{n \in \nn}$ is bounded by a constant $C_{\delta}>0$ in $\mathcal L(L^2(\mathbb T^2))$, we have 
\begin{multline*}
\limsup_{n \to+\infty} \|g_{n, \varepsilon, \delta}\|^2_{L^2((0,T)\times \mathbb T^2)}\\
\leq 2C_{\delta}\limsup_{n \to +\infty} \int_{0}^T\|h_n D_x\chi_{\varepsilon}(h_nD) v_n(t)\|^2_{L^2(\mathbb T^2)} dt+2 \limsup_{n \to +\infty} \left\|(V_{p_\delta}-V_n)v_n\right\|^2_{L^2((0,T)\times\T^2)}.
\end{multline*}
On the first hand, by using the fact that $\Supp \chi_{\varepsilon} \subset \{(\xi_1, \xi_2)\in \rr^2; \ |\xi_1| \leq \varepsilon\}$, it follows that 
$$\limsup_{n \to +\infty} \int_{0}^T\|h_n D_x\chi_{\varepsilon}(h_nD) v_n(t)\|^2_{L^2(\mathbb T^2)} dt \leq \varepsilon^2 \|v_n\|^2_{L^2((0,T)\times\T^2)}=\varepsilon^2 T.$$
On the other hand, since $\limsup_{n\to+\infty}\|V_{p_{\delta}}-V_n\|_{L^4(\T^2)} \leq \delta$ by \eqref{eq:Vn_cauchy_sequence}, it follows from the Strichartz type estimates given by Proposition~\ref{prop:StrichartzEstimate} that
$$\limsup_{n \to +\infty} \left\|(V_{p_\delta}-V_n)v_n\right\|^2_{L^2((0,T)\times\T^2)} \leq \| V_{p_\delta}-V_n\|^2_{L^4(\T^2)}\|v_n\|^2_{L^4(\T^2,L^2(0,T))} \leq C{'}\delta^2,$$
where $C{'}>0$ is a new positive constant provided by Proposition~\ref{prop:StrichartzEstimate}.  We therefore obtain \eqref{eq:boundsgndelta}.

By now, we use the Fourier series Theory in the $y$-variable. We can write
\begin{equation*}
    w_{n,\delta}(t,z)=\sum_{k \in \mathbb Z} w_{n,\delta,k}(t,x) e^{iky}, \quad f_{n,\varepsilon, \delta}(t,z)=\sum_{k \in \mathbb Z} f_{n,\varepsilon, \delta,k}(t,x) e^{iky}, \quad g_{n,\varepsilon, \delta}(t,z)=\sum_{k \in \mathbb Z} g_{n,\varepsilon, \delta,k}(t,x) e^{iky},
\end{equation*}
for $z=(x,y)\in \mathbb T^2$.
Since $\langle V_{p_{\delta}}\rangle_y$ does not depend on $y$, we obtain that for all integers $k$ and $n$, $w_{n,\delta,k}$ solves the following one-dimensional Schrödinger equation posed on $\mathbb T^1$:
\begin{equation*}
    i\partial_t w_{n,\delta, k}(t)=(-\partial^2_x+2i\theta_{1,n}\cdot \partial_x + |\theta_n|^2 + \langle V_{p_\delta}\rangle(x)+k^2+2\theta_{2,n}k)w_{n,\delta,k}(t)+f_{n,\varepsilon, \delta,k}(t)+ g_{n,\varepsilon, \delta, k}(t),
\end{equation*}
where $\theta_n=(\theta_{1,n}, \theta_{2,n})$.

We can now apply the one-dimensional observability estimates, given by Corollary~\ref{cor:obs_1D}, to the solutions $w_{n,\delta,k}$, with potential $\langle V_{p_\delta}\rangle_y$ and observable $\langle b\rangle_y$. By Parseval's Theorem, we obtain
\begin{align*}
      \int_{0}^T\|w_{n,\delta}(t)\|^2_{L^2(\mathbb T^2)}dt&= \sum_{k \in \mathbb Z} \int_0^T\|w_{n,\delta,k}(t)\|^2_{L^2(\mathbb T^1)}dt\\
    & \leq \tilde C \sum_{k \in \mathbb Z} \int_0^T\int_{\mathbb T^1} \langle b\rangle_y(x) |w_{n,\delta,k}(t,x)|^2 dxdt + \tilde C \sum_{k \in \mathbb Z} \|f_{n,\varepsilon, \delta,k}+g_{n,\varepsilon, \delta,k}\|^2_{L^2((0,T)\times \mathbb T^1)},
\end{align*}
where $\tilde C>0$ is a positive constant, independent on $\delta>0$, provided by Corollary~\ref{cor:obs_1D}. It follows that 
\begin{align}\label{eq:gamma}
    0<\gamma^2 \leq\liminf_{n\to+\infty} \int_{0}^T\|w_{n,\delta}(t)\|^2_{L^2(\mathbb T^2)}dt &\leq  \limsup_{n\to+\infty} \int_{0}^T\|w_{n,\delta}(t)\|^2_{L^2(\mathbb T^2)}dt \\ \nonumber
    & \leq \tilde C \limsup_{n\to+\infty}\int_{0}^T \int_{\mathbb T^2} \langle b \rangle_y(x) |w_{n,\delta}(t,z)|^2 dzdt+ \tilde CC_{\delta}\varepsilon^2+\tilde C C{'}\delta^2.
\end{align}
It remains to estimate the first term of the right hand side. Let $(b_p)_{p\in\nn} \subset \mathcal{C}^{\infty}(\mathbb T^2, \rr_+)$ be a sequence converging to $b$ in $L^2(\mathbb T^2)$ and satisfying 
$\|b_p\|_{L^{\infty}}\leq \|b\|_{L^{\infty}},$ for all $p \in \nn$.
We have, for all $p,n \in \nn$,
\begin{multline}\label{eq:b_estimate}
\int_{0}^T \int_{\mathbb T^2} \langle b \rangle_y(x) |w_{n,\delta}(t,z)|^2 dzdt\\
=\int_{0}^T \int_{\mathbb T^2} \langle b-b_p \rangle_y(x) |w_{n,\delta}(t,z)|^2 dzdt+\int_{0}^T \int_{\mathbb T^2} \langle b_p \rangle_y(x) |w_{n,\delta}(t,z)|^2 dzdt.
\end{multline}
On the first hand, thanks to the Strichartz type estimates given by Proposition~\ref{prop:StrichartzEstimate}, there exists an other constant $C{'}>0$ such that for all $p, n \in \nn$,
\begin{multline*}
    \int_{0}^T \int_{\mathbb T^2} \langle b-b_p \rangle_y(x) |w_{n,\delta}(t,z)|^2 dzdt  \leq \left\|\langle b-b_p\rangle_y\right\|_{L^2(\mathbb T^1)} \|w_{n,\delta}\|^2_{L^4(\mathbb T^2, L^2(0,T))}\\
     \leq C{'} \left\|\langle b-b_p\rangle_y\right\|_{L^2(\mathbb T^1)} \left(\|w_{n,\delta}(0,\cdot)\|^2_{L^2(\mathbb T^2)}+ \|f_{n,\varepsilon,\delta}+g_{n,\varepsilon, \delta}\|^2_{L^2((0,T)\times\mathbb T^2)}\right).
\end{multline*}
Thus, we have for all $p \in \nn$,
\begin{equation}\label{eq:b_estimate2}
\limsup_{n \to+\infty} \int_{0}^T \int_{\mathbb T^2} \langle b-b_p \rangle_y(x) |w_{n,\delta}(t,z)|^2 dzdt \leq C'_{\varepsilon, \delta} \left\| b-b_p\right\|_{L^2(\mathbb T^2)},
\end{equation}
for a new positive constant $C'_{\varepsilon, \delta}>0$.
On the other hand, thanks to \eqref{defect_measure_2D}, we have for all $p \in \nn$,
\begin{multline*}
    \limsup_{n\to+\infty} \int_{0}^T \int_{\mathbb T^2} \langle b_p \rangle_y(x) |w_{n,\delta}(t,z)|^2 dzdt = \int_{\mathbb T^2\times \rr^2} \langle b_p\rangle_y(x) |\chi_{\varepsilon}(\zeta)|^2 \mu_T(dz, d\zeta)\\
     = \int_{\Sigma_{\mathbb Q}^{m_\varepsilon}}  \langle b_p\rangle_y(x) |\chi_{\varepsilon}(\zeta)|^2 \mu_T(dz, d\zeta)+ \int_{\Sigma_{\rr \setminus\mathbb Q}^{m_\varepsilon}}  \langle b_p\rangle_y(x) |\chi_{\varepsilon}(\zeta)|^2 \mu_T(dz, d\zeta).
\end{multline*}
Let us recall that $\Sigma_{\mathbb Q}^{m_{\varepsilon}} \cap (\mathbb T^2 \times \Supp \chi_{\varepsilon})= \mathbb T^2 \times \{(0,1)\}$ and $\mu_T(\Sigma_{\rr \setminus\mathbb Q}^{m_\varepsilon})<\varepsilon$.
We deduce that 
\begin{equation}\label{eq:b_estimate3}
    \limsup_{n\to+\infty} \int_{0}^T \int_{\mathbb T^2} \langle b_p \rangle(x) |w_{n,\delta}(t,z)|^2 dzdt \leq \int_{\mathbb T^2\times\{(0,1)\}}  \langle b_p\rangle_y(x) \mu_T(dz, d\zeta)+\|b_p\|_{L^{\infty}(\mathbb T^2)} \varepsilon.
\end{equation}
Moreover, since $\mathbb T^2\times \{(0,1)\}$ and $\mu_T(dz, d\zeta)$ are invariant under the flow $(z, \xi) \mapsto (z+s\zeta, \zeta)$ for $s \in \rr$, we have 
\begin{equation}\label{eq:b_estimate4}
\int_{\mathbb T^2\times\{(0,1)\}}  \langle b_p\rangle_y(x) \mu_T(dz, d\zeta)= \int_{\mathbb T^2\times\{(0,1)\}} b_p(z) d\mu_T(dz, d\zeta) \leq \int_{\mathbb T^2} b_p(z) g_T(z) dz,
\end{equation}
where $g_T \in L^2(\mathbb T^2)$ is introduced in the first step. Finally, by gathering \eqref{eq:gamma}, \eqref{eq:b_estimate}, \eqref{eq:b_estimate2}, \eqref{eq:b_estimate3} and \eqref{eq:b_estimate4}, we obtain 
\begin{equation*}
    0 < \gamma^2 \leq \tilde CC'_{\varepsilon, \delta} \|b-b_p\|_{L^2(\mathbb T^2)}+\tilde C\int_{\mathbb T^2} b_p(z) g_T(z) dz+ \tilde C\|b_p\|_{L^{\infty}(\mathbb T^2)}\varepsilon+ \tilde CC_{\delta}\varepsilon^2+\tilde C C{'}\delta^2.
\end{equation*}
Since $\sup_{p\in \nn}\|b_p\|_{L^{\infty}} \leq \|b\|_{L^{\infty}}$, $b_p \underset{p\to+\infty}{\longrightarrow} b$ in $L^2(\mathbb T^2)$ and $\int_{\mathbb T^2} b(z) g_T(z)dz=0$, by passing to the limit when $p\to+\infty$, we first have
$$0<\gamma^2 \leq \tilde C\|b\|_{L^{\infty}} \varepsilon+\tilde C C_{\delta} \varepsilon^2+\tilde C C{'} \delta^2.$$ Then, letting $\varepsilon$ go to $0$ leads to 
$$0< \frac{\gamma^2}{2} \leq \tilde{C} C{'}\delta^2.$$
Since $\delta>0$ can be chosen arbitrary small, this last inequality provides a contradiction. 
This ends the proof of Proposition~\ref{obs_tore_smooth_potential}.

\subsubsection{From smooth to rough potentials}
In this section, we end the proof of Theorem~\ref{obs_tore_thm}. Let $T>0$ and $\mathcal K \subset L^{\infty}(\T^2)$ be a compact subset. Since $\mathcal K$ is compact, there exists $M>0$ such that $\mathcal K \subset B_{L^{\infty}}(0,M)$. Let $(\rho_n)_{n \in \nn} \subset \mathcal C^{\infty}(\T^2)$ be an approximation of unity. From $\mathcal K$, we define the following set of smooth potentials:
\begin{equation*}
    \mathcal K':=\left\{\rho_n *V; \ V \in \mathcal K, n \in \nn\right\}.
\end{equation*}
It can be readily checked that $\mathcal K' \subset \mathcal C^{\infty}(\T^2)\cap B_{L^{\infty}}(0,M)$ is relatively compact in $L^4(\T^2)$. Thanks to Proposition~\ref{obs_tore_smooth_potential}, there exists a positive constant $C=C(T,M, \mathcal K')>0$ such that for all $n \in \nn$, $\theta \in [0,1]^2$, $V\in \mathcal K'$ and $u_0 \in L^2(\T^2)$,
\begin{equation}\label{eq:obs_smooth_potentialrhon}
    \|u_0\|^2_{L^2(\T^2)} \leq C \int_0^T \int_{\T^2} b(z) \left|e^{-it(-\Delta+2i\theta\cdot\nabla + |\theta|^2 +\rho_n*V)}u_0(z)\right|^2 dz dt+C \|u_0\|^2_{H^{-2}(\T^2)}.
\end{equation}
Let $V \in \mathcal K$. Since $\rho_n*V \underset{n\to+\infty}{\longrightarrow} V$ in $L^4(\T^2)$, we then have $\rho_n*V \rightharpoonup^\star V$ in $L^{\infty}(\T^2)$, so one can use Proposition \ref{prop:stabilityparameters} to pass to the limit as $n \to +\infty$ in \eqref{eq:obs_smooth_potentialrhon} to finally get for all $\theta \in [0,1]^2$ and $u_0\in L^2(\T^2)$,
$$\|u_0\|^2_{L^2(\T^2)} \leq C \int_0^T \int_{\T^2} b(z) \left|e^{-it(-\Delta+2i\theta\cdot\nabla+ |\theta|^2 +V)}u_0(z)\right|^2 dz dt+C\|u_0\|^2_{H^{-2}(\T^2)}.$$
Since $\mathcal K$ is compact in $L^{\infty}(\T^2)$, we can apply Proposition~\ref{prop:from_weakobs_to_strongobs} to conclude the proof of Theorem~\ref{obs_tore_thm}.

\appendix
\section{Appendix}\label{section:appendix}

\subsection{Semiclassical quantization on Euclidean spaces}
This section is devoted to present few facts about semiclassical analysis on Euclidean spaces. We follow the presentation of \cite[Chapter 4]{Zwo12}.

In this section, $h>0$ denotes a positive parameter and $a \in \mathcal{S}(\R^{2d})$, $a = a(x, \xi)$ is called a symbol. 
The Weyl quantization of $a$ is the operator $\ops ha$ acting on $u \in \mathcal{S}(\R^{d})$ by the formula
\begin{equation*}
    \ops ha u(x) = \frac{1}{2 \pi h^d} \int_{\R^d} \int_{\R^d} e^{\frac{i}{h} \langle x-y, \xi \rangle} a \left(\frac{x+y}{2}, \xi\right) u(y) dy d \xi.
\end{equation*}

We first present a result that tells us how the Weyl quantization acts on the Schwartz space $\mathcal{S}(\R^d)$, on tempered distributions $\mathcal{S}'(\R^d)$ and on $L^2(\R^d)$.

\begin{theorem}
\label{th:continuitypseudoRd}
We have the following continuity results.
\begin{enumerate}
    \item \cite[Theorem 4.1]{Zwo12}. Assume $a \in \mathcal{S}(\R^{2d})$. Then, $\ops ha$ can be extended as an operator mapping $\mathcal{S}'(\R^d)$ to $\mathcal{S}(\R^d)$.
    \item \cite[Theorem 4.2]{Zwo12}. Assume $a \in \mathcal{S}'(\R^{2d})$. Then, $\ops ha$ can be extended as an operator mapping $\mathcal{S}(\R^d)$ to $\mathcal{S}'(\R^d)$.
    \item \cite[Theorem 5.1]{Zwo12}. Assume $a \in \mathcal C_b^{\infty}(\R^{2d}) := \{a \in \mathcal C^{\infty}(\R^{2d}) \ ;\ \norme{\partial^{\alpha} a}_{L^{\infty}(\R^{2d})} \leq C_{\alpha}\ \forall \alpha \in \N^{2d}\}$. Then, $\ops ha$ can be extended as an operator mapping $L^2(\R^d)$ to $L^2(\R^d)$. Moreover, we have
    \begin{equation}
        \label{eq:EstimationCalderonVaillancourt}
            \|\ops ha\|_{\mathcal L (L^2(\mathbb R^d))} \leq C \norme{a}_{L^{\infty}(\R^{2d})} + O(h^{1/2}).
    \end{equation}
    \end{enumerate}
\end{theorem}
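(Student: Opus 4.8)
The plan is to treat the three assertions in increasing order of difficulty. Items (1) and (2) are kernel computations, whereas item (3) is the (semiclassical) Calderón--Vaillancourt theorem and is the real content.

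For (1) and (2) the starting point is the Schwartz kernel of $\ops ha$. After the change of variables $\xi \mapsto h\xi$ in the defining oscillatory integral one gets $\ops ha u(x) = \int_{\R^d} K_a^h(x,y)\, u(y)\, dy$ with
\[
K_a^h(x,y) = \frac{1}{(2\pi h)^d}\int_{\R^d} e^{\frac ih \langle x-y,\xi\rangle}\, a\!\left(\tfrac{x+y}{2},\xi\right) d\xi,
\]
so that, up to the linear change of variables $(x,y)\mapsto(\tfrac{x+y}{2},x-y)$, $K_a^h$ is the partial inverse Fourier transform of $a$ in its second block of variables. If $a\in\mathcal S(\R^{2d})$ then $K_a^h\in\mathcal S(\R^{2d})$, and $u\mapsto \int K_a^h(\cdot,y)u(y)\,dy$ is continuous from $\mathcal S'(\R^d)$ to $\mathcal S(\R^d)$, which is (1). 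If merely $a\in\mathcal S'(\R^{2d})$, then $K_a^h\in\mathcal S'(\R^{2d})$ and the Schwartz kernel theorem realizes $\ops ha$ as a continuous map $\mathcal S(\R^d)\to\mathcal S'(\R^d)$, which is (2).

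The substantive assertion is (3). I would first reduce to $h=1$ via the unitary rescaling $U_h u(x)=h^{d/4}u(h^{1/2}x)$ on $L^2(\R^d)$, which conjugates $\ops ha$ into $\ops 1{a_h}$ with $a_h(x,\xi)=a(h^{1/2}x,h^{1/2}\xi)$; the key point is that $\norme{a_h}_{L^\infty(\R^{2d})}=\norme{a}_{L^\infty(\R^{2d})}$ while $\norme{\partial^\alpha a_h}_{L^\infty}\leq h^{|\alpha|/2}\norme{\partial^\alpha a}_{L^\infty}$ for $|\alpha|\geq1$. For the $h=1$ statement one runs a $T^*T$-type iteration: the composition (Moyal) formula for the Weyl calculus gives $\ops 1{a_h}^{*}\ops 1{a_h}=\ops 1{b_h}$ with $b_h=\overline{a_h}\,\#\,a_h=|a_h|^2+r_h$, every term of the remainder $r_h$ carrying at least one derivative of $a_h$, so $\norme{r_h}_{L^\infty}=O(h^{1/2})$ for a fixed symbol class; combining this with a soft a~priori $L^2$-boundedness of $\ops 1{a_h}$ — obtained, for instance, from the Cotlar--Stein lemma applied to a partition of unity by unit cubes in phase space, the almost-orthogonality bounds $\norme{\ops 1{a_j}^{*}\ops 1{a_k}}$, $\norme{\ops 1{a_j}\ops 1{a_k}^{*}}$ coming from repeated integration by parts in the oscillatory kernels — and iterating the identity above yields $\norme{\ops 1{a_h}}_{\mathcal L(L^2)}^2\leq\norme{b_h}_{L^\infty}+o(1)\leq\norme{a}_{L^\infty}^2+O(h^{1/2})$. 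Undoing the isometric rescaling gives \eqref{eq:EstimationCalderonVaillancourt}.

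The main obstacle is the bookkeeping in (3): one must verify that the remainder in the Moyal product and the constants in the Cotlar--Stein almost-orthogonality estimates stay controlled \emph{uniformly in $h\in(0,1]$} after the rescaling, and in particular that the leading constant $C$ multiplies only $\norme{a}_{L^\infty}$ and not a seminorm of $a$ involving derivatives — which is precisely the refinement recorded in \eqref{eq:EstimationCalderonVaillancourt}. Since all three statements are classical, I would simply follow the detailed arguments in \cite[Chapters 4 and 5]{Zwo12}.
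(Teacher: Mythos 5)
Your proposal is correct and follows essentially the same route as the paper, which gives no proof of its own but cites \cite[Theorems 4.1, 4.2 and 5.1]{Zwo12} and remarks that \eqref{eq:EstimationCalderonVaillancourt} follows from the crude Calder\'on--Vaillancourt bound \eqref{eq:EstimationCalderonVaillancourtWeak} combined with a scaling argument --- precisely the unitary rescaling $a\mapsto a(h^{1/2}\cdot,h^{1/2}\cdot)$ you describe, under which every derivative of the symbol gains a factor $h^{1/2}$. The extra Moyal/Cotlar--Stein iteration you sketch for item (3) is not needed for the stated estimate (it is what yields the sharper bound $\norme{a}_{L^{\infty}}+O(h)$ of \cite[Theorem 13.13]{Zwo12}, which the paper mentions but does not use); applying \eqref{eq:EstimationCalderonVaillancourtWeak} directly to the rescaled symbol already gives $C\norme{a}_{L^{\infty}(\R^{2d})}+O(h^{1/2})$.
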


The third point of Theorem \ref{th:continuitypseudoRd} is usually called Calderon-Vaillancourt Theorem. Note that \cite[Theorem 4.23]{Zwo12} telling us that 
 \begin{equation}
        \label{eq:EstimationCalderonVaillancourtWeak}
            \|\ops ha\|_{\mathcal L (L^2(\mathbb R^d))} \leq C \sum_{|\alpha| \leq M d} \norme{\partial^{\alpha} a}_{L^{\infty}(\R^{2d})},
\end{equation}
would not be sufficient for our purpose. Actually, the useful bound \eqref{eq:EstimationCalderonVaillancourt} comes from \eqref{eq:EstimationCalderonVaillancourtWeak} and a tricky scaling argument. It is also worth mentioning that one can derive a better bound than \eqref{eq:EstimationCalderonVaillancourt}. Indeed, \cite[Theorem 13.13]{Zwo12} states that 
 \begin{equation*}
        \label{eq:EstimationCalderonVaillancourtStrong}
            \|\ops ha\|_{\mathcal L (L^2(\mathbb R^d))} =  \norme{a}_{L^{\infty}(\R^{2d})} + O(h).
    \end{equation*}
    
We have this straightforward result, that enables to compute explicitly the operator $\ops ha$ for particular symbols $a$.
\begin{lemma}
We have
\begin{enumerate}
    \item \cite[Equation (4.1.6)]{Zwo12}. If $a(x, \xi) = \xi^{\alpha}$, then $\ops ha u (x) = h^{\alpha} D^{\alpha} u (x)$, where $D^{\alpha} = (1/i) \partial^{\alpha}$,
    \item \cite[Theorem 4.3]{Zwo12}. If $a(x, \xi) = a(x) \in \mathcal{S}'(\R^d)$ then $\ops ha u (x) = a(x) u(x)$.
\end{enumerate}
\end{lemma}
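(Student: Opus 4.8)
The plan is to identify each operator directly from the defining oscillatory integral, the only subtlety being that neither $\xi^\alpha$ nor a general $a=a(x)\in\mathcal{S}'(\R^d)$ lies in $\mathcal{S}(\R^{2d})$; however both are elements of $\mathcal{S}'(\R^{2d})$, so by Theorem~\ref{th:continuitypseudoRd}(2) the operators $\ops h{\xi^\alpha}$ and $\ops h{a}$ are well defined from $\mathcal{S}(\R^d)$ to $\mathcal{S}'(\R^d)$, and it remains to compute them on $u\in\mathcal{S}(\R^d)$. All the oscillatory integrals below would be made absolutely convergent in the standard way, by inserting a Schwartz cut-off $\chi(\varepsilon\xi)$ in the $\xi$-variable, carrying out the manipulations, and letting $\varepsilon\to0^+$ in $\mathcal{S}'(\R^d)$; this is consistent with the extension of $\ops h{\cdot}$ fixed in Theorem~\ref{th:continuitypseudoRd}.

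For the first identity, I would start from the elementary relation $\xi^\alpha e^{\frac{i}{h}\langle x-y,\xi\rangle}=(-h)^{|\alpha|}D_y^\alpha e^{\frac{i}{h}\langle x-y,\xi\rangle}$, plug it into the integral defining $\ops h{\xi^\alpha}u(x)$, and integrate by parts $|\alpha|$ times in the $y$ variable (the boundary terms vanishing thanks to the Schwartz decay of $u$), which moves the derivatives onto $u$ at the price of a sign $(-1)^{|\alpha|}$. Since $(-h)^{|\alpha|}(-1)^{|\alpha|}=h^{|\alpha|}$, this yields
\[
\ops h{\xi^\alpha}u(x)=h^{|\alpha|}\,\frac{1}{(2\pi h)^d}\int_{\R^d}\int_{\R^d}e^{\frac{i}{h}\langle x-y,\xi\rangle}(D_y^\alpha u)(y)\,dy\,d\xi .
\]
The remaining double integral is exactly $\ops h{1}$ applied to $D^\alpha u$; after the rescaling $\xi=h\eta$ it becomes the Fourier inversion formula, so $\ops h{1}v=v$ and hence $\ops h{\xi^\alpha}u=h^{|\alpha|}D^\alpha u$.

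For the second identity, I would compute the Schwartz kernel of $\ops h{a}$ in the case where $a=a(x)$ does not depend on $\xi$. Performing the $\xi$-integration first gives $K(x,y)=a\!\left(\tfrac{x+y}{2}\right)\,\frac{1}{(2\pi h)^d}\int_{\R^d}e^{\frac{i}{h}\langle x-y,\xi\rangle}\,d\xi=a\!\left(\tfrac{x+y}{2}\right)\delta(x-y)$, and since $a(\tfrac{x+y}{2})=a(x)$ on the diagonal $\{x=y\}$ this is the kernel of multiplication by $a$. To make this rigorous for $a\in\mathcal{S}'(\R^d)$, I would first do the computation for $a$ smooth with all derivatives bounded, where the pairing $\langle\ops h{a}u,\varphi\rangle$ for $u,\varphi\in\mathcal{S}(\R^d)$ reduces, after the $\xi$-integration, to $\langle a,u\varphi\rangle=\langle au,\varphi\rangle$; the general case then follows by mollifying $a$, using the continuity of $\ops h{\cdot}$ stated in Theorem~\ref{th:continuitypseudoRd}(2) and the density of $\mathcal{S}(\R^d)$ in $\mathcal{S}'(\R^d)$.

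The only genuinely delicate point is giving rigorous meaning to the oscillatory integrals and to the exchanges of integration order, in particular to the identity $\frac{1}{(2\pi h)^d}\int_{\R^d}e^{\frac{i}{h}\langle x-y,\xi\rangle}\,d\xi=\delta(x-y)$ and to the integration by parts, for symbols that are merely polynomially bounded or tempered distributions; this is exactly what the cut-off and limiting argument takes care of. Beyond that, the proof is only bookkeeping of signs and powers of $h$, which is why this lemma is quoted from \cite{Zwo12} without a detailed argument.
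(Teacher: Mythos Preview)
Your argument is correct and is the standard computation: integrate by parts in $y$ for the monomial symbol $\xi^\alpha$, and compute the Schwartz kernel for a symbol depending only on $x$. The paper itself does not give a proof of this lemma; it merely records the two identities with references to \cite{Zwo12}, so there is nothing to compare against beyond noting that your proof is exactly the calculation underlying those references.
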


We now state the well-known Gårding inequality that roughly indicates that if the symbol is non-negative then the associated operator is almost non-negative.

\begin{theorem}
\cite[Theorem 4.32]{Zwo12}. Assume $a \in \mathcal C_b^{\infty}(\R^{2d})$ and $a \geq 0$ in $\R^{2d}$. Then there exist constants $C \geq 0$ and $h_0 >0$ such that 
\begin{equation*}
    \label{eq:inequalitygardingRd}
    \langle \ops ha u, u \rangle \geq - C h \norme{u}_{L^2(\R^d)}^2\qquad \forall 0 < h < h_0,\ \forall u \in L^2(\R^d).
\end{equation*}
\end{theorem}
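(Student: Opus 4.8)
The plan is to reduce the bound to a comparison between the Weyl quantization $\ops ha$ and the anti-Wick (Toeplitz) quantization of $a$, for which non-negativity is built in. First I would fix the coherent states $\varphi_{x,\xi}(y) = (\pi h)^{-d/4} e^{i\langle \xi, y-x\rangle/h} e^{-|y-x|^2/(2h)}$, $(x,\xi)\in\R^{2d}$, and introduce the anti-Wick quantization
$$\text{op}_h^{\mathrm{AW}}(a) u = \frac{1}{(2\pi h)^d}\int_{\R^{2d}} a(x,\xi)\, \langle u, \varphi_{x,\xi}\rangle_{L^2(\R^d)}\, \varphi_{x,\xi}\; dx\, d\xi .$$
Since the coherent states have constant $L^2$ norm and $a \geq 0$, this defines a bounded operator on $L^2(\R^d)$ with $\langle \text{op}_h^{\mathrm{AW}}(a) u, u\rangle = (2\pi h)^{-d}\int_{\R^{2d}} a(x,\xi)\,|\langle u, \varphi_{x,\xi}\rangle|^2\, dx\, d\xi \geq 0$ for all $u \in L^2(\R^d)$.

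Next I would invoke the classical identity $\text{op}_h^{\mathrm{AW}}(a) = \ops h{\tilde a}$, where $\tilde a = a * G_h$ is the convolution of $a$ on $\R^{2d}$ with a Gaussian $G_h$ at scale $\sqrt h$ (this follows by computing the Weyl symbol of the rank-one projectors $u\mapsto \langle u,\varphi_{x,\xi}\rangle\varphi_{x,\xi}$, which is itself a Gaussian centered at $(x,\xi)$, and integrating in $(x,\xi)$). Then, writing $\tilde a(z) - a(z) = \int_{\R^{2d}}\big(a(z-w)-a(z)\big)\,G_h(w)\,dw$ and using that $G_h$ has mean zero and second moments of order $h$, a second-order Taylor expansion in $w$ together with $a \in \mathcal C_b^\infty(\R^{2d})$ shows that $\tilde a - a \in \mathcal C_b^\infty(\R^{2d})$ with $\norme{\partial^\alpha(\tilde a - a)}_{L^\infty(\R^{2d})} \leq C_\alpha h$ for every multi-index $\alpha$.

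Finally, by the Calderón--Vaillancourt estimate \eqref{eq:EstimationCalderonVaillancourt} (indeed already by \eqref{eq:EstimationCalderonVaillancourtWeak}, which uses only finitely many derivatives of the symbol), $\norme{\ops h{\tilde a - a}}_{\mathcal L(L^2(\R^d))} \leq C h$, so for $0<h<h_0$ with $h_0$ small enough,
$$\langle \ops ha u, u\rangle = \langle \text{op}_h^{\mathrm{AW}}(a) u, u\rangle - \langle \ops h{\tilde a - a} u, u\rangle \geq - C h \norme{u}_{L^2(\R^d)}^2,$$
which is the asserted inequality. The main obstacle is the second step: carefully verifying that the anti-Wick quantization is exactly $\ops h{\tilde a}$ for the Gaussian-smoothed symbol and that $\tilde a - a$ has all derivatives uniformly $O(h)$, which requires keeping precise track of the $h$-scaling in both the Gaussian weight and the oscillatory Weyl integral — this is precisely the bookkeeping carried out in \cite[Theorem 4.32]{Zwo12}. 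An alternative would be to construct an approximate symbolic square root $b$ with $\ops hb^{*}\ops hb = \ops ha + \ops h r$ and $r = O(h)$ in $\mathcal C_b^\infty$; but since $\sqrt a$ need not be smooth where $a$ vanishes, one would first have to mollify $a$ at scale $\sqrt h$, which leads back to essentially the same Gaussian-smoothing argument.
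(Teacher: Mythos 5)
Your argument is correct: the paper states this result without proof, simply citing \cite[Theorem 4.32]{Zwo12}, and your reduction to the anti-Wick quantization via Gaussian smoothing of the symbol (positivity of $\mathrm{op}_h^{\mathrm{AW}}(a)$, the identity $\mathrm{op}_h^{\mathrm{AW}}(a)=\ops h{a*G_h}$, the $O(h)$ bound on $\partial^\alpha(a*G_h-a)$, and Calder\'on--Vaillancourt) is precisely the standard proof given in that reference. As a minor remark, the argument in fact yields the inequality for all $0<h\leq 1$, so no smallness condition on $h_0$ is actually needed.
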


Roughly speaking, the next result tells us first that the composition of two pseudodifferential operators is also a pseudodifferential operator and we can give an exact formula. Secondly, we present the asymptotic expansion of such a symbol with respect to $h$.

Given $z = (x,\xi)$, $w = (y,\eta)$ in $\R^{2d}$, define their symplectic product
$$\sigma(z,w) = \langle \xi, y \rangle - \langle x, \eta \rangle.$$

\begin{theorem}\label{th:semiclassicalcomposition}
\cite[Theorem 4.11 and 4.12]{Zwo12}. 

Suppose that $a, b \in \mathcal{S}(\R^{2d})$. Then, there exists $c \in \mathcal{S}(\R^{2d})$ such that
\begin{equation*}
    \ops ha \ops hb = \ops hc,
\end{equation*}
where
\begin{equation*}
c(x,\xi) = e^{i \frac{h}{2} \sigma(D_x, D_{\xi}, D_{y}, D_{\eta})} (a(x, \xi) b(y,\eta))_{|(y,\eta)=(x,\xi)}.
\end{equation*}
Moreover, we have
\begin{equation*}
\label{eq:expansionproductRd}
    c= ab + \frac{h}{2i} \{a,b\} + \mathcal{O}_{\mathcal S}(h^2),
\end{equation*}
and
\begin{equation}
\label{eq:expansioncommutatorRd}
    [ \ops ha,  \ops hb] =  \frac hi \ops h{ \{a, b\}} + \mathcal{O}_{\mathcal S}(h^3).
\end{equation}
If we assume further that $\mathrm{supp}(a) \cap \mathrm{supp}(b) = \emptyset$, then 
\begin{equation}
\label{eq:suppdisjointssemiclassical}
    c = \mathcal{O}_{\mathcal S}(h^{\infty}).
\end{equation}
\end{theorem}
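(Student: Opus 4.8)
The plan is to deduce the composition formula from the integral-kernel representation of the Weyl quantization, to read off the two asymptotic expansions by Taylor-expanding an exponential of a constant-coefficient quadratic differential operator, and to handle the disjoint-support case by non-stationary phase.

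\textbf{Step 1: the composition formula.} For $a\in\mathcal S(\R^{2d})$ the operator $\ops ha$ has Schwartz kernel $K_a(x,y)=(2\pi h)^{-d}\int_{\R^d}e^{\frac ih\langle x-y,\xi\rangle}a(\tfrac{x+y}{2},\xi)\,d\xi$, and $a$ is recovered from $K_a$ by an inverse Fourier transform in the difference variable. Composing $\ops ha$ and $\ops hb$ replaces kernels by their convolution in the middle variable; carrying out the resulting Gaussian integrations (absolutely convergent after a standard regularization of the oscillatory integrals) in the two frequency variables and in the intermediate space point gives, with $z=(x,\xi)$ and $\sigma$ the symplectic form,
\begin{equation*}
c(z)=\frac{1}{(\pi h)^{2d}}\iint_{\R^{2d}\times\R^{2d}}e^{\frac{2i}{h}\sigma(z-z_1,\,z-z_2)}\,a(z_1)\,b(z_2)\,dz_1\,dz_2 .
\end{equation*}
Since a Fourier multiplier whose symbol is the exponential of a nondegenerate quadratic form acts by convolution against a Gaussian, the right-hand side is exactly $e^{\frac{ih}{2}\sigma(D_x,D_\xi,D_y,D_\eta)}\big(a(x,\xi)b(y,\eta)\big)$ evaluated at $(y,\eta)=(x,\xi)$; this gives the stated formula, and $c\in\mathcal S(\R^{2d})$ follows from the continuity of this exponential operator on $\mathcal S$.

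\textbf{Step 2: the expansions.} Put $L=\tfrac i2\sigma(D_x,D_\xi,D_y,D_\eta)$, a constant-coefficient second-order operator, so that $c=\big(e^{hL}(a\otimes b)\big)\big|_{y=x,\eta=\xi}$. Taylor's formula with integral remainder gives $e^{hL}=\mathrm{Id}+hL+\tfrac{(hL)^2}{2}+\cdots$, and a direct computation yields $\big(L(a\otimes b)\big)\big|_{y=x,\eta=\xi}=\tfrac1{2i}\{a,b\}$, hence $c=ab+\tfrac{h}{2i}\{a,b\}+\mathcal O_{\mathcal S}(h^2)$, the remainder being a fixed finite-order differential operator applied to $a\otimes b$ and then restricted to the diagonal, so controlled in every Schwartz seminorm by those of $a$ and $b$. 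For the commutator, the symbol of $\ops hb\,\ops ha$ is obtained by the same recipe with $a$ and $b$ swapped, equivalently by replacing $\sigma$ with $-\sigma$, i.e. $L$ with $-L$; thus the symbol of $[\ops ha,\ops hb]$ equals $\big((e^{hL}-e^{-hL})(a\otimes b)\big)\big|_{y=x,\eta=\xi}$, whose terms of even order in $h$ cancel, leaving $2hL(a\otimes b)|_{y=x,\eta=\xi}+\mathcal O_{\mathcal S}(h^3)=\tfrac hi\{a,b\}+\mathcal O_{\mathcal S}(h^3)$; quantizing gives \eqref{eq:expansioncommutatorRd}.

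\textbf{Step 3: disjoint supports, and the main obstacle.} If $\operatorname{supp}a\cap\operatorname{supp}b=\emptyset$, then in the integral of Step 1 the phase $\Phi(z_1,z_2)=\tfrac2h\sigma(z-z_1,z-z_2)$ has gradient proportional (through the symplectic matrix) to $\tfrac1h(z-z_2,\,z-z_1)$, so $|\nabla\Phi|^2\gtrsim h^{-2}\big(|z-z_1|^2+|z-z_2|^2\big)\gtrsim h^{-2}|z_1-z_2|^2$, which is bounded below on the support of the amplitude $a(z_1)b(z_2)$ (disjointness handles $z_1,z_2$ in a bounded set, and the rapid decay of $a,b$ handles the rest). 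Inserting the first-order operator that preserves $e^{i\Phi}$ and integrating by parts $N$ times produces a factor $h^N$ with the remaining integrand still absolutely convergent and estimated by finitely many seminorms of $a$ and $b$; letting $N\to\infty$, and running the same argument after applying $\partial_z^\alpha$ and multiplying by $z^\beta$, yields $c=\mathcal O_{\mathcal S}(h^\infty)$, which is \eqref{eq:suppdisjointssemiclassical}. I expect the genuine difficulty to lie entirely in Step 1: making the oscillatory Gaussian integrations rigorous, matching the outcome with the clean exponential formula, and tracking all Schwartz seminorms of $c$ uniformly; Steps 2 and 3 are then routine Taylor expansion and non-stationary phase.
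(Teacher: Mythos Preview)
The paper does not prove this theorem: it is quoted from Zworski's textbook \cite[Theorems 4.11 and 4.12]{Zwo12}, and the appendix only records the statement together with the meaning of the notation $\mathcal O_{\mathcal S}(h^k)$. Your sketch follows the standard route taken in that reference (kernel composition leading to the exponential formula, then Taylor expansion of $e^{hL}$), and Steps~1--2 are correct in outline.

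For Step~3 there is a cleaner argument than non-stationary phase, and it is the one implicit in the Taylor-remainder machinery you already set up: every term $(hL)^k(a\otimes b)\big|_{(y,\eta)=(x,\xi)}$ is a finite sum of products of derivatives of $a$ and derivatives of $b$ evaluated at the \emph{same} point, hence vanishes identically when $\operatorname{supp}a\cap\operatorname{supp}b=\emptyset$; since the remainder after $N$ terms is $\mathcal O_{\mathcal S}(h^N)$ uniformly, one gets $c=\mathcal O_{\mathcal S}(h^N)$ for every $N$. Your non-stationary phase route is also valid, but the claim that $|z_1-z_2|$ is ``bounded below on the support of the amplitude'' presupposes $\operatorname{dist}(\operatorname{supp}a,\operatorname{supp}b)>0$, which can fail for disjoint closed \emph{unbounded} sets in $\R^{2d}$; your parenthetical about rapid decay is indeed the right fix, but to make it rigorous you would need to split the integral into a region $|z_1|+|z_2|\le R$ (where compactness gives a positive distance) and its complement (where Schwartz decay supplies the missing powers of $h$).
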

In the previous result, the notation $\varphi = \mathcal{O}_{\mathcal{S}}(h^k)$ means that for all multiindices $\alpha,\beta,\gamma,\delta$, 
$$ \sup_{(x, \xi) \in \R^{2d}} |(1+|x|^2)^{\alpha} (1+|\xi|^2)^{\beta} \partial_{x}^{\gamma} \partial_{\xi}^{\delta} a(x,\xi)| \leq C_{\alpha,\beta,\gamma, \delta} h^k.$$

\subsection{Semiclassical quantization on the torus}

We need to extend the semiclassical quantization to the torus $\T^d$. First, one can identify the torus $\T^d$ with the fundamental domain 
$$ \T^d \approx \{x = (x_1, \dots, x_n)\ ;\ 0 \leq x_i\leq 1,\, \forall 1 \leq i \leq d\}.$$
We likewise identify functions in $\T^d$ with periodic functions on $\R^d$
$$ u(x + k) = u(x)\qquad \forall k \in \Z^d.$$
Symbols $a$ on $T^* \T^d = \T^d \times \R^d$ are similarly identified with symbols $a \in \R^{2d}$ that are periodic with respect to the variable $x \in \R^d$,
$$ a(x+k, \xi) = a(x,\xi)\qquad \forall k \in \Z^d.$$
Operators obtained by quantizing such symbols satisfy
$$
(\mathrm{op}_h (a) u)(x+k) = (\mathrm{op}_h (a) u(\cdot + k))(x).
$$

From the previous results, established in the Euclidean case $\R^{2d}$, one can deduce the analogue in the torus.

\begin{theorem}\label{thm:sc_results_torus}
Suppose that $a, b \in \mathcal{C}_b^{\infty}(T^* \T^d)$.
\begin{enumerate}
\item The operator $\ops ha$ can be extended as an operator mapping $L^2(\T^d)$ to $L^2(\T^d)$. Moreover, we have
    \begin{equation}
        \label{eq:EstimationCalderonVaillancourtT^d}
            \|\ops ha\|_{\mathcal L (L^2(\mathbb T^d))} \leq C \norme{a}_{L^{\infty}(T^* \T^d)} + O(h^{1/2}).
    \end{equation}
\item If $a \geq 0$, then there exist constants $C \geq 0$ and $h_0 >0$ such that 
\begin{equation*}
    \label{eq:inequalitygardingTd}
    \langle \ops ha u, u \rangle \geq - C h \norme{u}_{L^2(\T^d)}^2\qquad \forall 0 < h < h_0,\ \forall u \in L^2(\T^d).
\end{equation*}
\item There exists $c \in \mathcal{S}(\R^{2d})$ such that
\begin{equation*}
    \ops ha \ops hb = \ops hc,
\end{equation*}
where
\begin{equation}
\label{eq:exactformulacompositionT}
c(x,\xi) = e^{i \frac{h}{2} \sigma(D_x, D_{\xi}, D_{y}, D_{\eta})} (a(x, \xi) b(y,\eta))_{|(y,\eta)=(x,\xi)}.
\end{equation}
Moreover, we have
\begin{equation*}
\label{eq:expansionproductTd}
    c= ab + \frac{h}{2i} \{a,b\} + \mathcal{O}_{\mathcal S}(h^2),
\end{equation*}
and
\begin{equation}
\label{eq:expansioncommutatorTd}
    [ \ops ha,  \ops hb] =  \frac hi \ops h{ \{a, b\}} + \mathcal{O}_{\mathcal S}(h^3).
\end{equation}
\item Given a polynomial function $P$ of degree $2$, we have
\begin{equation}\label{commutators_laplacian}
    [\ops ha, \ops h{P(\xi)}] = \frac hi \ops h{\nabla_{\xi}P(\xi) \cdot \nabla_x a}
\end{equation}
\end{enumerate} 
\end{theorem}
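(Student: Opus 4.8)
The plan is to exploit that, since $P(\xi)$ is a polynomial of degree $2$ in $\xi$, the Moyal expansion underlying the exact composition formula \eqref{eq:exactformulacompositionT} \emph{terminates}: the generic $\mathcal O_{\mathcal S}(h^3)$ remainder of \eqref{eq:expansioncommutatorTd} is then absent, and one gets an identity rather than an asymptotic expansion. First I would reduce to monomials. Writing $P(\xi)=\sum_{|\alpha|\le 2}c_\alpha\xi^\alpha$ and using $\ops h{\xi^\alpha}=h^{|\alpha|}D^\alpha$ together with the bilinearity of $(A,B)\mapsto[A,B]$ and the linearity of $P\mapsto\nabla_\xi P$, the identity \eqref{commutators_laplacian} reduces to the two elementary cases $P(\xi)=\xi_j$ and $P(\xi)=\xi_j\xi_k$, the constant case being trivial.

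For $P(\xi)=\xi_j$, I would compute $[\ops ha,\ops h{\xi_j}]=[\ops ha,hD_j]$ directly from \eqref{eq:exactformulacompositionT}. In forming $\ops ha\ops h{\xi_j}$ one applies $e^{\frac{ih}{2}\sigma(D_x,D_\xi,D_y,D_\eta)}$ to $a(x,\xi)\,\eta_j$: the half $D_\xi\cdot D_y$ of $\sigma(D)$ annihilates this product, while the other half involves $D_\eta$ acting on $\eta_j$, which it exhausts after a single differentiation, so only the $h^0$ and $h^1$ terms survive; symmetrically for $\ops h{\xi_j}\ops ha$. Subtracting, the $h^0$ parts cancel and one is left with the exact identity $[\ops ha,\ops h{\xi_j}]=\frac hi\,\ops h{\partial_{x_j}a}$, which is \eqref{commutators_laplacian} for $P(\xi)=\xi_j$.

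For $P(\xi)=\xi_j\xi_k$, I would use that $\ops h{\xi_j}\ops h{\xi_k}=\ops h{\xi_j\xi_k}$ \emph{exactly} (in \eqref{eq:exactformulacompositionT} every surviving derivative kills one of the two $\xi$-only factors), so by the commutator Leibniz rule $[\ops ha,\ops h{\xi_j\xi_k}]=[\ops ha,\ops h{\xi_j}]\ops h{\xi_k}+\ops h{\xi_j}[\ops ha,\ops h{\xi_k}]=\frac hi\big(\ops h{\partial_{x_j}a}\ops h{\xi_k}+\ops h{\xi_j}\ops h{\partial_{x_k}a}\big)$ by the previous step. Recombining each of $\ops h{\partial_{x_j}a}\ops h{\xi_k}$ and $\ops h{\xi_j}\ops h{\partial_{x_k}a}$ into a single quantization via \eqref{eq:exactformulacompositionT} (again exact, truncating at order $1$), the two $\mathcal O(h)$ symbol corrections, each a multiple of $\partial_{x_j}\partial_{x_k}a$, cancel, and one is left with $\ops h{\xi_k\partial_{x_j}a+\xi_j\partial_{x_k}a}=\ops h{\nabla_\xi(\xi_j\xi_k)\cdot\nabla_x a}$. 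Summing against the coefficients $c_\alpha$ then yields \eqref{commutators_laplacian} for an arbitrary quadratic $P$.

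The argument involves no genuine difficulty; the point requiring care is the justification of the three "exactness" claims — in each use of \eqref{eq:exactformulacompositionT} the operator $e^{\frac{ih}{2}\sigma(D)}$ genuinely stops because one of the two symbols is polynomial of degree $\le 2$ in the variable differentiated by the part of $\sigma(D)$ acting nontrivially — together with keeping the multi-index bookkeeping honest (notably the factor $2$ when $j=k$). A shorter but equivalent alternative skips the Leibniz step: apply \eqref{eq:exactformulacompositionT} directly to $\ops ha\ops h{\xi_j\xi_k}$ and to $\ops h{\xi_j\xi_k}\ops ha$, subtract, and check that the order-$0$ terms cancel (scalars commute), the order-$2$ terms cancel (both are the same symmetric multiple of $\partial_{x_j}\partial_{x_k}a$), and the order-$1$ terms add up to the claimed symbol.
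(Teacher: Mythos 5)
Your proposal is correct and rests on exactly the same mechanism as the paper's one-line proof: the exponential in the composition formula \eqref{eq:exactformulacompositionT} terminates when one factor is a polynomial of degree at most $2$ in $\xi$, so the surviving order-$h^2$ contributions are symmetric in the two factors and cancel in the commutator, and \eqref{eq:expansioncommutatorTd} holds with no remainder. Your reduction to monomials and the Leibniz recombination are just a more explicit organization of that same computation (and you reproduce the sign convention of \eqref{commutators_laplacian} exactly as the paper does), so there is nothing genuinely different to report.
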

\begin{proof}
We only prove the fourth point. From the exact formula \eqref{eq:exactformulacompositionT}, the terms of order bigger than $h$ in the asymptotic expansion only involve derivatives of order more than three so they all vanish. Therefore, we have $[\ops ha, \ops h{P(\xi)}] = \frac hi \ops h{ \{a, P(\xi)\}} =  \frac hi \ops h {\nabla_{\xi}P(\xi) \cdot \nabla_x a},$ which concludes the proof.
\end{proof}

\subsection{Semiclassical measures for Schrödinger equations}
This section is devoted to recall useful results concerning semiclassical measures for Schrödinger equations. 

Let $(h_n)_{n \in \nn} \subset (0,1]$ be a sequence of real positive numbers tending to $0$. Let $(\theta_n)_{n \in \nn} \subset [0,1]^d$ and $(V_n)_{n \in \nn} \subset \mathcal{C}^{\infty}(\mathbb T^d)$ be a sequence of smooth potentials satisfying $$\sup_{n \in \nn} \|V_n\|_{L^{\infty}(\mathbb T^d)} <+\infty.$$ To simplify, we denote by $\mathcal H_{n} (= \mathcal{H}_{\theta_n,V_n})$ the Schrödinger operator given by
\begin{equation*}\label{schrod_operator}
\mathcal H_{n}=-\Delta+2i\theta_n\cdot\nabla + |\theta_n|^2 + V_n,
\end{equation*}
with $n \in \nn$.
\begin{definition}\label{h-oscillating}
A bounded family $(u_n)_{n \in \nn} \subset L^2(\mathbb T^d)$ is said to be \textit{$(h_n)$-oscillating} if and only if 
$$\lim_{R \to +\infty} \limsup_{n \to +\infty} \| \un_{(R,+\infty)}(h_n^2 \mathcal H_{n}) u_n \|_{L^2(\mathbb T^d)} =0.$$ 
\end{definition}
The following lemma shows that a $(h_n)$-oscillating family is actually also $(h_n)$-oscillating with respect to the free Schrödinger operator $-\Delta$.
\begin{lemma}\label{lemma:h_oscillating}
Let $(u_n)_{n \in \nn} \in L^2(\mathbb T^d)$ be a bounded family. This family is $(h_n)$-oscillating (with respect to $\mathcal H_{n}$) if and only if 
\begin{equation}\label{h-oscillating2}
\lim_{R\to+\infty} \limsup_{n \to +\infty} \| \un_{(R, +\infty)}(-h_n^2\Delta) u_n\|_{L^2(\mathbb T^d)}=0.
\end{equation}
\end{lemma}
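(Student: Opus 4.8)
The statement to prove is Lemma~\ref{lemma:h_oscillating}: a bounded family $(u_n)$ is $(h_n)$-oscillating with respect to $\mathcal H_n = -\Delta + 2i\theta_n\cdot\nabla + |\theta_n|^2 + V_n$ if and only if it satisfies the analogous property \eqref{h-oscillating2} with $-\Delta$ in place of $\mathcal H_n$. The key observation is that $h_n^2\mathcal H_n = -h_n^2\Delta + 2ih_n^2\theta_n\cdot\nabla + h_n^2|\theta_n|^2 + h_n^2 V_n$, and the last three terms are, in a suitable sense, \emph{small} compared to $-h_n^2\Delta$ at high frequencies. Concretely, I would aim to show that $h_n^2\mathcal H_n - (-h_n^2\Delta)$ maps $L^2$-bounded families into families that are negligible after applying a high-frequency cutoff, so that the spectral projectors $\un_{(R,+\infty)}(h_n^2\mathcal H_n)$ and $\un_{(R,+\infty)}(-h_n^2\Delta)$ have the same asymptotic effect.

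\textbf{Key steps.} First I would record the elementary bound on the perturbation: for $u$ in the range of $\un_{(N,+\infty)}(-h_n^2\Delta)$, the term $h_n^2|\theta_n|^2 u$ has norm $\le 2h_n^2\|u\|$ (since $\theta_n\in[0,1]^d$), the term $h_n^2 V_n u$ has norm $\le (\sup_n\|V_n\|_{L^\infty}) h_n^2 \|u\|$, and $2ih_n^2\theta_n\cdot\nabla u$ has norm $\lesssim h_n^2 \|\nabla u\|$; on the range of $\un_{(N,+\infty)}(-h_n^2\Delta)$ one also has $\|\nabla u\| = h_n^{-1}\| (-h_n^2\Delta)^{1/2} u\|$, which is only $O(h_n^{-1})$ a priori, so these three terms are $O(h_n)$ relative to $\|u\|$ — negligible as $n\to+\infty$. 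Second, I would use a standard perturbative/resolvent comparison: since $V_n$ is bounded and $\theta_n$ ranges in a compact set, $\mathcal H_n = -\Delta + B_n$ with $B_n = 2i\theta_n\cdot\nabla + |\theta_n|^2 + V_n$ relatively bounded with respect to $-\Delta$ with relative bound zero (indeed $\|2i\theta_n\cdot\nabla u\|\le \varepsilon\|\Delta u\| + C_\varepsilon\|u\|$ for any $\varepsilon>0$ by interpolation). This gives, via the functional calculus (e.g. a Helffer--Sjöstrand formula or a direct resolvent identity), that for any $R>0$ and $R'<R$,
\begin{equation*}
\big\| \un_{(R,+\infty)}(h_n^2\mathcal H_n)\,\un_{(-\infty,R')}(-h_n^2\Delta) \big\|_{\mathcal L(L^2)} \underset{n\to+\infty}{\longrightarrow} 0,
\end{equation*}
and symmetrically with the roles of $\mathcal H_n$ and $-\Delta$ exchanged, provided $R\ne R'$. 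Third, I would combine this with the decomposition $u_n = \un_{(-\infty,R']}(-h_n^2\Delta)u_n + \un_{(R',+\infty)}(-h_n^2\Delta)u_n$ (and the analogous one for $h_n^2\mathcal H_n$) to transfer the vanishing of one truncated norm into the vanishing of the other: if \eqref{h-oscillating2} holds, then for $R$ large, $\un_{(R,+\infty)}(h_n^2\mathcal H_n)u_n$ is, up to a term that goes to $0$ by the operator-norm estimate above, controlled by $\un_{(R',+\infty)}(-h_n^2\Delta)u_n$ with $R'$ slightly smaller than $R$, hence small; taking $\limsup_n$ then $R\to+\infty$ finishes one implication, and the reverse implication is symmetric.

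\textbf{Main obstacle.} The delicate point is making the comparison of spectral projectors quantitative and uniform in $n$, i.e. establishing the operator-norm convergence $\| \un_{(R,+\infty)}(h_n^2\mathcal H_n)\,\un_{(-\infty,R')}(-h_n^2\Delta)\| \to 0$. The cleanest route is probably to write $h_n^2\mathcal H_n = h_n^2(-\Delta) + h_n^2 B_n$ and use that on $\operatorname{Ran}\un_{(-\infty,R')}(-h_n^2\Delta)$ one has $\|h_n^2 B_n v\|\le \varepsilon \|h_n^2(-\Delta) v\| + C_\varepsilon h_n^2\|v\| \le (\varepsilon R' + C_\varepsilon h_n^2)\|v\|$, so the perturbation is genuinely small in norm on this subspace; then a Neumann-series / min-max argument shows the high spectral subspace of $h_n^2\mathcal H_n$ (above $R$) and the low spectral subspace of $h_n^2(-\Delta)$ (below $R'<R$) are asymptotically orthogonal. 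Care is needed because $B_n$ is a first-order operator and is only $-\Delta$-bounded with relative bound zero, not actually bounded, so the truncation to $\operatorname{Ran}\un_{(-\infty,R')}(-h_n^2\Delta)$ (equivalently, restricting to frequencies $|k|\lesssim \sqrt{R'}/h_n$) is what makes everything controllable; one should also handle the boundary frequencies by choosing $R\ne R'$ so that no spectral mass concentrates exactly at the threshold. Everything else (the two decompositions, the $\limsup$ manipulations, the final $R\to+\infty$ limit) is routine once this uniform projector comparison is in hand.
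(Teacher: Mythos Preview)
Your Helffer--Sj\"ostrand route to the projector comparison is correct and would work: insert a smooth $\chi$ with $\chi\equiv 1$ on $[R,+\infty)$ and $\chi\equiv 0$ on $(-\infty,R']$, so that $\un_{(R,+\infty)}(A)\,\un_{(-\infty,R']}(B)=\un_{(R,+\infty)}(A)\big(\chi(A)-\chi(B)\big)\un_{(-\infty,R']}(B)$, and bound $\|\chi(A)-\chi(B)\|=O(h_n)$ via the resolvent identity. Be warned, however, that the shortcut you sketch in your last paragraph --- deducing $\|PQ\|\to 0$ directly from the smallness of $h_n^2 B_n$ on $\operatorname{Ran}Q$ by a ``Neumann / min--max'' argument --- does not deliver what you claim: a quadratic-form argument only yields $\limsup_n\|PQ\|^2\le R'/R$, which is bounded away from $0$ for fixed $R'<R$.

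The paper's proof is considerably more elementary and avoids functional calculus entirely. The trick is to split not at a fixed $R'<R$ but at $\sqrt R$. One first records the Fourier-side lower bound $h_n^2(|k|^2-2\theta_n\cdot k)\ge R-C\sqrt R$ on $\{h_n^2|k|^2>R\}$, giving $\|\un_{(R,+\infty)}(-h_n^2\Delta)v\|^2\le (R-C\sqrt R)^{-1}\langle h_n^2(-\Delta+2i\theta_n\cdot\nabla)v,v\rangle$; then for $v=\un_{(-\infty,\sqrt R)}(h_n^2\mathcal H_n)u_n$ the right side is at most $\sqrt R\,\|u_n\|^2+O(h_n^2)$, since $\mathcal H_n$ and $-\Delta+2i\theta_n\cdot\nabla$ differ only by the bounded zeroth-order term $|\theta_n|^2+V_n$. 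The ratio $\sqrt R/(R-C\sqrt R)\to 0$ then closes the argument. This is essentially the quadratic-form bound $\|PQ\|^2\lesssim R'/R$ your shortcut would produce, rescued by coupling $R'=\sqrt R$ to $R$ --- so your direct instinct was in fact the right one, provided the two thresholds are tied together rather than kept independent.
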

\begin{proof}
Let us prove that if $(u_n)_{n \in \nn}$ is $(h_n)$-oscillating (with respect to $\mathcal H_{n}$) then \eqref{h-oscillating2} holds. The converse can be proved in the same manner. By using the facts that 
$$\un_{(R,+\infty)}(-h_n^2\Delta)=\un_{(R,+\infty)}(-h_n^2\Delta)\un_{(\sqrt R,+\infty)}(h_n^2 \mathcal H_{n})+\un_{(R,+\infty)}(-h_n^2\Delta)\un_{(-\infty, \sqrt R)}(h_n^2 \mathcal H_{n})$$
and for all $R> 4$ and $v \in L^2(\mathbb T^d)$,
$$\|\un_{(R,+\infty)}(-h_n^2\Delta) v\|^2_{L^2(\mathbb T^d)} \leq \frac1{R-2\sqrt R}\left\langle \un_{(R,+\infty)}(-h_n^2\Delta) h_n^2 (-\Delta +2i\theta_n \cdot \nabla) v,v\right\rangle_{L^2(\mathbb T^d)},$$
thanks to functional calculus, we deduce that for all $R>4$ and $n \in \nn$, taking $v = \un_{(-\infty, \sqrt R)}(h_n^2 \mathcal H_{n}) u_n$,
\begin{multline*}
     \|\un_{(R,+\infty)}(-h_n^2\Delta) u_n\|^2_{L^2(\mathbb T^d)} 
    \leq  \  2\|\un_{(\sqrt R,+\infty)}(h_n^2 \mathcal H_{n}) u_n\|^2_{L^2(\mathbb T^d)}\\
    +\frac2{R-2\sqrt R}\left\langle h_n^2 (-\Delta +2i\theta_n \cdot \nabla) \un_{(-\infty, \sqrt R)}(h_n^2 \mathcal H_{n}) u_n, \un_{(-\infty, \sqrt R)}(h_n^2 \mathcal H_{n}) u_n\right\rangle_{L^2(\mathbb T^d)}.
\end{multline*}
Moreover, we have that for all $n \in \nn$,
\begin{multline*}
    \left\langle h_n^2 (-\Delta +2i\theta_n \cdot \nabla) \un_{(-\infty, \sqrt R)}(h_n^2 \mathcal H_{n}) u_n, \un_{(-\infty, \sqrt R)}(h_n^2 \mathcal H_{n}) u_n\right\rangle_{L^2(\mathbb T^d)}\\
    \leq \sqrt R \|u_n\|_{L^2(\mathbb T^d)}^2 + h_n^2 \Big(d +  \|V_n\|_{L^{\infty}(\mathbb T^d)}\Big)\|u_n\|_{L^2(\mathbb T^d)}^2 .
\end{multline*}
We therefore deduce from the previous lines, together with the facts that $(V_n)_{n \in \nn}$ is $L^{\infty}$-bounded and $(u_n)_{n \in \nn}$ is $L^2$-bounded, denoting $M = \sup_{n \in \nn}\left\| u_n\right\|_{L^2(\mathbb T^d)}^2$,
\begin{equation*}
         \limsup_{n \to +\infty}\|\un_{(R,+\infty)}(-h_n^2\Delta) u_n\|_{L^2(\mathbb T^d)}^2
    \leq  \ \limsup_{n \to +\infty}\|\un_{(\sqrt R,+\infty)}(h_n^2 \mathcal H_{n}) u_n\|_{L^2(\mathbb T^d)}^2+\frac{\sqrt R M}{R-2\pi\sqrt R}.
\end{equation*}
We can now apply the $(h_n)$-oscillating assumption to conclude that 
\begin{equation*}
    \limsup_{n \to +\infty} \|\un_{(R,+\infty)}(-h_n^2\Delta) u_n\|_{L^2(\mathbb T^d)}^2 \leq \limsup_{n \to +\infty} \|\un_{(\sqrt R,+\infty)}(h_n^2 \mathcal H_{n}) u_n\|_{L^2(\mathbb T^d)}^2 + \frac{\sqrt R M}{R-2 \sqrt R} \underset{R\to+\infty}{\longrightarrow} 0.
\end{equation*}
This concludes the proof of Lemma~\ref{lemma:h_oscillating}.
\end{proof}

The following proposition is the main result of this section. It provides a generalisation of results established in \cite{Mac09} by Macià. More precisely, \cite[Theorem~1 \& 2]{Mac09} studied semiclassical defect measures for solutions to Schrödinger equations associated to an operator $\mathcal{H}=-\Delta+V$. The following result extends \cite[Theorem~1 \& 2]{Mac09} to the operators $\mathcal{H}_n=-\Delta+2i\theta_n \cdot \nabla + |\theta_n|^2+   V_n,$ which are allowed to vary according to $n \in \nn$. The proof follows the very same lines as the ones given in \cite{Mac09}. For the convenience of the reader, the proof is entirely recalled.
\begin{proposition}\label{semiclassical_measure_prop}
Let $(u_n)_{n \in \nn}$ be a bounded family in $L^2(\mathbb T^d)$. If $(u_n)_{n \in \nn}$ is $(h_n)$-oscillating then there exists a subsequence, still denoted $(u_n)_{n \in \nn}$ and a finite measure $\mu \in L^{\infty}(\rr, \mathcal M_+(T^*\mathbb T^d))$ satisfying:\\
\indent \textit{(i)} For every $\varphi \in L^1(\rr)$ and every $a \in \mathcal C^{\infty}_c(T^*\mathbb T^d)$,
\begin{equation*}
    \lim_{n \to +\infty} \int_{\rr} \varphi(t) \langle \ops {h_n}a e^{-i t\mathcal H_{n}} u_n, e^{-it \mathcal H_{n}} u_n\rangle_{L^2(\mathbb T^d)} dt= \int_{\rr \times T^*\mathbb T^d} \varphi(t) a(x,\xi) \mu(t, dx, d\xi)dt.
\end{equation*}
\indent \textit{(ii)} For every $\varphi \in L^1(\rr)$ and every $a \in \mathcal C^{\infty}_c(\mathbb T^d)$,
\begin{equation*}
    \lim_{n\to +\infty} \int_{\rr} \varphi(t) a(x) |e^{-it \mathcal H_{n}} u_n(x)|^2 dt= \int_{\rr \times T^*\mathbb T^d} \varphi(t) a(x) \mu(t, dx, d\xi)dt.
\end{equation*}
\indent \textit{(iii)} The measure $\mu$ is invariant under the geodesic flow: for every $\varphi \in L^1(\rr)$, $a \in \mathcal C^{\infty}_c(T^*\mathbb T^d)$,
\begin{equation*}
    \forall s \in \rr, \quad \int_{\rr}\int_{T^*\mathbb T^d}\varphi(t) a(x+s\xi, \xi) \mu(dt, dx, d\xi)=\int_{\rr} \int_{T^*\mathbb T^d} \varphi(t)  a(x, \xi) \mu(dt, dx, d\xi).
\end{equation*}
\end{proposition}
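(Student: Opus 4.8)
The plan is to reproduce Macià's argument from \cite{Mac09}, dragging the two extra parameters $\theta_n$ and the $n$-dependent potentials $V_n$ through the computation. Throughout, set $v_n(t)=e^{-it\mathcal{H}_n}u_n$, which by \eqref{eq:conservationL2norm} satisfies $\|v_n(t)\|_{L^2(\mathbb T^d)}=\|u_n\|_{L^2(\mathbb T^d)}$ for every $t$, and for a real-valued $a\in\mathcal C^\infty_c(T^*\mathbb T^d)$ put $\ell_n^a(t):=\langle \ops{h_n}{a}\,v_n(t),v_n(t)\rangle_{L^2(\mathbb T^d)}$. \emph{Step 1 (extraction).} By the Calderón--Vaillancourt bound \eqref{eq:EstimationCalderonVaillancourtT^d} and the conservation of the $L^2$-norm, $\|\ell_n^a\|_{L^\infty(\mathbb R)}\le(C\|a\|_{L^\infty(T^*\mathbb T^d)}+O(h_n^{1/2}))\sup_n\|u_n\|_{L^2}^2$, so $(\ell_n^a)_n$ is bounded in $L^\infty(\mathbb R)=(L^1(\mathbb R))'$. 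Choosing a countable family of symbols dense enough in $\mathcal C^\infty_c(T^*\mathbb T^d)$ and diagonalising, one extracts a subsequence (not relabelled, and also arranged so that $\|u_n\|_{L^2}^2$ converges) along which $\int_{\mathbb R}\varphi(t)\ell_n^a(t)\,dt$ converges for all $\varphi\in L^1(\mathbb R)$ and all $a$, to a bilinear form bounded by $C\|\varphi\|_{L^1}\|a\|_{L^\infty}$. For fixed $\varphi\ge0$, the sharp Gårding inequality (Theorem~\ref{thm:sc_results_torus}, item~2) gives $\ell_n^a(t)\ge -Ch_n\|u_n\|^2$ whenever $a\ge0$, so the limit is a \emph{positive} functional of $a$ dominated by $\|a\|_{C^0}$; by the Riesz representation theorem it is integration against a positive Radon measure, and a routine disintegration in $t$ (using that $\varphi\mapsto$ (limit) is dominated by Lebesgue measure) yields $\mu\in L^\infty(\mathbb R,\mathcal M_+(T^*\mathbb T^d))$ realising \textit{(i)} for compactly supported symbols.

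\emph{Step 2 (oscillation, finiteness, (ii)).} Since $\un_{(R,+\infty)}(h_n^2\mathcal{H}_n)$ commutes with $e^{-it\mathcal{H}_n}$, the family $(v_n(t))_n$ is $(h_n)$-oscillating uniformly in $t$ and hence, by Lemma~\ref{lemma:h_oscillating}, also $(h_n)$-oscillating with respect to $-\Delta$ uniformly in $t$. Testing \textit{(i)} against a radial cutoff $\tilde\chi_R(\xi)$ with $\tilde\chi_R=1$ on $\{|\xi|\le R\}$, noting that $\ops{h_n}{\tilde\chi_R}=\tilde\chi_R(-h_n^2\Delta)$ is a Fourier multiplier with values in $[0,1]$, and letting $R\to+\infty$ using \eqref{h-oscillating2}, one obtains $\int_{T^*\mathbb T^d}\mu(t,dx,d\xi)=\lim_n\|u_n\|_{L^2}^2$ (so $\mu$ is finite, with mass constant in $t$) and moreover that \textit{(i)} extends to symbols $a=a(x)$ independent of $\xi$. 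Since for such $a$ the operator $\ops{h_n}{a}$ is exactly multiplication by $a$, this last point is precisely \textit{(ii)}.

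\emph{Step 3 (dynamics and flow invariance).} From $i\partial_t v_n=\mathcal{H}_n v_n$ and the self-adjointness of $\mathcal{H}_n$ and $\ops{h_n}{a}$, $\tfrac{d}{dt}\ell_n^a(t)=i\langle[\mathcal{H}_n,\ops{h_n}{a}]v_n(t),v_n(t)\rangle$. Writing $\mathcal{H}_n=h_n^{-2}\ops{h_n}{|\xi|^2}-2h_n^{-1}\ops{h_n}{\theta_n\cdot\xi}+|\theta_n|^2+V_n$ and applying the exact commutator identity \eqref{commutators_laplacian} to the quadratic and linear symbols (the remainders vanish because their third derivatives do), together with the elementary bound that $\|[V_n,\ops{h_n}{a}]\|_{\mathcal L(L^2)}$ is bounded (both $\|V_n\|_{L^\infty}$ and $\|\ops{h_n}{a}\|_{\mathcal L(L^2)}$ are, the latter uniformly in $\theta_n$ by \eqref{eq:EstimationCalderonVaillancourtT^d}), one gets
\[
h_n\,\tfrac{d}{dt}\ell_n^a(t)=-\,\ell_n^{\,2\xi\cdot\nabla_x a}(t)+O(h_n)\qquad\text{uniformly in }t\in\mathbb R,
\]
where the drift term contributes only an $O(h_n)$ remainder since $\theta_n$ is bounded. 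Testing against $\varphi\in\mathcal C^\infty_c(\mathbb R)$, integrating, and integrating by parts in $t$ on the left (the resulting term $-h_n\int\varphi'\ell_n^a\,dt\to0$ by the uniform bound on $\ell_n^a$), then passing to the limit via \textit{(i)} and extending by density to $\varphi\in L^1(\mathbb R)$, gives $\int_{\mathbb R}\varphi(t)\int_{T^*\mathbb T^d}\xi\cdot\nabla_x a(x,\xi)\,\mu(t,dx,d\xi)\,dt=0$ for all $\varphi$ and $a$. Finally, for fixed $\varphi$ and $a$ the function $g(s):=\int\varphi(t)\int a(x+s\xi,\xi)\,\mu(t,dx,d\xi)\,dt$ is $\mathcal C^1$ with $g'(s)=\int\varphi(t)\int\xi\cdot\nabla_x\tilde a_s(x,\xi)\,\mu(t,dx,d\xi)\,dt$ where $\tilde a_s(x,\xi)=a(x+s\xi,\xi)\in\mathcal C^\infty_c(T^*\mathbb T^d)$ (it is smooth, $2\pi\mathbb Z^d$-periodic in $x$, and compactly supported in $\xi$); the identity just proved, applied with test symbol $\tilde a_s$, shows $g'\equiv0$, i.e.\ $g(s)=g(0)$, which is \textit{(iii)}.

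\emph{Main obstacle.} The one genuinely delicate point is the interplay between the non-compactness of $T^*\mathbb T^d=\mathbb T^d\times\mathbb R^d$ and the construction of $\mu$: one needs the $(h_n)$-oscillation, in the form of Lemma~\ref{lemma:h_oscillating} and \emph{uniformly in time}, to rule out escape of mass to $\xi=\infty$, so that $\mu$ is honestly a finite measure, so that \textit{(ii)} holds with the full density $|v_n(t)|^2$ (and not merely a frequency-localised piece of it), and so that the disintegration $\mu(t,dx,d\xi)\,dt$ with $L^\infty$ time-dependence is well defined. By contrast the commutator computation of Step~3 is essentially algebraic: the Laplacian and drift parts produce \emph{exact} identities because their symbols are polynomials of degree $\le 2$, and the $L^\infty$-potential enters only through a crude operator-norm estimate, so neither the $n$-dependence of $V_n$ nor the presence of $\theta_n$ costs anything beyond keeping all constants uniform.
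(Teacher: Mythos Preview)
Your proof is correct and contains all the essential ingredients of the paper's argument: Calder\'on--Vaillancourt for the uniform $L^\infty(\mathbb R_t)$ bound on $\ell_n^a$, sharp G\aa rding for positivity, the $(h_n)$-oscillation (via Lemma~\ref{lemma:h_oscillating}) to prevent escape of mass to $|\xi|=\infty$, and the exact commutator identity \eqref{commutators_laplacian} for the flow invariance. The commutator computation in Step~3, including the observation that the drift and potential contribute only $O(h_n)$, matches the paper's fourth step.

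The organisational difference lies in Steps~1--2. The paper, following Maci\`a, introduces a \emph{time--space} Wigner distribution $W_n$ on $\mathbb R_t\times\mathbb T^d_x\times\mathbb R_\tau\times\mathbb R^d_\xi$ (with a dual time variable $\tau$), extracts a limiting positive Radon measure $\tilde\mu$ on this larger space, and only then defines $\mu$ by integrating out $\tau$; the $(h_n)$-oscillation is invoked a first time to show that the $\tau$-integral is well defined (via $\chi(h_n^2 D_t/R)=\chi(h_n^2\mathcal H_n/R)$), and a second time for the $\xi$-cutoff in part \textit{(ii)}. You bypass the $\tau$ variable entirely by working directly with $\ell_n^a\in L^\infty(\mathbb R)$ and extracting a weak-$\ast$ limit there, then disintegrating. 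This is more streamlined and uses oscillation only once (for $\xi$). The trade-off is that your ``routine disintegration'' hides a small measurable-selection argument (for a.e.\ $t$, $a\mapsto\ell^a(t)$ is a positive functional dominated by $\|a\|_{C^0}$), whereas the paper's construction of $\tilde\mu$ on the product space makes the existence of $\mu$ automatic. One cosmetic slip: $\ops{h_n}{\tilde\chi_R}$ is the Fourier multiplier $\tilde\chi_R(h_n D)$, not literally $\tilde\chi_R(-h_n^2\Delta)$ unless $\tilde\chi_R$ is a function of $|\xi|^2$; this does not affect the argument.
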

\begin{proof}
Let $\psi_n(t,x)= e^{-i t\mathcal H_{n}} u_n(x)$. \medskip\\
\indent \textit{First step: subsequence for time-space Wigner distribution.}\\
Let us introduce the time-space Wigner distribution $W_n \in \mathcal D'(\rr \times \mathbb T^d \times \rr \times \rr^d)$, given by 
\begin{equation*}
    \forall b \in \mathcal C^{\infty}_c(\rr\times \mathbb T^d \times \rr \times \rr^d), \quad \langle W_n, b \rangle = \langle \ops {h_n}{b_n} \psi_n, \psi_n \rangle_{L^2(\rr\times \mathbb T^d)},
\end{equation*}
where $b_n(t,x,\tau, \xi):=b(t, x, h_n \tau, \xi)$.

Thanks to the Caldéron-Vaillancourt Theorem \eqref{eq:EstimationCalderonVaillancourt}, for every $b \in \mathcal D'(\rr \times \mathbb T^d \times \rr \times \rr^d)$, the family $\langle W_n, b \rangle$ is bounded. Since the space $(\mathcal D'(\rr \times \mathbb T^d \times \rr \times \rr^d), \|\cdot\|_{\infty})$ is separable, it follows that, up to a subsequence, $(W_n)_{n \in \nn}$ converges to $\tilde{\mu} \in \mathcal D'(\rr \times \mathbb T^d \times \rr \times \rr^d)$. By using anew the Caldéron-Vaillancourt Theorem, we have for every $b \in \mathcal C^{\infty}_c(\rr \times \mathbb T^d \times \rr \times \rr^d)$,
$$\|\langle W_n, b \rangle \| \leq C \norme{b}_{\infty} + O(h_n^{1/2}),$$
and by passing to the limit as $n \to +\infty$, we obtain that
$$\forall b \in \mathcal C^{\infty}_c( \rr\times \T^d\times \rr \times \rr^d), \quad \langle \tilde{\mu}, b \rangle_{\mathcal D', \mathcal D} \leq C \|b \|_{\infty}.$$
The above estimate therefore ensures that $\tilde{\mu}$ is a Radon measure on $\rr\times\mathbb T^d \times \rr \times \rr^d$. Moreover, by the Gårding inequality, for a non-negative test function $b \in \mathcal C^{\infty}_c(\rr \times \mathbb T^d \times \rr \times \rr^d)$,
$$\langle W_n, b \rangle \geq - C h_n \norme{u_n}_{L^2(\T^d)}.$$
Then, as $n \to +\infty$,
$$\forall b \in C^{\infty}_c(\rr \times \mathbb T^d \times \rr \times \rr^d), \quad \langle \tilde{\mu}, b \rangle \geq 0,$$
and $\tilde{\mu}$ is therefore a positive Radon measure.\\

\indent \textit{Second step: semiclassical defect measure for $\psi_n$.}\\
By now, we define the following positive Radon measure on $\rr \times \mathbb T^d \times \rr^d$
$$\mu(dt, dx, d\xi):= \int_{\rr_{\tau}} \tilde{\mu}(dt, dx, d\tau, d\xi).$$
We first establish that $\mu$ is well-defined, $\mu \in L^{\infty}(\rr, \mathcal M_+(\mathbb T^d \times \rr^d))$ and that \textit{(i)} holds. Let $\varphi, \chi \in \mathcal C^{\infty}_c(\rr)$ with $0 \leq \chi \leq 1$ and $\chi_{(-1,1)} \equiv 1$. For all $a \in \mathcal C^{\infty}_c(T^*\mathbb T^d)$ and $R>0$, 
\begin{equation}\label{wm_schro}
    \int_{\rr} \varphi(t) \langle \ops {h_n} a \psi_n, \psi_n \rangle_{L^2(\mathbb T^d)} dt = \langle W_n, b \rangle + r_n(R),
\end{equation}
with $b(t,x,\tau, \xi)= \varphi(t) \chi(\tau/R) a(x, \xi)$ and
\begin{equation*}
    r_n(R) = \int_{\rr} \varphi(t) \langle \ops {h_n} a \psi_n, \psi_n \rangle_{L^2(\mathbb T^d)} dt- \langle \ops {h_n}{b_n} \psi_n, \psi_n \rangle_{L^2(\rr \times \mathbb T^d)}.
\end{equation*}
Moreover, from the third assertion in  Theorem~\ref{thm:sc_results_torus}, we have that
$$\ops {h_n}{b_n} = \ops {h_n^2}{\varphi(t) \chi(\tau/R)} \ops {h_n} a = \varphi(t) \chi(h_n^2 D_t/R)  \ops {h_n} a + O_{\mathcal L (L^2)}(h_n^2).$$
This implies that
\begin{equation*}
    r_n(R)  = \int_{\rr} \varphi(t) \left\langle (1- \chi(h_n^2 D_t/R))\ops {h_n} a \psi_n, \psi_n \right\rangle_{L^2(\mathbb T^d)} dt + O(h_n^2),
\end{equation*}
and
\begin{equation*}\label{RHS_rest}
    |r_n(R)| \leq \int_\rr \varphi(t) \|\ops {h_n}a (1-\chi(h_n^2 D_t/R)) \psi_n\|_{L^2(\mathbb T^d)} dt + O(h_n^2).
\end{equation*}
Moreover, since $$(1-\chi(h_n^2 D_t/R)) \psi_n= (1-\chi(h_n^2 \mathcal H_n/R)) \psi_n,$$ we deduce from the $(h_n)$-oscillation property that
\begin{equation*}\label{RHS_rest2}
\limsup_{n \to +\infty}\|\ops {h_n}a (1-\chi(h_n^2 D_t/R)) \psi_n\|_{L^2(\mathbb T^d)}
\leq C \limsup_{n\to+\infty} \|(1-\chi(h_n^2 \mathcal H_n/R)) \psi_n\|_{L^2(\mathbb T^d)} \underset{R \to +\infty}{\longrightarrow} 0.
\end{equation*}
This shows that 
\begin{equation}\label{eq:remainder}\lim_{R\to +\infty} \limsup_{n \to +\infty} |r_n(R)|=0.
\end{equation}
By gathering \eqref{wm_schro} and \eqref{eq:remainder}, we obtain
\begin{equation*}
    \lim_{n\to+\infty} \int_{\rr} \varphi(t) \langle \ops {h_n} a \psi_n, \psi_n \rangle_{L^2(\mathbb T^d)} dt =\int_{\rr \times \mathbb T^d \times \rr^d} \varphi(t) a(x, \xi) \mu(dt, dx, d\xi).
\end{equation*}
Observe that, since for every $n \in \nn$, $\|\psi_n\|_{L^{\infty}(\rr, L^2(\mathbb T^d))}=1$, we have
$$\forall n \in \nn, \forall \varphi \in \mathcal C^{\infty}_c(\rr), \forall a \in \mathcal C^{\infty}_c(T^*\mathbb T^d), \quad \int_{\rr} \varphi(t) \langle \ops {h_n} a \psi_n, \psi_n \rangle_{L^2(\mathbb T^d)} dt \leq C \|\varphi\|_{L^1(\rr)} \|a\|_{L^{\infty}(T^*\mathbb T^d)}.$$
It follows that $\mu \in L^{\infty}(\rr, \mathcal M_+(T^*\mathbb T^d))$ and the above convergence holds for all $\varphi \in L^1(\rr)$. Notice that, at this step, we have for all real numbers $t_0\leq t_1$, $\mu([t_0,t_1] \times T^*\mathbb T^d) \leq t_1-t_0$, since for all $t \in \rr$, $\| \psi_n(t,\cdot)\|_{L^2(\mathbb T^d)}=1$. \medskip\\
\indent \textit{Third step: proof of assertion (ii).} Let $\chi \in \mathcal{C}^{\infty}_c(\rr)$ with $\chi \equiv 1$ on $(-1,1)$. For $R>1$, we define $\chi_R=\chi(\cdot/R)$. Thanks to the previous step, we deduce that for all $R>1$, $\varphi \in L^1(\rr)$ and $a \in \mathcal C_c^{\infty}(\mathbb T^d)$,
\begin{equation}\label{eq:third_cv1}
    \lim_{n \to +\infty} \int_{\rr} \varphi(t) \langle \ops {h_n} {a_R} \psi_n, \psi_n \rangle_{L^2(\mathbb T^d)} dt =\int_{\rr \times \mathbb T^d \times \rr^d} \varphi(t) a(x) \chi_R(\xi) \mu(dt, dx, d\xi)
\end{equation}
with $a_R(x, \xi)=a(x) \chi_R(\xi)$.\\
By the dominated convergence theorem, we have the following convergence
$$\int_{\rr \times \mathbb T^d \times \rr^d} \varphi(t) a(x) \chi_R(\xi) \mu(dt, dx, d\xi) \underset{R \to +\infty}{\longrightarrow} \int_{\rr \times \mathbb T^d \times \rr^d} \varphi(t) a(x) \mu(dt, dx, d\xi).$$
On the other hand, thanks to the third assertion of Theorem~\ref{thm:sc_results_torus}, we have for all $R>1$
\begin{multline}\label{eq:third_cv2}
 \int_{\rr} \varphi(t) \langle \ops {h_n} {a_R} \psi_n, \psi_n \rangle_{L^2(\mathbb T^d)} dt \\
= \int_{\rr} \varphi(t) \langle a(x) \psi_n, \psi_n \rangle_{L^2(\mathbb T^d)} dt+ \int_{\rr} \varphi(t) \langle a(x) (\chi_R(h_n D_x)-1) \psi_n, \psi_n \rangle_{L^2(\mathbb T^d)} dt +O(h_n).
\end{multline}
Moreover, by using the $(h_n)$-oscillating property and Lemma~\ref{h-oscillating2}, we obtain
\begin{multline}\label{eq:third_cv3}
|\limsup_{n\to +\infty} \int_{\rr} \varphi(t) \langle a(x) (\chi_R(h_n D_x)-1) \psi_n, \psi_n \rangle_{L^2(\mathbb T^d)} dt|\\ \leq \limsup_{n\to +\infty} \int_{\rr} |\varphi(t)| dt \|(1-\chi_R(h_n D_x)) u_n\|_{L^2(\mathbb T^d)} \underset{R \to +\infty}{\longrightarrow} 0.
\end{multline}
Thanks to \eqref{eq:third_cv1}, \eqref{eq:third_cv2} and \eqref{eq:third_cv3}, we deduce that
$$\lim_{n \to +\infty} \int_\rr \varphi(t)\langle a(x) \psi_n, \psi_n\rangle_{L^2(\T^d)} dt =\int_\rr \varphi(t) a(x) \mu(dt, dx, d\xi).$$
This concludes the proof of assertion \textit{(ii)}. \medskip\\
\indent \textit{Fourth step: semiclassical defect measure and geodesic flow.}\\ It remains to show that the semiclassical defect measure is invariant under the geodesic flow. As a first step, let us notice that it is sufficient to establish that
\begin{equation}\label{poisson_bracket}
    \forall \varphi \in \mathcal C^{\infty}_c(\rr), \forall a \in \mathcal{C}^{\infty}_c(T^*\mathbb T^d), \quad \int_{\rr} \int_{T^*\mathbb T^d} \varphi(t) \xi \cdot \nabla_x a(x, \xi) \mu(t, dx, d\xi) dt=0. 
\end{equation}
Indeed, if \eqref{poisson_bracket} holds, then it follows that for all $\varphi \in \mathcal C^{\infty}_c(\rr), a \in \mathcal{C}^{\infty}_c(T^*\mathbb T^d)$, and $s \in \rr$,
\begin{equation*}
    \frac d{ds} \int_{\rr}\int_{T^*\mathbb T^d}\varphi(t) a(x+s\xi, \xi) \mu(dt, dx, d\xi)=\int_{\rr} \int_{T^*\mathbb T^d} \varphi(t) \xi \cdot \nabla_x a_s(x, \xi) \mu(t, dx, d\xi) dt=0,
\end{equation*}
where $a_s(x, \xi)=a(x+s\xi, \xi).$ To conclude, let us show that \eqref{poisson_bracket} holds. Let $\varphi \in \mathcal C^{\infty}_c(\rr)$ and $a \in \mathcal{C}^{\infty}_c(T^*\mathbb T^d)$. Thanks to \eqref{commutators_laplacian} and the second step of this proof, we obtain
\begin{align}\label{commutator_step4}
    \int_{\rr} \int_{T^*\mathbb T^d} \varphi(t) \xi \cdot \nabla_x a(x, \xi) \mu(t, dx, d\xi) dt & = \lim_{n\to +\infty} \int_{\rr} \varphi(t) \langle \ops {h_n} {\xi \cdot \nabla_x a} \phi_n, \phi_n \rangle_{L^2(\mathbb T^d)} dt\nonumber\\
    &= \lim_{n\to +\infty} h_n i \int_{\rr} \varphi(t) \left\langle [\ops {h_n} a, - \Delta] \phi_n, \phi_n \right\rangle_{L^2(\mathbb T^d)} dt.
\end{align}
Moreover, by using the evolution equation satisfied by $\phi_n$, we deduce from integration by parts, the following identity
\begin{align*}
    \int_{\rr} \varphi(t) \left\langle [\ops {h_n} a, - \Delta] \phi_n, \phi_n \right\rangle_{L^2(\mathbb T^d)} dt & = \int_{\rr} \varphi(t) \left\langle [\ops {h_n} a, \mathcal H_{n} ] \phi_n, \phi_n \right\rangle_{L^2(\mathbb T^d)} dt+O_{n\to+\infty}(1)\\ 
    & = \int_{\rr} \varphi (t) i\frac d{dt} \left\langle \ops {h_n} a \phi_n, \phi_n \right\rangle_{L^2(\mathbb T^d)} dt+ O_{n\to +\infty}(1)\\
    & = -i \int_{\rr} \varphi '(t) \left\langle \ops {h_n} a \phi_n, \phi_n \right\rangle_{L^2(\mathbb T^d)} dt+ O_{n\to +\infty}(1).
\end{align*}
In particular, this provides $$\int_{\rr} \varphi(t) \left\langle [\ops {h_n} a, - \Delta] \phi_n, \phi_n \right\rangle_{L^2(\mathbb T^d)} dt= O_{n\to +\infty}(1),$$
and therefore \eqref{commutator_step4} implies \eqref{poisson_bracket}.
\end{proof}
We end this section by a useful example:
\begin{exemple}\label{ex:measure_support}
Let $(h_n)_{n \in \nn} \subset (0,1]$ and $(\rho_n)_{n \in \nn} \subset \rr^*_+$  be two sequences tending to $0$ and $(u_n)_{n \in \nn}$ be a bounded sequence in $L^2(\mathbb T^d)$ such that 
$$\un_{[1-\rho_n,1+\rho_n]}(h_n^2\mathcal H_{n}) u_n=u_n.$$
Then, the sequence $(u_n)_{n \in \nn}$ is $(h_n)$-oscillating and any semiclassical defect measure $\mu \in L^{\infty}(\rr, \mathcal M_+(\T^d))$ provided by Proposition~\ref{semiclassical_measure_prop} satisfies 
$$\Supp \mu \subset \rr \times \T^d \times \mathbb S^{d-1}.$$
\end{exemple}
\begin{proof}[Proof of Example~\ref{ex:measure_support}]
Let $\chi \in \mathcal C^{\infty}_c(\rr^d)$ be a nonnegative test function supported in $\rr^d \setminus \mathbb S^{d-1}$ and $t_0,t_1 \in \rr$ with $t_0< t_1$. By assumption, there exists $\delta>0$ such that for all $\xi \in \Supp \chi$, $$||\xi|^2-1|\geq \delta.$$
Without loss of generality, we can assume that $|\xi|^2-1\geq \delta,$ for $\xi \in \Supp \chi$.
We have
\begin{multline*}
   0 \leq \int_{t_0}^{t_1}  \langle \chi(h_n  D) u_n(t), u_n(t)\rangle_{L^2(\T^d)} dt
    \leq \int_{t_0}^{t_1} \left\langle \chi(h_n D)\frac{(-h_n^2\Delta-1)}{\delta} u_n(t), u_n(t)\right\rangle_{L^2(\T^d)} dt \\ \leq \frac{\|\chi\|_{L^{\infty}}}{\delta} \int_{t_0}^{t_1} \langle (h_n^2 \mathcal H_n-1) \un_{[1-\rho_n, 1+\rho_n]}(h_n^2 \mathcal H_n) u_n(t), u_n(t)\rangle_{L^2(\T^d)} dt+ O(h_n)
    \leq O(\rho_n+h_n).
\end{multline*}
Eventually, this provides 
$$\int_{t_0}^{t_1} \int_{\mathbb T^d \times \mathbb R^d} \chi(\xi) \mu(t, dx, d\xi)dt=0,$$
and then, $\Supp \mu \subset \rr \times \mathbb T^d \times \mathbb S^{d-1}$.
\end{proof}

\bibliographystyle{alpha}
\small{\bibliography{FreeSchrodinger}}

\end{document}